\newtheorem{thm}{Theorem}[section]
\newtheorem{cor}[thm]{Corollary} 
\newtheorem{lem}[thm]{Lemma}
\newtheorem{prop}[thm]{Proposition}
\newtheorem{defn}[thm]{Definition}
\newcommand{\R}{\mathbb R}
\newcommand{\C}{\mathbb C}
\newcommand{\Z}{\mathbb Z}
\newcommand{\N}{\mathbb N}
\newcommand{\p}{\varphi}
\renewcommand{\l}{\lambda}
\newcommand{\til}[1]{\widetilde{#1}}
\title{Circular Nets with Spherical Parameter Lines and Terminating Laplace Sequences}
\author{Alexander I. Bobenko, Alexander Y. Fairley}
\date{Institut f\"ur Mathematik, Technische Universit\"at Berlin,\\
Str. des 17. Juni 136, 10623 Berlin, Germany\\
\today \\
bobenko@math.tu-berlin.de,  fairley@math.tu-berlin.de}
\begin{document}

\maketitle

\begin{abstract}
The focus is on circular nets with one or two families of spherical parameter lines, which are treated in M\"obius geometry. These circular nets provide a discretisation of surfaces with one or two families of spherical curvature lines. The special cases of planar, circular and linear parameter lines are also investigated. A Lie-geometric discretisation in terms of principal contact element nets is also presented.  Its properties are analogous to the classical properties of surfaces with one or two families of spherical curvature lines. Circular nets with two families of spherical parameter lines have geometric properties that are related to Darboux cyclides. Circular nets with one or two families of spherical parameter lines are examples of Q-nets with terminating Laplace sequences. More generally, this article considers Q-nets that are inscribed in quadrics and that have terminating Laplace sequences.

\end{abstract}

\section{Introduction}
\label{section: intro}

In the context of discrete differential geometry, discrete conjugate-line parametrised surfaces are defined as Q-nets, which are maps $P: \Z^2 \to \R \mathrm{P}^n$ such that, $\forall (i,j) \in \Z^2,$  the vertices $P_{i,j},P_{i+1,j}, P_{i+1,j+1}, P_{i,j+1}$ are coplanar \cite{sauer1970differenzen}.  Throughout this article, $P_{i,j}$ denotes the image of $(i,j)$ under the function $P$. 

There are various structure-preserving discretisations of curvature-line parametrisations: circular nets were introduced in \cite{Nutbourne1988}, conical nets were introduced in \cite{liu2006geometry} and principal contact element nets were introduced in \cite{bobenko2007organizing}. Principal contact element nets provide a unification of circular nets and conical nets. In this article, the focus is on circular nets. Circular nets are Q-nets with circular faces. They provide a M\"obius-geometric discretisation of curvature-line parametrisations. Circular nets can be characterised as Q-nets that are inscribed in the M\"obius quadric \cite{DDG}.

A classical topic in differential geometry is the study of surfaces with one or two families of curvature lines that are spherical, planar, circular or linear. The foregoing surfaces were studied mostly in the  nineteenth century in the setting of local differential geometry \cite{blaschke1929,  bonnet1853memoire, darboux1896lecons, eisenhart1909treatise,  Serret1853}. 

In modern times, surfaces with planar and spherical curvature lines have garnered renewed interest in the setting of global differential geometry. In particular, the simplest tori with constant mean curvature have one family of planar curvature lines. Based on this property, they were explicitly described in \cite{Abresch1987, Walter1987}. Later on, these results were extended to constant mean curvature tori with one family of spherical curvature lines \cite{Wente1992}. Surfaces with planar curvature lines were studied in \cite{musso1999laguerre} via Laguerre geometry. Non-canal isothermic tori with spherical curvature lines were constructed  in \cite{Bernstein2001}. All isothermic tori with one family of planar curvature lines were recently classified in \cite{bobenko2021compact} and applied to solve the longstanding Bonnet problem. Further examples of surfaces with spherical curvature lines were investigated in \cite{cho2021constrained}.

Curvature-line parametrisations are a subclass of conjugate-line parametrisations. The latter admit Laplace transformations, which can be iterated. The iterations determine a so-called Laplace sequence. Generically, the Laplace sequence is infinite. However, there are interesting cases where the Laplace sequence terminates after finitely many steps, i.e. where an iteration degenerates to a curve. The foregoing cases were notably studied in \cite{darboux1889lecons, Titeica1924geometrie}.  For surfaces with one family of spherical curvature lines, it is a classical result that the Laplace sequence terminates after finitely many steps \cite{goursat1896equations, darboux1896lecons}.

In this paper, we introduce and study discrete curvature-line parametrisations with spherical parameter lines. Although our main focus is on circular nets in M\"obius geometry, we also consider general Q-nets in projective geometry as well as principal contact element nets in Lie geometry. Up to now, this problem has not been investigated in detail although some examples appeared in the literature. In the context of architectural design, discrete surfaces with planar curvature lines were investigated in \cite{Tellier2019, Pottmannplanarplanar2022}. Discrete channel surfaces, which have circular curvature lines, were investigated in \cite{hertrich2020}.  Discrete surfaces with spherical curvature lines were investigated in \cite{rorig2021ribaucour} in the context of Ribaucour transformations and discrete R-congruences.

Terminating Laplace sequences lie at the core of our investigation of discrete conjugate nets with constrained parameter lines. 
The intersection points of opposite edge-lines of the elementary quadrilaterals of Q-nets also build Q-nets. These define two Laplace transforms $\mathcal{L}_A P : \Z^2 \to \R \mathrm{P}^n$ and  $\mathcal{L}_B P : \Z^2 \to \R \mathrm{P}^n$. (See Figure~\ref{figure: Laplacepoints}.) The Laplace transformations can be iterated \cite{doliwa2000lattice, doliwa1997geometricToda} and they determine a Laplace sequence. By definition, a Laplace sequence terminates if an iteration degenerates to a discrete curve, i.e. a sequence of points. In Section~\ref{section: Laplace}, examples of Q-nets $P: \Z^2 \to \R \mathrm{P}^n$ with terminating Laplace sequences are provided by Q-nets with constrained parameter lines such that, for each $j\in \Z$, the parameter line $\{P_{i,j} \mid i \in \Z\}$ is contained in a $d$-dimensional projective subspace of $\R \mathrm{P}^n$, where $d < n$.

Q-nets that are inscribed in quadrics are well established in discrete differential geometry \cite{DDG, doliwa1999}. In particular, they provide a natural way to treat circular nets, conical nets and principal contact element nets in their corresponding geometries \cite{bobenko2007organizing}. In Section~\ref{section: gridinquadric}, we show that for a Q-net that is inscribed in a quadric, its Laplace sequence terminates in one direction if and only if it terminates in both directions, see Theorems~\ref{thm: GoursatimpliesLaplaceQnetquadric} and \ref{thm: LaplaceimpliesGoursatQnetquadric}.

We denote by $[m]$ the set of integers $\{k\in \N \mid 1 \leq k \leq m\}$.

In this article, we consider Q-nets in $\R \mathrm{P}^n$ where $n\ge 2$, without repeating this every time.

\begin{thm}
\label{thm: mxmiteratedlaplaceconjugate}
Let $P: [m]\times [m] \to \mathbb{R}\mathrm{P}^n$ be a Q-net, where $m \geq 2$. Suppose that $\mathcal{L}_A^d P : [m-d]\times [m-d] \to \mathbb{R}\mathrm{P}^n$ and $\mathcal{L}_B^d P: [m-d]\times [m-d] \to \mathbb{R}\mathrm{P}^n$ are well defined Q-nets for all $d \in [m-1]$. Then, $\mathcal{L}_A^{m-1} P$ and $\mathcal{L}_B^{m-1} P$ are two points. Suppose that the points $\{P_{i,j} \mid (i,j) \in [m]\times [m] \text{ s.t. } (i,j)\neq(m,m)\}$ are contained in a quadric 
$\mathcal{Q} \subset \mathbb{R}\mathrm{P}^n$. Then, the point $P_{m,m}$ is contained in $\mathcal{Q}$ if and only if the points $\mathcal{L}_A^{m-1} P$ and $\mathcal{L}_B^{m-1} P$ are conjugate with respect to $\mathcal{Q}$.
\end{thm}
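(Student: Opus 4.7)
The plan is to prove the theorem by induction on $m$.

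For the base case $m = 2$, I would lift the four coplanar vertices to representatives in $\R^{n+1}$ satisfying the additive relation $P_{2,2} = P_{2,1} + P_{1,2} - P_{1,1}$; such a normalization always exists by rescaling the four representatives, using that the vertices are coplanar. In this lift one computes directly $\mathcal{L}_A P_{1,1} = P_{2,1} - P_{1,1}$ and $\mathcal{L}_B P_{1,1} = P_{1,2} - P_{1,1}$, and a bilinear expansion of the symmetric form $Q$ polarizing $\mathcal{Q}$ yields the polarization identity
\[
Q(P_{2,2}, P_{2,2}) - Q(P_{2,1}, P_{2,1}) - Q(P_{1,2}, P_{1,2}) + Q(P_{1,1}, P_{1,1}) = 2\, Q(\mathcal{L}_A P_{1,1}, \mathcal{L}_B P_{1,1}).
\]
Under the hypothesis $P_{1,1}, P_{2,1}, P_{1,2} \in \mathcal{Q}$, the left side collapses to $Q(P_{2,2}, P_{2,2})$, so $P_{2,2} \in \mathcal{Q}$ if and only if $\mathcal{L}_A P_{1,1}$ and $\mathcal{L}_B P_{1,1}$ are conjugate, settling $m=2$.

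For the inductive step, fix $m \geq 3$ and assume the theorem for all smaller sizes. I would apply the inductive hypothesis to the four $(m-1)\times(m-1)$ corner sub-nets $P^{(a,b)} := P|_{[a,\, a+m-2]\times[b,\, b+m-2]}$, indexed by $(a,b)\in\{1,2\}^2$. An index-shift calculation shows that $\mathcal{L}_A^{m-2} P^{(a,b)}$ and $\mathcal{L}_B^{m-2} P^{(a,b)}$ are single points equal to $R_{a,b}$ and $S_{a,b}$ respectively, where $R := \mathcal{L}_A^{m-2} P$ and $S := \mathcal{L}_B^{m-2} P$ are $2\times 2$ Q-nets. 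For $(a,b)\neq(2,2)$ all vertices of $P^{(a,b)}$ lie on $\mathcal{Q}$, so the IH yields $Q(R_{a,b}, S_{a,b}) = 0$; for $(a,b)=(2,2)$ the only vertex possibly off $\mathcal{Q}$ is $P_{m,m}$, so the IH gives $P_{m,m} \in \mathcal{Q} \Leftrightarrow Q(R_{2,2}, S_{2,2}) = 0$.

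To close the induction I need to show $Q(R_{2,2}, S_{2,2}) = 0 \Leftrightarrow Q(\mathcal{L}_A R_{1,1}, \mathcal{L}_B S_{1,1}) = 0$, since $\mathcal{L}_A R_{1,1} = \mathcal{L}_A^{m-1} P$ and $\mathcal{L}_B S_{1,1} = \mathcal{L}_B^{m-1} P$. Applying the base-case normalization separately to the $2\times 2$ Q-nets $R$ and $S$ and substituting the three known vanishings from the IH produces a cross-polarization identity of the form
\[
Q(R_{2,2}, S_{2,2}) = Q(\mathcal{L}_A R_{1,1}, \mathcal{L}_B S_{1,1}) + Q(\mathcal{L}_B R_{1,1}, \mathcal{L}_A S_{1,1}).
\]
The main obstacle will be the residual mixed cross-term $Q(\mathcal{L}_B \mathcal{L}_A^{m-2} P_{1,1},\, \mathcal{L}_A \mathcal{L}_B^{m-2} P_{1,1})$: this is a conjugacy statement about two mixed-order iterated Laplace points that is not directly covered by the theorem as stated. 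I would handle it either by strengthening the inductive hypothesis to simultaneously track all mixed iterations $\mathcal{L}_A^a \mathcal{L}_B^b P_{1,1}$ with $a+b = m-1$ and prove their mutual conjugacy by a refined induction, or by exploiting the direction-symmetry provided by Theorems~\ref{thm: GoursatimpliesLaplaceQnetquadric} and \ref{thm: LaplaceimpliesGoursatQnetquadric} on Q-nets inscribed in quadrics.
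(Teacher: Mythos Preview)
Your overall architecture matches the paper's proof closely: the base case $m=2$ is exactly Lemma~\ref{lem: planarquadpolarlaplace}, your cross-polarization identity for the two $2\times 2$ Laplace quads $R=\mathcal{L}_A^{m-2}P$ and $S=\mathcal{L}_B^{m-2}P$ is exactly Lemma~\ref{lem: twoquads}, and your use of the four corner $(m-1)\times(m-1)$ sub-nets to obtain the three conjugacies $R_{a,b}\perp S_{a,b}$ for $(a,b)\neq(2,2)$ is the same reduction the paper makes.

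The genuine gap is in your handling of the residual cross-term $Q(\mathcal{L}_B R_{1,1},\mathcal{L}_A S_{1,1})$. You treat $\mathcal{L}_B\mathcal{L}_A^{m-2}P$ and $\mathcal{L}_A\mathcal{L}_B^{m-2}P$ as genuinely ``mixed'' objects requiring a strengthened induction, but they are not: the identity $\mathcal{L}_B\circ\mathcal{L}_A P(i,j)=P(i+1,j+1)$ stated in Section~\ref{section: Laplace} collapses them. Concretely, $\mathcal{L}_B R_{1,1}=\mathcal{L}_B\circ\mathcal{L}_A(\mathcal{L}_A^{m-3}P)(1,1)=\mathcal{L}_A^{m-3}P(2,2)$ and likewise $\mathcal{L}_A S_{1,1}=\mathcal{L}_B^{m-3}P(2,2)$. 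The conjugacy $\mathcal{L}_A^{m-3}P(2,2)\perp\mathcal{L}_B^{m-3}P(2,2)$ is then precisely the theorem for size $m-2$ applied to the \emph{interior} sub-net $P|_{\{2,\dots,m-1\}^2}$, all of whose vertices lie on $\mathcal{Q}$. (For $m=3$ this degenerates to $P_{2,2}\perp P_{2,2}$, which holds since $P_{2,2}\in\mathcal{Q}$.) So the induction should load the hypothesis for both $m-1$ and $m-2$, not only $m-1$.

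Your second proposed fix, appealing to Theorems~\ref{thm: GoursatimpliesLaplaceQnetquadric} and~\ref{thm: LaplaceimpliesGoursatQnetquadric}, is circular: both of those theorems invoke Theorem~\ref{thm: mxmiteratedlaplaceconjugate} in their proofs.
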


Theorem~\ref{thm: mxmiteratedlaplaceconjugate} is the main tool that enables us to construct Q-nets $\Z^2 \to \R \mathrm{P}^n$ that are inscribed in quadrics and that have constrained parameter lines.

In Section~\ref{section: Mobiusapproach}, we apply the general results of Section~\ref{section: gridinquadric} to investigate circular nets with one or two families of spherical parameter lines. We also investigate in detail the cases of planar, circular and linear parameter lines. Theorem~\ref{thm: mxmiteratedlaplaceconjugate} is used to prove several remarkable incidence theorems that provide effective methods to construct circular nets with parameter lines that are spherical, planar, circular or linear. For example, Proposition~\ref{thm: circularcircular4x4} is an incidence theorem concerning the configuration in Figure~\ref{figure: 4x4circulargridincidence} which is a finite patch of a circular net with two families of circular parameter lines. We show in Section~\ref{section: Mobiusapproach} that circular nets with two families of spherical parameter lines have algebraic and geometric properties involving quadrics and Darboux cyclides. The properties are compared and contrasted with those of surfaces with spherical curvatures lines. Appendix~\ref{appendix: mobiusgeo} overviews the pertinent properties of Darboux cyclides. 

\begin{figure}[htbp]
\[\includegraphics[width=0.68\textwidth]{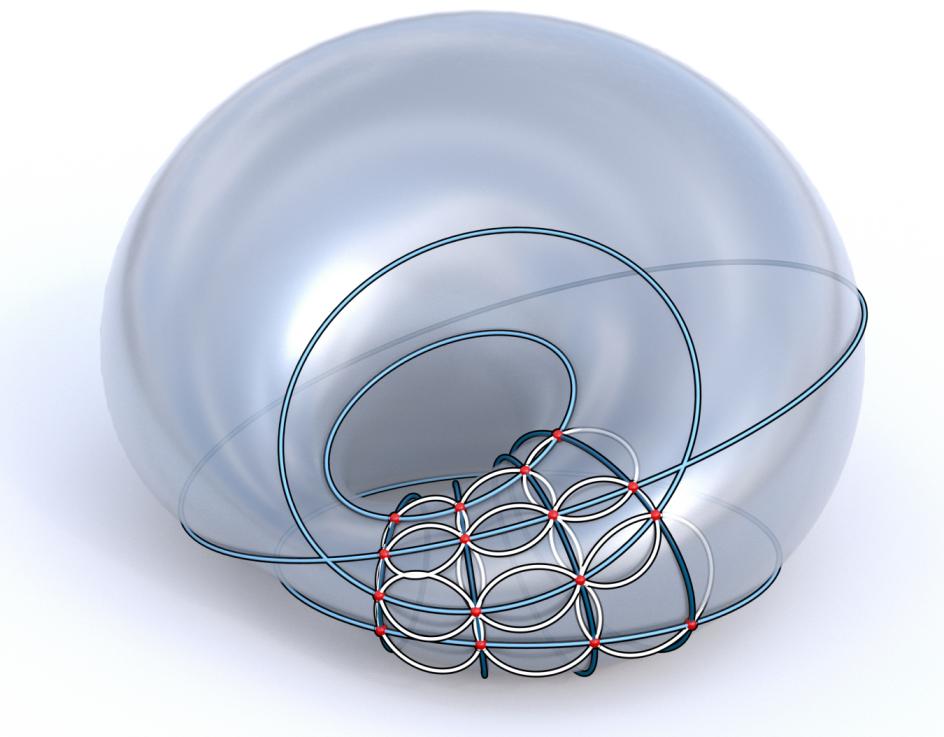}\]
\caption{A circular net with circular parameter lines. Proposition~\ref{thm: circularcircular4x4} is an incidence theorem concerning the configuration. By Proposition~\ref{prop: circularcircularR2R3}, the circular parameter lines are contained in a Darboux cyclide.}
\label{figure: 4x4circulargridincidence}
\end{figure}

Our main motivation is to find a structure-preserving discretisation of surfaces with spherical curvature lines. One can pass to a continuum limit by refining the mesh size. This general picture of discrete differential geometry looks very natural. There is a common belief that the smooth theory can be obtained from the corresponding discrete theory. The corresponding $C^\infty$ convergence of general circular nets to curvature line parametrised surfaces was proven in \cite{bobenko2003discrete}. A method to generate the curvature-line parametrisation of a given surface from circular nets on the surface was developed in \cite{bobenko2018curvature}. However, one should not underestimate possible difficulties and ambiguities concerning connections of the discrete and smooth theories. An example of such a problem is an alternation phenomenon which is discussed in Section~\ref{section: Lieapproach}. We demonstrate that although the natural discretisation via circular nets with spherical parameter lines comprises a nice consistent theory, it is slightly too general to be seen as the perfect discretisation model.

As indicated in \cite{bobenko2007organizing, DDG, pottmann2008focal}, there are three natural discretisations of curvature-line parametrisations: circular nets, conical nets and principal contact element nets. They correspond to M\"obius-, Laguerre- and Lie-geometric descriptions of curvature-line parametrisations. The most general one is the Lie-geometric description, which is also applicable to the case of spherical curvature lines. 

Curvature-line parametrised surfaces with spherical curvature lines belong to Lie geometry. If one applies a Lie transformation to such a surface, one obtains again a curvature-line parametrised surface with spherical curvature lines. The corresponding discretisation is given in Section~\ref{section: Lieapproach} in terms of principal contact element nets.  A Lie-geometric approach is used to exhibit the aforementioned alternation phenomenon. Once the alternation phenomenon is resolved, the discrete theory fits perfectly to the smooth theory and clarifies the classical properties of surfaces with spherical curvature lines. 

\textbf{Genericity assumption:} In the spirit of local differential geometry, this article assumes implicitly in all statements and proofs that {\em all the data are generic}. This means that the geometric structure of the nets under consideration is invariant under all small perturbations that preserve the constraints of the data.  For example, Q-nets are maps $P: \Z^2 \to \R \mathrm{P}^n$, with the constraint that the points $P_{i,j}, P_{i+1,j}, P_{i+1, j+1}, P_{i, j+1}$ are coplanar. The genericity assumption implies that the points $P_{i,j}, P_{i+1,j}, P_{i+1, j+1}, P_{i, j+1}$ are distinct and that no three of them are collinear. The genericity assumption also implies that the lines $P_{i,j} \vee P_{i,j+1}$, $P_{i+1,j} \vee P_{i+1,j+1}$, and $P_{i+2,j} \vee P_{i+2,j+1}$ are not concurrent. More generally, the genericity assumption ensures that generic Q-nets have well-defined Laplace sequences. 

\section{Terminating Laplace Sequences}\label{section: Laplace}

Let $P:\Z^2 \to \R \mathrm{P}^n$ be a Q-net. This means that $\forall (i,j) \in \Z^2$, the points $P_{i,j}, P_{i+1,j},P_{i+1,j+i}, P_{i,j+1}$ are coplanar. The intersection points
\begin{align*}
A_{i,j} := (P_{i,j}\vee P_{i+1,j})\cap (P_{i,j+1}\vee P_{i+1,j+1})\\
B_{i,j}:= (P_{i,j} \vee P_{i,j+1})\cap (P_{i+1,j} \vee P_{i+1,j+1})
\end{align*}
are known as the {\em Laplace points} of the planar quad $\Box (P_{i,j}, P_{i+1,j}, P_{i+1,j+1}, P_{i,j+1})$. 

The {\em Laplace transforms} $\mathcal{L}_A P : \Z^2 \to \R \mathrm{P}^n$ and $\mathcal{L}_B P : \Z^2 \to \R \mathrm{P}^n$ of  $P$ are defined such that $\mathcal{L}_A P(i,j) := A_{i,j}$ and $\mathcal{L}_BP (i,j) := B_{i,j}$. The {\em Laplace transformations} are $\mathcal{L}_A$ and $\mathcal{L}_B$ such that $P \mapsto \mathcal{L}_A P$ and $P\mapsto \mathcal{L}_B P$, respectively. It is straightforward to check that the Laplace transforms $\mathcal{L}_A P$ and $\mathcal{L}_B P$ are Q-nets. Thus, Laplace transformations can be iterated. Let $\mathcal{L}_A^d$ and $\mathcal{L}_B^d$  denote the $d$-times iterated Laplace transformations. An example of Laplace transforms is shown in Figure~\ref{figure: Laplacepoints}.

\begin{figure}[htbp] 
\[\includegraphics[width=0.8\textwidth]{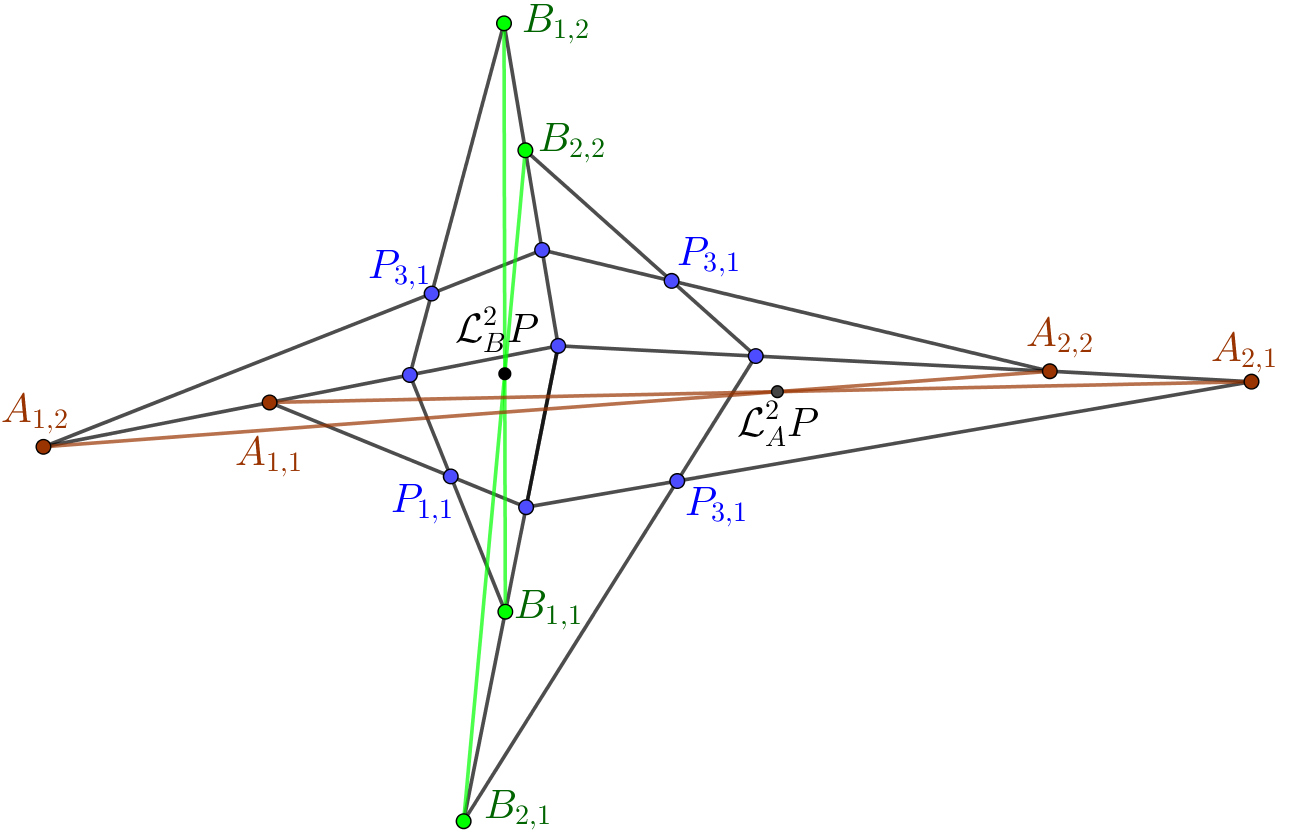}\]
\caption{A patch of Q-net $P$ and its Laplace transforms $\mathcal{L}_A P$ and $\mathcal{L}_B P$. The Laplace transforms $\mathcal{L}^2_A P$ and $\mathcal{L}^2_B P$ are also shown.} 
\label{figure: Laplacepoints}
\end{figure}

It is straightforward to check that the Laplace transformations $\mathcal{L}_A$ and $\mathcal{L}_B$ are mutually inverse up to an index shift:
$$
\mathcal{L}_A \circ \mathcal{L}_B P (i,j) = \mathcal{L}_B \circ \mathcal{L}_A P (i,j) = P(i+1,j+1).
$$ 
Thus, the iterated Laplace transforms can be arranged into the following {\em Laplace sequence}.
\begin{align*}
\ldots  \leftarrow \mathcal{L}_A^3P \leftarrow  \mathcal{L}_A^2P  \leftarrow \mathcal{L}_AP  \leftarrow P \rightarrow \mathcal{L}_BP \rightarrow \mathcal{L}_B^2P \rightarrow \mathcal{L}_B^3P \rightarrow \ldots
\end{align*}
Generically, the Laplace sequence is infinite in both directions. However, there are interesting cases where the sequence terminates in one or both directions.

\begin{defn}\label{defn: GoursatandLaplace}
Let $P: \Z^2 \to \mathbb{R}\mathrm{P}^n$ be a Q-net and $m\in \N$. We call
\begin{itemize}
\item $\mathcal{L}_A^m P$  \emph{Goursat degenerate} if $\mathcal{L}_A^m P (i,j)$ is independent of $i$. 
\item $\mathcal{L}_A^m P$  \emph{Laplace degenerate} if $\mathcal{L}_A^m P (i,j)$  is independent of $j$.
\item $\mathcal{L}_B^m P$  \emph{Goursat degenerate} if $\mathcal{L}_B^m P (i,j)$  is independent of $j$.
\item $\mathcal{L}_B^m P$  \emph{Laplace degenerate} if $\mathcal{L}_B^m P (i,j)$  is independent of $i$.
\end{itemize}
\end{defn}

The case of Laplace degenerate $\mathcal{L}_AP$ or $\mathcal{L}_BP$ was studied in \cite{Kilian2023}.

Definition~\ref{defn: GoursatandLaplace} has a counterpart in the smooth theory. The attributions to Goursat and Laplace are compatible with the attributions that are made in \cite{bol1950projektive, lane1932projective}.

If $\mathcal{L}_A^mP$ is either Goursat degenerate or Laplace degenerate, then $\mathcal{L}_A^m P$ degenerates to a discrete curve $\Z \to \R\mathrm{P}^n$, and the Laplace sequence terminates after $m$ steps. If $\mathcal{L}_A^mP$ is both Goursat and Laplace degenerate, then $\mathcal{L}_A^m P$ degenerates to a single point. 

\begin{prop}\label{prop: Goursatterminatemsteps}
Let $P: \Z^2 \to \R \mathrm{P}^n$ be a Q-net,  and $m<n$. Generically,  $\mathcal{L}_A^mP$ is Goursat degenerate if and only if the parameter lines $\{P_{i,j} \}_{i\in \Z}$ are contained in $m$-dimensional projective subspaces of $\R \mathrm{P}^n$.
\end{prop}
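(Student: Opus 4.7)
The plan is to study the projective span $V_j^{(k)} := \mathrm{span}\{\mathcal{L}_A^k P(i,j) : i \in \Z\}$ of the $j$-th parameter line of the $k$-th iterated Laplace transform. Goursat degeneracy of $\mathcal{L}_A^m P$ is by definition the condition $\dim V_j^{(m)} = 0$ for every $j$, while the condition on the right-hand side of the proposition is $\dim V_j^{(0)} \leq m$. I would prove both directions by chaining two complementary inequalities across $k = 0, 1, \ldots, m$:
\[
\textup{(i)}\quad \dim V_j^{(k+1)} \leq \dim V_j^{(k)} - 1 \text{ (generically)}, \qquad \textup{(ii)}\quad \dim V_j^{(k+1)} \geq \dim V_j^{(k)} - 1.
\]
Iterating (i) starting from $\dim V_j^{(0)} \leq m$ yields $\dim V_j^{(m)} \leq 0$, giving the "$\Leftarrow$" direction; iterating (ii) starting from $\dim V_j^{(m)} = 0$ yields $\dim V_j^{(0)} \leq m$, giving the "$\Rightarrow$" direction.

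For (i), the starting point is that $\mathcal{L}_A P(i,j)$ lies on both $P_{i,j} \vee P_{i+1,j} \subset V_j^{(0)}$ and $P_{i,j+1} \vee P_{i+1,j+1} \subset V_{j+1}^{(0)}$, so $\mathcal{L}_A P(i,j) \in V_j^{(0)} \cap V_{j+1}^{(0)}$; applying the same reasoning to $\mathcal{L}_A^k P$ in place of $P$ gives the containment $V_j^{(k+1)} \subseteq V_j^{(k)} \cap V_{j+1}^{(k)}$. Under the paper's standing genericity assumption, and using that $m < n$ so the spans $V_j^{(k)}$ have room not to coincide, neighbouring spans $V_j^{(k)}$ and $V_{j+1}^{(k)}$ are distinct subspaces of $\R\mathrm{P}^n$ of the same positive dimension, so their intersection is a proper subspace of $V_j^{(k)}$ and the dimension drops by at least one.

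For (ii), I would use the defining collinearity of the Laplace transformation: the three points $\mathcal{L}_A^k P(i,j)$, $\mathcal{L}_A^k P(i+1,j)$, and $\mathcal{L}_A^{k+1} P(i,j)$ lie on a common line, hence $\mathcal{L}_A^k P(i+1,j) \in \mathrm{span}(\mathcal{L}_A^k P(i,j), \mathcal{L}_A^{k+1} P(i,j))$. Induction on $i$ then places every $\mathcal{L}_A^k P(i,j)$ in $\mathrm{span}(\mathcal{L}_A^k P(0,j)) + V_j^{(k+1)}$, so $V_j^{(k)} \subseteq \mathrm{span}(\mathcal{L}_A^k P(0,j)) + V_j^{(k+1)}$ and (ii) follows. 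The principal obstacle is (i): one must justify not only that $V_j^{(k)} \neq V_{j+1}^{(k)}$ but that their intersection really is of codimension one in $V_j^{(k)}$. Both assertions are packaged into the standing genericity hypothesis of the introduction, but they are where the proof invokes genericity rather than proceeding by pure combinatorial linear algebra.
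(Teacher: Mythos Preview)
Your proposal is correct and follows essentially the same dimension-counting approach as the paper: both arguments track the span of the $j$-th parameter line of $\mathcal{L}_A^k P$ and use the containment $V_j^{(k+1)} \subseteq V_j^{(k)} \cap V_{j+1}^{(k)}$ together with genericity to drop the dimension by one at each step. Your treatment is in fact more thorough than the paper's, which writes out only the $\Leftarrow$ direction and leaves the converse to the phrase ``the claim follows by counting dimensions''; your inequality (ii), based on the collinearity of $\mathcal{L}_A^k P(i,j)$, $\mathcal{L}_A^k P(i+1,j)$, $\mathcal{L}_A^{k+1} P(i,j)$, makes that direction explicit.
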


\begin{proof}
Due to the genericity assumption, the claim follows by counting dimensions of projective subspaces. Let $H_j$ be the $m$-dimensional space containing the points $\{P_{i,j}\}_{i \in \Z}$. The points $\{\mathcal{L}_AP(i,j)\}_{i \in \Z}$ lie in $H_j\cap H_{j+1}$. Generically, $H_j \cap H_{j+1}$ is $(m-1)$-dimensional because $P$ is a Q-net. By the same reasoning, for each $j\in \Z$ and $d \le m$, the points $\{\mathcal{L}_A^d (i,j)\}_{i \in \Z}$ are contained in a $(m-d)$-dimensional space. In particular, the points $\{\mathcal{L}_A^m (i,j)\}_{i \in \Z}$ coincide. 
\end{proof}

\begin{prop}\label{prop: Laplaceterminatemsteps}
Let $P: \Z^2 \to \R \mathrm{P}^n$ be a Q-net, and $m <n$. Generically,  $\mathcal{L}_A^mP$ is  Laplace degenerate if and only if
$\cap_{j\in \Z}P_{i,j} \vee P_{i+1,j} \vee \ldots \vee P_{i+m, j}$ is a point for each $i$. 
\end{prop}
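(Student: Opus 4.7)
The plan is to model the proof on Proposition~\ref{prop: Goursatterminatemsteps}, with the key input being a multi-containment property for iterated Laplace points. Setting $V_d(i,j) := P_{i,j} \vee P_{i+1,j} \vee \cdots \vee P_{i+d,j}$, the core lemma I would establish by induction on $d$ is
\[
  \mathcal{L}_A^d P(i,j) \;\in\; V_d(i,j) \cap V_d(i,j+1) \cap \cdots \cap V_d(i,j+d).
\]
The base case $d = 0$ is tautological. For the inductive step, $\mathcal{L}_A^d P(i,j)$ lies on each of the two lines $\mathcal{L}_A^{d-1}P(i,j) \vee \mathcal{L}_A^{d-1}P(i+1,j)$ and $\mathcal{L}_A^{d-1}P(i,j+1) \vee \mathcal{L}_A^{d-1}P(i+1,j+1)$. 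The inductive hypothesis places the endpoints of the first line in $V_{d-1}(i, j+l)$ and $V_{d-1}(i+1, j+l)$ for $l = 0, \ldots, d-1$, and since $V_{d-1}(i, j+l) \vee V_{d-1}(i+1, j+l) = V_d(i, j+l)$, the whole line lies in those $V_d(i, j+l)$. Analogously the second line lies in $V_d(i, j+l)$ for $l = 1, \ldots, d$.

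For the forward direction $(\Rightarrow)$, suppose $\mathcal{L}_A^m P$ is Laplace degenerate and write $Q_i := \mathcal{L}_A^m P(i,j)$, which is independent of $j$. The lemma gives $Q_i \in V_m(i,j)$ for every $j$, hence $Q_i \in \bigcap_j V_m(i,j)$. Under the paper's genericity assumption, the intersection cannot strictly exceed a single point without imposing further incidences on the net, so $\bigcap_j V_m(i,j) = \{Q_i\}$.

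For the converse $(\Leftarrow)$, assume $\bigcap_j V_m(i,j) = \{Q_i\}$ for each $i$. Applying the lemma with $d = m$ places $\mathcal{L}_A^m P(i,j)$ in the finite intersection $V_m(i,j) \cap V_m(i,j+1) \cap \cdots \cap V_m(i,j+m)$, which already contains $Q_i$. A projective dimension count finishes the argument: projecting from $Q_i$, each $V_m(i,\cdot)$ becomes an $(m-1)$-dimensional subspace of $\mathbb{R}\mathrm{P}^{n-1}$, and the intersection of $m+1$ generic such subspaces has expected codimension $(m+1)(n-m)$; since $(m+1)(n-m) - (n-1) = m(n-m-1) + 1 > 0$ whenever $1 \leq m < n$, this intersection is empty, forcing the unprojected intersection of $m+1$ consecutive $V_m(i,\cdot)$'s to equal $\{Q_i\}$. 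Hence $\mathcal{L}_A^m P(i,j) = Q_i$ for every $j$, i.e. $\mathcal{L}_A^m P$ is Laplace degenerate.

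The main obstacle I anticipate is the genericity step in $(\Leftarrow)$: the $m+1$ subspaces $V_m(i,j), \ldots, V_m(i, j+m)$ are linked by the Q-net structure through shared vertex columns, so one must verify that, subject only to the single constraint $\bigcap_j V_m(i,j) = \{Q_i\}$, their projections from $Q_i$ are in general position in $\mathbb{R}\mathrm{P}^{n-1}$. I would handle this by appealing to the blanket genericity assumption stated in the introduction rather than producing an explicit transverse perturbation.
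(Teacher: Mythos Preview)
Your containment lemma $\mathcal{L}_A^d P(i,j)\in\bigcap_{l=0}^d V_d(i,j+l)$ is precisely the ingredient the paper uses, and the $(\Rightarrow)$ direction matches. The difference is in $(\Leftarrow)$.

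The paper's route is shorter and avoids your projection step entirely. It observes that, \emph{unconditionally} under genericity, the finite intersection $\bigcap_{l=0}^m V_m(i,j+l)$ is already a single point: because $P$ is a Q-net, $V_m(i,j)\vee V_m(i,j+1)$ is only $(m{+}1)$-dimensional (adding one row adds one new point), so $V_m(i,j)\cap V_m(i,j+1)$ is $(m{-}1)$-dimensional; iterating, $m{+}1$ consecutive $V_m$'s meet in a point. That point must then equal $\mathcal{L}_A^m P(i,j)$ by your lemma, and the equivalence with $\bigcap_{j\in\Z}V_m(i,j)$ being a point is immediate.

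Your projection argument is where the reasoning is shaky. After projecting from $Q_i$, the images of the $V_m(i,j+l)$ are \emph{not} $(m{-}1)$-planes in general position in $\mathbb{R}\mathrm{P}^{n-1}$: consecutive ones already share an $(m{-}2)$-plane, since before projection $V_m(i,j)\cap V_m(i,j+1)$ is $(m{-}1)$-dimensional by the Q-net structure. So the naive expected-codimension count $(m{+}1)(n{-}m)$ is not applicable, and appealing to blanket genericity to force general position is exactly what you cannot do here---the Q-net constraint prevents it. The conclusion you want is still true, but the correct justification is the paper's: each additional $V_m$ cuts the running intersection down by exactly one dimension, because of the Q-net condition, not despite it.
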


\begin{proof}
For all $(i,j) \in \Z^2$, $P_{i,j} \vee P_{i+1,j} \vee \ldots \vee P_{i+m, j}$ is generically $m$-dimensional. Because $P$ is a Q-net, the intersection $(P_{i,j} \vee P_{i+1,j} \vee \ldots \vee P_{i+m, j})\cap (P_{i,j+1} \vee P_{i+1,j+1} \vee \ldots \vee P_{i+m, j+1})$ is $(m-1)$-dimensional, and $\cap_{l\in \{0, 1,\ldots,m\}}P_{i,j+l} \vee P_{i+1,j+l} \vee \ldots \vee P_{i+m, j+l}$ is a point. The point coincides with $\mathcal{L}_A^mP(i,j)$ since the latter is contained in $\cap_{l\in \{0, 1,\ldots,m\}}P_{i,j+l} \vee P_{i+1,j+l} \vee \ldots \vee P_{i+m, j+l}$. So, $\mathcal{L}_A^m P(i,j)$ is independent of $j$ if and only if $\cap_{j\in \Z}P_{i,j} \vee P_{i+1,j} \vee \ldots \vee P_{i+m, j}$ is a point for each $i$.
\end{proof}

Propositions~\ref{prop: Goursatterminatemsteps} and \ref{prop: Laplaceterminatemsteps} have counterparts in the smooth theory \cite{lane1942treatise}. For instance, the case $m=2$ of Proposition~\ref{prop: Laplaceterminatemsteps} can be interpreted in terms of osculating planes.


Recall, $l: \Z^2 \to \{\text{lines in } \R \mathrm{P}^n\}$ is a discrete line congruence if and only if any two neighbouring lines intersect.

\begin{defn}
\label{def:laplace_line congruence}
Let $l: \Z^2 \to \{\text{lines in } \R \mathrm{P}^n\}$ be a discrete line congruence, where $n \ge 3$. Define the \emph{Laplace transforms} $\mathcal{L}_Al:\Z^2 \to \{ \text{lines in } \R \mathrm{P}^n\}$ and $\mathcal{L}_Bl:\Z^2 \to \{ \text{lines in } \R \mathrm{P}^n\}$ as
\begin{align*}
\mathcal{L}_A l (i,j) &:= (l_{i,j} \vee l_{i+1,j}) \cap (l_{i,j+1} \vee l_{i+1,j+1}), \\
\mathcal{L}_B l (i,j) &:= (l_{i,j} \vee l_{i,j+1}) \cap (l_{i+1,j} \vee l_{i+1,j+1}).
\end{align*} 
\end{defn}

It is straightforward to check that the Laplace transforms $\mathcal{L}_A l$ and $\mathcal{L}_B l$ are generically discrete line congruences. Consequently, the Laplace transformations  $\mathcal{L}_A$ and $\mathcal{L}_B$ of discrete line congruences can be iterated. Let $\mathcal{L}_A^d$ and $\mathcal{L}_B^d$  denote the $d$-times iterated Laplace transformations, where $d \in \N$.

Analogously to the Laplace sequences of Q-nets, discrete line congruences also determine Laplace sequences. Goursat degenerations and Laplace degenerations are defined for discrete line congruences analogously to Definition~\ref{defn: GoursatandLaplace}.
\section{Q-Nets that are Inscribed in Quadrics}\label{section: gridinquadric}

Lemmas~\ref{lem: twoquads} and \ref{lem: planarquadpolarlaplace} are used in the ensuing proof of Theorem~\ref{thm: mxmiteratedlaplaceconjugate} which describes the structure of Q-nets that are inscribed in quadrics.

\begin{lem}\label{lem: twoquads}
Let $O:= \Box(O_{1,1}, O_{2,1}, O_{2,2}, O_{1,2})$ and $P:= \Box(P_{1,1}, P_{2,1}, P_{2,2}, P_{1,2})$ be two planar quads in $\mathbb{R}\mathrm{P}^n$. Let $\mathcal{L}_AO$, $\mathcal{L}_BO$, $\mathcal{L}_AP$ and $\mathcal{L}_BP$ be their Laplace points. (See Figure~\ref{figure: lemmaAB}.) Any five of the following six conjugacy conditions relative to a quadric $\mathcal{Q}$ in $\mathbb{R}\mathrm{P}^n$ imply the sixth one.
\begin{gather*}
 O_{1,2} \perp P_{1,2}, \  O_{1,1} \perp P_{1,1}, \ O_{2,1} \perp P_{2,1},
O_{2,2} \perp P_{2,2}, \ 
\mathcal{L}_{B}O \perp \mathcal{L}_AP,\  \mathcal{L}_{A}O \perp \mathcal{L}_{B}P.
\end{gather*}
\end{lem}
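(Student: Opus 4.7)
The plan is to exploit the fact that conjugacy with respect to $\mathcal{Q}$ is governed by the symmetric bilinear form $B$ on $\R^{n+1}$ whose isotropic cone is $\mathcal{Q}$. Each of the six conjugacy conditions becomes a scalar equation $B(v,w)=0$ for suitable homogeneous lifts $v,w$ of the points involved. The strategy is to produce a single linear identity among the six resulting scalars $B(v,w)$ in which every coefficient is non-zero. Once such an identity is in hand, the vanishing of any five of the six scalars immediately forces the sixth to vanish.

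To obtain such an identity, I would first normalise the lifts. Since the four vertices of $O$ are coplanar and in general position, lifts $o_{11},o_{12},o_{21}$ of $O_{1,1},O_{1,2},O_{2,1}$ can be chosen so that $o_{22}:=o_{11}+o_{12}+o_{21}$ is a lift of $O_{2,2}$. A short direct computation in the plane of $O$ then shows that $o_{11}+o_{21}$ and $o_{11}+o_{12}$ are lifts of the Laplace points $\mathcal{L}_A O$ and $\mathcal{L}_B O$, respectively. Performing the same normalisation for $P$ produces lifts $p_{ij}$ satisfying $p_{22}=p_{11}+p_{12}+p_{21}$, $\mathcal{L}_A P=p_{11}+p_{21}$ and $\mathcal{L}_B P=p_{11}+p_{12}$.

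Expanding each of the six conjugacy expressions by the bilinearity of $B$ in terms of the nine elementary pairings $B(o_{ab},p_{cd})$, $(a,b),(c,d)\in\{(1,1),(1,2),(2,1)\}$, a brief calculation reveals the identity
\[
B(o_{22},p_{22})+B(o_{11},p_{11})-B(o_{12},p_{12})-B(o_{21},p_{21})=B(\mathcal{L}_A O,\mathcal{L}_B P)+B(\mathcal{L}_B O,\mathcal{L}_A P).
\]
Each of the six scalars appearing in the statement of the lemma occurs here with coefficient $\pm 1$, so the vanishing of any five of them forces the sixth to vanish.

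The main technical subtlety is really just the correct choice of normalisation of the lifts; with arbitrary representatives the Laplace points acquire awkward scalar factors that obscure the linear identity. The symmetry of the chosen normalisation under permutations of the vertex indices makes the identity drop out cleanly and reflects the symmetry already visible in the statement of the lemma.
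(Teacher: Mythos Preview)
Your proof is correct and follows essentially the same approach as the paper: choose lifts of three vertices so that the fourth vertex and both Laplace points have simple representatives, then observe that the six conjugacy conditions satisfy a single linear relation with nonzero coefficients. The paper's normalisation differs only in labelling (it takes $O_{1,2},O_{1,1},O_{2,1}$ as the basic lifts), and it leaves the linear identity implicit where you write it out explicitly.
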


\begin{figure}[htbp]
\begin{center}
\includegraphics[width =0.7\textwidth]{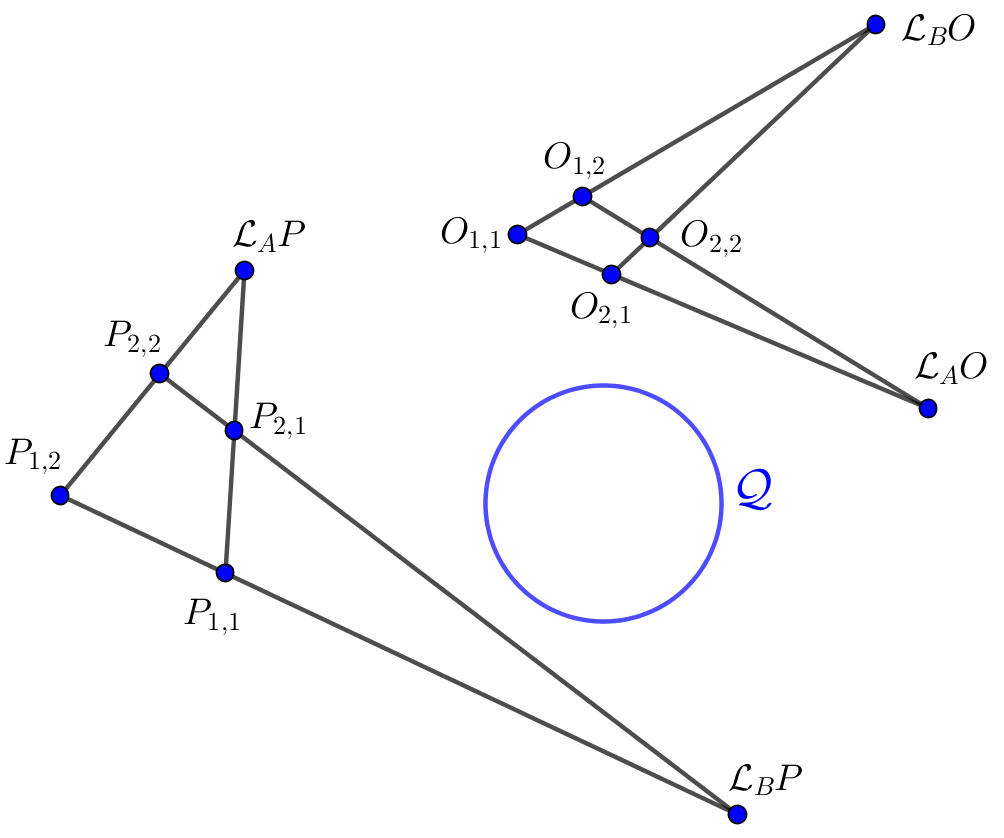}
\end{center}
\caption{Two planar quads  in $\R\mathrm{P}^2$ that satisfy the six conjugacy conditions in Lemma~\ref{lem: twoquads}.} 
\label{figure: lemmaAB}
\end{figure}

\begin{proof}
Denote by $[x] \in \R \mathrm{P}^n$ a point with representative vector $x \in \R^{n+1}$. Let $O_{1,2}= [u]$, $O_{1,1} = [v]$, $O_{2,1} = [w]$. Choose the representative vectors $u, v$ and $w$ so that $O_{2,2}= [u+v+w]$. Then, $\mathcal{L}_{A}O = [v+w]$ and $\mathcal{L}_BO = [u+v]$. Analogously, $P_{1,2}= [x]$, $P_{1,1} = [y]$, $P_{2,1} = [z]$, $P_{2,2}= [x+y+z]$. Then, $\mathcal{L}_AP = [y+z]$ and $\mathcal{L}_BP = [x+y]$.   Let $\p$ be a symmetric bilinear form representing the quadric $\mathcal{Q}$. The six conditions in Lemma~\ref{lem: twoquads} are equivalent to the following equations.
\begin{align*}
& \p(u,x)=0,\ \p(v,y)=0,\ \p(w,z)=0,  \\
&   \p(u+v, y+z)=0,\ \p(v+w,  x+y )=0,\  \p(u+v+w,x+y+z)=0.
\end{align*}
Any five of these equations imply the sixth one.
\end{proof}

\begin{lem}\label{lem: planarquadpolarlaplace}
Let three of the vertices of a planar quad $\Box(P_{1,1},P_{2,1},P_{2,2},P_{1,2})\subset \mathbb{R}\mathrm{P}^n$ be contained in a quadric $\mathcal{Q}$. Then, the fourth vertex also lies in the quadric $Q$ if and only if the Laplace points $A_{1,1}:= (P_{1,1} \vee P_{2,1})\cap (P_{1,2} \vee P_{2,2})$ and $B_{1,1}:= (P_{1,1} \vee P_{1,2})\cap (P_{2,1} \vee P_{2,2})$ are conjugate relative to $\mathcal{Q}$.
\end{lem}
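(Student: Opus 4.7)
The plan is to set up convenient homogeneous coordinates on the plane of the quad, compute the two Laplace points explicitly, and then show that both the condition ``$P_{2,2}\in\mathcal{Q}$'' and the conjugacy condition ``$A_{1,1}\perp B_{1,1}$'' translate into the \emph{same} scalar equation in a bilinear form representing $\mathcal{Q}$.

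Concretely, I would write $P_{1,2}=[u]$, $P_{1,1}=[v]$, $P_{2,1}=[w]$ and choose the representative vectors so that $P_{2,2}=[u+v+w]$. This normalisation is possible because the four vertices are coplanar and, by genericity, no three of them are collinear, so $P_{2,2}$ is a linear combination of $u,v,w$ with all three coefficients nonzero, which we absorb into a rescaling of $u$, $v$, $w$. A short computation then identifies the Laplace points: $A_{1,1}$ must lie both on the line $P_{1,1}\vee P_{2,1}$ (a combination of $v,w$) and on the line $P_{1,2}\vee P_{2,2}$ (a combination of $u$ and $u+v+w$, hence of $u$ and $v+w$), forcing $A_{1,1}=[v+w]$; symmetrically $B_{1,1}=[u+v]$.

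Let $\varphi$ be a symmetric bilinear form representing $\mathcal{Q}$. The hypothesis that $P_{1,1},P_{1,2},P_{2,1}\in\mathcal{Q}$ becomes $\varphi(u,u)=\varphi(v,v)=\varphi(w,w)=0$. Expanding $\varphi(u+v+w,\,u+v+w)$ and using these three identities, the condition $P_{2,2}\in\mathcal{Q}$ reduces to
\[
\varphi(u,v)+\varphi(u,w)+\varphi(v,w)=0.
\]
Expanding $\varphi(v+w,\,u+v)$ and using $\varphi(v,v)=0$, the conjugacy condition $A_{1,1}\perp B_{1,1}$ reduces to exactly the same equation. The two conditions are therefore equivalent, proving the lemma.

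There is essentially no obstacle beyond the book-keeping of choosing representatives so that $P_{2,2}=[u+v+w]$; once that is done the whole argument is a two-line expansion. As an alternative, one could deduce the statement from Lemma~\ref{lem: twoquads} by taking the auxiliary quad equal to the given one (so that conditions four and five of that lemma coincide with $A_{1,1}\perp B_{1,1}$), but the direct calculation above is shorter and more transparent.
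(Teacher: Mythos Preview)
Your proof is correct and follows exactly the paper's approach: the paper's proof simply says the claim follows from the same normalisation $P_{1,2}=[u]$, $P_{1,1}=[v]$, $P_{2,1}=[w]$, $P_{2,2}=[u+v+w]$ used in Lemma~\ref{lem: twoquads}, and you have spelled out the two-line expansion that the paper leaves implicit.
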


\begin{proof}
Analogously to the proof of Lemma~\ref{lem: twoquads}, the claim follows easily from the normalisation $P_{1,2} = [u]$, $P_{1,1} = [v]$, $P_{2,1}= [w]$, $P_{2,2}= [u+v+w]$.
\end{proof}

\begin{proof}[Proof of Theorem~\ref{thm: mxmiteratedlaplaceconjugate}]
The case $m=2$ of Theorem~\ref{thm: mxmiteratedlaplaceconjugate} is Lemma~\ref{lem: planarquadpolarlaplace}. Let $m=3$. Then, $\mathcal{L}_A P: [2]\times [2] \to \R \mathrm{P}^n$ and $\mathcal{L}_B P: [2]\times [2] \to \R \mathrm{P}^n$ are the two planar quads $\Box(A_{1,1}, A_{2,1}, A_{2,2}, A_{1,2})$ and $\Box(B_{1,1}, B_{2,1}, B_{2,2}, B_{1,2})$. Suppose that the points $\{P_{i,j} \mid (i,j) \in [3]\times [3] \text{ s.t.\ } (i,j)\neq(3,3)\}$ are contained in a quadric $\mathcal{Q}$. By Lemma~\ref{lem: planarquadpolarlaplace}, the following conjugacy conditions are satisfied: 
\begin{align}\label{eq: conj}
A_{1,1} \perp B_{1,1}, \qquad A_{2,1} \perp B_{2,1}, \qquad
A_{1,2} \perp B_{1,2}.
\end{align}
Consider the following iterated Laplace points:
\begin{align*}
\mathcal{L}_A \circ \mathcal{L}_AP := (A_{11}\vee A_{21}) \cap (A_{12} \vee A_{22}),\\
\mathcal{L}_B \circ \mathcal{L}_AP := (A_{11}\vee A_{12}) \cap (A_{21} \vee A_{22}),\\
\mathcal{L}_B \circ \mathcal{L}_BP := (B_{11}\vee B_{12}) \cap (B_{21}\vee B_{22}),\\
\mathcal{L}_A \circ \mathcal{L}_BP := (B_{11}\vee  B_{21}) \cap (B_{12}\vee B_{22}).
\end{align*}
The points $\mathcal{L}_A \circ \mathcal{L}_BP$ and $\mathcal{L}_B \circ \mathcal{L}_AP$ coincide and equal $P_{2,2}$. So, $\mathcal{L}_A \circ \mathcal{L}_BP$ and $\mathcal{L}_B \circ \mathcal{L}_AP$ are conjugate relative to $\mathcal{Q}$ because $P_{2,2} \in \mathcal{Q}$. Because of the foregoing conjugacy and the three conjugacies in (\ref{eq: conj}), Lemma~\ref{lem: twoquads} ensures that $\mathcal{L}^2_AP \perp \mathcal{L}_B^2P$ if and only if $A_{2,2} \perp B_{2,2}$. By Lemma~\ref{lem: planarquadpolarlaplace}, $A_{2,2}\perp B_{2,2}$ if and only if $P_{3,3}\in \mathcal{Q}$. This proves the case $m=3$ of Theorem~\ref{thm: mxmiteratedlaplaceconjugate}. By the principle of mathematical induction, it suffices to prove Theorem~\ref{thm: mxmiteratedlaplaceconjugate} for $m \geq 4$ under the induction hypothesis that Theorem~\ref{thm: mxmiteratedlaplaceconjugate} holds for $m-1$ and $m-2$.

Let $m\geq 4$. Consider the Laplace transforms $\mathcal{L}^{m-2}_A P, \mathcal{L}^{m-2}_B P: [2]\times [2] \to \R \mathrm{P}^n$. They are two planar quads. The case $m-1$ of Theorem~\ref{thm: mxmiteratedlaplaceconjugate} implies the following conjugacy conditions:

\begin{gather}
\mathcal{L}^{m-2}_A P(1,1) \perp \mathcal{L}^{m-2}_B P(1,1)\label{eq: conj1}\\
\mathcal{L}^{m-2}_A P(2,1) \perp \mathcal{L}^{m-2}_B P(2,1)\label{eq: conj2}\\
\mathcal{L}^{m-2}_A P(1,2) \perp \mathcal{L}^{m-2}_B P(1,2\label{eq: conj3})
\end{gather}
 For instance, applying the case $m-1$ of Theorem~\ref{thm: mxmiteratedlaplaceconjugate} to the points $\{P_{i,j}\mid (i,j)\in [m]\times [m], i\neq m, j\neq 1\}$ implies the conjugacy condition $(\ref{eq: conj3})$.

The point $\mathcal{L}_B\circ \mathcal{L}^{m-2}_AP$ coincides with $\mathcal{L}_A^{m-3}P(2,2)$. The point $\mathcal{L}_A\circ \mathcal{L}^{m-2}_BP$ coincides with $\mathcal{L}_B^{m-3} P(2,2)$. By applying the case $m-2$ of Theorem~\ref{thm: mxmiteratedlaplaceconjugate} to the points $\{P_{i,j}\mid (i,j)\in [m]\times [m], i\neq 1, i\neq m, j\neq 1,j\neq m\}$, we obtain $\mathcal{L}_A^{m-3}P(2,2) \perp \mathcal{L}_B^{m-3}P(2,2)$. Equivalently,  $\mathcal{L}_B\circ \mathcal{L}^{m-2}_AP \perp \mathcal{L}_A\circ \mathcal{L}^{m-2}_BP$. Because of the foregoing conjugacy and the conjugacies (\ref{eq: conj1}), (\ref{eq: conj2}) and (\ref{eq: conj3}), Lemma~\ref{lem: twoquads} ensures that $\mathcal{L}^{m-2}_A P(2,2) \perp \mathcal{L}^{m-2}_B P(2,2)$ if and only if $\mathcal{L}^{m-1}_AP \perp \mathcal{L}_B^{m-1}P$. By applying the case $m-1$ of Theorem~\ref{thm: mxmiteratedlaplaceconjugate} to the points $\{P_{i,j}\mid (i,j)\in [m]\times [m], i\neq 1, j\neq 1\}$, the point $P_{m,m}$ is contained in $\mathcal{Q}$ if and only if $\mathcal{L}^{m-2}_A P(2,2) \perp \mathcal{L}^{m-2}_B P(2,2)$. Therefore, $P_{m,m}$ is contained in $\mathcal{Q}$ if and only if $\mathcal{L}_A^{m-1}P \perp \mathcal{L}_B^{m-1}P$.
\end{proof}

\begin{cor}\label{cor: mxmgridthm}
Let $P: [m]\times [m] \to \mathbb{R}\mathrm{P}^n$ be a Q-net, where $n \ge m \geq 1$.  For each $i\in [m]$, let $V_i:= \mathrm{join}\{P_{i,j}\}_{j \in [m]}$. For each $j\in [m]$, let $H_j:= \mathrm{join}\{P_{i,j}\}_{i \in [m]}$. Suppose that the points $\{P_{i,j} \mid (i,j)\in [m] \times [m] \text{ s.t.\ } (i,j)\neq (m,m)\}$ are contained in a non-degenerate quadric $\mathcal{Q}$. Generically, the spaces $X:= \cap_{j\in [m] }H_j$ and $Y:= \cap_{i\in [m]}V_i$ are points. The point $P_{m,m}$ is contained in $\mathcal{Q}$ if and only if the points $X$ and $Y$ are conjugate with respect to $\mathcal{Q}$.
\end{cor}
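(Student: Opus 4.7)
The plan is to apply Theorem~\ref{thm: mxmiteratedlaplaceconjugate} after identifying the terminal Laplace points of the Q-net $P$ with the incidence points $X$ and $Y$. The key claim to establish is that $\mathcal{L}_A^{m-1} P = X$ and $\mathcal{L}_B^{m-1} P = Y$. Once this is done, the corollary follows at once from Theorem~\ref{thm: mxmiteratedlaplaceconjugate}, whose genericity hypothesis (that all intermediate $\mathcal{L}_A^d P$ and $\mathcal{L}_B^d P$ are well-defined Q-nets) is guaranteed by the standing genericity assumption of the article. The small case $m = 1$ is vacuous, and the case $m = 2$ is immediate from the definitions, as $X = H_1 \cap H_2 = A_{1,1}$ and $Y = V_1 \cap V_2 = B_{1,1}$.

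For $m \geq 3$, I would prove by induction on $d$ the containment
\begin{equation*}
\mathcal{L}_A^d P(i,j) \in H_j \cap H_{j+1} \cap \cdots \cap H_{j+d}
\end{equation*}
for every valid index pair $(i,j) \in [m-d]\times [m-d]$. The base case $d = 0$ is tautological. For the inductive step, $\mathcal{L}_A^{d+1} P(i,j)$ is the intersection of the line through $\mathcal{L}_A^d P(i,j)$ and $\mathcal{L}_A^d P(i+1, j)$ with the line through $\mathcal{L}_A^d P(i, j+1)$ and $\mathcal{L}_A^d P(i+1, j+1)$. By the inductive hypothesis, the first line lies in $H_j \cap \cdots \cap H_{j+d}$ and the second in $H_{j+1}\cap \cdots \cap H_{j+d+1}$, which forces their intersection point into $H_j \cap \cdots \cap H_{j+d+1}$. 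Specialising to $d = m-1$ and $(i,j) = (1,1)$ yields $\mathcal{L}_A^{m-1} P \in X$, and the symmetric argument gives $\mathcal{L}_B^{m-1} P \in Y$.

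The main obstacle is verifying that these containments are equalities, that is, that $X$ and $Y$ are single points. This is the dimension count already indicated in the proof of Proposition~\ref{prop: Goursatterminatemsteps}: generically each $H_j$ is $(m-1)$-dimensional, and the Q-net structure forces $H_j \cap H_{j+1}$ to be $(m-2)$-dimensional, since it contains the $m-1$ Laplace points $A_{i,j}$, $i \in [m-1]$, which are generically in general position. Iterating this observation through the successive Laplace transforms (whose rows span the cumulative intersections of the $H_j$), the intersection $H_1 \cap \cdots \cap H_d$ has dimension $m - d$ generically, so $X = H_1 \cap \cdots \cap H_m$ is zero-dimensional, and analogously for $Y$. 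Combining the identifications $\mathcal{L}_A^{m-1} P = X$ and $\mathcal{L}_B^{m-1} P = Y$ with Theorem~\ref{thm: mxmiteratedlaplaceconjugate} then delivers the desired equivalence; non-degeneracy of $\mathcal{Q}$ is used only to ensure that the polarity relation on $\mathbb{R}\mathrm{P}^n$ behaves as expected.
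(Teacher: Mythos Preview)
Your proof is correct and follows essentially the same route as the paper: establish the containments $\mathcal{L}_A^{m-1}P \in X$ and $\mathcal{L}_B^{m-1}P \in Y$, argue by a dimension count under genericity that $X$ and $Y$ are points, and then invoke Theorem~\ref{thm: mxmiteratedlaplaceconjugate}. The only minor discrepancy is that the paper explicitly uses non-degeneracy of $\mathcal{Q}$ in the dimension count (to rule out $V_i = V_{i+1}$), whereas you fold this into genericity and reserve non-degeneracy for the polarity; under the paper's standing genericity convention both readings are fine.
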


\begin{proof}
A point $P$ is contained in the quadric $\mathcal{Q}$ if and only if $P$ is self-conjugate relative to $\mathcal{Q}$. This proves the case $m=1$. The case $m=2$ is Lemma~\ref{lem: planarquadpolarlaplace}. For the remainder of this proof, let $m\geq 3$. The genericity assumption ensures that $\mathcal{L}_A^d P, \mathcal{L}_B^d P : [m-d]\times [m-d] \to \mathbb{R}\mathrm{P}^n$ are well defined Q-nets for all $d \le m-1$. Then, $\mathcal{L}^{m-1}_AP$ and $\mathcal{L}^{m-1}_BP$ are points such that $\mathcal{L}^{m-1}_AP \in X:= \cap_{j\in [m]}H_j$ and $\mathcal{L}^{m-1}_BP \in Y:= \cap_{i\in [m]}V_i$. Generically, the spaces $\{V_i\}_{i\in [m]}$ and $\{H_j\}_{j\in [m]}$ are $(m-1)$-dimensional. $V_i \vee V_{i+1}$ is $m$-dimensional because $P$ is a Q-net and $V_i \neq V_{i+1}$ since $\mathcal{Q}$ is non-degenerate. Equivalently, $V_i \cap V_{i+1}$ is $(m-2)$-dimensional. It follows that $Y:= \cap_{i\in [m]}V_i$ is $0$-dimensional. Similarly, $X:= \cap_{j\in [m]}H_j$ is $0$-dimensional. Therefore, $X$ equals $\mathcal{L}^{m-1}_AP$ and $Y$ equals $\mathcal{L}^{m-1}_BP$. Suppose that the points $\{P_{i,j}: (i,j)\in [m] \times [m], (i,j)\neq (m,m)\}$ are contained in a quadric $\mathcal{Q}$. By Theorem~\ref{thm: mxmiteratedlaplaceconjugate}, $P_{m,m} \in \mathcal{Q}$ if and only if $\mathcal{L}^{m-1}_AP \perp \mathcal{L}^{m-1}_BP$. Equivalently, $P_{m,m} \in \mathcal{Q}$ if and only if $X$ and $Y$ are conjugate relative to $\mathcal{Q}$.
\end{proof}

\begin{thm}\label{thm: GoursatimpliesLaplaceQnetquadric}
Let $P: \Z^2 \to \mathcal{Q} \subset \R \mathrm{P}^n$ be a Q-net that is inscribed in a quadric, and all lines $P_{i,j} \vee P_{i+1,j}$ are not isotropic.
Suppose that $\mathcal{L}_A^mP$ is Goursat degenerate for some $m$, and $\mathcal{L}_B^d P$ is well defined for all $d \le m$. Then $\mathcal{L}_B^mP$ is Laplace degenerate. 
\end{thm}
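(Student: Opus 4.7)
The plan is to use Theorem~\ref{thm: mxmiteratedlaplaceconjugate} on the family of overlapping $(m+1)\times(m+1)$ sub-blocks of $P$ to produce a conjugacy relation for $\mathcal{L}_B^m P$, and then to combine that conjugacy with a dimension count in order to force the points of $\mathcal{L}_B^m P$ within each row to coincide.

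By Proposition~\ref{prop: Goursatterminatemsteps}, the Goursat hypothesis is equivalent to each row $\{P_{i,j}\}_{i\in\Z}$ being contained in an $m$-dimensional subspace $H_j\subset\R\mathrm{P}^n$, and the degenerate value is $f(j):=\mathcal{L}_A^m P(i,j)\in\bigcap_{k=j}^{j+m}H_k$, a single point by genericity. By the symmetric counterpart of Proposition~\ref{prop: Laplaceterminatemsteps} (obtained by exchanging $i\leftrightarrow j$, which swaps $\mathcal{L}_A$ and $\mathcal{L}_B$), the desired Laplace degeneracy of $\mathcal{L}_B^m P$ is equivalent to the statement that, for each $j_0$, the osculating $m$-flats $V_i^{(j_0)}:=P_{i,j_0}\vee P_{i,j_0+1}\vee\cdots\vee P_{i,j_0+m}$ share a common point as $i$ varies over $\Z$. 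Now fix $(i_0,j_0)\in\Z^2$ and apply Theorem~\ref{thm: mxmiteratedlaplaceconjugate} to the $(m+1)\times(m+1)$ sub-block $P|_{[i_0,i_0+m]\times[j_0,j_0+m]}$, all of whose vertices lie on $\mathcal{Q}$. This yields $\mathcal{L}_A^m P(i_0,j_0)\perp\mathcal{L}_B^m P(i_0,j_0)$, which under the Goursat hypothesis simplifies to $f(j_0)\perp Y(i_0,j_0)$ where $Y(i_0,j_0):=\mathcal{L}_B^m P(i_0,j_0)$; thus, for each fixed $j_0$, every $Y(i_0,j_0)$ lies in the polar hyperplane $f(j_0)^\perp$.

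Both $Y(i_0,j_0)$ and $Y(i_0+1,j_0)$ also lie in $L(i_0,j_0):=\bigcap_{i=i_0+1}^{i_0+m}V_i^{(j_0)}$. By the Q-net dimension relations used in the proof of Corollary~\ref{cor: mxmgridthm}, this intersection of $m$ column osculating $m$-flats is generically a projective line. If $L(i_0,j_0)\not\subset f(j_0)^\perp$, then $L(i_0,j_0)\cap f(j_0)^\perp$ is a single point, forcing $Y(i_0,j_0)=Y(i_0+1,j_0)$; iterating in $i_0$ then yields the Laplace degeneracy of $\mathcal{L}_B^m P$. The main obstacle is the transversality $L(i_0,j_0)\not\subset f(j_0)^\perp$: the exceptional case where this line sits inside the polar hyperplane corresponds to a very special alignment of the row- and column-intersections, exemplified by $m=1$, where the rows would have to lie on rulings of $\mathcal{Q}$ --- exactly isotropic lines contained in $\mathcal{Q}$. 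The non-isotropy hypothesis on the edges $P_{i,j}\vee P_{i+1,j}$ excludes such exceptional configurations and, combined with the standing genericity assumption, ensures that $L(i_0,j_0)$ meets $f(j_0)^\perp$ transversely, closing the argument.
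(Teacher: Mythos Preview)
Your overall architecture matches the paper's proof closely: both apply Theorem~\ref{thm: mxmiteratedlaplaceconjugate} to the sliding $(m+1)\times(m+1)$ blocks to obtain $f(j_0)\perp Y(i_0,j_0)$ for all $i_0$, both observe that $Y(i_0,j_0)$ and $Y(i_0+1,j_0)$ lie on the common line $L(i_0,j_0)=\bigcap_{k=i_0+1}^{i_0+m}V_k^{(j_0)}$ (which coincides with the paper's line $\mathcal{L}_B^{m-1}P(i_0+1,j_0)\vee\mathcal{L}_B^{m-1}P(i_0+1,j_0+1)$), and both reduce the problem to showing $L(i_0,j_0)\not\subset f(j_0)^\perp$.

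The gap is in your last paragraph. You assert that non-isotropy of the row edges ``combined with the standing genericity assumption'' forces the transversality $L(i_0,j_0)\not\subset f(j_0)^\perp$, but you only justify this for $m=1$. For general $m$ this is precisely the substantive step, and genericity alone does not dispose of it: one must actually link the containment $L\subset f(j_0)^\perp$ back to a specific row edge being isotropic. The paper does this with a vertex-replacement argument. Assume $L\subset f(j_0)^\perp$ and slide $P_{i_0+m+1,\,j_0+m}$ along the edge $P_{i_0+m,\,j_0+m}\vee P_{i_0+m+1,\,j_0+m}$. This move leaves the row-span $H_{j_0+m}$ (restricted to columns $i_0{+}1,\ldots,i_0{+}m{+}1$) unchanged, so $\mathcal{L}_A^m P(i_0+1,j_0)$ stays equal to $f(j_0)$; it also leaves $L$ unchanged, while $\mathcal{L}_B^m P(i_0+1,j_0)$ moves along $L$. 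Since $L\subset f(j_0)^\perp$, the conjugacy $\mathcal{L}_A^m\perp\mathcal{L}_B^m$ persists for every position of the replaced vertex, and the ``if'' direction of Theorem~\ref{thm: mxmiteratedlaplaceconjugate} then forces every such replaced vertex to lie on $\mathcal{Q}$. Hence the whole edge lies in $\mathcal{Q}$, contradicting the non-isotropy hypothesis. Without this (or an equivalent) argument, the connection between the hypothesis on edges and the transversality of $L$ with $f(j_0)^\perp$ is unproven, and your proof is incomplete at exactly the point where the non-isotropy assumption is meant to do its work.
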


\begin{proof}
Suppose that $\mathcal{L}_B^d P$ is well defined for all $d \le m$ and $\mathcal{L}_A^mP$ is Goursat degenerate. Then, for each $j\in \Z$, the point $\mathcal{L}_A^mP(j) := \mathcal{L}_A^mP(i,j)$ is well defined because it is independent of $i$. By Theorem~\ref{thm: mxmiteratedlaplaceconjugate}, for each $j\in \Z$, $\mathcal{L}_A^mP(j)$ is conjugate to the points $\{\mathcal{L}_B^m P(i,j)\}_{i\in \Z}$.  We will prove that this is in fact a unique point by showing that $\mathcal{L}_B^mP(i,j)=\mathcal{L}_B^mP(i+1,j)$. Indeed, suppose that $\mathcal{L}_B^mP(i,j) \neq\mathcal{L}_B^mP(i+1,j)$. Then, the line $\mathcal{L}_B^mP(i,j) \vee \mathcal{L}_B^mP(i+1,j)$ is conjugate to $\mathcal{L}_A^mP(j)$. Notice that the line $\mathcal{L}_B^mP(i,j) \vee \mathcal{L}_B^mP(i+1,j)$ coincides with the line $\mathcal{L}_B^{m-1}P(i+1,j)\vee \mathcal{L}_B^{m-1}P(i+1,j+1)$. If $P_{i+m+1, j+m}$ is replaced by any point in the line $P_{i+m,j+m} \vee P_{i+m+1,j+m}$, then the Laplace point $\mathcal{L}_A^mP(i,j)$ does not change, whereas the Laplace point $\mathcal{L}_B^mP(i+1,j)$ does change but it remains in the line $\mathcal{L}_B^{m-1}P(i+1,j)\vee \mathcal{L}_B^{m-1}P(i+1,j+1)$ which is conjugate to $\mathcal{L}_A^m P(j)$. Therefore, by Theorem~\ref{thm: mxmiteratedlaplaceconjugate}, any point in the line $P_{i+m,j+m} \vee P_{i+m+1,j+m}$ is contained in $\mathcal{Q}$. This contradicts the assumption that $P_{i+m,j+m} \vee P_{i+m+1,j+m}$ is not an isotropic line. 
\end{proof}

The assumption of Theorem~\ref{thm: GoursatimpliesLaplaceQnetquadric} that the lines $P_{i,j} \vee P_{i+1,j}$ are not isotropic is essential. It is reasonable to assume that $\forall (i,j)\in \Z^2$, $P_{i,j} \vee P_{i+1,j}$ is not an isotropic line of $\mathcal{Q}$. For example, suppose that $P: \Z^2 \to \R \mathrm{P}^3$ is a Q-net inscribed in a non-degenerate quadric $\mathcal{Q}$ of signature $(++--)$ and $\mathcal{L}_AP$ is Goursat degenerate. Then, for each $j\in \Z$, the points $\{P_{i,j}\}_{i\in \Z}$ are contained in an isotropic line of $\mathcal{Q}$. However, $\mathcal{L}_BP$ is generically non-degenerate.

\begin{thm}\label{thm: LaplaceimpliesGoursatQnetquadric}
Let $P: \Z^2 \to \mathcal{Q} \subset \R \mathrm{P}^n$ be a Q-net that is inscribed in a non-degenerate quadric.
Suppose that $\mathcal{L}_A^mP$ is Laplace degenerate for some $m$, and $\mathcal{L}_B^d P$ is well defined for all $d \le m+n-1$. Then $\mathcal{L}_B^{m+n-1}P$ is Goursat degenerate. 
\end{thm}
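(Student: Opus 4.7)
The plan is to combine the pointwise conjugacy furnished by Theorem~\ref{thm: mxmiteratedlaplaceconjugate} with a dimension-counting argument in the spirit of the proof of Proposition~\ref{prop: Goursatterminatemsteps}. First, I would apply Theorem~\ref{thm: mxmiteratedlaplaceconjugate} to every $(m+1) \times (m+1)$ sub-patch $\{P_{i+k, j+l} : 0 \le k, l \le m\}$ of $P$, each of which is entirely inscribed in $\mathcal{Q}$. This yields, for every $(i, j) \in \Z^2$, the pointwise conjugacy
\[
\mathcal{L}_A^m P(i, j) \perp \mathcal{L}_B^m P(i, j)
\]
with respect to the quadric $\mathcal{Q}$.

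Next, since $\mathcal{L}_A^m P$ is Laplace degenerate, the point $p_i := \mathcal{L}_A^m P(i, j)$ depends only on $i$. Consequently, for each fixed $i$, every point in the sequence $\{\mathcal{L}_B^m P(i, j)\}_{j \in \Z}$ is conjugate to $p_i$ and therefore lies in the polar hyperplane $V_i$ of $p_i$ with respect to $\mathcal{Q}$. Non-degeneracy of $\mathcal{Q}$ guarantees that each $V_i$ is a genuine hyperplane of projective dimension $n - 1$.

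Set $Q := \mathcal{L}_B^m P$, so that $Q(i, j) \in V_i$ for all $j$ with $V_i$ depending only on $i$. I would then carry out, by induction on $d$ in the style of the proof of Proposition~\ref{prop: Goursatterminatemsteps}, the subspace inclusion $\mathcal{L}_B^d Q(i, j) \in V_i \cap V_{i+1} \cap \cdots \cap V_{i+d}$: indeed, the two lines whose intersection defines $\mathcal{L}_B^d Q(i, j) = \mathcal{L}_B(\mathcal{L}_B^{d-1} Q)(i, j)$ lie, by the inductive hypothesis, in $V_i \cap \cdots \cap V_{i+d-1}$ and in $V_{i+1} \cap \cdots \cap V_{i+d}$, respectively. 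Generically, the $n$ hyperplanes $V_i, V_{i+1}, \ldots, V_{i+n-1}$ meet transversally in a single point, so for $d = n-1$ the element $\mathcal{L}_B^{m+n-1} P(i, j) = \mathcal{L}_B^{n-1} Q(i, j)$ coincides with this point and is therefore independent of $j$. This is exactly Goursat degeneracy of $\mathcal{L}_B^{m+n-1} P$.

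The main subtlety is justifying that the $n$ polar hyperplanes $V_i, V_{i+1}, \ldots, V_{i+n-1}$ really are in general position, which is where the paper's standing genericity assumption is invoked; note also that, in contrast with Theorem~\ref{thm: GoursatimpliesLaplaceQnetquadric}, no non-isotropy hypothesis on the edge lines is needed, since the argument proceeds through polarity rather than through a contradiction along an isotropic direction. Apart from this, the argument is a clean chaining together of Theorem~\ref{thm: mxmiteratedlaplaceconjugate} and the subspace-intersection bookkeeping that already underpins Proposition~\ref{prop: Goursatterminatemsteps}.
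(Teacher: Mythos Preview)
Your proposal is correct and follows essentially the same approach as the paper's proof: both apply Theorem~\ref{thm: mxmiteratedlaplaceconjugate} to obtain $\mathcal{L}_A^mP(i)\perp\mathcal{L}_B^mP(i,j)$, deduce that the $j$-column of $\mathcal{L}_B^mP$ lies in the polar hyperplane of $\mathcal{L}_A^mP(i)$, and then intersect $n$ consecutive such hyperplanes (invoking genericity for transversality) to pin $\mathcal{L}_B^{m+n-1}P(i,j)$ to a single point independent of $j$. Your version simply spells out the inductive containment $\mathcal{L}_B^dQ(i,j)\in V_i\cap\cdots\cap V_{i+d}$ that the paper leaves implicit.
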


\begin{proof}
Suppose that $\mathcal{L}_B^d P$ is well defined for all $d \le m+n-1$. Let $\mathcal{L}^m_AP$ be Laplace degenerate. Then, for each $i\in \Z$, the point $\mathcal{L}_A^mP(i) := \mathcal{L}_A^mP(i,j)$ is well defined, i.e. it is independent of $j$.  By Theorem~\ref{thm: mxmiteratedlaplaceconjugate}, $\mathcal{L}_A^mP(i)$ is conjugate to the points $\{\mathcal{L}_B^m P(i,j)\}_{j\in \Z}$. Thus, these points are contained in the $(n-1)$-dimensional polar hyperplane $\mathcal{L}_A^mP(i)^\perp$. Generically, $\mathrm{join}\{\mathcal{L}_A^mP(i), \ldots, \mathcal{L}_A^mP(i+n-1)\}$ is $(n-1)$-dimensional. Equivalently, the intersection $\mathcal{L}_A^mP(i)^\perp \cap \ldots \cap \mathcal{L}_A^mP(i+n-1)^\perp$ is $0$-dimensional. This implies, $\mathcal{L}_B^{m+n-1}P(i,j)=\mathcal{L}_B^{m+n-1}P(i,j+1)$ because they are both contained in $\mathcal{L}_A^mP(i)^\perp \cap \ldots \cap \mathcal{L}_A^mP(i+n-1)^\perp$. Thus, the points $\{\mathcal{L}_B^{m+n-1} P(i,j)\}$ are independent of $j$. 
\end{proof}


Theorems~\ref{thm: GoursatimpliesLaplaceQnetquadric} and \ref{thm: LaplaceimpliesGoursatQnetquadric} have analogues in the smooth theory. A theorem of Goursat for conjugate-line parametrisations of non-degenerate quadric surfaces says that if the Laplace sequence terminates in one direction, then it terminates in both directions \cite{goursat1896equations}.

Theorem~\ref{thm: mxmincidencethm} can be used to construct Q-nets $P: \Z^2 \to \mathbb{R}\mathrm{P}^n$ that are inscribed in non-degenerate quadrics and such that the parameter lines are contained in $d$-dimensional projective subspaces of $\mathbb{R}\mathrm{P}^n$, where $d <  n$.  Let us describe initial data that determine such nets.

\begin{thm}\label{thm: mxmincidencethm}
Let $P: [d+1]\times [d+1] \to \mathcal{Q}\subset \mathbb{R}\mathrm{P}^n$ be a Q-net that is inscribed in a non-degenerate quadric $Q$, and $d<n$. Denote by 
$V_i:= \mathrm{join} \{P_{i,j}\}_{j \in [d+1]}$, $H_j:= \mathrm{join} \{P_{i,j}\}_{i \in [d+1]}$ the corresponding $d$-dimensional projective subspaces. Then, this finite Q-net together with additional points $P_{1,j}\in \mathcal{Q}\cap V_1, P_{i,1}\in \mathcal{Q}\cap H_1$, $i,j\in\Z$ can be uniquely extended to a Q-net $P: \Z^2 \to \mathcal{Q}$ that is inscribed in the quadric $\mathcal{Q}$ such that all the parameter lines are contained in $d$-dimensional projective subspaces $V_i$ and $H_j$. Moreover, both $\cap_{i \in \Z}V_i$ and $\cap_{j \in \Z}H_j$ are points. Equivalently, $\mathcal{L}_A^{d}P$  and $\mathcal{L}_B^{d}P$ are both Laplace degenerate and also Goursat degenerate. 
\end{thm}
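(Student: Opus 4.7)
The plan is to prove the theorem by an inductive construction of the extension that simultaneously verifies the required properties. The starting point is Corollary~\ref{cor: mxmgridthm} applied to the initial $(d+1)\times(d+1)$ block: it yields that $X:=\cap_{j\in[d+1]}H_j$ and $Y:=\cap_{i\in[d+1]}V_i$ are points and satisfy $X\perp Y$ relative to $\mathcal{Q}$. This conjugacy is the algebraic backbone of the construction and will control the propagation.

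I would first extend the ``strip'' $[1,d+1]\times\Z$ column by column in the order $i=1,2,\ldots,d+1$. For $i=1$ the column is given as initial data. For $2\le i\le d+1$ and $j\notin[d+1]$, I define $P_{i,j}$ recursively as the second intersection with $\mathcal{Q}$ of the line $\pi_{i,j}\cap V_i$, where $\pi_{i,j}:=P_{i-1,j-1}\vee P_{i,j-1}\vee P_{i-1,j}$; the ``first'' intersection is $P_{i,j-1}$. A dimension count using $\pi_{i,j}\subset V_{i-1}+V_i$ confirms the construction: the initial Q-net structure forces $V_{i-1}\cap V_i$ to be $(d-1)$-dimensional, whence $V_{i-1}+V_i$ is $(d+1)$-dimensional and $\pi_{i,j}\cap V_i$ is the expected line. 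A symmetric procedure extends the strip $\Z\times[d+1]$ row by row. Once $H_j$ is determined for every $j\in\Z$, the remaining quadrants $(i,j)$ with $i>d+1,\ j>d+1$ (and the analogous ones) are filled in by the same recipe with $H_j$ in place of $V_i$.

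The main technical step, which I expect to be the crux of the proof, is to verify that each newly constructed column $V_i$ passes through $Y$ (and symmetrically each $H_j$ through $X$). Assume inductively that $Y\in V_k$ for every $k<i$. Applying Corollary~\ref{cor: mxmgridthm} to the shifted $(d+1)\times(d+1)$ block $[i-d,i]\times[1,d+1]$, which is inscribed in $\mathcal{Q}$ by construction, yields the point $Y_i:=\cap_{k\in[i-d,i]}V_k$ together with $X\perp Y_i$ (the corresponding ``horizontal'' intersection is still $X$, because the row subspaces $H_j$ are unchanged). Both $Y$ and $Y_i$ therefore lie on the generically $1$-dimensional subspace $W:=\cap_{k\in[i-d,i-1]}V_k$ and both lie in the polar hyperplane $X^\perp$; by genericity $W\not\subset X^\perp$, so $W\cap X^\perp$ is a single point and $Y=Y_i$, giving $Y\in V_i$. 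Uniqueness is then immediate from the construction, since at every inductive step the new vertex is the unique second intersection of a specified line with $\mathcal{Q}$; and the equivalence with the Goursat and Laplace degeneracy of $\mathcal{L}_A^d P$ and $\mathcal{L}_B^d P$ follows directly from Propositions~\ref{prop: Goursatterminatemsteps} and~\ref{prop: Laplaceterminatemsteps}.
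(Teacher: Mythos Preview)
Your argument is essentially correct and rests on the same engine as the paper's proof: the conjugacy $X\perp Y$ obtained from Corollary~\ref{cor: mxmgridthm}, propagated by applying that corollary to shifted $(d+1)\times(d+1)$ blocks. Your ``main technical step'' showing $Y\in V_i$ via the line $W=\cap_{k\in[i-d,i-1]}V_k$ and the hyperplane $X^\perp$ is a clean dual formulation of what the paper does. The difference is organizational and in which direction of the ``if and only if'' is used. The paper extends one layer at a time (from $[d+1]^2$ to $[d+2]^2$): it \emph{defines} the new subspaces $V_{d+2},H_{d+2}$ to contain $Y,X$ and then uses $X\perp Y$ to conclude the new corner points lie in $\mathcal{Q}$. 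You instead \emph{define} the new points to lie in $\mathcal{Q}$ (as second intersections) and use $X\perp Y$ to conclude the new subspaces contain $Y,X$.

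There is one loose end in your organization. Once both strips are built and you pass to the quadrant $i>d+1,\ j>d+1$, the phrase ``the same recipe with $H_j$ in place of $V_i$'' produces a point in $H_j\cap\mathcal{Q}$, but nothing you have said forces it into $V_i$; the line $\pi_{i,j}\cap H_j$ need not coincide with $\pi_{i,j}\cap V_i$. This is not a fatal gap---your own shifted-block argument adapts---but the cleanest fix is to switch to the paper's viewpoint in the quadrant: define $P_{i,j}:=\pi_{i,j}\cap V_i\cap H_j$ and then apply Corollary~\ref{cor: mxmgridthm} to the block $[i-d,i]\times[j-d,j]$; since $X\in\cap H_k$ and $Y\in\cap V_k$ are already known, the corollary gives $P_{i,j}\in\mathcal{Q}$ directly. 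Equivalently, extending layer by layer from the outset avoids the issue altogether.
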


%

\begin{proof}
We show how the net $P: [d+1]\times [d+1] \to \mathcal{Q}$ with arbitrarily chosen $P_{1,d+2}\in \mathcal{Q}\cap V_1$, $P_{d+2,1}\in \mathcal{Q}\cap H_1$ can be uniquely extended to a Q-net $P: [d+2]\times [d+2] \to \mathcal{Q}$ with $d$-dimensional parameter lines.  By Corollary~\ref{cor: mxmgridthm},  $X:=\cap_{j\in [d+1]} H_j$,  $Y:=\cap_{i\in [d+1]} V_i$ are two points that are conjugate with respect to $\mathcal{Q}$. For $2\le i\le d$, the line $V_i\cap {\rm join}\{ P_{i-1,d+1},  P_{i-1,d+2}, P_{i,d+1}\}$ intersects $\mathcal{Q}$ in the point $P_{i,d+1}$ and one additional point, which determines $P_{i,d+2}$. The points $P_{d+2,j}$, $2\le j\le d$, are similarly determined. Define $V_{d+2}:={\rm join}\{ Y,  P_{d+2,j}\mid j\in [d]\}$, $H_{d+2}:={\rm join}\{ X,   P_{i,d+2}\mid i \in [d]\} $. By Theorem~\ref{thm: mxmiteratedlaplaceconjugate}, the point $P_{d+1,d+2}:=H_{d+2}\cap V_{d+1}\cap {\rm join}\{ P_{d,d+1},  P_{d+1,d+1}, P_{d,d+2}\}$ is contained in $\mathcal{Q}$. Similarly, $P_{d+2,d+1} := V_{d+2} \cap H_{d+1} \cap {\rm join}\{P_{d+1,d}, P_{d+1,d+1}, P_{d+2, d}\}\in \mathcal{Q}$. Applying Theorem~\ref{thm: mxmiteratedlaplaceconjugate} to the net $\{ P_{i,j} \mid 2\le  i,j\le d+2\}$, we obtain $P_{d+2,d+2}\in \mathcal{Q}$. Note, $X$ equals $\cap_{j\in [d+2]} H_j$ and $Y$ equals $\cap_{i\in [d+2]} V_i$. Then, by Proposition~\ref{prop: Laplaceterminatemsteps}, $\mathcal{L}_A^{d}P$  and $\mathcal{L}_B^{d}P$ are both Laplace degenerate and also Goursat degenerate.
\end{proof}

An isotropic subspace $G$ of maximal dimension in a quadric $\mathcal{Q}\subset \R \mathrm{P}^n$  is called a \emph{generator} of  $\mathcal{Q}$. If $\mathcal{Q}$ is a quadric of signature $(p,q,r)$, then any generator is $({\rm min}\{p,q\} -1 +r)$-dimensional. The dimension of the projective subspace consisting of the singular points of the quadric $\mathcal{Q}$ is $r-1$. If $\mathcal{Q}$ is non-degenerate, then $r=0$ and the signature is $(p,q)$.


\begin{prop}~\label{prop: hyperbolicquadrictwoclasses}
Let $\mathcal{Q}\subset \R \mathrm{P}^{2m-1}$ be a non-degenerate quadric  of neutral signature $(m,m)$. By definition, two generators $G_1$ and $G_2$ (of dimension $m-1$) of $\mathcal{Q}$ belong to the same \emph{system of generators} if the dimension of $G_1 \cap G_2$ has different parity than the parity of $m$. Then, there are exactly two systems of generators.
\end{prop}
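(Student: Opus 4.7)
Passing to vector-space dimensions, a generator $G_i$ corresponds to a maximal isotropic subspace $W_i \subset V := \R^{2m}$ of vector dimension $m$, and the condition ``$\dim(G_1 \cap G_2)$ has different parity than $m$'' translates to $\dim(W_1 \cap W_2) \equiv m \pmod 2$. Reflexivity and symmetry of the resulting relation are clear, so the key step is to establish transitivity, equivalently the cocycle identity
\[
\dim(W_1 \cap W_2) + \dim(W_2 \cap W_3) + \dim(W_1 \cap W_3) \equiv m \pmod 2
\]
for arbitrary maximal isotropic subspaces $W_1, W_2, W_3$. Granted this, fixing any reference $W_0$ partitions the generators into at most two classes according to the parity of $\dim(W \cap W_0)$.

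I would prove the cocycle identity in two stages. First, the \emph{special case} $W_1 \cap W_2 = \{0\}$: then $V = W_1 \oplus W_2$ and $\p$ identifies $W_2$ with the dual of $W_1$; the projections of $W_3$ along this decomposition give subspaces $\pi_i(W_3) \subseteq W_i$ and an induced map $\pi_1(W_3) \to \pi_2(W_3)$ whose kernel is $W_3 \cap W_1$. Transporting this map through $\p$ defines a bilinear form on $\pi_1(W_3)$, which the isotropy of $W_3$ forces to be skew-symmetric. Its rank equals $m - \dim(W_3 \cap W_1) - \dim(W_3 \cap W_2)$ and must be even; hence $\dim(W_3 \cap W_1) + \dim(W_3 \cap W_2) \equiv m \pmod 2$, proving the special case. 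For the \emph{general case} I would pass to the non-degenerate quotient form on $V' := (W_1 + W_2)/(W_1 \cap W_2)$, which has split signature $(m - k_{12}, m - k_{12})$ with $k_{12} := \dim(W_1 \cap W_2)$; here the classes $\bar W_1, \bar W_2$ become complementary maximal isotropics in $V'$, and the image $\widetilde W_3$ of $(W_1 + W_2) \cap W_3$ is a maximal isotropic of $V'$ satisfying $\dim(\bar W_i \cap \widetilde W_3) = \dim(W_i \cap W_3) - \dim(W_1 \cap W_2 \cap W_3)$ for $i = 1, 2$ (this uses the dimension count $\dim(W_1 + W_2 + W_3) = 2m - \dim(W_1 \cap W_2 \cap W_3)$, which follows from $(W_1 + W_2 + W_3)^\perp = W_1 \cap W_2 \cap W_3$). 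Applying the special case inside $V'$ yields $\dim(W_1 \cap W_3) + \dim(W_2 \cap W_3) \equiv m - k_{12} \pmod 2$, which rearranges to the cocycle identity.

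Finally, to see that there are exactly two classes (and not fewer), I would exhibit two maximal isotropics whose intersection dimension has the opposite parity from $m$. In a hyperbolic basis $e_1, \ldots, e_m, f_1, \ldots, f_m$ with $\p(e_i, f_j) = \d_{ij}$ and $\p(e_i, e_j) = \p(f_i, f_j) = 0$, the subspaces $W^{(1)} := \mathrm{span}(e_1, \ldots, e_m)$ and $W^{(2)} := \mathrm{span}(e_1, \ldots, e_{m-1}, f_m)$ are both maximal isotropic with $\dim(W^{(1)} \cap W^{(2)}) = m - 1$, whose parity differs from that of $m$; the corresponding generators therefore lie in different systems. I expect the main obstacle to be the skew-symmetry step in the special case of the cocycle proof: the evenness of the rank of a skew-symmetric bilinear form is precisely what converts the isotropy of $W_3$ into the mod-$2$ relation between intersection dimensions.
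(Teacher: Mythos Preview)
Your argument is correct. The cocycle identity
\[
\dim(W_1\cap W_2)+\dim(W_2\cap W_3)+\dim(W_1\cap W_3)\equiv m\pmod 2
\]
is exactly what is needed, and your two-step proof (complementary case via a skew form, general case via the quotient $(W_1+W_2)/(W_1\cap W_2)$, which indeed inherits neutral signature $(m-k_{12},m-k_{12})$ over~$\R$) goes through; the dimension counts for $\widetilde W_3$ and for $\bar W_i\cap\widetilde W_3$ are all right. One small expository point: the skew-symmetric form is most cleanly defined on $W_3$ itself by $B(u,v)=\p(\pi_1(u),\pi_2(v))$, with radical $(W_1\cap W_3)\oplus(W_2\cap W_3)$; the ``induced map $\pi_1(W_3)\to\pi_2(W_3)$'' you describe is not well defined unless $W_2\cap W_3=0$, but this does not affect the conclusion, since the rank statement you use is the correct one.

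There is nothing in the paper to compare your approach against: the paper does not give its own proof of this proposition, but simply refers the reader to \cite{odehnal2020universe}. Your write-up therefore supplies strictly more than the paper does.
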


A proof of Proposition~\ref{prop: hyperbolicquadrictwoclasses} can be found in \cite{odehnal2020universe}. 

One can naturally adapt the foregoing definition of systems of generators to quadrics $\mathcal{V}\subset \R \mathrm{P}^{2m}$ of signature $(m,m,1)$. Let $E$ be a hyperplane not containing the apex $A$ of $\mathcal{V}$. Then, $E\cap\mathcal{V}$ has signature $(m,m)$ and Proposition~\ref{prop: hyperbolicquadrictwoclasses} implies that it has two systems of generators of dimension $m-1$. Extending them by the apex $A$, we obtain two systems of generators of $\mathcal{V}$ of dimension $m$.



Consider a Q-net $P: \Z^2 \to \mathbb{R}\mathrm{P}^{n}$ with the property that each parameter line of $P$ is contained in an $m$-dimensional projective subspace of $\mathbb{R}\mathrm{P}^{n}$ where $n > m >1$. The vertices of $P$ span at most a $2m$-dimensional space. 

\begin{thm}\label{thm: pencilofquadrics}
Let $P: \Z^2 \to \mathbb{R}\mathrm{P}^{2m}$, where $m \geq 2$, be a Q-net inscribed in a non-degenerate quadric $\mathcal{Q}$. Assume that the vertices $\{P_{i,j}\}_{j\in \Z}$ are contained in $m$-dimensional projective subspaces $V_i$, and the vertices $\{P_{i,j}\}_{i \in \Z}$ are contained in $m$-dimensional projective subspaces $H_j$. Then, generically, the following properties hold:
\begin{itemize}
\item $V_i$ are isotropic subspaces of a quadric $\mathcal{V}$ of signature $(m,m,1)$.
\item $H_j$ are isotropic subspaces of a quadric $\mathcal{H}$ of signature $(m,m,1)$.
\item  $\{V_{2i}\}_{i\in \Z}$ and $\{V_{2i+1}\}_{i\in \Z}$ are two systems of generators of $\mathcal{V}$.
\item $\{H_{2j}\}_{j\in \Z}$ and $\{H_{2j+1}\}_{j\in \Z}$ are two systems of generators of $\mathcal{H}$.
\item The quadrics $\mathcal{Q}$, $\mathcal{V}$ and $\mathcal{H}$ belong to a pencil of quadrics.
\end{itemize}
\end{thm}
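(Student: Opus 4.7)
The plan is to proceed in three stages: first identifying the apex points of the would-be cones $\mathcal V$ and $\mathcal H$, then constructing these cones along with their ruling structure, and finally verifying the pencil property.

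\emph{Apex points.} By Theorem~\ref{thm: mxmincidencethm}, since the Q-net has parameter lines in $m$-dimensional subspaces, the Laplace sequences $\mathcal L_A^m P$ and $\mathcal L_B^m P$ are simultaneously Goursat and Laplace degenerate, and hence reduce to single points $Y := \bigcap_{i\in\Z} V_i$ and $X := \bigcap_{j\in\Z} H_j$. Corollary~\ref{cor: mxmgridthm} applied to any $(m+1)\times(m+1)$ subgrid gives the conjugacy $X \perp Y$ with respect to $\mathcal Q$. Since every $V_i$ passes through $Y$, any quadric containing each $V_i$ as a subspace must be singular at $Y$, and analogously $\mathcal H$ must be singular at $X$.

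\emph{Construction of the cones and alternating rulings.} Project from $Y$ onto a hyperplane $E \cong \R\mathrm{P}^{2m-1}$ not containing $Y$, sending each $V_i$ to an $(m-1)$-dimensional subspace $\bar V_i \subset E$. Genericity yields $V_i \cap V_{i+1} = \{Y\}$, so consecutive projections $\bar V_i, \bar V_{i+1}$ are disjoint in $\R\mathrm{P}^{2m-1}$. The goal is to exhibit a unique non-degenerate quadric $\bar{\mathcal V} \subset \R\mathrm{P}^{2m-1}$ of signature $(m,m)$ whose two systems of generators are $\{\bar V_{2i}\}$ and $\{\bar V_{2i+1}\}$; for $m=2$ this is the classical regulus theorem, and for $m\ge 3$ the extra incidence from the Q-net (in particular the lines $P_{i,j}\vee P_{i+1,j}$, whose projections join $\bar V_i$ to $\bar V_{i+1}$) supplies the additional rigidity needed. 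Then $\mathcal V$ is defined as the cone with apex $Y$ over $\bar{\mathcal V}$; it has signature $(m,m,1)$ and contains each $V_i$ as a maximal isotropic subspace. The alternating ruling assertion follows from Proposition~\ref{prop: hyperbolicquadrictwoclasses}, since disjoint generators of a non-degenerate $(m,m)$-quadric necessarily lie in opposite systems. The quadric $\mathcal H$ is constructed analogously from $X$ and the $H_j$.

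\emph{Pencil property and main obstacle.} Every quadric in the two-parameter linear family $\mathcal F = \{\alpha \mathcal V + \beta \mathcal H\}$ vanishes on every vertex $P_{i,j}$, since $P_{i,j} \in V_i \subset \mathcal V$ and $P_{i,j} \in H_j \subset \mathcal H$. To place $\mathcal Q$ in $\mathcal F$, choose coordinates with $Y = [e_0]$ and $X = [e_{2m}]$; the singularity of $\mathcal V$ at $Y$ forces the $0$-th row of its Gram matrix to vanish, and analogously the $(2m)$-th row of $\mathcal H$'s Gram matrix vanishes, while the conjugacy $X \perp_{\mathcal Q} Y$ forces the $(0,2m)$-entry of $\mathcal Q$'s Gram matrix to vanish. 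A direct row-by-row comparison then solves a small linear system for scalars $\alpha,\beta$ with $\mathcal Q = \alpha\mathcal V + \beta\mathcal H$. The main obstacle lies in the second stage: for $m\ge 3$, three disjoint $(m-1)$-generators of a prospective $(m,m)$-quadric do not uniquely pin it down, so the Q-net structure must be used essentially. A natural route is induction: assuming that a cone $\mathcal V_k$ contains $V_1,\ldots,V_k$ as generators, use the Q-net incidence (in particular, the Laplace transformations linking $V_k$ to $V_{k+1}$) to force $V_{k+1}$ to be a generator of the same $\mathcal V_k$.
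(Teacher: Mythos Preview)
Your proposal has two genuine gaps.

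\textbf{The intersection $V_i\cap V_{i+1}$.} You claim that genericity yields $V_i\cap V_{i+1}=\{Y\}$, so the projected $\bar V_i,\bar V_{i+1}\subset\R\mathrm{P}^{2m-1}$ are disjoint. This is false, and in fact it is precisely the Q-net property that forces the opposite. Since each quad $\Box(P_{i,j},P_{i+1,j},P_{i+1,j+1},P_{i,j+1})$ is planar, the line $P_{i,j}\vee P_{i,j+1}\subset V_i$ meets the line $P_{i+1,j}\vee P_{i+1,j+1}\subset V_{i+1}$ in the Laplace point $B_{i,j}$. Hence $V_i\cap V_{i+1}$ contains all the $B_{i,j}$ and is generically $(m-1)$-dimensional (equivalently, $V_i\vee V_{i+1}$ is only $(m+1)$-dimensional). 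After projection from $Y$ (or intersection with a hyperplane $E$), $\bar V_i\cap\bar V_{i+1}$ has dimension $m-2$, not $-1$. Your conclusion about alternating rulings then rests on a false premise; incidentally, the statement ``disjoint generators of an $(m,m)$-quadric lie in opposite systems'' is already wrong for $m=2$ (disjoint lines on a hyperboloid lie in the \emph{same} ruling). The correct parity argument is that $\dim(\bar V_i\cap\bar V_{i+1})=m-2$ has the same parity as $m$, which by Proposition~\ref{prop: hyperbolicquadrictwoclasses} places $\bar V_i$ and $\bar V_{i+1}$ in different systems. This error also undermines your proposed inductive construction of $\bar{\mathcal V}$, which is built on the disjointness assumption. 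The paper instead constructs $\mathcal V$ directly via Lemma~\ref{lem: mxZstripisoquadric} (an induction on strips $\Z\times[m]$), and obtains the signature $(m,m,1)$ by arguing that a larger singular locus of $\mathcal V$ would force $\mathcal Q$ to be degenerate.

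\textbf{The pencil argument.} Your coordinate proposal does not work as stated. Knowing that the $0$-th row of $\mathcal V$, the $(2m)$-th row of $\mathcal H$, and the $(0,2m)$-entry of $\mathcal Q$ all vanish gives only a handful of linear constraints, nowhere near enough to force $\mathcal Q$ into the two-dimensional span of $\mathcal V$ and $\mathcal H$ inside the $\binom{2m+2}{2}$-dimensional space of symmetric forms. The paper's argument is genuinely different: it picks the unique quadric $\mathcal R$ in the pencil spanned by $\mathcal Q$ and $\mathcal V$ that contains the apex of $\mathcal H$, observes that $\mathcal R$ contains every vertex $P_{i,j}$ (since these lie in the base locus), and then uses the fact that $H_j\cap\mathcal Q$ is the \emph{unique} quadric in $H_j$ through the points $\{P_{i,j}\}_{i\in\Z}$ to conclude that $H_j$ must be isotropic for $\mathcal R$, whence $\mathcal R=\mathcal H$.
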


We prove Theorem~\ref{thm: pencilofquadrics} with three simple lemmas, the first of which can be found in \cite[Lemma 3.2]{bobenko2020checkerboard}.

\begin{lem}\label{lem: 2pointsandpencilofquadrics}
Let $\mathcal{Q}_1$ and $\mathcal{Q}_2$ be two distinct quadrics in $\mathbb{R}\mathrm{P}^{n}$. Let $A, B, C, D$ be distinct points in $\mathcal{Q}_1 \cap \mathcal{Q}_2$ such that the lines $A \vee B$ and $C \vee D$ are not contained in $\mathcal{Q}_1\cap \mathcal{Q}_2$. Then, there are unique quadrics $\mathcal{Q}_{A,B}$ and $\mathcal{Q}_{C,D}$ in the pencil of quadrics spanned by $\mathcal{Q}_1$ and $\mathcal{Q}_2$ such that $A \vee B$ is an isotropic line of $\mathcal{Q}_{A,B}$ and $C\vee D$ is an isotropic line of $\mathcal{Q}_{C,D}$.  If the lines $A\vee B$ and $C \vee D$ are coplanar, then $\mathcal{Q}_{A,B} = \mathcal{Q}_{C,D}$.
\end{lem}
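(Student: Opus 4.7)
The plan is to parametrise the pencil by symmetric bilinear forms and reduce every assertion to a linear condition. Let $\varphi_1,\varphi_2$ be symmetric bilinear forms representing $\mathcal{Q}_1,\mathcal{Q}_2$, and fix representative vectors $a,b,c,d$ for $A,B,C,D$; since all four points lie in $\mathcal{Q}_1\cap \mathcal{Q}_2$, each $\varphi_i$ vanishes on the diagonal at these vectors. A generic member of the pencil is $\mathcal{Q}_{[\alpha:\beta]}$, represented by $\alpha \varphi_1 + \beta \varphi_2$, and the line $A\vee B$, whose endpoints already lie on every pencil member, is an isotropic line of $\mathcal{Q}_{[\alpha:\beta]}$ exactly when $\alpha\, \varphi_1(a,b) + \beta\, \varphi_2(a,b) = 0$. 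The hypothesis that $A\vee B \not\subset \mathcal{Q}_1\cap \mathcal{Q}_2$ says precisely that $\varphi_1(a,b)$ and $\varphi_2(a,b)$ do not both vanish, so this single linear equation determines $[\alpha:\beta]$ uniquely up to scale, yielding a unique $\mathcal{Q}_{A,B}$. The same argument produces $\mathcal{Q}_{C,D}$.

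The substantive content is the coplanar case, which I would handle via the pencil of conics induced on the plane $\pi$ spanned by the two lines. A preliminary observation is that no three of $A,B,C,D$ are collinear: otherwise a common line would meet $\mathcal{Q}_1$ (respectively $\mathcal{Q}_2$) in three distinct points and would be forced to lie in $\mathcal{Q}_1 \cap \mathcal{Q}_2$, contradicting the hypothesis on $A\vee B$ or $C\vee D$. Since four coplanar points with no three collinear impose four independent conditions on plane conics, the space of conics in $\pi$ through $A,B,C,D$ is a projective pencil, and it contains the three degenerate members: the line pairs $(A\vee B) \cup (C \vee D)$, $(A\vee C) \cup (B \vee D)$, $(A\vee D) \cup (B \vee C)$.

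It then remains to locate $(A\vee B) \cup (C\vee D)$ inside the restricted pencil $\alpha \varphi_1|_\pi + \beta \varphi_2|_\pi$. If the restrictions $\varphi_1|_\pi$ and $\varphi_2|_\pi$ are linearly dependent, some nonzero combination vanishes identically on $\pi$, so $\pi \subset \mathcal{Q}_{[\alpha:\beta]}$; then both $A\vee B$ and $C\vee D$ are isotropic lines of $\mathcal{Q}_{[\alpha:\beta]}$, and the uniqueness established above forces $\mathcal{Q}_{A,B} = \mathcal{Q}_{C,D}$. If instead the two restrictions are linearly independent, they span a projective pencil of conics in $\pi$ through $A,B,C,D$, which must coincide with the full pencil of such conics; hence the line pair $(A\vee B) \cup (C\vee D)$ appears as some $\alpha \varphi_1|_\pi + \beta \varphi_2|_\pi$, and the corresponding $\mathcal{Q}_{[\alpha:\beta]}$ contains both lines, again forcing $\mathcal{Q}_{A,B} = \mathcal{Q}_{[\alpha:\beta]} = \mathcal{Q}_{C,D}$. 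The main obstacle is exactly this case split, which records whether the plane $\pi$ happens to lie inside a member of the original quadric pencil.
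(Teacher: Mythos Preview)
Your argument is correct. The paper does not supply its own proof of this lemma but instead cites \cite[Lemma~3.2]{bobenko2020checkerboard}, so there is no in-paper argument to compare against; your bilinear-form reduction for the uniqueness claim and the pencil-of-conics argument on the plane $\pi$ (with the dependent/independent case split on the restricted forms) are a clean and complete way to establish both assertions.
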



\begin{lem}\label{lem: 2quadricsdefinepencil}
Let $\mathcal{U}  \subset E_\mathcal{U}$ and $\mathcal{W}  \subset E_\mathcal{W}$ be $(n-2)$-dimensional quadrics in two distinct hyperplanes of $\mathbb{R}\mathrm{P}^{n}$.  Suppose that $\mathcal{U}$ and $\mathcal{W}$ coincide in $E_\mathcal{U} \cap E_\mathcal{W}$. Then, there is a pencil of $(n-1)$-dimensional quadrics that contain $\mathcal{U}$ and  $\mathcal{W}$. If  $E_\mathcal{U} \cap E_\mathcal{W}$ is not an isotropic subspace (of  $\mathcal{U}$ and $\mathcal{W}$), then the pencil of quadrics is uniquely determined.

\end{lem}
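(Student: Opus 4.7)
The plan is to work directly with the quadratic forms in $\mathbb{R}[x_0,\ldots,x_n]$ cutting out the quadrics and to exploit the elementary fact that a polynomial vanishing on a hyperplane $\{L = 0\}$ is divisible by $L$. Let $L_\mathcal{U}$ and $L_\mathcal{W}$ be linear forms defining $E_\mathcal{U}$ and $E_\mathcal{W}$, and let $\varphi_\mathcal{U}$, $\varphi_\mathcal{W}$ be quadratic forms defining $\mathcal{U}$ and $\mathcal{W}$ on the respective hyperplanes. Because $\mathcal{U}$ and $\mathcal{W}$ coincide on $E_\mathcal{U} \cap E_\mathcal{W}$, I can rescale $\varphi_\mathcal{W}$ so that $\varphi_\mathcal{U}|_{E_\mathcal{U}\cap E_\mathcal{W}}$ and $\varphi_\mathcal{W}|_{E_\mathcal{U}\cap E_\mathcal{W}}$ are literally the same quadratic form, which I denote by $\alpha$.

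For existence of the pencil, I first note that the reducible quadric $L_\mathcal{U} L_\mathcal{W}$ trivially contains $\mathcal{U} \cup \mathcal{W}$, since its zero locus is $E_\mathcal{U} \cup E_\mathcal{W}$. Then I construct a second form $\Phi_0$ with the same property. Choosing coordinates with $L_\mathcal{U} = x_n$ and $L_\mathcal{W} = x_{n-1}$, I decompose $\varphi_\mathcal{U} = \alpha + x_{n-1}\beta_\mathcal{U}$ and $\varphi_\mathcal{W} = \alpha + x_n \beta_\mathcal{W}$ for suitable linear forms $\beta_\mathcal{U}, \beta_\mathcal{W}$, and set $\Phi_0 := \alpha + x_{n-1}\beta_\mathcal{U} + x_n\beta_\mathcal{W}$. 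Then $\Phi_0|_{E_\mathcal{U}} = \varphi_\mathcal{U}$ and $\Phi_0|_{E_\mathcal{W}} = \varphi_\mathcal{W}$, so $\{\Phi_0 = 0\}$ is a quadric containing $\mathcal{U} \cup \mathcal{W}$; and since $\Phi_0$ restricts non-trivially to $E_\mathcal{U}$ whereas $L_\mathcal{U} L_\mathcal{W}$ vanishes there, the two forms are linearly independent. Their span provides the desired pencil.

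For uniqueness under the non-isotropic hypothesis, take any quadratic form $\Phi$ whose zero locus contains $\mathcal{U} \cup \mathcal{W}$. Non-degeneracy of $\mathcal{U}$ inside $E_\mathcal{U}$ forces $\Phi|_{E_\mathcal{U}} = c_1 \varphi_\mathcal{U}$ for some scalar $c_1$, and similarly $\Phi|_{E_\mathcal{W}} = c_2 \varphi_\mathcal{W}$. Restricting both identities to $E_\mathcal{U} \cap E_\mathcal{W}$ yields $c_1 \alpha = c_2 \alpha$. The non-isotropic hypothesis is precisely the statement that $\alpha \not\equiv 0$, so $c_1 = c_2 =: c$. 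The difference $\Phi - c\Phi_0$ then vanishes on both hyperplanes, so it is divisible by each of the distinct coprime linear forms $L_\mathcal{U}$ and $L_\mathcal{W}$; unique factorisation gives $\Phi - c\Phi_0 = \mu L_\mathcal{U} L_\mathcal{W}$ for some scalar $\mu$, placing $\Phi$ in the pencil spanned by $\Phi_0$ and $L_\mathcal{U} L_\mathcal{W}$. The delicate step is exactly the deduction $c_1 = c_2$: in the isotropic case $\alpha \equiv 0$ the two scalars decouple and the space of containing quadratic forms jumps from dimension two to dimension three, which is exactly why the non-isotropic hypothesis is indispensable for uniqueness.
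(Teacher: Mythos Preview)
Your argument is essentially the paper's own: both pick coordinates adapted to the two hyperplanes and build the pencil as a common extension $\Phi_0$ of $\varphi_\mathcal{U}$ and $\varphi_\mathcal{W}$ plus multiples of the cross term $L_\mathcal{U}L_\mathcal{W}$ (the paper writes this as a one-parameter family $\varphi_\lambda$ of bilinear forms with $\varphi_\lambda(b_1,b_{n+1})=\lambda$, which is the same pencil). Your uniqueness argument is in fact more explicit than the paper's one-line assertion.

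One small imprecision: you justify $\Phi|_{E_\mathcal{U}}=c_1\varphi_\mathcal{U}$ by appealing to ``non-degeneracy of $\mathcal{U}$,'' which is not among the hypotheses and, over $\mathbb{R}$, is not even sufficient (a definite form has empty real zero locus, so set-theoretic containment tells you nothing). The clean fix is to read ``contains $\mathcal{U}$'' as the paper does---$\Phi|_{E_\mathcal{U}}$ is a scalar multiple of $\varphi_\mathcal{U}$---which makes that step tautological and leaves the rest of your argument intact; the non-isotropy hypothesis $\alpha\not\equiv 0$ then does all the work in forcing $c_1=c_2$, exactly as you say.
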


\begin{proof}
Let $F_\mathcal{U}$ and $F_\mathcal{W}$ be $n$-dimensional linear subspaces of $\R^{n+1}$ such that $E_\mathcal{U} = [F_\mathcal{U}]$, $E_\mathcal{W}=[F_\mathcal{W}]$. Let $b_1, b_2, \ldots, b_{n+1}$ be a basis of $\R^{n+1}$ such that $F_\mathcal{U} = \mathrm{span}\{b_1, \ldots, b_n\}$,  $F_\mathcal{W} = \mathrm{span}\{b_2, \ldots, b_{n+1}\}$. Denote $F:=F_\mathcal{U}\cap F_\mathcal{W}= \mathrm{span}\{b_2, \ldots, b_n\}$. Let $\p: F\times F\to \R$ be a symmetric bilinear form representing $\mathcal{U} \cap E_\mathcal{W}$. Choose representative symmetric bilinear forms $\p_\mathcal{U}:F_\mathcal{U}\times F_\mathcal{U}\to \R$, $\p_\mathcal{W}:F_\mathcal{W}\times F_\mathcal{W}\to \R$ of $\mathcal{U}$ and $\mathcal{W}$ as extensions of $\p$. Then, define $\p_\lambda:\R^{n+1}\times \R^{n+1}\to \R$, $\lambda\in \R$, as 
\[
 \p_\l(b_i,b_j) = 
  \begin{cases} 
   \p_\mathcal{U}(b_i, b_j) & \text{if } i,j< n+1 \\
   \p_\mathcal{W}(b_i,b_j) & \text{if } i,j> 1 \\
   \l       & \text{if } \{i,j\}=\{1, n+1\} .
  \end{cases}
\]
It determines a pencil of quadrics containing $\mathcal{U}$ and $\mathcal{W}$. The pencil is unique if $\p$ does not vanish identically.
\end{proof}

Let $P: \Z \times [m] \to \mathbb{R}\mathrm{P}^{n}$,  $m\geq 2$, be a Q-net such that, $\forall j \in [m]$, $H_j:=\mathrm{join}\{P_{i,j}\}_{i\in \Z}$ is $d$-dimensional, $d\geq 2$. The vertices of $P$ span at most a $(d+m-1)$-dimensional space. 

\begin{lem}\label{lem: mxZstripisoquadric}
Let $P: \Z \times [m] \to\mathcal{Q}\subset \mathbb{R}\mathrm{P}^{d+m-1}$, where $m, d\geq 2$, be a Q-net inscribed in a quadric $\mathcal{Q}$ such that the lines $P_{i,j}\vee P_{i,j+1}$ are not isotropic. Suppose that the spaces $H_j:=\mathrm{join}\{P_{i,j}\}_{i\in \Z}$ are $d$-dimensional non-isotropic subspaces. 
Then, generically, there exists a unique quadric $\mathcal{V}$ such that the $(m-1)$-dimensional spaces   $V_i:= \mathrm{join}\{P_{i,j}\}_{j\in [m]}$ are isotropic subspaces of $\mathcal{V}$.
\end{lem}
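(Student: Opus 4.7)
The plan is to combine two observations: isotropy of a subspace is a linear condition on the bilinear form defining the quadric, and the Q-net structure on the strip propagates this isotropy from column to column. A quadric in $\mathbb{R}\mathrm{P}^{d+m-1}$ is represented by a symmetric bilinear form $\psi$ on $\mathbb{R}^{d+m}$ modulo scalar, and the requirement $V_i\subset\mathcal{V}$ is the linear condition that $\psi$ vanishes on $V_i\times V_i$. Both existence and uniqueness of $\mathcal{V}$ thereby reduce to a question about the solution space of a linear system.

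The key reduction is the following. If $\mathcal{V}$ contains every $V_i$ as an isotropic subspace, then every vertex $P_{i,j}$ lies in $\mathcal{V}$, so for each $j\in[m]$ the infinite set $\{P_{i,j}\}_{i\in\Z}$ lies in $\mathcal{V}\cap H_j$. Since $H_j$ is a non-isotropic $d$-dimensional subspace and, generically, infinitely many such points lying on a curve of $\mathcal{Q}\cap H_j$ do not lie on any other quadric of $H_j$, one must have $\mathcal{V}\cap H_j=\mathcal{Q}\cap H_j$ for every $j\in[m]$. By iterated application of Lemma~\ref{lem: 2quadricsdefinepencil} (whose uniqueness clause applies because the $H_j$ are non-isotropic), the ambient quadrics that restrict correctly to every $H_j$ form a pencil containing $\mathcal{Q}$ as one distinguished element.

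Within this pencil I would then pick out $\mathcal{V}$ as the unique member for which $V_1$ is isotropic. This step is a higher-dimensional analogue of Lemma~\ref{lem: 2pointsandpencilofquadrics}: any generic chord of $V_1$ joining two points of $V_1\cap\mathcal{Q}$ is isotropic in exactly one element of the pencil, and since $V_1$ contains the $m$ points $\{P_{1,j}\}_{j\in[m]}$ of $\mathcal{Q}$, an elementary linear algebra argument should upgrade isotropy of one such chord to isotropy of all of $V_1$. This would give existence and uniqueness of a quadric $\mathcal{V}$ satisfying $V_1\subset\mathcal{V}$ and $\mathcal{V}\cap H_j=\mathcal{Q}\cap H_j$ for every $j$.

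The main obstacle is the final step: verifying that this distinguished $\mathcal{V}$ has all other columns $V_i$ as isotropic subspaces as well. I would prove this by induction on $|i-1|$, using Lemma~\ref{lem: twoquads} applied to pairs of adjacent quad faces $\Box(P_{i-1,j},P_{i,j},P_{i,j+1},P_{i-1,j+1})$ and $\Box(P_{i,j},P_{i+1,j},P_{i+1,j+1},P_{i,j+1})$ that share a column. Since every vertex lies in $\mathcal{V}$ and $V_{i-1}\subset\mathcal{V}$ by the inductive hypothesis, the five-implies-six principle of Lemma~\ref{lem: twoquads}, combined with the Q-net coplanarity and the conjugacies already forced by $V_{i-1}\subset\mathcal{V}$, should deliver the conjugacies required for $V_{i+1}\subset\mathcal{V}$; running the same argument in the opposite direction handles negative indices. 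Closing this propagation rigorously is likely to require a careful chain of conjugacies reminiscent of the proof of Theorem~\ref{thm: mxmiteratedlaplaceconjugate}.
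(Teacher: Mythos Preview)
Your broad strategy (narrow down to a pencil of candidate quadrics, select $\mathcal{V}$ by imposing isotropy of one column, then propagate to all columns) matches the paper's approach. However, two of your three steps have genuine gaps.

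\textbf{Step 1 does not go through as stated.} Lemma~\ref{lem: 2quadricsdefinepencil} applies to quadrics lying in two \emph{hyperplanes} of the ambient space. Your $H_j$ are $d$-dimensional, not $(d+m-2)$-dimensional, so the lemma does not apply directly and there is no obvious way to ``iterate'' it across the $H_j$. In fact, a naive dimension count suggests that for $m\ge 3$ the quadrics containing every $\mathcal{Q}\cap H_j$ should generically reduce to $\mathcal{Q}$ alone; the reason a full pencil survives is precisely the Q-net structure, and extracting that requires more than repeated hyperplane extension. The paper resolves this by inducting on $m$: the two overlapping sub-strips of width $m-1$ live in genuine hyperplanes $\mathrm{join}(H_1,\ldots,H_{m-1})$ and $\mathrm{join}(H_2,\ldots,H_m)$, so the induction hypothesis furnishes quadrics $\mathcal{U},\mathcal{W}$ there, and \emph{then} Lemma~\ref{lem: 2quadricsdefinepencil} applies once to produce the pencil in $\mathbb{R}\mathrm{P}^{d+m-1}$.

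\textbf{Step 3 invokes the wrong tool.} Lemma~\ref{lem: twoquads} concerns conjugacy of corresponding vertices and Laplace points of two quads; it does not naturally produce the conditions $\psi(P_{i+1,j},P_{i+1,k})=0$ you need for isotropy of $V_{i+1}$. The paper's propagation is far simpler and purely incidence-geometric: once $V_i\subset\mathcal{V}$, the intersection $V_{i+1}\cap\mathcal{V}$ already contains the $(m-2)$-dimensional subspace $V_i\cap V_{i+1}$ together with the isotropic subspaces $U_{i+1},W_{i+1}$ supplied by the induction; since $V_i\cap V_{i+1}\not\subset U_{i+1}\cup W_{i+1}$, the section $V_{i+1}\cap\mathcal{V}$ is too large to be a quadric and $V_{i+1}$ must be isotropic. (Even without $U_{i+1},W_{i+1}$ one can argue that $V_{i+1}\cap\mathcal{V}$ contains the hyperplane $V_i\cap V_{i+1}$ and the $m$ points $P_{i+1,j}$ spanning $V_{i+1}$, which already forces isotropy---this is the clean replacement for your Lemma~\ref{lem: twoquads} chain.)

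The missing organisational idea is the induction on $m$: it simultaneously puts you in the hyperplane setting where Lemma~\ref{lem: 2quadricsdefinepencil} applies, and supplies the auxiliary isotropic subspaces that make the column-to-column propagation a one-line argument.
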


\begin{proof}
We first prove the base case $m=2$ and then the case  $m\geq 3$ under the induction hypothesis that Lemma~\ref{lem: mxZstripisoquadric} holds for  $m - 1$. Let $m=2$. The points $\{P_{i,j}\}_{(i,j)\in \Z \times [2]}$ are contained in the intersection of a pair of quadrics $\mathcal{Q}\cap (H_1\cup H_2)$. These quadrics are distinct because $H_1$ and $H_2$ are not isotropic subspaces of $\mathcal{Q}$. Lemma~\ref{lem: 2pointsandpencilofquadrics} uniquely determines a quadric $\mathcal{Q}_i$ from this pencil, such that $P_{i,1}\vee P_{i,2}$ is an isotropic line of $\mathcal{Q}_i$. The lines $P_{i,1}\vee P_{i,2}$ and $P_{i+1,1} \vee P_{i+1, 2}$ are coplanar because $P$ is a Q-net. Applying Lemma~\ref{lem: 2pointsandpencilofquadrics}, we get $\mathcal{Q}_i = \mathcal{Q}_{i+1}$. This proves the case $m=2$.

Let $m \geq 3$. For each $i\in \Z$, define $U_i:= \mathrm{join}\{P_{i,j}\}_{j=1,\ldots, m-1}$ and $W_i := \mathrm{join}\{P_{i,j}\}_{j=2,\ldots, m}$. Because $P$ is a Q-net and each space $H_j$ is $d$-dimensional, $\mathrm{join}\{H_1$, $H_2, \ldots,  H_{m-1}\}$ and \linebreak $\mathrm{join}\{H_2, H_3, \ldots,  H_{m}\}$ are both generically $(d+m-2)$-dimensional hyperplanes in $\R \mathrm{P}^{d+m-1}$. The two foregoing hyperplanes determine a degenerate quadric that contains all the points \linebreak $\{P_{i,j}\}_{(i,j)\in \Z \times [m]}$. The quadric $\mathcal{Q}$ is a distinct quadric that also contains all these points. Applying the induction hypothesis to $\mathrm{join}\{H_1, H_2, \ldots,  H_{m-1}\}$, there is a unique quadric $\mathcal{U}$ such that the spaces $U_i$ are  isotropic subspaces of $\mathcal{U}$. Applying the induction hypothesis to $\mathrm{join}\{H_2, H_3, \ldots,  H_{m}\}$, there is a unique quadric $\mathcal{W}$ such that the spaces $W_i$ are  isotropic subspaces of $\mathcal{W}$. By Lemma~\ref{lem: 2quadricsdefinepencil}, there is a unique pencil of quadrics  in $\mathbb{R}\mathrm{P}^{d+m-1}$ that contain $\mathcal{U}$ and $\mathcal{W}$. Let $O$ be any point in $V_0 \setminus \{U_0 \cup W_0\}$. There is a unique quadric $\mathcal{V}$ in the pencil containing $O$. The intersection $V_0 \cap \mathcal{V}$ is either a quadric or an isotropic space of $\mathcal{V}$. However, $V_0 \cap \mathcal{V}$ cannot be a quadric because it contains $U_0\cup W_0$, which is a degenerate quadric in $V_0$, and it also contains $O \notin U_0\cup W_0$. Therefore, $V_0$ must be an isotropic subspace of $\mathcal{V}$. Further, $V_{1}$ must also be an isotropic subspace of  $\mathcal{V}$. Indeed, $U_1$ and $W_1$ are isotropic subspaces of $\mathcal{V}$ and $V_0\cap V_1$ is also isotropic because $V_0$ is isotropic. However, $V_0\cap V_1$ is not contained in the degenerate quadric $U_1 \cup W_1 \subset V_1$. So, $V_1\cap\mathcal{V}$ is not a quadric but an isotropic subspace of $\mathcal{V}$. Iteratively, all $V_i$ are isotropic subspaces of $\mathcal{V}$.
\end{proof}

\begin{proof}[Proof of Theorem~\ref{thm: pencilofquadrics}]
By applying Lemma~\ref{lem: mxZstripisoquadric} to the restriction of $P :\Z^2 \to \mathbb{R}\mathrm{P}^{2m}$ to $\Z\times [m+1]$, there exists a quadric $\mathcal{V}$ such that the projective subspaces $\{V_i\}_{i\in \Z}$ are isotropic subspaces of $\mathcal{V}$. By applying Lemma~\ref{lem: mxZstripisoquadric} to the restriction of $P :\Z^2 \to \mathbb{R}\mathrm{P}^{2m}$ to $[m+1]\times \Z$, there exists a quadric $\mathcal{H}$ such that the projective subspaces $\{H_j\}_{j \in \Z}$ are isotropic subspaces of $\mathcal{H}$. Non-degenerate quadrics with $m$-dimensional isotropic spaces can only exist in $\mathbb{R}\mathrm{P}^{n}$ where $n \geq 2m+1$. The quadrics $\mathcal{V}$ and $\mathcal{H}$ must be degenerate because they are contained in $\R \mathrm{P}^{2m}$. Let $(p,q,r)$ be the signature of $\mathcal{V}$. Looking for a contradiction, suppose that $r \geq 3$. Then, $\mathcal{V}$ has at least a $2$-dimensional subspace of singular points. By the construction of $\mathcal{V}$, which is explained in the proof of Lemma~\ref{lem: mxZstripisoquadric}, $\mathcal{V}$ is in the pencil of quadrics spanned by $\mathcal{Q}$ and a degenerate quadric, say $\mathcal{D}$, that consists of a pair of hyperplanes in $\R \mathrm{P}^{2m}$. The locus of singular points of $\mathcal{D}$ is $(2m-2)$-dimensional, which intersects the locus of singular points of $\mathcal{V}$ in at least one point. Such an intersection point must also be a singular point of $\mathcal{Q}$ because $\mathcal{Q}$ is in the pencil of quadrics spanned by $\mathcal{V}$ and $\mathcal{D}$. This contradicts the assumption that $\mathcal{Q}$ is non-degenerate. Therefore, $r \leq 2$. The generators of $\mathcal{V}$ are $(min\{p,q\}-1+r)$-dimensional. Because $\mathcal{V}$ contains $m$-dimensional isotropic spaces, $\mathcal{V}$ has signature $(m,m,1)$. Similarly, $\mathcal{H}$ also has signature $(m,m,1)$.

To prove the claim about the systems of generators, observe that $V_i\cap V_{i+1}$ is $(m-1)$-dimensional. By intersecting with a generic hyperplane $E$, the dimension of $V_i\cap V_{i+1}\cap E$ is $m-2$ which has the same parity as $m$. Thus, by the explanation at Proposition~\ref{prop: hyperbolicquadrictwoclasses}, we have two systems of generators $\{V_{2i}\}_{i\in \Z}$ and $\{V_{2i+1}\}_{i\in \Z}$. 


Generically, the apex of $\mathcal{H}$ is not contained in $\mathcal{Q}$. Then, there is a unique quadric $\mathcal{R}$ that contains the apex of $\mathcal{H}$ and that is a member of the pencil of quadrics spanned by $\mathcal{V}$ and $\mathcal{Q}$. We will show that $\mathcal{R}$ equals $\mathcal{H}$. The $m$-dimensional spaces $H_j$ are not isotropic subspaces of $\mathcal{Q}$ because $\mathcal{Q}$ is non-degenerate. So, $H_j \cap \mathcal{Q}$ is a quadric that contains the points $\{P_{i,j}\}_{i\in \Z}$. Generically, $H_j \cap \mathcal{Q}$ is the unique quadric in $H_j$ that contains these points. So, $\mathcal{R}$ contains the quadric $H_j\cap \mathcal{Q}$ because $\mathcal{R}$ contains the points $\{P_{i,j}\}_{i\in \Z}$ that are in the base locus of the pencil of quadrics spanned by $\mathcal{Q}$ and $\mathcal{V}$. However, $\mathcal{R}$ also contains the apex of $\mathcal{H}$ which is not contained in $\mathcal{Q}$. So, $H_j \cap \mathcal{R}$ cannot be a quadric. Instead, $H_j$ is an isotropic subspace of $\mathcal{R}$. The quadrics $\mathcal{R}$ and $\mathcal{H}$ are equal because they have the same generators $H_j$.
\end{proof}

\section{Circular Nets with Spherical Parameter Lines}
\label{section: Mobiusapproach}

Circular nets in $\R^3$  provide a M\"obius-geometric discretisation of curvature-line parametrisations. See \cite{DDG, bobenko2018curvature, bobenko2003discrete, cieslinski1997}.

A map $P: \Z^2 \to \R^n$ is a circular net if  the vertices $P_{i,j}$, $P_{i+1,j}$, $P_{i+1,j+1}$, $P_{i,j+1}$ lie on circles, i.e. $\Box_{i,j}:= \Box(P_{i,j}, P_{i+1,j},P_{i+1,j+i}, P_{i,j+1})$ are  circular quads. The 
points $\{P_{i,j}\}_{i \in \Z}$ and $\{P_{i,j}\}_{j \in \Z}$ build two families of parameter lines. If $n=3$, then the parameter lines are interpreted as discrete curvature lines.  

Throughout this Section~\ref{section: Mobiusapproach}, the lift of a circular net $P: \Z^2 \to \R^n$ to the M{\"o}bius quadric $\mathcal{M}^n:=\{ [x]\in \R \mathrm{P}^{n+1}\mid x_1^2+x_2^2+\ldots + x_{n+1}^2 -x_{n+2}^2 =0 \}$ is denoted  $M: \Z^2 \to \mathcal{M}^n\subset \R \mathrm{P}^{n+1}$. The lift is a Q-net that is inscribed in the M\"obius quadric such that $\sigma(M_{i,j}) = P_{i,j}$, where $\sigma: \R \mathrm{P}^{n+1} - N \to \overline{\R^n}$ is the central projection with centre $N$ that is defined in Appendix~\ref{appendix: mobiusgeo}. Here $\overline{\R^n}$ is the projective closure of $\R^n$. See Appendix~\ref{appendix: mobiusgeo} for a brief introduction to M\"obius geometry.

\begin{defn}
Let $P:\Z^2 \to \R^n$ be a circular net and let $M: \Z^2 \to \mathcal{M}^n \subset \R \mathrm{P}^{n+1}$ be its lift. A parameter line $\{P_{i,j}\}_{i \in \Z}$ is \emph{$m$-spherical} or \emph{$m$-planar} if its vertices are contained in an $m$-dimensional sphere or $m$-dimensional plane in $\R^n$, respectively. Equivalently, the points $\{M_{i,j}\}_{i\in \Z}$ of the lift $M: \Z^2 \to \mathcal{M}^n \subset \R \mathrm{P}^{n+1}$ to the M\"obius quadric are contained in a $(m+1)$-dimensional projective subspace of $\R \mathrm{P}^{n+1}$.
\end{defn}

A parameter line $\{P_{i,j}\}_{i \in \Z}$ is $m$-planar when the corresponding $(m+1)$-dimensional projective subspace of $\R \mathrm{P}^{n+1}$ contains the point $N$.

Some of the proofs in this Section~\ref{section: Mobiusapproach} repeatedly use the same arguments. To avoid excessive repetition, some of them are omitted.

\subsection{One Family Spherical or Circular}\label{section: Mobiusonefamily}
The following proposition is a direct corollary of Proposition~\ref{prop: Goursatterminatemsteps}. 

\begin{prop}\label{prop: sphericaliffgoursat}
Let $M: \Z^2 \to \mathcal{M}^n \subset \R \mathrm{P}^{n+1}$ be a circular net,  and $m <n$. Generically, the parameter lines  $\{ M_{i,j}\}_{i \in \Z}$ are $m$-spherical if and only if $\mathcal{L}_A^{m+1} M$ is Goursat degenerate.
\end{prop}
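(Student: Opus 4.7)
The plan is to invoke Proposition~\ref{prop: Goursatterminatemsteps} with the lift $M$ viewed as a Q-net in $\R \mathrm{P}^{n+1}$. First I would recall that $M: \Z^2 \to \mathcal{M}^n \subset \R \mathrm{P}^{n+1}$ is itself a Q-net (this is the standard fact that the lift of a circular net to the M\"obius quadric produces a Q-net inscribed in $\mathcal{M}^n$, which is already used implicitly throughout Section~\ref{section: Mobiusapproach}). Once this is noted, the ambient projective space has dimension $n+1$, and the dimension hypothesis of Proposition~\ref{prop: Goursatterminatemsteps} becomes $m+1 < n+1$, i.e., $m < n$, which matches exactly the hypothesis of the current statement.

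Next I would apply Proposition~\ref{prop: Goursatterminatemsteps} with degeneracy index $m+1$. It yields that $\mathcal{L}_A^{m+1} M$ is Goursat degenerate if and only if, for each $j \in \Z$, the parameter line $\{M_{i,j}\}_{i\in\Z}$ is contained in an $(m+1)$-dimensional projective subspace of $\R \mathrm{P}^{n+1}$. By the definition of \emph{$m$-spherical} given immediately before the proposition, this containment condition is exactly equivalent to the parameter line $\{P_{i,j}\}_{i \in \Z} = \{\sigma(M_{i,j})\}_{i \in \Z}$ lying on an $m$-dimensional sphere or $m$-plane in $\R^n$, i.e. being $m$-spherical. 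Combining these two equivalences gives the statement.

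There is essentially no obstacle, since the whole content is already packaged in Proposition~\ref{prop: Goursatterminatemsteps} and in the definition of $m$-spherical parameter lines; the genericity clause is inherited verbatim from Proposition~\ref{prop: Goursatterminatemsteps}. The only thing worth a sentence of verification is the matching of dimensions (lifting adds one to both the ambient dimension and the affine dimension of the spanning subspace, so the strict inequality is preserved).
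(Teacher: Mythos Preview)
Your proposal is correct and matches the paper's own treatment exactly: the paper states that this proposition is a direct corollary of Proposition~\ref{prop: Goursatterminatemsteps} and gives no further proof, and your argument simply unpacks that corollary by checking the dimension shift $m+1<n+1$ and invoking the definition of $m$-spherical.
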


\begin{prop}\label{prop: sphericalterminatbothsides}
Let $M: \Z^2 \to \mathcal{M}^n \subset \R \mathrm{P}^{n+1}$ be a circular net, where $n \geq 2$. Suppose that the parameter lines  $\{(M_{i,j})_{i \in \Z}\}_{j\in \Z}$ are $m$-spherical, where $m < n$. Generically, $\mathcal{L}_B^{m+1}M$ is Laplace degenerate. For each $(i,j) \in \Z^2$, let $\mathcal{S}_{i,j}$  be the $m$-dimensional sphere that contains the points $M_{i,j}, \ldots, M_{i, j+m+1}$. For each $j \in \Z$, the $m$-spheres $\{\mathcal{S}_{i,j}\}_{i \in \Z}$ have a common orthogonal hypersphere.
\end{prop}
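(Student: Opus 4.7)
The plan is to derive both assertions from the general machinery of Section~\ref{section: gridinquadric} combined with the M\"obius-geometric dictionary of Appendix~\ref{appendix: mobiusgeo}. For the first assertion I would first invoke Proposition~\ref{prop: sphericaliffgoursat} to translate the hypothesis that the parameter lines $\{M_{i,j}\}_{i\in\Z}$ are $m$-spherical into the assertion that $\mathcal{L}_A^{m+1} M$ is Goursat degenerate. Since $M$ is a Q-net inscribed in the non-degenerate M\"obius quadric $\mathcal{M}^n$, and since generically the lines $M_{i,j} \vee M_{i+1,j}$ are honest secants of $\mathcal{M}^n$ (they are lifts of the genuine circles of the net, not isotropic lines of the quadric), Theorem~\ref{thm: GoursatimpliesLaplaceQnetquadric} applies directly and yields that $\mathcal{L}_B^{m+1} M$ is Laplace degenerate. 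This settles the first part of the proposition.

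For the second assertion I would apply the $B$-direction analogue of Proposition~\ref{prop: Laplaceterminatemsteps}: the Laplace degeneracy of $\mathcal{L}_B^{m+1} M$ just obtained is equivalent to the statement that, for each fixed $j$, the $(m+1)$-dimensional projective subspaces $V_{i,j} := M_{i,j} \vee M_{i,j+1} \vee \cdots \vee M_{i,j+m+1}$ satisfy $\bigcap_{i \in \Z} V_{i,j} = \{L_j\}$ for a single point $L_j \in \R \mathrm{P}^{n+1}$. Because the $m+2$ isotropic points $M_{i, j+l}$ are generically in general position on the $m$-sphere $\mathcal{S}_{i,j}$, the subspace $V_{i,j}$ has signature $(m+1, 1)$ and is precisely the M\"obius-geometric representative of $\mathcal{S}_{i,j}$, i.e., the projective subspace whose intersection with $\mathcal{M}^n$ is the lift of $\mathcal{S}_{i,j}$. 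Hence the single point $L_j$ lies in $V_{i,j}$ simultaneously for every $i \in \Z$.

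It remains to identify $L_j$ with a hypersphere orthogonal to every $\mathcal{S}_{i,j}$. The key M\"obius-geometric fact is that a spacelike point $[\hat H] \in \R \mathrm{P}^{n+1}$ represents a hypersphere $H \subset \R^n$, and $H$ is orthogonal to an $m$-sphere $\mathcal{S}$ exactly when $[\hat H]$ lies in the projective subspace $V$ assigned to $\mathcal{S}$; equivalently, when $\hat H$ is conjugate to every hypersphere of the pencil $V^\perp$ that has $\mathcal{S}$ as its common base. Under the genericity assumption $L_j$ is spacelike, so it represents the desired common orthogonal hypersphere. The main obstacle in the argument is really just this last dictionary entry — the equivalence between membership of $[\hat H]$ in the projective subspace of $\mathcal{S}$ and M\"obius-orthogonality of the hypersphere $H$ with the sphere $\mathcal{S}$ — which is a routine but essential check performed via the explicit coordinates for spheres and the M\"obius form recorded in Appendix~\ref{appendix: mobiusgeo}.
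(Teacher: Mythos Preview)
Your proof is correct and essentially identical to the paper's: both invoke Proposition~\ref{prop: sphericaliffgoursat} and Theorem~\ref{thm: GoursatimpliesLaplaceQnetquadric} for the Laplace degeneracy, then observe that the common point $\mathcal{L}_B^{m+1}M(j)$ lies in every $(m+1)$-dimensional subspace $S_{i,j}=V_{i,j}$ and hence its polar hyperplane cuts out the common orthogonal hypersphere. The only cosmetic difference is that the paper phrases the last step directly via the polar hyperplane $\mathcal{L}_B^{m+1}M(j)^\perp$ and explicitly allows the hypersphere to be imaginary, rather than asserting that $L_j$ is spacelike.
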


\begin{proof}
By Proposition~\ref{prop: sphericaliffgoursat},  $\mathcal{L}_A^{m+1} M$ is generically Goursat degenerate. Generically, by Theorem~\ref{thm: GoursatimpliesLaplaceQnetquadric}, $\mathcal{L}_B^{m+1} M$ is Laplace degenerate. For each $j \in \Z$, the point $\mathcal{L}_B^{m+1} M(j) :=\mathcal{L}_B^{m+1} M(i,j)$ is well defined because it is  independent of $i \in \Z$.  For each $(i,j) \in \Z^2$, let $S_{i,j}$  be the $(m+1)$-dimensional projective subspace that contains the points $M_{i,j}, \ldots, M_{i, j+m+1}$. Then, $S_{i,j} \cap \mathcal{M}^n = \mathcal{S}_{i,j}$. For each $j \in \Z$, the spaces $\{S_{i,j}\}_{i \in \Z}$ contain the point $\mathcal{L}_B^{m+1} M(j)$. Let $\mathcal{L}_B^{m+1} M(j)^\perp$ be the polar hyperplane of $\mathcal{L}_B^{m+1} M(j)$ relative to $\mathcal{M}^n$. The (possibly imaginary) hypersphere  $\mathcal{L}_B^{m+1} M(j)^\perp \cap \mathcal{M}^n$ is orthogonal to the $m$-dimensional spheres $\{\mathcal{S}_{i,j}\}_{i \in \Z}$.
\end{proof}

Let $P: \Z^2 \to \R^3$ be a circular net with spherical parameter lines $\{(P_{i,j})_{i \in \Z}\}_{j\in \Z}$. Let $M: \Z^2 \to \mathcal{M}^{3}$ be its lift. By the case $m=2$ of Proposition~\ref{prop: sphericaliffgoursat}, $\mathcal{L}_A^{3} M$ is Goursat degenerate. For each $(i,j) \in \Z^2$ and $d \in [3]$, $\sigma(\mathcal{L}_A^{d} M (i,j)) = \mathcal{L}_A^{d} P(i,j)$. So, $\mathcal{L}_A^{3} P$ is Goursat degenerate. This agrees with the smooth theory of surfaces in $\R^3$ with one family of spherical curvature lines \cite{goursat1896equations}. By the case $m=2$ of Proposition~\ref{prop: sphericalterminatbothsides}, $\mathcal{L}_B^{3} M$ is Laplace degenerate. So, $\mathcal{L}_B^{3} P$ is Laplace degenerate. This differs to the smooth theory.  Along any of the spherical curvature lines of a smooth surface with one family of spherical curvature lines, the osculating planes of the non-spherical curvature lines are generically concurrent \cite{picart1863}. Then, the case $m=2$ of Proposition~\ref{prop: Laplaceterminatemsteps} implies that the circular net $P: \Z^2 \to \R^3$ with one family of spherical parameter lines $\{(P_{i,j})_{i \in \Z}\}_{j\in \Z}$ would better mimic a smooth surface with one family of spherical curvature lines if $\mathcal{L}_B^2P$ were Laplace degenerate. Let $\nu_{i,j}$ be the discrete normal through the centre of the circle containing $P_{i,j}$, $P_{i+1,j}$, $P_{i+1,j+1}$, $P_{i,j+1}$ and that is orthogonal to the plane of the circle. It can readily be shown that the planes $\{\nu_{i,j} \vee \nu_{i,j+1}\}_{i \in \Z}$ are concurrent at the centre of the sphere through the points $\{P_{i,j+1}\}_{i \in \Z}$. This feature is comparable to the condition that the osculating planes are concurrent.

In Section~\ref{Lie: onefamilyspherical}, an alternation phenomenon is exhibited that is used to define an improved discretisation of surfaces with spherical curvature lines.

Let $P: \Z^2 \to \R^3$ be a circular net with circular parameter lines $\{(P_{i,j})_{i \in \Z}\}_{j\in \Z}$. By the case $m=1$ of Proposition~\ref{prop: sphericaliffgoursat}, $\mathcal{L}_A^{2} M$ is Goursat degenerate.  For each $(i,j) \in \Z^2$ and $d \in [2]$, $\sigma(\mathcal{L}_A^{d} M (i,j)) = \mathcal{L}_A^{d} P(i,j)$.  So, $\mathcal{L}_A^{2} P$ is Goursat degenerate. This agrees with the smooth theory of surfaces in $\R^3$ with one family of circular curvature lines \cite{goursat1896equations}, which are classical surfaces that are known as \emph{channel surfaces}. By the case $m=1$ of Proposition~\ref{prop: sphericalterminatbothsides}, $\mathcal{L}_B^{2} M$ is Laplace degenerate. So, $\mathcal{L}_B^{2} P$ is Laplace degenerate. This differs to the smooth theory. Joachimsthal's theorem ensures that the tangent planes of a channel surface along any circular curvature line are the tangent planes of a cone (or cylinder) of revolution. The tangent lines of the non-circular curvature lines are rulings of the foregoing cone. Therefore, any channel surface has a Laplace transform that is Laplace degenerate. It consists of the locus of the apices of the cones of revolution that are tangent to the channel surface along the circular curvature lines. Therefore, the circular net $P: \Z^2 \to \R^3$ with one family of circular parameter lines $\{(P_{i,j})_{i \in \Z}\}_{j\in \Z}$ would better mimic a channel surface if $\mathcal{L}_B P$ were Laplace degenerate. The condition that $\mathcal{L}_B P$ is Laplace degenerate is present in the Lie-geometric discretisation of channel surfaces in \cite{hertrich2020}. Let $\nu_{i,j}$ be the discrete normal through the centre of the circle containing $P_{i,j}$, $P_{i+1,j}$, $P_{i+1,j+1}$, $P_{i,j+1}$ and that is orthogonal to the plane of the circle. It can readily be shown that the planes $\{\nu_{i,j} \vee \nu_{i,j+1}\}_{i \in \Z}$ are concurrent at the line through the centre of the circle containing the points $\{P_{i,j+1}\}_{i \in \Z}$ and that is orthogonal to the plane of the circle.

\begin{prop}\label{prop: onefamilyplanar}
Let $P: \Z^2 \to \R^n$ be a circular net, where $n\geq 2$. For each $j\in \Z$, suppose that the points $\{P_{i,j}\}_{i \in \Z}$ are contained in an $m$-dimensional plane $\mathcal{H}_j$, where $m<n$.  Generically, $\mathcal{L}^{m}_AP$ is Goursat degenerate and $\mathcal{L}^{m+1}_BP$ is Laplace degenerate. The discrete curve $\mathcal{L}^{m+1}_BP$ is contained in the hyperplane at infinity of $\overline{\R^{n}}$. For each $j\in \Z$, the lines $\{\mathcal{L}^m_BP(i,j)\vee \mathcal{L}^m_BP(i,j+1)\}_{i \in \Z}$ are parallel.
\end{prop}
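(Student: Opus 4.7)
The plan is to pass to the M\"obius lift $M: \Z^2 \to \mathcal{M}^n \subset \R\mathrm{P}^{n+1}$ and exploit the fact that, by the equivalence recorded after the definition of $m$-planarity, the assumption on $P$ is the same as saying that for each $j$ the points $\{M_{i,j}\}_{i\in\Z}$ lie in an $(m+1)$-dimensional projective subspace $H_j$ of $\R\mathrm{P}^{n+1}$, with the extra feature $N\in H_j$. This extra containment is what distinguishes the planar case from the spherical one and is responsible for an additional dimension drop relative to Proposition~\ref{prop: sphericalterminatbothsides}.

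For the Goursat degeneracy of $\mathcal{L}_A^m P$, I would apply Proposition~\ref{prop: Goursatterminatemsteps} directly to $P$ inside the projective closure $\overline{\R^n}$: the affine $m$-planes $\mathcal{H}_j$ extend to $m$-dimensional projective subspaces of $\overline{\R^n}$, and since $m<n$ the proposition immediately yields Goursat degeneracy of $\mathcal{L}_A^m P$. For the Laplace degeneracy of $\mathcal{L}_B^{m+1}P$, I would work on the lift: the parameter lines of $M$ sit in the $(m+1)$-dimensional subspaces $H_j$, so Proposition~\ref{prop: sphericaliffgoursat} gives Goursat degeneracy of $\mathcal{L}_A^{m+1}M$. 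Since $M$ is inscribed in the non-degenerate $\mathcal{M}^n$ and generically no edge line is isotropic, Theorem~\ref{thm: GoursatimpliesLaplaceQnetquadric} produces Laplace degeneracy of $\mathcal{L}_B^{m+1}M$. Because $\sigma$ commutes with Laplace transformations wherever it is defined, $\mathcal{L}_B^{m+1}P=\sigma\circ\mathcal{L}_B^{m+1}M$ is Laplace degenerate as well.

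The crux is to identify the value of $\mathcal{L}_A^{m+1}M$. For each $j$, the point $\mathcal{L}_A^{m+1}M(i,j)$ lies in $\bigcap_{k=0}^{m+1}H_{j+k}$; a dimension count (as in the proof of Proposition~\ref{prop: Goursatterminatemsteps}) shows this intersection is generically zero-dimensional, hence a single point. Since $N\in H_k$ for every $k$, that point must be $N$, so $\mathcal{L}_A^{m+1}M\equiv N$. Applying Theorem~\ref{thm: mxmiteratedlaplaceconjugate} to the $(m+2)\times(m+2)$ subgrid of $M$ starting at $(i_0,j_0)$, whose vertices all lie in $\mathcal{M}^n$, yields the conjugacy $\mathcal{L}_A^{m+1}M(i_0,j_0)\perp\mathcal{L}_B^{m+1}M(i_0,j_0)$ relative to $\mathcal{M}^n$, i.e. $\mathcal{L}_B^{m+1}M(i_0,j_0)\in N^\perp$. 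The tangent hyperplane $N^\perp$ of $\mathcal{M}^n$ at $N$ is precisely the locus that the central projection from $N$ sends to the hyperplane at infinity of $\overline{\R^n}$, so $\mathcal{L}_B^{m+1}P$ is contained at infinity, proving the third claim.

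For the parallelism, I would observe that by definition of $\mathcal{L}_B$ the point $\mathcal{L}_B^{m+1}M(i,j)$ lies on $\mathcal{L}_B^mM(i,j)\vee\mathcal{L}_B^mM(i,j+1)$, so after projection $\mathcal{L}_B^{m+1}P(i,j)$ lies on $\mathcal{L}_B^mP(i,j)\vee\mathcal{L}_B^mP(i,j+1)$. Since $\mathcal{L}_B^{m+1}P$ is Laplace degenerate, the point $\mathcal{L}_B^{m+1}P(i,j)$ is independent of $i$; being at infinity by the previous step, it serves as a common point at infinity through which all the lines $\{\mathcal{L}_B^mP(i,j)\vee\mathcal{L}_B^mP(i,j+1)\}_{i\in\Z}$ pass, forcing them to be parallel. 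The main obstacle will be the bookkeeping around the distinguished point $N$: ensuring that $\sigma$ is defined on all the relevant Laplace iterates, that $\mathcal{L}_A^{m+1}M$ collapses to $N$ rather than to some other intersection point, and that conjugacy to $N$ with respect to $\mathcal{M}^n$ corresponds under $\sigma$ exactly to lying at infinity in $\overline{\R^n}$.
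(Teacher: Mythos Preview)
Your proposal is correct and follows essentially the same approach as the paper's proof: lift to $\mathcal{M}^n$, use that each $H_j$ contains $N$ to force $\mathcal{L}_A^{m+1}M\equiv N$, apply Theorem~\ref{thm: GoursatimpliesLaplaceQnetquadric} and Theorem~\ref{thm: mxmiteratedlaplaceconjugate} to place $\mathcal{L}_B^{m+1}M$ in $N^\perp$, and project. Two minor remarks: the paper identifies $\mathcal{L}_A^{m+1}M\equiv N$ by invoking both Proposition~\ref{prop: Goursatterminatemsteps} and Proposition~\ref{prop: Laplaceterminatemsteps} rather than your direct dimension count, which amounts to the same thing; and the paper's proof actually omits the final parallelism step, which you correctly spell out as the observation that the common point $\mathcal{L}_B^{m+1}P(j)$ of the lines $\{\mathcal{L}_B^mP(i,j)\vee\mathcal{L}_B^mP(i,j+1)\}_{i\in\Z}$ lies at infinity.
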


\begin{proof}
By Proposition~\ref{prop: Goursatterminatemsteps}, $\mathcal{L}_A^{m} P$ is Goursat degenerate. Let $M: \Z^2 \to \mathcal{M}^n$ be the lift of $P$. For each $j \in \Z$, $H_j := \mathrm{join}\{M_{i,j}\}_{i \in \Z}$ is a $(m+1)$-dimensional projective subspace of $\R \mathrm{P}^{n+1}$ that contains the point $N$. By Proposition~\ref{prop: Laplaceterminatemsteps},  $\mathcal{L}_A^{m+1} M$ is Laplace degenerate. By Proposition~\ref{prop: Goursatterminatemsteps},  $\mathcal{L}_A^{m+1} M$ is Goursat degenerate. So, $N = \mathcal{L}_A^{m+1} M(i,j)$ for all $(i,j) \in \Z^2$. By Theorem~\ref{thm: GoursatimpliesLaplaceQnetquadric}, $\mathcal{L}_B^{m+1} M$ is Laplace degenerate. So,  $\mathcal{L}_B^{m+1} M(j):=  \mathcal{L}_B^{m+1} M(i,j)$ is well defined. By Theorem~\ref{thm: mxmiteratedlaplaceconjugate}, $N$ and $\mathcal{L}_B^{m+1} M(j)$ are conjugate for all $j \in \Z$. Then, the points $\{\mathcal{L}_B^{m+1} M(j)\}_{j \in\Z}$ are contained in the polar hyperplane $N^\perp$. For each $(i,j) \in \Z^2$, $\sigma(\mathcal{L}_B^{m+1} M (i,j)) = \mathcal{L}_B^{m+1} P(i,j)$. The polar hyperplane $N^\perp$ projects under $\sigma$ to the hyperplane at infinity of $\overline{\R^n}$. Therefore, $\mathcal{L}_B^{m+1} P$ is Laplace degenerate and the points $\{\mathcal{L}_B^{m+1} P(j)\}_{j \in\Z}$ are contained in the hyperplane at infinity of $\overline{\R^{n}}$.
\end{proof}

\subsection{Both Families Circular}\label{section: circularcircular}

\begin{prop}\label{thm: circularcircular4x4}
Consider a function $M: [4]\times [4] - \{(4,4)\} \to \mathcal{M}^n \subset \R\mathrm{P}^{n+1}$ where $n\in \{2,3\}$.  For each $j \in [3]$, let the points $\{M_{i,j}\}_{i\in [4]}$ be contained in a circle $\mathcal{H}_j$. Let $\mathcal{H}_4$ be the circle containing the points $M_{1,4}, M_{2,4}, M_{3,4}$. For each $i \in [3]$, let the points $\{M_{i,j}\}_{j\in [4]}$ be contained in a circle $\mathcal{V}_i$. Let $\mathcal{V}_4$ be the circle containing the points $M_{4,1}, M_{4,2}, M_{4,3}$. Suppose that $\forall (i,j) \neq (3,3)$ the quad $\Box_{i,j}:= \Box(M_{i,j}, M_{i+1,j}, M_{i+1,j+i}, M_{i,j+1})$ is circular. There exists a unique point $M_{4,4}\in \mathcal{M}^n$ such that the quad $\Box_{3,3}$ is circular and such that $M_{4,4} \in \mathcal{V}_4\cap \mathcal{H}_4$. (See Figure~\ref{figure: 4x4circulargridincidence}.)
\end{prop}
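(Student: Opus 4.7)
The plan is to deduce the proposition from Theorem~\ref{thm: mxmincidencethm}, applied with $d=2$ and $\mathcal{Q}=\mathcal{M}^n$. Let $V_i$ and $H_j$ denote the $2$-dimensional projective subspaces of $\R\mathrm{P}^{n+1}$ containing the circles $\mathcal{V}_i$ and $\mathcal{H}_j$ respectively. The $[3]\times[3]$ subgrid $\{M_{i,j}\}_{i,j\in[3]}$ is then a Q-net inscribed in $\mathcal{M}^n$ whose parameter lines are contained in $V_i$ and $H_j$ for $i,j\in[3]$. Feeding Theorem~\ref{thm: mxmincidencethm} the additional given data $M_{4,1}\in\mathcal{M}^n\cap H_1$ and $M_{1,4}\in\mathcal{M}^n\cap V_1$, one obtains a unique extension to a $[4]\times[4]$ Q-net inscribed in $\mathcal{M}^n$; denote its newly produced vertices by $\widetilde{M}_{i,j}\in\mathcal{M}^n$ and its new parameter planes by $\widetilde{V}_4,\widetilde{H}_4$.

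The central step is to verify that $\widetilde{M}_{i,j}=M_{i,j}$ for $(i,j)\in\{(2,4),(3,4),(4,2),(4,3)\}$. For $(i,j)=(2,4)$, the proof of Theorem~\ref{thm: mxmincidencethm} produces $\widetilde{M}_{2,4}$ as the unique second point (besides $M_{2,3}$) of $V_2\cap \mathrm{join}\{M_{1,3},M_{1,4},M_{2,3}\}\cap \mathcal{M}^n$. The given $M_{2,4}$ satisfies exactly the same characterisation: it lies in $V_2$ because it is on $\mathcal{V}_2$; it lies in $\mathrm{join}\{M_{1,3},M_{1,4},M_{2,3}\}$ because $\Box_{1,3}$ is circular; it lies on $\mathcal{M}^n$; and it is distinct from $M_{2,3}$ by genericity. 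Symmetric second-intersection arguments using $\mathcal{V}_3,\mathcal{H}_2,\mathcal{H}_3$ together with the circular quads $\Box_{2,3},\Box_{3,1},\Box_{3,2}$ yield the three remaining identifications.

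With these identifications in hand, the $2$-dimensional planes $\widetilde{V}_4$ and $V_4$ both contain the three (generically non-collinear) points $M_{4,1},M_{4,2},M_{4,3}$, so $\widetilde{V}_4=V_4$; similarly $\widetilde{H}_4=H_4$. Setting $M_{4,4}:=\widetilde{M}_{4,4}$ then gives a point of $\mathcal{M}^n\cap V_4\cap H_4=\mathcal{V}_4\cap\mathcal{H}_4$ for which $\Box_{3,3}$ is circular (it is a quad of the extended Q-net inscribed in $\mathcal{M}^n$), establishing existence. Uniqueness is immediate: any valid $M_{4,4}$ lies in the intersection of the two circles $\mathcal{H}_4$ and $\mathcal{M}^n\cap\mathrm{join}\{M_{3,3},M_{4,3},M_{3,4}\}$ in $\mathcal{M}^n$, which share $M_{3,4}$ and, generically, exactly one further point. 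The main obstacle is this verification step, which relies at every turn on the genericity assumption ensuring that each relevant line meets $\mathcal{M}^n$ in exactly two distinct points and that the triples of points spanning $V_4$ and $H_4$ are non-collinear.
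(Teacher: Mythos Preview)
Your proof is correct and follows essentially the same route as the paper's first proof: both argue via Theorem~\ref{thm: mxmincidencethm} with $d=2$ and $\mathcal{Q}=\mathcal{M}^n$, identifying $M_{4,4}$ as the point $(M_{3,3}\vee M_{4,3}\vee M_{3,4})\cap V_4\cap H_4$ and concluding it lies on $\mathcal{M}^n$. Your version is more explicit than the paper's in verifying that the extension produced by Theorem~\ref{thm: mxmincidencethm} actually agrees with the given data $M_{2,4},M_{3,4},M_{4,2},M_{4,3}$ via second-intersection arguments, which the paper leaves implicit; the paper also supplies an alternative proof via Miquel's theorem and four-dimensional consistency of Q-nets that you do not touch, but that is not needed.
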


\begin{proof}
Let $V_i$ and $H_j$ denote the $2$-dimensional projective subspaces of $\R\mathrm{P}^{n+1}$ such that $\mathcal{V}_i = V_i \cap \mathcal{M}^n$ and $\mathcal{H}_j = H_j \cap \mathcal{M}^n$. Define $M_{4,4}:= (M_{3,3} \vee M_{4,3} \vee M_{3,4}) \cap V_4 \cap H_4$. By Theorem~\ref{thm: mxmincidencethm}, $M_{4,4} \in \mathcal{M}^n$.
\end{proof}

\begin{proof}[Alternative proof of Proposition~\ref{thm: circularcircular4x4}]
By applying Miquel's theorem to the points $\{M_{i,j} \mid i \in  [4], \, j \in [2]\}$, it follows that the points $M_{1,1}, M_{1,2}, M_{4,1},M_{4,2}$ are contained in a circle. For instance, see \cite{DDG} for more information regarding Miquel's theorem. In other words, the points $\{M_{i,j} \mid i\in [4],  j\in [2]\}$ are the eight vertices of a cube with planar faces. By Miquel's theorem, the following quadruples of points are also contained in circles.
\begin{gather*}
\{M_{1,2},M_{1,3},M_{4,2},M_{4,3}\} \; \;\;\; \; \{ M_{1,1}, M_{2,1}, M_{1,4}, M_{2,4}\} \; \;\;\;\; \{ M_{2,1}, M_{3,1}, M_{2,4}, M_{3,4}\}
\end{gather*}
By the $4$-dimensional consistency of Q-nets \cite[Theorem 2.5]{DDG}, the following six planes are concurrent because the vertices have the structure of a hypercube.
\begin{gather*}
M_{3,3} \vee M_{4,3} \vee M_{3,4} \qquad M_{1,1}\vee M_{4,1}\vee M_{1,4}\\
M_{1,3}\vee M_{1,4}\vee M_{4,3} \qquad M_{3,1}\vee M_{4,1}\vee M_{3,4}\\
M_{4,1}\vee M_{4,2}\vee M_{4,3} \qquad M_{1,4}\vee M_{2,4}\vee M_{3,4}
\end{gather*}

Let $M_{4,4}$ be the concurrency point of the six planes. By a classical theorem concerning associated points, which can be found in \cite[Thm. 3.13]{DDG}, the point $M_{4,4}$ is contained in $\mathcal{M}^n$. So, the following quadruples of points are contained in circles.
\begin{gather*}
\{M_{3,3}, M_{4,3}, M_{3,4}, M_{4,4} \}\qquad \{M_{1,1}, M_{4,1}, M_{1,4},M_{4,4}\}\\
\{M_{1,3}, M_{1,4}, M_{4,3}, M_{4,4}\} \qquad \{M_{3,1} , M_{4,1}, M_{3,4}, M_{4,4}\}\\
\{M_{4,1}, M_{4,2}, M_{4,3}, M_{4,4}\} \qquad \{M_{1,4}, M_{2,4}, M_{3,4},M_{4,4}\}
\end{gather*}\end{proof}
In Proposition~\ref{thm: circularcircular4x4}, the point $M_{4,4}$ is uniquely determined as the concurrency point of three circles. In the alternative proof of Proposition~\ref{thm: circularcircular4x4}, it is shown that the point $M_{4,4}$ is actually the concurrency point of six circles that are determined by quadruples of vertices of the circular net $M: [4]\times [4] \to \mathcal{M}^n$.

In Proposition~\ref{thm: circularcircular4x4}, it is superfluous to consider the case $n > 3$. In the alternative proof of Proposition~\ref{thm: circularcircular4x4}, the sixteen vertices $\{M_{i,j} \mid (i,j) \in [4]\times [4]\}$ are identified with the vertices of a hypercube that is inscribed in $\mathcal{M}^n$. The sixteen vertices of any hypercube span at most a $4$-dimensional projective subspace of $\R \mathrm{P}^{n+1}$. Therefore, the vertices $\{M_{i,j} \mid (i,j) \in [4]\times [4]\}$ must be contained in a $3$-dimensional sphere if $n >3$. 

Proposition~\ref{thm: circularcircular4x4} can be used iteratively to construct circular nets $\Z^2 \to \mathcal{M}^n$, where $n \in \{2,3\}$, such that both families of parameter lines are circular.

Proposition~\ref{prop: circularcircularR2R3} involves Darboux cyclides and cyclics. These are defined in Appendix~\ref{appendix: mobiusgeo}.

\begin{prop}\label{prop: circularcircularR2R3}
Let $M: \Z^2 \to \mathcal{M}^n\subset \R \mathrm{P}^{n+1}$, where $n\in \{2,3\}$, be a circular net. For all $i\in \Z$, let the points $\{M_{i,j}\}_{j\in \Z}$ be contained in a circle $\mathcal{V}_i$. For all $j \in \Z$, let the points $\{M_{i,j}\}_{i\in \Z}$ be contained in a circle $\mathcal{H}_j$. 
\begin{enumerate}[label=(\roman*)]
\item
Let $n=3$. The circles $\{\mathcal{V}_i\}_{i\in \Z}$ and $\{\mathcal{H}_j\}_{j\in \Z}$ are contained in a $2$-dimensional Darboux cyclide $\mathcal{D}$. (See Figure~\ref{figure: 4x4circulargridincidence}.)
\item
Let $n=2$. The circles $\{\mathcal{V}_i\}_{i\in \Z}$ and $\{\mathcal{H}_j\}_{j\in \Z}$ envelop a cyclic $\mathcal{D}$. Each circle $\mathcal{V}_i$ is twice tangent to $\mathcal{D}$ and each circle $\mathcal{H}_j$ is twice tangent to $\mathcal{D}$. (See Figure~\ref{figure: bircircularquarticCASa}.)
\end{enumerate}
\end{prop}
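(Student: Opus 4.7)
The plan is to prove the two cases separately: case (i) by direct application of Theorem~\ref{thm: pencilofquadrics}, and case (ii) by a more delicate pencil argument since the ambient $\R\mathrm{P}^3$ does not meet the dimensional hypotheses of that theorem.

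For case (i), the lift $M$ takes values in $\mathcal{M}^3 \subset \R\mathrm{P}^{4} = \R\mathrm{P}^{2m}$ with $m = 2$, and the parameter-line subspaces $V_i, H_j$ are $2$-dimensional, so the hypotheses of Theorem~\ref{thm: pencilofquadrics} are met. First I would invoke that theorem to obtain quadrics $\mathcal{V}, \mathcal{H} \subset \R\mathrm{P}^4$ of signature $(2,2,1)$ such that each $V_i$ is a generator of $\mathcal{V}$, each $H_j$ is a generator of $\mathcal{H}$, and $\mathcal{M}^3, \mathcal{V}, \mathcal{H}$ lie in a common pencil. Setting $\mathcal{D} := \mathcal{M}^3 \cap \mathcal{V}$, which is a $2$-dimensional Darboux cyclide by the definition recalled in Appendix~\ref{appendix: mobiusgeo}, containment of each $\mathcal{V}_i$ in $\mathcal{D}$ is immediate: $\mathcal{V}_i = V_i \cap \mathcal{M}^3 \subset \mathcal{V} \cap \mathcal{M}^3 = \mathcal{D}$ because $V_i \subset \mathcal{V}$. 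For the other family, I would use that any two distinct members of a pencil of quadrics share the same base locus, so $\mathcal{M}^3 \cap \mathcal{H} = \mathcal{M}^3 \cap \mathcal{V} = \mathcal{D}$, whence $\mathcal{H}_j \subset \mathcal{D}$.

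For case (ii), Theorem~\ref{thm: pencilofquadrics} is unavailable because there is no non-degenerate quadric of signature $(2,2,1)$ in $\R\mathrm{P}^3$: the $V_i, H_j$ are themselves hyperplanes. Instead I would reformulate the tangency statement via the standard projective criterion that two conics in a plane are bitangent iff their pencil contains a double-line member. Applied to $\mathcal{V}_i$ and $V_i \cap \mathcal{Q}$ inside the plane $V_i$, this shows $\mathcal{V}_i$ is bitangent to a cyclic $\mathcal{D} = \mathcal{M}^2 \cap \mathcal{Q}$ iff some member of the pencil of quadrics $\lambda \mathcal{M}^2 + \mu \mathcal{Q}$ restricts to $V_i$ as a double line, equivalently is tangent to $V_i$ along a line. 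The plan is then to construct a quadric $\mathcal{Q} \subset \R\mathrm{P}^3$ for which this degenerate-restriction condition is simultaneously met by every $V_i$ and every $H_j$, using a pencil argument in the spirit of Lemma~\ref{lem: mxZstripisoquadric} but adapted to the codimension-one setting: starting from a strip of a few consecutive circles, pick $\mathcal{Q}$ out of a finite-dimensional linear system by imposing initial tangency conditions, and propagate the remaining tangencies by induction along the strip, using the coplanarity of consecutive $V_i$-edges of the Q-net to guarantee compatibility between the pencil member distinguished by $(V_i, V_{i+1})$ and the one distinguished by $(V_{i+1}, V_{i+2})$.

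The main obstacle will be this construction in (ii), specifically the simultaneous compatibility of the bitangent pencil members arising from the $V_i$-family and from the $H_j$-family, which must all lie in the same pencil $\lambda \mathcal{M}^2 + \mu \mathcal{Q}$ in order to produce a single cyclic $\mathcal{D}$. I expect this to follow from an argument in the spirit of the closing paragraph of the proof of Theorem~\ref{thm: pencilofquadrics}, now executed inside $\R\mathrm{P}^3$ rather than $\R\mathrm{P}^4$, and exploiting the rich incidence structure of the $4 \times 4$ configuration of Proposition~\ref{thm: circularcircular4x4} to force the two families of bitangent data to belong to a common pencil with $\mathcal{M}^2$.
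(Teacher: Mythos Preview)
Your treatment of case (i) is correct and matches the paper exactly: Theorem~\ref{thm: pencilofquadrics} with $m=2$ gives the pencil containing $\mathcal{M}^3$, $\mathcal{V}$, $\mathcal{H}$, and the common base locus is the Darboux cyclide.

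For case (ii), your plan is workable in spirit but misses the two concrete tools that make the paper's argument short. First, Lemma~\ref{lem: mxZstripisoquadric} needs no adaptation: apply it verbatim with $m=2$, $d=2$ to a two-row strip $\Z\times\{j,j+1\}$, obtaining a genuine quadric $\mathcal{Q}\subset\R\mathrm{P}^3$ (signature $(++--)$) for which the \emph{edge-lines} $M_{i,j}\vee M_{i,j+1}$ are rulings. Second, the paper exploits the common point $Y:=\cap_{i}V_i$, whose existence follows from Theorem~\ref{thm: mxmincidencethm}. Each plane $V_i$ is then the span of a ruling of $\mathcal{Q}$ with $Y$, so the $V_i$ are tangent planes of the quadratic cone $\mathcal{V}$ obtained by joining $Y$ to the conic $Y^{\perp_{\mathcal{Q}}}\cap\mathcal{Q}$. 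Setting $\mathcal{D}:=\mathcal{V}\cap\mathcal{M}^2$, the bitangency of every $\mathcal{V}_i$ to $\mathcal{D}$ is immediate: the line of tangency of $V_i$ with $\mathcal{V}$ meets $\mathcal{M}^2$ in the two contact points. No inductive propagation is required.

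For the $\mathcal{H}_j$-family the paper also avoids the abstract pencil-compatibility argument you anticipate. Since $\mathcal{H}_1,\mathcal{H}_2\subset\mathcal{Q}\cap\mathcal{M}^2$, one checks directly that each $\mathcal{H}_j$ (for $j=1,2$) meets $\mathcal{D}$ tangentially at the two points $H_j\cap Y^{\perp_{\mathcal{Q}}}\cap\mathcal{M}^2$. Then one slides the strip: replacing $\mathcal{Q}$ by the analogous quadric built from rows $j,j+1$ gives the same cone $\mathcal{V}$ (because $Y$ and the $V_i$ are unchanged), hence the same $\mathcal{D}$, and shows $\mathcal{H}_j,\mathcal{H}_{j+1}$ bitangent to it. This is simpler than forcing two independently constructed pencils to agree. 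Your double-line criterion for bitangency is correct, and the cone $\mathcal{V}$ is precisely the pencil member you are seeking; the point is that the apex $Y$ hands it to you for free.
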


\begin{proof}
Let $V_i$ and $H_j$ denote the $2$-dimensional projective subspaces of $\R\mathrm{P}^{n+1}$ such that $\mathcal{V}_i = V_i \cap \mathcal{M}^n$ and $\mathcal{H}_j = H_j \cap \mathcal{M}^n$.

Let $n=3$. The case $m=2$ of Theorem~\ref{thm: pencilofquadrics} implies that $\{V_i\}_{i \in \Z}$ are isotropic subspaces of a quadric $\mathcal{V}$. Similarly, $\{H_j\}_{j \in \Z}$ are isotropic subspaces of a quadric $\mathcal{H}$. By Theorem~\ref{thm: pencilofquadrics}, $\mathcal{H}$ and $\mathcal{V}$ span a pencil of quadrics that contains $\mathcal{M}^3$. The base locus of the pencil of quadrics is the required Darboux cyclide.

Let $n=2$. The first step is to show that the circles $\{\mathcal{V}_i\}_{i\in \Z}$ envelop a cyclic. By Theorem~\ref{thm: mxmincidencethm}, $\cap_{i\in [4]}V_i$ is generically a point. It follows that $Y:= \cap_{i\in \Z}V_i$ is a point. By Lemma~\ref{lem: mxZstripisoquadric}, the lines $\{M_{i,1}\vee M_{i,2}\}_{i \in \Z}$ are isotropic lines of a $2$-dimensional quadric, say $\mathcal{Q}$. Generically, $\mathcal{Q}$ has signature $(++--)$. Each plane $V_i$ can be described as $(M_{i,1}\vee M_{i,2})\vee Y$. So, the planes $\{V_i\}_{i\in \Z}$ are tangent planes of the quadratic cone $\mathcal{V}$ that is determined by the join of the point $Y$ with the conic $Y^{\perp_{\mathcal{Q}}} \cap \mathcal{Q}$, where $Y^{\perp_{\mathcal{Q}}}$ is the polar plane of $Y$ relative to $\mathcal{Q}$. The cone $\mathcal{V}$ intersects the sphere $\mathcal{M}^2\subset \R \mathrm{P}^3$ in a cyclic $\mathcal{D}:= \mathcal{M}^2\cap \mathcal{V}$ such that, $\forall i \in \Z$, the circle $\mathcal{V}_i$ is twice tangent to $\mathcal{D}$. The plane $V_i$ is tangent to the cone $\mathcal{V}$ along an isotropic line that intersects the sphere $\mathcal{M}^2$ in two (possibly imaginary) points that are the tangency points of $\mathcal{V}_i$ with the cyclic $\mathcal{D}$. The plane $H_1$ intersects the plane $Y^{\perp_{\mathcal{Q}}}$ in a line that intersects $\mathcal{M}^2$ in two (possibly imaginary) points. The circle $\mathcal{H}_1$ is contained in $\mathcal{Q}$ and also in $\mathcal{M}^2$. So, the two intersection points $H_1\cap Y^{\perp_{\mathcal{Q}}} \cap \mathcal{M}^2$ are the two (possibly imaginary) tangency points of $\mathcal{H}_1$ with the cyclic $\mathcal{D}$. Similarly, the circle $\mathcal{H}_2$ is twice tangent to $\mathcal{D}$. For any $j\in \Z$, if the quadric $\mathcal{Q}$ is replaced by the quadric that contains the lines $\{M_{i,j}\vee M_{i,j+1}\}_{i \in \Z}$, the same reasoning shows that the circles $\mathcal{H}_j$ and $\mathcal{H}_{j+1}$ are twice tangent to the cyclic $\mathcal{D}$. Therefore, the circles $\{\mathcal{H}_j\}_{j\in \Z}$ are twice tangent to the cyclic $\mathcal{D}$.
\end{proof}

\begin{cor}\label{cor: circularcircularR2confocaldeferents}
Let $P: \Z^2 \to \mathbb{R}^2$ be a circular net. For all $i\in \Z$, let the points $\{P_{i,j}\}_{j\in \Z}$ be contained in a circle $\mathcal{V}_i$. For all $j \in \Z$, let the points $\{P_{i,j}\}_{i\in \Z}$ be contained in a circle $\mathcal{H}_j$. Generically, the following properties hold. (See Figure~\ref{figure: bircircularquarticCASa}.)
\begin{itemize}
\item The centres of the circles $\{\mathcal{H}_j\}_{j\in\Z}$ are contained in a conic $\mathcal{Q}_{\mathcal{H}}$.
\item The centres of the circles $\{\mathcal{V}_i\}_{i\in \Z}$ are contained in a conic $\mathcal{Q}_{\mathcal{V}}$.
\item The conics $\mathcal{Q}_{\mathcal{H}}$ and $\mathcal{Q}_{\mathcal{V}}$ are confocal.
\item The circles $\{\mathcal{H}_j\}_{j\in\Z}$ have a common orthogonal circle $\mathcal{K}_{\mathcal{H}}$.
\item The circles $\{\mathcal{V}_i\}_{i\in\Z}$ have a common orthogonal circle $\mathcal{K}_{\mathcal{V}}$.
\item The circles $\mathcal{K}_{\mathcal{H}}$ and $\mathcal{K}_{\mathcal{V}}$ are orthogonal. 
\end{itemize}
\end{cor}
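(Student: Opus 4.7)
The plan is to work with the M\"obius lift $M : \Z^2 \to \mathcal{M}^2 \subset \R\mathrm{P}^3$ and the corresponding $2$-dimensional projective subspaces $V_i, H_j$ with $\mathcal{V}_i = V_i \cap \mathcal{M}^2$ and $\mathcal{H}_j = H_j \cap \mathcal{M}^2$. From the proof of Proposition~\ref{prop: circularcircularR2R3}(ii), the planes $\{V_i\}$ are tangent planes of a quadratic cone $\mathcal{V}$ with apex $Y := \bigcap_{i\in\Z} V_i$, the planes $\{H_j\}$ are tangent planes of a quadratic cone $\mathcal{H}$ with apex $X := \bigcap_{j\in\Z} H_j$, and the three quadrics $\mathcal{V}$, $\mathcal{H}$, $\mathcal{M}^2$ lie in a single pencil. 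This cone/pencil picture is the structural input that drives everything else.

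For the common orthogonal circles and their mutual orthogonality, I set $\mathcal{K}_\mathcal{V} := Y^\perp \cap \mathcal{M}^2$ and $\mathcal{K}_\mathcal{H} := X^\perp \cap \mathcal{M}^2$, with $\perp$ denoting polarity in $\mathcal{M}^2$. Since $Y$ lies in every $V_i$, the pole $V_i^\perp$ of $\mathcal{V}_i$ lies in $Y^\perp$, which is the M\"obius-geometric condition for $\mathcal{V}_i \perp \mathcal{K}_\mathcal{V}$; the same reasoning yields $\mathcal{H}_j \perp \mathcal{K}_\mathcal{H}$ for all $j$. The M\"obius poles of $\mathcal{K}_\mathcal{H}$ and $\mathcal{K}_\mathcal{V}$ are exactly $X$ and $Y$, so $\mathcal{K}_\mathcal{V} \perp \mathcal{K}_\mathcal{H}$ is equivalent to $X, Y$ being conjugate with respect to $\mathcal{M}^2$. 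This conjugacy follows by applying Corollary~\ref{cor: mxmgridthm} to any $[3]\times[3]$ subpatch of $M$, all of whose vertices lie in $\mathcal{M}^2$, so that $P_{3,3} \in \mathcal{M}^2$ forces the conjugacy.

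For the conics of centres, the key observation is that the Euclidean centre of the circle $V \cap \mathcal{M}^2$ depends projective-linearly on the coefficients of the plane $V \in (\R\mathrm{P}^3)^*$: this is a projection $\pi : (\R\mathrm{P}^3)^* \dashrightarrow \overline{\R^2}$ whose unique indeterminacy point is the tangent plane $T_N \mathcal{M}^2$. The tangent planes to the cone $\mathcal{V}$ form a conic in $(\R\mathrm{P}^3)^*$ (sitting inside the $2$-dimensional subspace of planes through $Y$), and $\pi$ maps this conic to a conic $\mathcal{Q}_\mathcal{V}$ in $\R^2$ that contains the Euclidean centres of the $\mathcal{V}_i$; the same argument produces $\mathcal{Q}_\mathcal{H}$. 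Confocality of $\mathcal{Q}_\mathcal{V}$ and $\mathcal{Q}_\mathcal{H}$ then follows by combining Proposition~\ref{prop: circularcircularR2R3}(ii) -- which identifies $\{\mathcal{V}_i\}$ and $\{\mathcal{H}_j\}$ as the two one-parameter families of circles doubly tangent to a common cyclic $\mathcal{D}$ -- with the classical theorem (see Appendix~\ref{appendix: mobiusgeo}) that these two families have centres on the two focal conics of $\mathcal{D}$, which are confocal by definition.

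The main obstacle is precisely this last step. Although the pencil of quadrics spanned by $\mathcal{V}, \mathcal{H}, \mathcal{M}^2$ in $\R\mathrm{P}^3$ seems to encode the confocality directly, the projection $\pi$ is not defined on quadrics but on planes, and the dual quadrics of the members of the pencil do not push forward naively to a pencil of conics in $\R^2$. Consequently I would rely on the classical theory of bicircular quartics to extract the confocality, rather than attempt to construct the appropriate pencil in $\R^2$ from scratch; a fully self-contained proof is possible by a coordinate calculation in the stereographic chart, identifying the absolute pair of circular points as the distinguished feature pinning down confocality, but this is the delicate point of the argument.
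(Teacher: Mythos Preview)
Your argument is correct, and for the confocality you end up invoking exactly the same ingredient the paper does: Proposition~\ref{prop: circularcircularR2R3}(ii) identifies $\{\mathcal{V}_i\}$ and $\{\mathcal{H}_j\}$ as two generations of a single cyclic $\mathcal{D}$, and then Theorem~\ref{thm: cyclide} in the appendix supplies the confocal conics of centres, the common orthogonal circles, and their mutual orthogonality, all at once. The paper's proof is literally just those two citations.

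Where you differ is that you rederive four of the six bullets (the two common orthogonal circles, their orthogonality, and the existence of the two centre-conics) directly from the M\"obius picture, via $\mathcal{K}_\mathcal{V}=Y^\perp\cap\mathcal{M}^2$, $\mathcal{K}_\mathcal{H}=X^\perp\cap\mathcal{M}^2$, Corollary~\ref{cor: mxmgridthm} for $X\perp Y$, and the observation that ``plane $\mapsto$ centre of its circular section'' is the projective map $\sigma\circ\perp$, which carries the dual conic of the cone $\mathcal{V}$ to a conic in $\overline{\R^2}$. These arguments are clean and give explicit identifications that the paper's proof leaves implicit inside Theorem~\ref{thm: cyclide}. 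The trade-off is that, since you still need Theorem~\ref{thm: cyclide} for the confocality anyway, the direct work on the other four bullets is formally redundant; once you cite that theorem it delivers everything. Your honest flagging of confocality as the one genuinely non-elementary step is accurate: the paper does not attempt a self-contained argument there either.
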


\begin{figure}[htbp]
\[\includegraphics[width=0.7\textwidth]{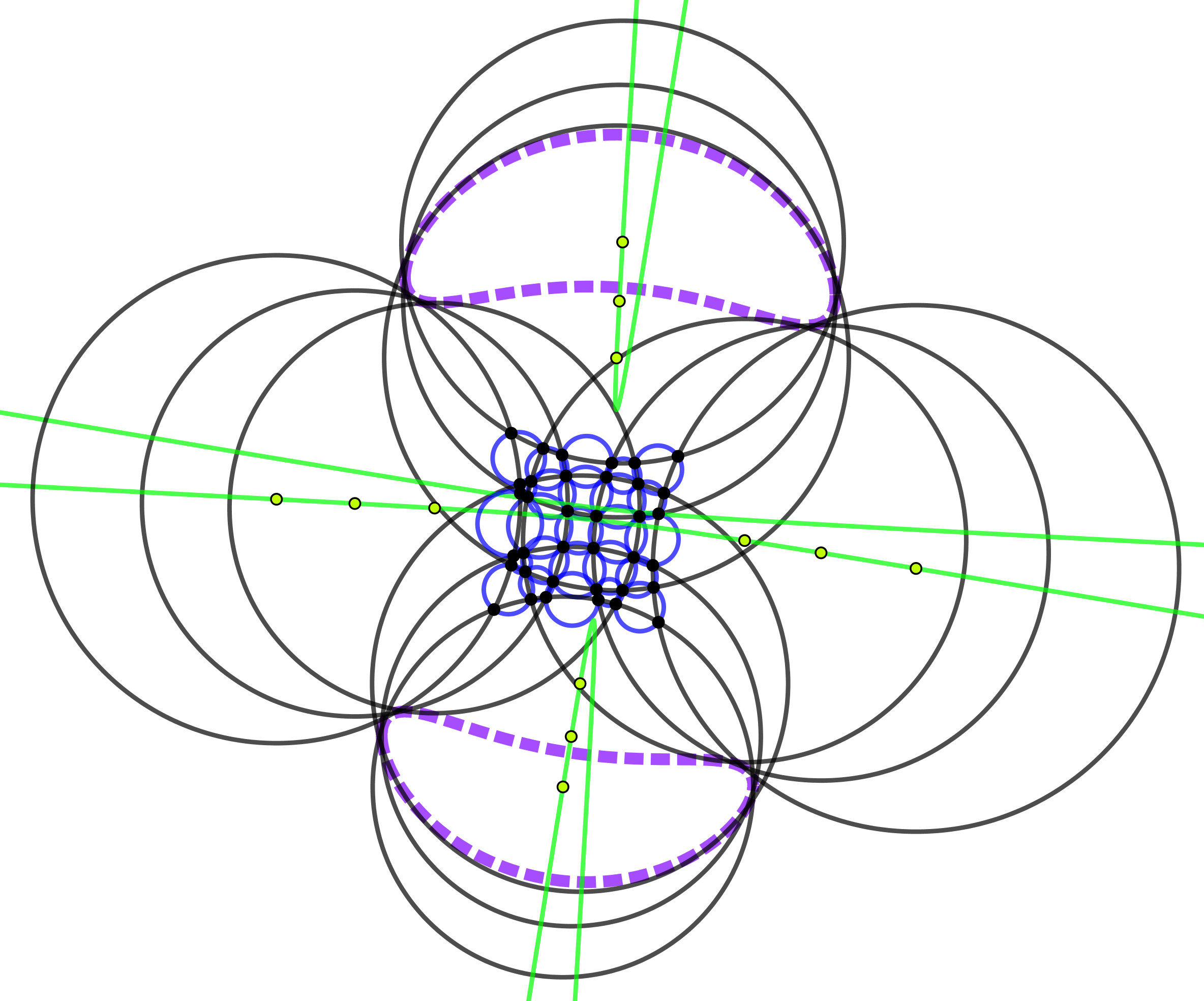}\]
\caption{A circular net $[6]\times [6]\to \R^2$ with circular parameter lines. The circles of the circular parameter lines are twice tangent to a cyclic, which is the dashed curve with two components. The centres of the circles of the circular parameter lines are contained in a pair of confocal conics.}
\label{figure: bircircularquarticCASa}
\end{figure}

\begin{proof}
By the case $n=2$ of Theorem~\ref{prop: circularcircularR2R3}, the circles of the circular parameter lines envelop a cyclic. The circles $\{\mathcal{H}_j\}_{j\in \Z}$ provide one generation of the cyclic. The circles $\{\mathcal{V}_i\}_{i\in \Z}$ provide another generation of the cyclic. Therefore, by Theorem~\ref{thm: cyclide}, the six properties are satisfied.
\end{proof}

In Corollary~\ref{cor: circularcircularR2confocaldeferents}, it can happen that one of the circles $\mathcal{K}_\mathcal{H}$ or $\mathcal{K}_\mathcal{V}$ is imaginary. However, $\mathcal{K}_\mathcal{H}$ and $\mathcal{K}_\mathcal{V}$ cannot both be imaginary.

In classical differential geometry, smooth surfaces in $\R^3$ such that both families of curvature lines are circles are known as \emph{Dupin cyclides}. The Darboux cyclides in the case $n=3$ of Proposition~\ref{prop: circularcircularR2R3} are generically not Dupin cyclides. Darboux cyclides are algebraic surfaces that are more general than Dupin cyclides.


In the proof of the case $n=3$ of Proposition~\ref{prop: circularcircularR2R3}, Theorem~\ref{thm: pencilofquadrics} implies that the spaces $\{V_i\}_{i\in\Z}$ decompose alternatingly into two systems of generators $\{V_{2i}\}_{i\in\Z}$ and $\{V_{2i+1}\}_{i\in\Z}$ of a quadric $\mathcal{V}$ that generically has signature $(++--0)$. Similarly, the spaces $\{H_j\}_{j\in\Z}$ decompose alternatingly into two systems of generators $\{H_{2j}\}_{j\in\Z}$ and $\{H_{2j+1}\}_{j\in\Z}$ of a quadric $\mathcal{H}$ that generically has signature $(++--0)$. Therefore, both families of circular parameter lines have an alternation phenomenon. For the circular net $M: \Z^2 \to \mathcal{M}^3$ to admit a smooth limit, one should remove the alternation phenomena. This can be achieved by requiring that $\mathcal{V}$ and $\mathcal{H}$ both degenerate to quadrics of signature $(++-00)$. Then, $\cap_{i\in \Z}V_i$ and $\cap_{j\in \Z}H_j$ are both $1$-dimensional. Then, Proposition~\ref{prop: 3x3X1D} implies that $\mathcal{L}_A M$ and $\mathcal{L}_B M$ are both Laplace degenerate. Using the terminology in \cite{bobenko2019multi}, this means that $M$ is a \emph{multi-circular net} with circular parameter lines.

\begin{prop}\label{prop: 3x3X1D}
Let $P: [3]\times [3] \to \R \mathrm{P}^n$ be a Q-net, where $n \geq 3$. For all $j\in [3]$, let $H_j:= \mathrm{join}\{P_{i,j}\}_{i\in [3]}$ be $2$-dimensional. If $\cap_{j\in [3]}H_j$ is $1$-dimensional, then $\mathcal{L}_A P$ is Laplace degenerate.
\end{prop}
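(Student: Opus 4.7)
The plan is to denote the common line $L := \cap_{j\in [3]}H_j$ and show that both Laplace points in each column of $\mathcal{L}_A P$ coincide with the intersection of $L$ with the middle line $P_{i,2}\vee P_{i+1,2}$.

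First I would observe that because $\cap_{j\in [3]}H_j$ is one-dimensional, the line $L$ sits in each pairwise intersection $H_j\cap H_{j'}$. Under the genericity assumption we cannot have $H_j=H_{j'}$, so each pairwise intersection is exactly one-dimensional, hence equals $L$. Next, since $P$ is a Q-net and $H_j = \mathrm{join}\{P_{1,j},P_{2,j},P_{3,j}\}$, every edge-line $P_{i,j}\vee P_{i+1,j}$ of the $j$-th parameter line lies inside the plane $H_j$.

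The main step is the following observation. The Laplace point
\[
A_{1,1} \;=\; (P_{1,1}\vee P_{2,1})\cap (P_{1,2}\vee P_{2,2})
\]
lies simultaneously in $H_1$ (through the line $P_{1,1}\vee P_{2,1}$) and in $H_2$ (through the line $P_{1,2}\vee P_{2,2}$), hence in $H_1\cap H_2 = L$. On the other hand, $A_{1,1}$ lies on the middle line $P_{1,2}\vee P_{2,2}$ by definition. Therefore $A_{1,1}$ equals the intersection of $L$ with the line $P_{1,2}\vee P_{2,2}$ inside the plane $H_2$ (two distinct lines of $H_2$, so generically meeting in a single point). The very same argument applied to $A_{1,2}=(P_{1,2}\vee P_{2,2})\cap (P_{1,3}\vee P_{2,3})$ yields $A_{1,2}\in H_2\cap H_3 = L$ together with $A_{1,2}\in P_{1,2}\vee P_{2,2}$, so $A_{1,2}$ equals the same point $L\cap (P_{1,2}\vee P_{2,2})$. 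Hence $A_{1,1}=A_{1,2}$, and a parallel argument with the edge $P_{2,2}\vee P_{3,2}$ gives $A_{2,1}=A_{2,2}$. This is precisely Laplace degeneracy of $\mathcal{L}_A P$.

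There is no real obstacle here beyond verifying the dimension count: the only subtle point is ruling out degenerate coincidences such as $P_{1,2}\vee P_{2,2} = L$ or $H_j = H_{j'}$, which are excluded by the genericity assumption. Once these are ruled out, the argument reduces to the elementary observation that two distinct lines in a common plane meet in a single point.
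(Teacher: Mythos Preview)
Your proof is correct and rests on the same key observation as the paper's: each Laplace point $A_{i,j}$ lies on the common line $L=\cap_j H_j$ (via the pairwise intersections $H_j\cap H_{j'}=L$) and on the middle edge-line $P_{i,2}\vee P_{i+1,2}$, forcing $A_{i,1}=A_{i,2}$. The paper reaches the same conclusion but packages it as a two-step proof by contradiction (supposing first that both equalities fail, then that only one fails, and deriving in each case that $P_{1,2},P_{2,2},P_{3,2}$ are collinear); your direct argument is cleaner and invokes genericity in a more transparent place.
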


\begin{proof}
Let $l:= \cap_{j\in [3]}H_j$ be $1$-dimensional. Consider the Laplace points $\{A_{i,j}\mid (i,j)\in [2]\times [2]\}$.   Looking for a contradiction, suppose that $A_{1,1}\neq A_{1,2}$ and $A_{2,1}\neq A_{2,2}$. Then, $l= A_{1,1}\vee A_{1,2}= A_{2,1} \vee A_{2,2}$. Evidently, $A_{1,1}\vee A_{1,2}= P_{1,2} \vee P_{2,2}$ and $A_{2,1} \vee A_{2,2}= P_{2,2} \vee P_{3,2}$. Then, the points $P_{1,2}$, $P_{2,2}$ and $P_{3,2}$ are collinear. This contradicts the assumption that $H_2$ is $2$-dimensional. Therefore, at least one of the equations $A_{1,1}= A_{1,2}$ and $A_{2,1}= A_{2,2}$ must be valid. Looking for a contradiction, suppose that only one of the equations is valid. Without loss of generality, suppose that $A_{1,1}= A_{1,2}$ and $A_{2,1} \neq A_{2,2}$. Then, $l= A_{2,1}\vee A_{2,2}$. Equivalently, $l = P_{2,2}\vee P_{3,2}$ because generically $P_{2,2}\neq P_{3,2}$. $A_{1,1}=A_{1,2}$ is contained in the line $l$ because $A_{1,1} \in H_1 \cap H_2$ and $A_{1,2}\in H_2 \cap H_3$. Therefore, $A_{1,1}\in l$ and $P_{2,2} \in l$. Generically, $A_{1,1} \neq P_{2,2}$. Then, $l=A_{1,1}\vee P_{2,2}$. The line $A_{1,1}\vee P_{2,2}$ contains $P_{1,2}$. Therefore, the points $P_{1,2}$, $P_{2,2}$ and $P_{3,2}$ are contained in the line $l$. This contradicts the assumption that $H_2$ is $2$-dimensional. Therefore, $A_{1,1}= A_{1,2}$ and $A_{2,1}= A_{2,2}$.
\end{proof}

\subsection{One Family Linear and the Other Family Circular}

\begin{prop}\label{thm: linearcircular}
Consider a function $P: [4]\times [3]-\{(4,3)\} \to \R^n$ where $n\in \{2,3\}$. For each $j \in [2]$, let the points $\{P_{i,j}\}_{i\in [4]}$ be contained in a circle $\mathcal{H}_j$. Let $\mathcal{H}_3$ be the circle containing the points $P_{1,3},P_{2,3},P_{3,3}$. For each $i \in [3]$, let the points $\{P_{i,j}\}_{j\in [3]}$ be contained in a line $\mathcal{V}_i$. Let $\mathcal{V}_4$ be the line $P_{4,1}\vee P_{4,2}$. Suppose that $\forall (i,j) \neq (3,2)$ the quad $\Box_{i,j}$  is circular. There exists a unique point $P_{4,3}$ such that the quad $\Box_{3,2}$ is circular and such that $P_{4,3}$ is contained in $\mathcal{V}_4 \cap \mathcal{H}_3$. (See Figure~\ref{figure: linearcircular4x3}.)
\end{prop}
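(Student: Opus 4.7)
The plan is to lift $P$ to the M\"obius quadric $\mathcal{M}^n \subset \R\mathrm{P}^{n+1}$ and apply Theorem~\ref{thm: mxmincidencethm} in direct analogy with the proof of Proposition~\ref{thm: circularcircular4x4}.

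First I would introduce the M\"obius lift $M\colon [4]\times[3]\setminus\{(4,3)\}\to \mathcal{M}^n$. For each $j\in[3]$, let $H_j\subset\R\mathrm{P}^{n+1}$ be the $2$-dimensional projective subspace containing the lifted circle $\mathcal{H}_j$; for each $i\in[4]$, let $V_i\subset \R\mathrm{P}^{n+1}$ be the $2$-dimensional projective subspace containing the lifted column $i$. Since $\mathcal{V}_i$ is a line in $\R^n$ for every $i\in[4]$, each $V_i$ passes through the point $N$; in particular, $V_4 = M_{4,1}\vee M_{4,2}\vee N$.

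Next I would match our configuration to the hypotheses of Theorem~\ref{thm: mxmincidencethm} with $d=2$. The $3\times 3$ sub-net $M\colon[3]\times[3]\to \mathcal{M}^n$ is a Q-net inscribed in the M\"obius quadric with $2$-dimensional parameter subspaces $V_1,V_2,V_3,H_1,H_2,H_3$, and the additional point $M_{4,1}\in \mathcal{M}^n\cap H_1$ plays the role of the extension datum $P_{d+2,1}$ needed to grow the grid by one new column. The construction in the proof of Theorem~\ref{thm: mxmincidencethm} determines $M_{4,2}$ uniquely as the second intersection of the line $H_2\cap(M_{3,1}\vee M_{4,1}\vee M_{3,2})$ with $\mathcal{M}^n$; the given $M_{4,2}$ agrees with this, since circularity of $\Box_{3,1}$ forces it to lie there. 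Continuing the same construction, I would then define
\[
M_{4,3} := V_4 \cap H_3\cap (M_{3,2}\vee M_{4,2}\vee M_{3,3}),
\]
and Theorem~\ref{thm: mxmincidencethm} would yield that this is a uniquely determined point on $\mathcal{M}^n$. Setting $P_{4,3}:=\sigma(M_{4,3})$, the memberships $M_{4,3}\in V_4$ and $M_{4,3}\in H_3$ give $P_{4,3}\in \mathcal{V}_4\cap \mathcal{H}_3$, and the coplanarity of the four points $M_{3,2}, M_{4,2}, M_{4,3}, M_{3,3}$ on $\mathcal{M}^n$ gives that $\Box_{3,2}$ is circular. Uniqueness of $P_{4,3}$ follows immediately from the triple-intersection construction.

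The main obstacle is justifying that the triple intersection is indeed a well-defined point of $\mathcal{M}^n$ rather than empty. For $n=2$ the ambient is $\R\mathrm{P}^3$, in which three $2$-planes meet generically in a point by dimension count, so this case is immediate. For $n=3$ the ambient is $\R\mathrm{P}^4$, in which three $2$-planes are generically disjoint; here the non-trivial incidence is precisely what Theorem~\ref{thm: mxmincidencethm} (via Theorem~\ref{thm: mxmiteratedlaplaceconjugate} and Corollary~\ref{cor: mxmgridthm}) supplies, using the Q-net structure together with the inscription of the subgrid in the non-degenerate M\"obius quadric.
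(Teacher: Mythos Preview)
Your approach is correct and essentially the same as the paper's: both lift to the M\"obius quadric, note that all the column planes $V_i$ pass through $N$, define $M_{4,3}:=V_4\cap H_3\cap(M_{3,2}\vee M_{4,2}\vee M_{3,3})$, and then argue it lies on $\mathcal{M}^n$. The only cosmetic difference is that the paper invokes Corollary~\ref{cor: mxmgridthm} twice directly---once on the $[3]\times[3]$ block to extract the conjugacy $X\perp N$ (where $X=H_1\cap H_2\cap H_3$ and $N=\cap_i V_i$), and once on the shifted block $\{M_{i,j}:i\in\{2,3,4\},\,j\in[3]\}$ to conclude $M_{4,3}\in\mathcal{M}^n$---whereas you route the same two steps through the column-extension part of the proof of Theorem~\ref{thm: mxmincidencethm}.
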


\begin{figure}[htbp]
\[\includegraphics[width=0.4\textwidth]{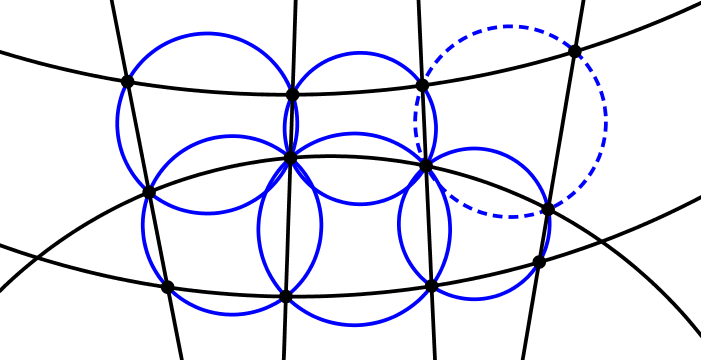}\]
\caption{A circular net $[4]\times [3]\to \R^2$ with one family of circular parameter lines and one family of linear parameter lines. Proposition~\ref{thm: linearcircular} ensures the existence of the dashed circle.}
\label{figure: linearcircular4x3}
\end{figure}

\begin{proof}
Let $M: [4]\times [3]-\{(4,3)\} \to \mathcal{M}^n \subset \R \mathrm{P}^{n+1}$ be the lift of $P$. Let $V_i$ and $H_j$ denote the $2$-dimensional projective subspaces of $\R\mathrm{P}^{n+1}$ such that $\sigma (V_i \cap \mathcal{M}^n) = \mathcal{V}_i$ and $\sigma (H_j \cap \mathcal{M}^n) = \mathcal{H}_j$. Generically, $V_1\cap V_2 \cap V_3 \cap V_4$ is a point. It must be the point $N$ because the $2$-dimensional spaces $\{V_i\}_{i\in [4]}$ contain $N$. Define $X:= H_1 \cap H_2 \cap H_3$. Generically, $X$ is a point. By applying Corollary~\ref{cor: mxmgridthm} to the restricted Q-net $M: [3]\times [3] \to \mathcal{M}^n$, the points $X$ and $N$ are conjugate relative to $\mathcal{M}^n$. Let $M_{4,3}:= V_4\cap H_3 \cap (M_{3,2} \vee M_{4,2} \vee M_{3,3})$. By applying Corollary~\ref{cor: mxmgridthm} to the points $\{M_{i,j} \mid i\in \{2,3,4\}, j\in [3]\}$, the point $M_{4,3}$ is contained in $\mathcal{M}^n$ because $X$ and $N$ are conjugate relative to $\mathcal{M}^n$. The point $\sigma(M_{4,3})$ is the required point $P_{4,3}$.
\end{proof}

Proposition~\ref{thm: linearcircular} can be used iteratively to construct circular nets $\Z^2 \to \R^n$, where $n\in \{2,3\}$, such that the parameter lines are linear in one family and circular in the other family.

\begin{prop}\label{prop: linearcircularquadricconcic}
Let $P: \Z^2 \to \R^n$, where $n\in \{2,3\}$, be a circular net. For all $i \in \Z$, let the points $\{P_{i,j}\}_{j\in\Z}$ be contained in a line $\mathcal{V}_i$. For all $j \in \Z$, let the points $\{P_{i,j}\}_{i\in\Z}$ be contained in a circle $\mathcal{H}_j$. 
\begin{enumerate}[label=(\roman*)]
\item
Let $n=3$. The lines $\{\mathcal{V}_i\}_{i\in \Z}$ are isotropic lines of a $2$-dimensional quadric $\mathcal{R}$ and the circles $\{\mathcal{H}_j\}_{j\in \Z}$ are contained in $\mathcal{R}$. 
\item
Let $n=2$. The lines $\{\mathcal{V}_i\}_{i\in \Z}$ are tangent lines of a conic $\mathcal{R}$ and the circles $\{\mathcal{H}_j\}_{j\in \Z}$ are twice tangent to $\mathcal{R}$. The centres of the circles $\{\mathcal{H}_j\}_{j\in \Z}$ are collinear. (See Figure~\ref{figure: dfbn}.)
\end{enumerate}
\end{prop}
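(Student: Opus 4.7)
The plan is to lift $P$ to the M\"obius quadric $\mathcal{M}^n\subset\R\mathrm{P}^{n+1}$ via $M:\Z^2\to\mathcal{M}^n$ and then apply the results of Section~\ref{section: gridinquadric} to obtain an auxiliary quadric whose central projection from $N$ is the desired $\mathcal{R}$. Throughout, write $V_i$ and $H_j$ for the $2$-dimensional projective subspaces of $\R\mathrm{P}^{n+1}$ with $\sigma(V_i\cap\mathcal{M}^n)=\mathcal{V}_i$ and $\sigma(H_j\cap\mathcal{M}^n)=\mathcal{H}_j$. Because $\mathcal{V}_i$ is a line, every $V_i$ contains $N$; because $\mathcal{H}_j$ is a proper circle, no $H_j$ contains $N$.

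For (i), I would apply the case $m=2$ of Theorem~\ref{thm: pencilofquadrics} to $M:\Z^2\to\mathcal{M}^3\subset\R\mathrm{P}^4$. This produces a quadric $\mathcal{V}$ of signature $(2,2,1)$ having the planes $\{V_i\}_{i\in\Z}$ among its generators. Since every generator of such a quadric passes through its unique singular point and each $V_i$ contains $N$, the apex of $\mathcal{V}$ must be $N$. Set $\mathcal{R}:=\sigma(\mathcal{V})$; because $\mathcal{V}$ is a projective cone with apex $N$, the projection $\sigma$ collapses each ruling to a single point and $\mathcal{R}$ is a $2$-dimensional quadric surface in $\R^3$. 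The inclusion $V_i\subset\mathcal{V}$ then gives $\mathcal{V}_i=\sigma(V_i)\subset\mathcal{R}$, so $\mathcal{V}_i$ is a line on the quadric surface $\mathcal{R}$, hence isotropic. For the circles, I would observe that $H_j\cap\mathcal{M}^3$ and $H_j\cap\mathcal{V}$ are both conics in the plane $H_j$ and share the infinitely many points $\{M_{i,j}\}_{i\in\Z}$; they therefore coincide, so $H_j\cap\mathcal{M}^3\subset\mathcal{V}$ and projecting gives $\mathcal{H}_j\subset\mathcal{R}$.

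For (ii), I would mirror the proof of Proposition~\ref{prop: circularcircularR2R3}(ii), with $N$ in the role of the common point $Y$. Restricting to the strip $\Z\times[2]$ and applying Lemma~\ref{lem: mxZstripisoquadric} gives a quadric $\mathcal{Q}'$ of signature $(++--)$ in $\R\mathrm{P}^3$ having the lines $M_{i,1}\vee M_{i,2}$ as isotropic lines. The planes $V_i$ are then tangent to the quadratic cone $\mathcal{V}=N\vee(N^{\perp_{\mathcal{Q}'}}\cap\mathcal{Q}')$ with apex $N$, and $\mathcal{R}:=\sigma(\mathcal{V})$ is a conic in $\R^2$ to which each $\mathcal{V}_i$ is tangent at the $\sigma$-image of the ruling $V_i\cap\mathcal{V}$. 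The double tangency of each $\mathcal{H}_j$ with $\mathcal{R}$ then follows exactly as in the proof of Proposition~\ref{prop: circularcircularR2R3}(ii), by identifying the two tangency points with $\sigma(H_j\cap N^{\perp_{\mathcal{Q}'}}\cap\mathcal{M}^2)$.

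For the collinearity of the centres, Theorem~\ref{thm: mxmincidencethm} and Corollary~\ref{cor: mxmgridthm} imply that $X:=\cap_{j\in\Z}H_j$ is a single point, conjugate to $N$ relative to $\mathcal{M}^2$, so that $N\in X^\perp$. Denoting by $Q_j$ the pole of the plane $H_j$ relative to $\mathcal{M}^2$, the condition $X\in H_j$ is equivalent to $Q_j\in X^\perp$; hence $N\vee Q_j\subset X^\perp$. The M\"obius lift of the centre of $\mathcal{H}_j$ is the second intersection of $N\vee Q_j$ with $\mathcal{M}^2$ and therefore lies in $X^\perp\cap\mathcal{M}^2$. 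Because $N\in X^\perp$ forces this intersection to pass through $N$, its $\sigma$-image is a line in $\R^2$ containing every centre. The main obstacle is verifying compatibility of the constraint $N\in V_i$ with the genericity hypotheses of Theorem~\ref{thm: pencilofquadrics} and Lemma~\ref{lem: mxZstripisoquadric}, and in particular identifying $N$ as the apex of the auxiliary cone $\mathcal{V}$; once that is done, the translation from projective incidences to affine statements in $\R^n$ via the central projection $\sigma$ is routine.
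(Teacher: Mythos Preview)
Your proposal is correct, and for part (ii) and the collinearity statement it matches the paper's argument almost verbatim (Lemma~\ref{lem: mxZstripisoquadric} on the strip $\Z\times[2]$, then the polar-of-$N$ construction, then Corollary~\ref{cor: mxmgridthm} for $X\perp N$ and Lemma~\ref{lem: stereoproj} for the centres).

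For part (i) you take a genuinely different route. The paper does \emph{not} invoke Theorem~\ref{thm: pencilofquadrics}; it applies Lemma~\ref{lem: mxZstripisoquadric} (case $m=2$, $d=2$) to the strip $\Z\times[2]$ to obtain a $2$-dimensional quadric $\mathcal{Q}$ of signature $(++--)$ sitting inside the $3$-space $H_1\vee H_2\subset\R\mathrm{P}^4$. Because the $H_j$ are proper circles, $N\notin H_1\vee H_2$, so $\sigma$ restricts to a projective isomorphism $H_1\vee H_2\to\overline{\R^3}$ and $\mathcal{R}:=\sigma(\mathcal{Q})$ is immediately a quadric in $\R^3$ with the $\mathcal{V}_i=\sigma(M_{i,1}\vee M_{i,2})$ as rulings. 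This sidesteps entirely the issue you flag at the end: there is no cone in $\R\mathrm{P}^4$ whose apex must be identified with $N$, and no compatibility with the genericity hypotheses of Theorem~\ref{thm: pencilofquadrics} to check. Your approach is more structural---it exhibits $\mathcal{R}$ as the base of the cone $\mathcal{V}$ from the pencil theorem and makes the role of $N$ explicit---while the paper's is more economical, working in one dimension lower throughout. Both yield the same $\mathcal{R}$, since a quadric surface in $\R^3$ containing infinitely many of the $\mathcal{V}_i$ and $\mathcal{H}_j$ is unique.
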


\begin{proof}
Let $M: \Z^2 \to \mathcal{M}^n$ be the lift of $P$. For each $j\in \Z$, let $H_j \subset \R \mathrm{P}^{n+1}$ be the $2$-plane such that $\sigma(H_j \cap \mathcal{M}^n) = \mathcal{H}_j$. For each $i\in \Z$, let $V_i \subset \R \mathrm{P}^{n+1}$ be the $2$-plane such that $\sigma(V_i \cap \mathcal{M}^n) = \mathcal{V}_i$. By the case $m=2$ and $d=2$ of Lemma~\ref{lem: mxZstripisoquadric}, the lines $\{M_{i,1} \vee M_{i,2}\}_{i\in\Z}$ are isotropic lines of a $2$-dimensional quadric $\mathcal{Q}$ that is contained in the $3$-dimensional projective subspace $H_1 \vee H_2$ of $\R\mathrm{P}^{n+1}$. Generically, the quadric $\mathcal{Q}$ has signature $(++--)$.

Let $n=3$. Generically, the point $N$ is not contained in the hyperplane $H_1 \vee H_2 \subset \R\mathrm{P}^4$. Then, the projection $\sigma: H_1 \vee H_2 \to \overline{\R^3}$ with centre $N$ is a projective transformation. The signature of $\mathcal{Q}$ is preserved under any projective transformation. So, $\sigma(\mathcal{Q})$ has signature $(++--)$. The quadric  $\sigma(\mathcal{Q})$ is the required quadric $\mathcal{R}$.

Let $n=2$. Let $N^{\perp_\mathcal{Q}}$ be the polar plane of $N$ relative to $\mathcal{Q}$. Generically, $N$ is not contained in $\mathcal{Q}$. Then, $N^{\perp_\mathcal{Q}} \cap \mathcal{Q}$ is a conic of signature $(++-)$. Thus, $\sigma (N^{\perp_\mathcal{Q}} \cap \mathcal{Q})$ is a conic in $\overline{\R^2}$ of signature $(++-)$. The isotropic lines $\{M_{i,1} \vee M_{i,2}\}_{i\in\Z}$ project under $\sigma$ to tangent lines of the conic $\sigma (N^{\perp_\mathcal{Q}} \cap \mathcal{M}^2)$. In other words, the lines $\{\mathcal{V}_i\}_{i\in \Z}$ are tangent lines of $\sigma (N^{\perp_\mathcal{Q}} \cap \mathcal{Q})$. For $j\in [2]$, $H_j$ intersects $N^{\perp_\mathcal{Q}}$ in a line that intersects $\mathcal{M}^2$ is a pair of (possibly imaginary) points that are contained in $\mathcal{Q}$ since $\mathcal{Q}$ contains $H_1 \cap \mathcal{M}^2$ and $H_2 \cap \mathcal{M}^2$. So, for $j\in [2]$, the circle $\mathcal{H}_j$ is tangent to the conic $\mathcal{R}:= \sigma (N^{\perp_\mathcal{Q}} \cap \mathcal{Q})$ at the two points $\sigma (H_j \cap N^{\perp_{\mathcal{Q}}}\cap \mathcal{M}^2)$. Similarly, all of the circles $\{\mathcal{H}_j\}_{j\in \Z}$ are twice tangent to the conic $\mathcal{R}$. Generically, $X:= H_1 \cap H_2 \cap H_3$ is a point and $N= \cap_{i\in \Z}V_i$. By Corollary~\ref{cor: mxmgridthm}, $X$ and $N$ are conjugate relative to $\mathcal{M}^2$. So, the poles $H_1^\perp, H_2^\perp$ and $H_3^\perp$ are points that are contained in the polar plane $X^{\perp_{\mathcal{M}^2}}$ that contains $N$. Thus, $\sigma(H_1^\perp), \sigma(H_2^\perp)$ and $\sigma(H_3^\perp)$ are collinear points in $\R^3$. Then, Lemma~\ref{lem: stereoproj} implies that the centres of the circles $\mathcal{H}_1$, $\mathcal{H}_2$ and $\mathcal{H}_3$ are collinear. It follows that the centres of the circles $\{\mathcal{H}_j\}_{j\in \Z}$ are collinear.
\end{proof}

\begin{figure}[htbp]
\begin{center}
  \begin{subfigure}[t]{0.65\textwidth}
   \includegraphics[width=\textwidth]{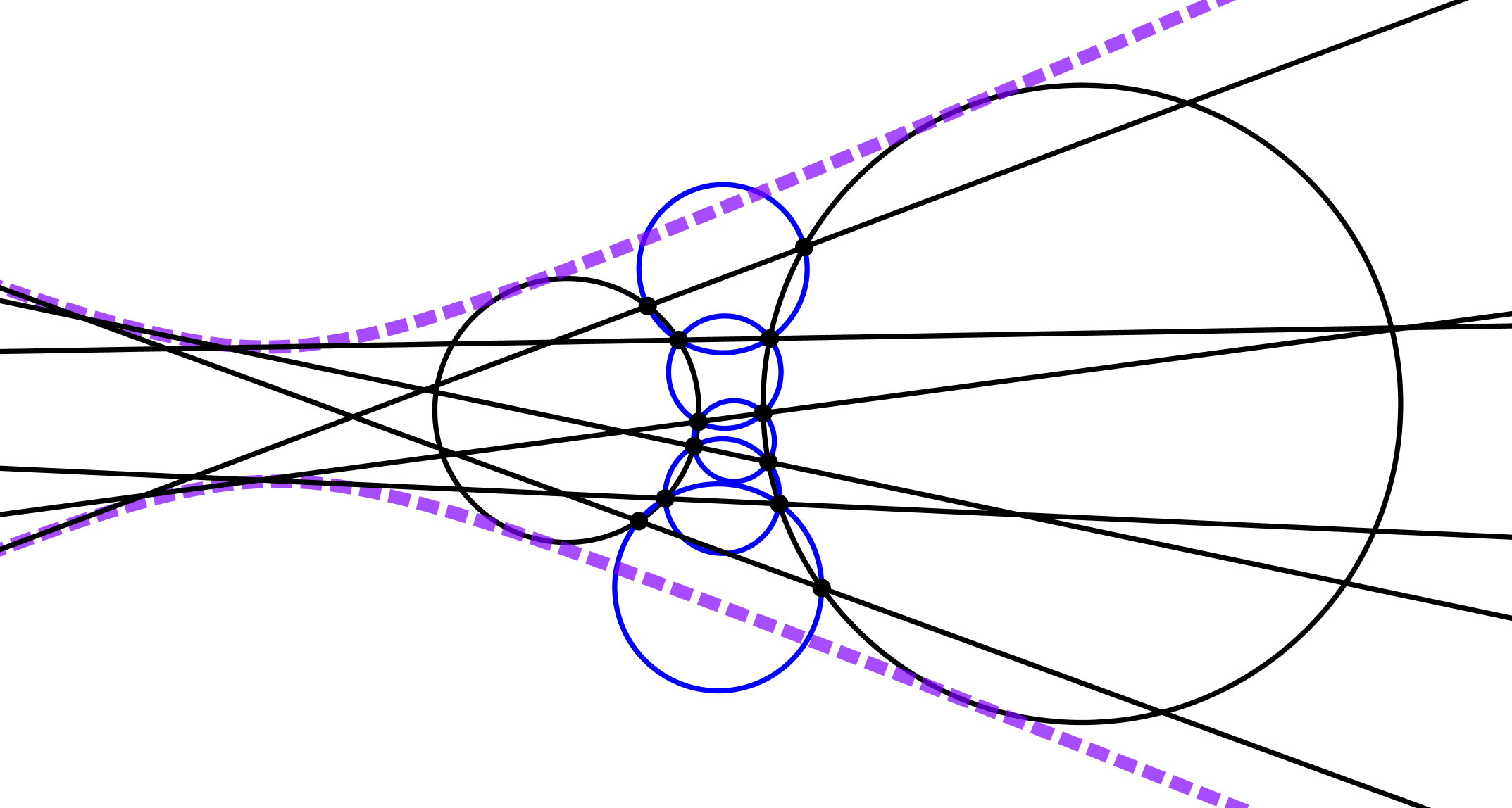}
 \caption{The six lines are tangent to the dashed hyperbola. The two black circles are twice tangent to the hyperbola.}
 \end{subfigure}
\hspace{0.7cm}%

  \begin{subfigure}[t]{0.4\textwidth}
   \includegraphics[width=\textwidth]{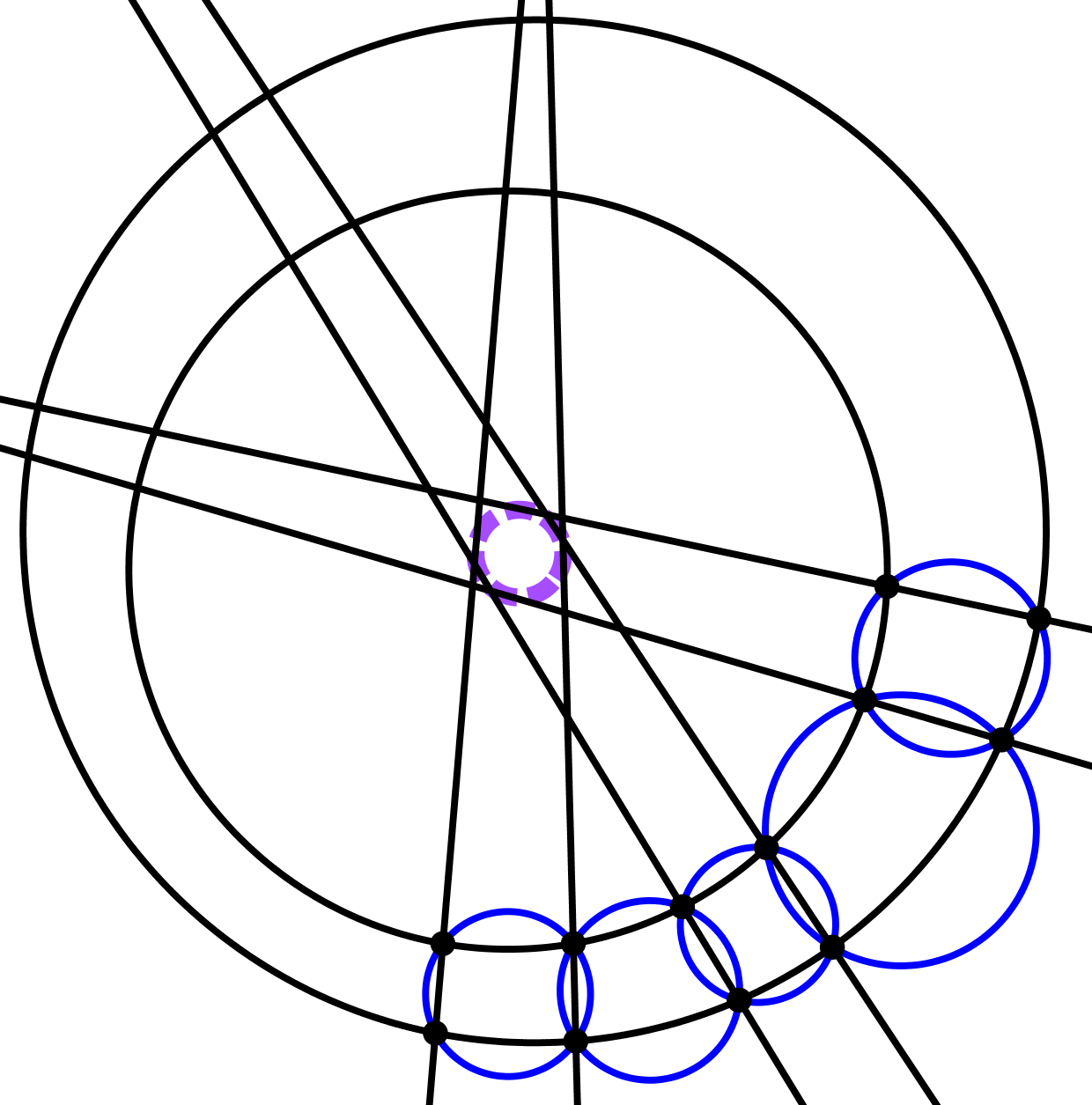}
 \caption{The six lines are tangent to the dashed ellipse. The two black circles have imaginary double contact with the ellipse.}
 \end{subfigure}
\hspace{0.7cm}%
  \begin{subfigure}[t]{0.4\textwidth}
    \includegraphics[width=\textwidth]{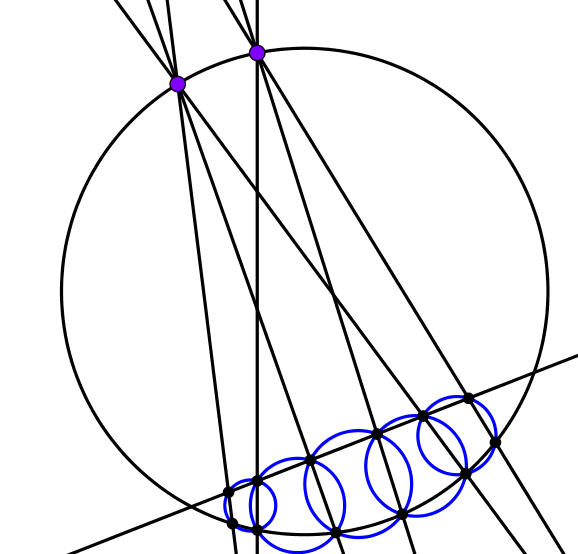}
 \caption{The six lines alternate between two systems of concurrent lines.}
    \end{subfigure}
   \end{center}
  \caption{Three examples of circular nets $P: [6]\times [2] \to \R^2$ with circular parameter lines $\{(P_{i,j})_{i \in [6]}\}_{j\in [2]}$. In case (a), the six lines are tangent to a hyperbola. In case (b), the six lines are tangent to an ellipse. In the case (c), one of the circular parameter lines is a straight line. The six lines are tangent to a degenerate conic.} 
  \label{figure: dfbn}
\end{figure}

As shown in Figure~\ref{figure: dfbn} (b), it is possible that some of the circles $\{\mathcal{H}_j\}_{j \in \Z}$ in the case $n=2$ of Proposition~\ref{prop: linearcircularquadricconcic} have imaginary double contact with the conic $\mathcal{R}$. The conic $\mathcal{R}$ degenerates to a pair of points if the point $N$ is contained in the quadric $\mathcal{Q}$ that is defined in the proof of the case $n=2$ of Proposition~\ref{prop: linearcircularquadricconcic}. An example is shown in Figure~\ref{figure: dfbn} (c).

Developable surfaces are ruled surfaces in $\R^3$ such that the rulings are curvature lines. Joachimsthal's theorem implies that any developable surface in $\R^3$ with one family of circular curvature lines must be a cone (or cylinder) of revolution. However, the circular net $P: \Z^2 \to \R^3$ in the case $n=3$ of Proposition~\ref{prop: linearcircularquadricconcic} is generically contained in a non-degenerate quadric rather than a cone of revolution. 

Let $P: \Z^2 \to \mathbb{R}^3$ be a circular net with linear parameter lines $\{\mathcal{V}_i\}_{i\in \Z}$ and circular parameter lines $\{\mathcal{H}_j\}_{j\in \Z}$. By Proposition~\ref{prop: linearcircularquadricconcic}, the lines  $\{\mathcal{V}_i\}_{i\in \Z}$ and the circles $\{\mathcal{H}_j\}_{j\in \Z}$ are generically contained in a non-degenerate quadric $\mathcal{R}$ of signature $(++--)$. By Proposition~\ref{prop: hyperbolicquadrictwoclasses}, the quadric $\mathcal{R}$ has two systems of $1$-dimensional generators. Any two generators of $\mathcal{R}$ are in different systems if and only if their intersection is $0$-dimensional, i.e. the intersection is a point. Thus, the generators $\{\mathcal{V}_{2i}\}_{i\in \Z}$ and $\{\mathcal{V}_{2i+1}\}_{i\in \Z}$ are in different systems of generators. The circular sections $\{\mathcal{H}_j\}_{j\in \Z}$ of the quadric $\mathcal{R}$ also have an alternation phenomenon. Proposition~\ref{prop: linearlinearparallel} implies that the circles $\{\mathcal{H}_{2j}\}_{j\in \Z}$ are contained in parallel planes and that the circles $\{\mathcal{H}_{2j+1}\}_{j\in \Z}$ are also contained in parallel planes. Generically, the planes of the circles $\{\mathcal{H}_{j}\}_{j\in \Z}$ are not parallel. For the circular net $P$ to admit a smooth limit to a developable surface with one family of circular curvature lines, one should remove both of the alternation phenomena. This can be achieved by requiring that the circles $\{\mathcal{H}_{j}\}_{j\in \Z}$ are contained in parallel planes and that $\mathcal{R}$ degenerates to a quadric of signature $(++-0)$. Then, the quadric $\mathcal{R}$ must be a quadric of revolution. This is consistent with the smooth theory. 

\subsection{Both Families Linear}

\begin{prop}\label{thm: linearlinear}

Consider a function $P: [3]\times [3] \to \R^2$.  For each $j \in [3]$, let the points $\{P_{i,j}\}_{i\in [3]}$ be contained in a line $\mathcal{H}_j$. For each $i \in [3]$, let the points $\{P_{i,j}\}_{j\in [3]}$ be contained in a line $\mathcal{V}_i$. Suppose that $\forall (i,j) \neq (2,2)$ the quad $\Box_{i,j}$ is circular. Then, the quad $\Box_{2,2}$ is also circular. 
\end{prop}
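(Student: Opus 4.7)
The plan is to lift $P$ to the M\"obius quadric and reduce to the incidence result of Corollary~\ref{cor: mxmgridthm}, following the same template as the proofs of Propositions~\ref{thm: circularcircular4x4} and \ref{thm: linearcircular}. First I would let $M:[3]\times[3]\to \mathcal{M}^2\subset \R\mathrm{P}^3$ be the M\"obius lift of $P$. Because each straight line in $\R^2$ lifts to a circle on $\mathcal{M}^2$ through the distinguished point $N$, each of the $2$-dimensional subspaces $V_i := \mathrm{join}\{M_{i,j}\}_{j\in[3]}$ and $H_j := \mathrm{join}\{M_{i,j}\}_{i\in[3]}$ contains $N$; hence generically
\[
Y:=V_1\cap V_2\cap V_3 = \{N\} = H_1\cap H_2 \cap H_3 =: X.
\]
The circularity of $\Box_{1,1}$, $\Box_{2,1}$, $\Box_{1,2}$ translates to these three lifted quads being coplanar in $\R\mathrm{P}^3$.

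Next I would introduce the $2$-plane $\Pi$ spanned by $M_{2,2}$, $M_{3,2}$, $M_{2,3}$ and the line $\ell := V_3 \cap H_3$, which passes through $N$, and then define the auxiliary point $M'_{3,3}:= \Pi \cap \ell$. By construction, the nine points obtained by replacing $M_{3,3}$ with $M'_{3,3}$ form a Q-net $[3]\times[3]\to \R\mathrm{P}^3$, and all eight of its vertices other than $M'_{3,3}$ lie on $\mathcal{M}^2$. Corollary~\ref{cor: mxmgridthm} applied with $m=n=3$ and $\mathcal{Q}=\mathcal{M}^2$ then asserts that $M'_{3,3}\in \mathcal{M}^2$ if and only if $X$ and $Y$ are conjugate relative to $\mathcal{M}^2$. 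Since $X=Y=\{N\}$ and $N\in \mathcal{M}^2$ is self-conjugate, this conjugacy holds automatically, and therefore $M'_{3,3}\in \mathcal{M}^2$.

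To close, I would observe that $\ell$ meets $\mathcal{M}^2$ in exactly two points --- namely $N$ and the lift $M_{3,3}$ of $\mathcal{V}_3\cap \mathcal{H}_3$ --- so the conclusion $M'_{3,3}\in \mathcal{M}^2$, combined with $M'_{3,3}\neq N$ under the genericity assumption (the coincidence $M'_{3,3}=N$ would force $P_{3,3}$ to lie at infinity), forces $M'_{3,3}=M_{3,3}$. This places $M_{3,3}\in \Pi$, showing that the four lifted vertices of $\Box_{2,2}$ are coplanar and hence that $\Box_{2,2}$ is circular. The only delicate point is verifying the genericity hypotheses of Corollary~\ref{cor: mxmgridthm} --- that each $V_i$, $H_j$ has dimension $2$ and that $X$, $Y$ are truly zero-dimensional --- but these follow immediately from the standing genericity assumption, since three points on a circle are never collinear and three distinct planes through a common point of $\R\mathrm{P}^3$ generically meet only at that point.
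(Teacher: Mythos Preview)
Your argument is correct and follows exactly the template the paper indicates (the proof is stated to be analogous to that of Proposition~\ref{prop: 4x4planarplanar}): lift to $\mathcal{M}^2\subset\R\mathrm{P}^3$, observe $X=Y=\{N\}$ because straight lines lift to circles through $N$, and invoke Corollary~\ref{cor: mxmgridthm} with the self-conjugacy of $N\in\mathcal{M}^2$. The extra step of introducing $M'_{3,3}$ and then identifying it with $M_{3,3}$ is the natural adaptation needed because here all nine points are prescribed in advance.

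One small correction: your parenthetical justification for $M'_{3,3}\neq N$ is off. The equality $M'_{3,3}=N$ means $N\in\Pi$, which would force the circle $\Pi\cap\mathcal{M}^2$ to pass through $N$, i.e.\ the three points $P_{2,2},P_{3,2},P_{2,3}$ to be collinear in $\R^2$; \emph{that} is what the genericity assumption rules out (no three vertices of $\Box_{2,2}$ collinear). It says nothing about $P_{3,3}$ lying at infinity --- that would correspond instead to $M_{3,3}=N$.
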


The proof of Proposition~\ref{thm: linearlinear} is analogous to the proof of Proposition~\ref{prop: 4x4planarplanar}.

Proposition~\ref{thm: linearlinear} can be used iteratively to construct circular nets $\Z^2 \to \R^2$ such that both families of parameter lines are straight lines.  The following Proposition~\ref{prop: linearlinearparallel} implies that, for both families of parameter lines, the straight lines alternate between two classes of parallel lines. It is the simplest instance of the alternation phenomena that appears throughout Section~\ref{section: Mobiusapproach}.

\begin{prop}\label{prop: linearlinearparallel}
Let $P: \Z \times [2] \to \R^2$ be a circular net. For all $j \in [2]$, let the points $\{P_{i,j}\}_{i\in \Z}$ be contained in a line $\mathcal{H}_j$. For all $i \in \Z$, let $\mathcal{V}_i$ be the line $P_{i,1} \vee P_{i,2}$. The lines $\{\mathcal{V}_{2i}\}_{i\in \Z}$ are parallel and the lines $\{\mathcal{V}_{2i + 1}\}_{i\in \Z}$ are parallel.
\end{prop}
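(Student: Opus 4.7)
The plan is to reduce everything to the power-of-a-point applied to the intersection of the two horizontal lines. Let $A := \mathcal{H}_1 \cap \mathcal{H}_2$; I will first treat the generic case in which $A$ is an ordinary point and then deal with the case $\mathcal{H}_1 \parallel \mathcal{H}_2$ separately.

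Fix unit vectors $\vec{u}$ and $\vec{v}$ along $\mathcal{H}_1$ and $\mathcal{H}_2$ respectively, based at $A$, and write
\[
P_{i,1} = A + a_i\,\vec{u}, \qquad P_{i,2} = A + b_i\,\vec{v},
\]
where $a_i, b_i \in \R$ are signed distances. Since the four points $P_{i,1}, P_{i+1,1}, P_{i+1,2}, P_{i,2}$ lie on a circle, the power of $A$ with respect to this circle, computed along the two secants $\mathcal{H}_1$ and $\mathcal{H}_2$, gives
\[
a_i\,a_{i+1} \;=\; b_i\,b_{i+1} \qquad \text{for every } i\in\Z.
\]
Equivalently, $a_{i+1}/b_{i+1} = b_i/a_i$, so the ratio $r_i := a_i/b_i$ satisfies $r_{i+1} = 1/r_i$, hence $r_{i+2} = r_i$. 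Therefore $r_{2i}$ is independent of $i$ and $r_{2i+1}$ is independent of $i$.

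The direction vector of $\mathcal{V}_i = P_{i,1}\vee P_{i,2}$ is $P_{i,2}-P_{i,1} = b_i\vec{v} - a_i\vec{u}$, which up to scaling depends only on $r_i$ via $\vec{v} - r_i\,\vec{u}$. The constancy of $r_{2i}$ therefore forces the lines $\{\mathcal{V}_{2i}\}_{i\in\Z}$ all to be parallel, and likewise for $\{\mathcal{V}_{2i+1}\}_{i\in\Z}$.

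For the degenerate case $\mathcal{H}_1 \parallel \mathcal{H}_2$, any circular quad with two vertices on $\mathcal{H}_1$ and two on $\mathcal{H}_2$ is an isosceles trapezoid, so its two transverse sides $\mathcal{V}_i$ and $\mathcal{V}_{i+1}$ are reflections of each other across the common perpendicular-bisector direction. Thus $\mathcal{V}_{i+2}$, being the reflection of $\mathcal{V}_{i+1}$, is parallel to $\mathcal{V}_i$; the same even/odd grouping appears. I expect the only real subtlety to be the bookkeeping of signs in the power-of-a-point relation (so that the argument works uniformly regardless of whether $A$ separates the points on each line), but this is mild because signed lengths along an oriented line are exactly what the power formula delivers.
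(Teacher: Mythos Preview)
Your proof is correct and entirely elementary. The power-of-a-point relation $a_i a_{i+1}=b_i b_{i+1}$ is exactly what concyclicity of the four vertices gives at the intersection $A=\mathcal H_1\cap\mathcal H_2$, and the $2$-periodicity of $r_i=a_i/b_i$ follows immediately. The treatment of the parallel case via isosceles trapezoids is also fine (and, given the paper's standing genericity assumption, is strictly speaking more than is required).

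The paper does not write out a proof of this proposition; it only remarks that the statement is a degenerate case of the geometry in Figure~\ref{figure: dfbn}(c). The intended argument there runs through the general machinery of Section~\ref{section: gridinquadric}: lift $P$ to $\hat P:\Z\times[2]\to\mathcal M^2\subset\R\mathrm P^3$, apply Lemma~\ref{lem: mxZstripisoquadric} (case $m=d=2$) to conclude that the lines $\{\hat P_{i,1}\vee\hat P_{i,2}\}_{i\in\Z}$ are rulings of a quadric of signature $(++--)$, and then invoke Proposition~\ref{prop: hyperbolicquadrictwoclasses} to see that consecutive rulings alternate between the two reguli. Since the $2$-planes $V_i$ all contain $N$ (the lines $\mathcal V_i$ being straight), projecting back to $\R^2$ sends the two reguli to two pencils of lines through two points at infinity, i.e.\ two families of parallel lines.

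So your route is genuinely different: it bypasses the M\"obius lift, the quadric $\mathcal Q$, and the regulus structure, replacing all of that with a one-line Euclidean identity. What you gain is a self-contained argument that a reader can verify without any of the paper's apparatus. What the paper's route buys is uniformity: the same lemma~\ref{lem: mxZstripisoquadric} and the same alternation mechanism (Proposition~\ref{prop: hyperbolicquadrictwoclasses}) explain this proposition, the linear--circular case (Proposition~\ref{prop: linearcircularquadricconcic}), and the higher-dimensional analogues, all at once.
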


Proposition~\ref{prop: linearlinearparallel} can be seen as a degenerate case of the geometry in Figure~\ref{figure: dfbn} (c).

\subsection{One Family Circular and the Other Family Spherical}\label{section: circularspherical}

\begin{prop}\label{prop: circularspherical5x4}
Consider a function $M: [5]\times [4] - \{(5,4)\} \to \mathcal{M}^n \subset \R\mathrm{P}^{n+1}$ where $n\in \{3,4\}$.  For each $j \in [3]$, let the points $\{M_{i,j}\}_{i\in [5]}$ be contained in a $2$-dimensional sphere $\mathcal{H}_j$. Let $\mathcal{H}_4$ be the $2$-dimensional sphere containing the points $M_{1,4}, M_{2,4}, M_{3,4}, M_{4,4}$. For each $i \in [4]$, let the points $\{M_{i,j}\}_{j\in [4]}$ be contained in a circle $\mathcal{V}_i$. Let $\mathcal{V}_5$ be the circle containing the points $M_{5,1}, M_{5,2}, M_{5,3}$. Suppose that $\forall (i,j) \neq (4,3)$ the quad $\Box_{i,j}$ is circular. There exists a unique point $M_{5,4}\in \mathcal{M}^n$ such that the quad $\Box_{4,3}$ is circular and such that $M_{5,4} \in \mathcal{V}_5\cap \mathcal{H}_4$.
\end{prop}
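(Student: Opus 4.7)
My plan is to mirror the strategy of the proofs of Propositions~\ref{thm: circularcircular4x4} and \ref{thm: linearcircular}: lift the data to projective subspaces of $\R\mathrm{P}^{n+1}$, define $M_{5,4}$ as an explicit intersection of three subspaces, verify uniqueness by a dimension count, and then invoke Theorem~\ref{thm: mxmincidencethm} (equivalently, two applications of Corollary~\ref{cor: mxmgridthm}) to show that $M_{5,4}$ lies on the M\"obius quadric $\mathcal{M}^n$.

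Concretely, let $V_i\subset \R\mathrm{P}^{n+1}$ be the $2$-dimensional projective subspace with $V_i\cap \mathcal{M}^n = \mathcal{V}_i$ (for $i\in [5]$), and let $H_j\subset \R\mathrm{P}^{n+1}$ be the $3$-dimensional projective subspace with $H_j\cap\mathcal{M}^n = \mathcal{H}_j$ (for $j\in [4]$). Writing $\pi := M_{4,3}\vee M_{5,3}\vee M_{4,4}$ for the plane forced by the required circularity of $\Box_{4,3}$, I define
$$
M_{5,4} := V_5\cap H_4\cap \pi.
$$
Uniqueness of $M_{5,4}$ follows from a dimension count in $\R\mathrm{P}^{n+1}$ combined with the shared incidences $M_{5,3}\in V_5\cap\pi$ and $M_{4,4}\in H_4\cap\pi$ imposed by the Q-net structure, exactly as in the analogous constructions of Propositions~\ref{thm: circularcircular4x4} and \ref{thm: linearcircular}. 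Once $M_{5,4}\in\mathcal{M}^n$ is established, the three set memberships immediately give $M_{5,4}\in\mathcal{V}_5\cap\mathcal{H}_4$ and the circularity of $\Box_{4,3}$.

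The core step is to show that $M_{5,4}\in\mathcal{M}^n$. This I plan to do in the style of the proof of Proposition~\ref{thm: linearcircular}: apply Corollary~\ref{cor: mxmgridthm} to a first $3\times 3$ subgrid all of whose vertices are already known to lie on $\mathcal{M}^n$, thereby obtaining a conjugacy relation $X\perp Y$ between certain intersection points relative to $\mathcal{M}^n$; then apply Corollary~\ref{cor: mxmgridthm} again to the $3\times 3$ subgrid whose corner is $(5,4)$ in order to translate the same conjugacy into the conclusion $M_{5,4}\in\mathcal{M}^n$. Alternatively, and more directly, the given data (the $5\times 4$ Q-net minus its corner, together with $\mathcal{V}_5$ and $\mathcal{H}_4$) fit the hypotheses of Theorem~\ref{thm: mxmincidencethm} for the prescribed mixed dimensions of $V$- and $H$-subspaces, and that theorem directly places $M_{5,4}$ on $\mathcal{M}^n$.

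The principal obstacle is the asymmetry of the dimensions of the parameter subspaces: the $V_i$ are $2$-dimensional (since the $\mathcal{V}_i$ are circles) while the $H_j$ are $3$-dimensional (since the $\mathcal{H}_j$ are $2$-spheres), whereas the symmetric incidence theorems of the preceding sections assume equal dimensions. As a consequence of this asymmetry, the iterated Laplace sequences in the $i$- and $j$-directions terminate after different numbers of steps (compare Propositions~\ref{prop: Goursatterminatemsteps} and \ref{prop: Laplaceterminatemsteps}), and the intersections $\bigcap_i V_i$ and $\bigcap_j H_j$ must be handled with care. The technical heart of the proof will be verifying that the relevant intersection points $X$ and $Y$ are well-defined and that the conjugacy $X\perp Y$ transfers consistently between the two applications of Corollary~\ref{cor: mxmgridthm}.
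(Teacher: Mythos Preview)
Your overall plan---define $M_{5,4}:=V_5\cap H_4\cap(M_{4,3}\vee M_{5,3}\vee M_{4,4})$ and then apply Corollary~\ref{cor: mxmgridthm} twice to $3\times 3$ subgrids---is exactly the route the paper takes, and your identification of the mixed-dimension asymmetry as the main difficulty is correct. However, there is a genuine gap. In Proposition~\ref{thm: linearcircular} the transfer of the conjugacy $X\perp Y$ between the two $3\times 3$ subgrids was free: both subgrids used the \emph{same} rows $j\in[3]$ (hence the same $X=H_1\cap H_2\cap H_3$), and every $V_i$ contained the fixed point $N$ (hence the same $Y$). Here neither shortcut is available. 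The two subgrids one must use are $\{3,4,5\}\times[3]$ and $\{3,4,5\}\times\{2,3,4\}$; these share $Y=V_3\cap V_4\cap V_5$, but their ``$X$'' points are the intersections of \emph{different} triples of the four $2$-planes $M_{3,j}\vee M_{4,j}\vee M_{5,j}$, $j\in[4]$. For the argument to close, you need these four planes to be concurrent at a single point $X_3$, and this is \emph{not} automatic---it is the technical heart you allude to but do not supply.

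The paper establishes this concurrency by a nontrivial detour: first it uses Lemma~\ref{lem: mxZstripisoquadric} (case $m=3$, $d=2$) to show that $\{M_{1,j}\vee M_{2,j}\vee M_{3,j}\}_{j\in[4]}$ are generators of a quadric of signature $(++--0)$ and hence concurrent at its apex $X_1$; similarly for $X_2$. Since $X_1\neq X_2$ both lie in $X:=\cap_{j}H_j$, the latter is a line. Then a separate combinatorial lemma (Proposition~\ref{prop: 4x4X1Dor2D}, whose proof mimics that of Proposition~\ref{prop: 3x3X1D}) converts ``$\cap_j H_j$ is $1$-dimensional'' into ``$\mathcal{L}_A^2 M$ is Laplace degenerate'', which via Proposition~\ref{prop: Laplaceterminatemsteps} gives that $X_3$ is indeed a single point. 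Only then do the two applications of Corollary~\ref{cor: mxmgridthm} link up. Your alternative of invoking Theorem~\ref{thm: mxmincidencethm} directly does not work: that theorem is stated for square grids with $V_i$ and $H_j$ of the \emph{same} dimension $d$, and there is no mixed-dimension version in the paper to appeal to.
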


Proposition~\ref{prop: 4x4X1Dor2D} is used in the ensuing proof of Proposition~\ref{prop: circularspherical5x4}.

\begin{prop}\label{prop: 4x4X1Dor2D}
Let $P: [4]\times [4] \to \R \mathrm{P}^n$ be a Q-net, where $n \geq 4$. For each $j\in [4]$, let $H_j:= \mathrm{join}\{P_{i,j}\}_{i\in [4]}$ be $3$-dimensional. 
\begin{enumerate}[label=(\roman*)]
\item If $\cap_{j \in [4]} H_j$ is $1$-dimensional, then $\mathcal{L}^2_A P$ is Laplace degenerate. 
\item If $\cap_{j \in [4]} H_j$ is $2$-dimensional, then $\mathcal{L}_A P$ is Laplace degenerate.
\end{enumerate}
\end{prop}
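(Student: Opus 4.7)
The plan is to exploit that, under each hypothesis, the consecutive intersections $H_j\cap H_{j+1}$ are forced to coincide with (or to chain tightly through) the prescribed common subspace, after which the Laplace points (in case (ii)) or iterated Laplace points (in case (i)) become pinned down by this intersection data. I will use throughout that, by the genericity assumption combined with the Q-net face condition, each pairwise intersection $F_j:=H_j\cap H_{j+1}$ is exactly $2$-dimensional, since it always contains the three Laplace points $\mathcal{L}_A P(i,j)$ for $i\in[3]$, which generically span a plane.

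For case (ii), let $E:=\cap_{j\in[4]}H_j$, so $\dim E=2$. Because $E\subseteq F_j$ and both are $2$-dimensional, $F_j=E$ for every $j\in[3]$. Consequently every Laplace point $\mathcal{L}_A P(i,j)$ lies in $E$. But $\mathcal{L}_A P(i,j)$ also lies on the edge-line $\ell_{i,j}:=P_{i,j}\vee P_{i+1,j}$, a line in $H_j$ not contained in $E$; hence $\ell_{i,j}\cap E$ is a single point, which must be $\mathcal{L}_A P(i,j)$. By the symmetric role of $\ell_{i,j+1}$, the point $\mathcal{L}_A P(i,j)$ equals $\ell_{i,j+1}\cap E$ as well. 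Therefore $\ell_{i,j}\cap E=\ell_{i,j+1}\cap E$, so $\mathcal{L}_A P(i,j)$ is independent of $j$, proving that $\mathcal{L}_A P$ is Laplace degenerate.

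For case (i), let $l:=\cap_{j\in[4]}H_j$, a line. Each $F_j$ is a $2$-plane containing $l$. The triple intersection $F_j\cap F_{j+1}=H_j\cap H_{j+1}\cap H_{j+2}$ contains $l$ and is generically $1$-dimensional, so equals $l$. The iterated Laplace point $\mathcal{L}_A^2 P(i,j)$ is the intersection of the line $\mathcal{L}_A P(i,j)\vee\mathcal{L}_A P(i+1,j)\subset F_j$ with the line $\mathcal{L}_A P(i,j+1)\vee\mathcal{L}_A P(i+1,j+1)\subset F_{j+1}$, and therefore lies in $F_j\cap F_{j+1}=l$. In particular, $\mathcal{L}_A^2 P(i,j)$ is the unique point where the $F_{j+1}$-line through $\mathcal{L}_A P(i,j+1)$ and $\mathcal{L}_A P(i+1,j+1)$ meets $l$. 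Evaluating this identity at $j=1$ and at $j=2$, both $\mathcal{L}_A^2 P(i,1)$ and $\mathcal{L}_A^2 P(i,2)$ equal the intersection $(\mathcal{L}_A P(i,2)\vee\mathcal{L}_A P(i+1,2))\cap l$. Hence $\mathcal{L}_A^2 P(i,j)$ is independent of $j$, and $\mathcal{L}_A^2 P$ is Laplace degenerate.

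The only potentially delicate point is verifying the ``generic'' dimension equalities (namely $\dim F_j=2$ everywhere, and $\dim(F_j\cap F_{j+1})=1$ under the constraint of case (i)); both follow from the paper's standing genericity convention together with the obvious lower bounds supplied by the Q-net face structure and the prescribed common subspace. Once these dimensions are in place, the argument is a direct containment chase and does not require any further incidence input.
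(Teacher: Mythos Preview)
Your argument is essentially correct and cleaner than the paper's. The paper does not spell out a proof but refers the reader to the method of Proposition~\ref{prop: 3x3X1D}, which is a proof by contradiction with case analysis: one assumes the relevant Laplace points differ, and then forces three consecutive vertices of a row to be collinear, contradicting the dimension of $H_j$. Your approach instead pins down the intersection lattice directly and reads off the equality of Laplace points from the uniqueness of a line--plane (or line--line) intersection. This is more systematic and generalises more readily to larger grids; the paper's contradiction argument is perhaps slightly more robust against non-generic edge cases, but under the paper's standing genericity convention both are fine.

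There is one small slip in your case~(i). You state the identity
\[
\mathcal{L}_A^2 P(i,j)=\bigl(\mathcal{L}_A P(i,j+1)\vee\mathcal{L}_A P(i+1,j+1)\bigr)\cap l
\]
and then claim that evaluating it at $j=1$ and $j=2$ both yield $\bigl(\mathcal{L}_A P(i,2)\vee\mathcal{L}_A P(i+1,2)\bigr)\cap l$. Literally, the $j=2$ instance gives the line through $\mathcal{L}_A P(i,3)$ and $\mathcal{L}_A P(i+1,3)$, not the one through the $j=2$ points. What you need is the \emph{symmetric} identity
\[
\mathcal{L}_A^2 P(i,j)=\bigl(\mathcal{L}_A P(i,j)\vee\mathcal{L}_A P(i+1,j)\bigr)\cap l,
\]
which follows from the same reasoning (the point lies on this $F_j$-line and on $l$, and the line is generically distinct from $l$). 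Using the first identity at $j=1$ and the second at $j=2$ gives the common value $\bigl(\mathcal{L}_A P(i,2)\vee\mathcal{L}_A P(i+1,2)\bigr)\cap l$, and the conclusion follows. This is a trivial fix; the underlying idea is sound.
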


The proof of Proposition~\ref{prop: 4x4X1Dor2D} is analogous to the proof of Proposition~\ref{prop: 3x3X1D}.

\begin{proof}[Proof of Proposition~\ref{prop: circularspherical5x4}]
Let $V_i$ be the $2$-dimensional projective subspace of $\R\mathrm{P}^{n+1}$ such that $V_i \cap \mathcal{M}^n = \mathcal{V}_i$. Let $H_j$ be the $3$-dimensional projective subspace of $\R\mathrm{P}^{n+1}$ such that $H_j \cap \mathcal{M}^n = \mathcal{H}_j$. Define the point $M_{5,4}:= V_5\cap H_4 \cap (M_{4,3} \vee M_{5,3} \vee M_{4,4})$ and the following spaces.
\begin{align*}
X&:= H_1 \cap H_2 \cap H_3\cap H_4 &X_1 &:= \bigcap_{1\leq j \leq 4} \mathrm{join}\{M_{i,j}\}_{1\leq i \leq 3} \\ 
X_2 &:= \bigcap_{1\leq j \leq 4} \mathrm{join}\{M_{i,j}\}_{2\leq i \leq 4} 
&X_3 &:= \bigcap_{1\leq j \leq 4} \mathrm{join}\{M_{i,j}\}_{3\leq i \leq 5}
\end{align*}
By the case $m=3$ and $d=2$ of Lemma~\ref{lem: mxZstripisoquadric}, the planes $\{M_{1,j} \vee M_{2,j} \vee M_{3,j}\}_{j\in [4]}$ are isotropic planes of a quadric of signature $(++--0)$. Then, the planes $\{M_{1,j} \vee M_{2,j} \vee M_{3,j}\}_{j\in [4]}$ are concurrent at the apex of the cone. Equivalently, $X_1$ is a point. Similarly, $X_2$ is a point. The points $X_1$ and $X_2$ are distinct and they are contained in $X$. So, $X$ is generically $1$-dimensional. By Proposition~\ref{prop: 4x4X1Dor2D}, $\mathcal{L}^2_A M$ is Laplace degenerate. By Proposition~\ref{prop: Laplaceterminatemsteps}, $X_3$ is $0$-dimensional. By applying Corollary~\ref{cor: mxmgridthm} to the points $\{M_{i,j} \mid i\in \{3,4,5\}, j\in [3]\}$, the points $X_3$ and $V_3\cap V_4\cap V_5$ are conjugate relative to $\mathcal{M}^n$. Therefore, by applying Corollary~\ref{cor: mxmgridthm} to the points $\{M_{i,j} \mid  i\in \{3,4,5\}, j\in \{2,3,4\}\}$, the point $M_{5,4}$ is contained in $\mathcal{M}^n$.
\end{proof}

In Proposition~\ref{prop: circularspherical5x4}, it is superfluous to consider the case $n > 4$. The vertices $\{M_{i,j}\mid (i,j) \in [5]\times [4]\}$ span at most a $5$-dimensional projective subspace of $\R \mathrm{P}^{n+1}$. So, the circular net $M$ must be contained in a $4$-sphere if $n>4$.

Proposition~\ref{prop: circularspherical5x4} can be used iteratively to construct circular nets $\Z^2 \to \mathcal{M}^n$, where $n\in \{3,4\}$, such that the parameter lines are circular in one family and $2$-spherical in the other family. 

Lemma~\ref{lem: isotropiclinepencil} is used in the proof of Proposition~\ref{prop: circularsphericalR3R4}. Lemma~\ref{lem: isotropiclinepencil} is almost equivalent to Lemma~\ref{lem: 2pointsandpencilofquadrics}. However, Lemma~\ref{lem: isotropiclinepencil} allows for $\mathcal{Q}_1 \cap l = \mathcal{Q}_2\cap l$ to be empty.

\begin{lem}\label{lem: isotropiclinepencil}
Let $\mathcal{Q}_1 = \{[x] \in \R \mathrm{P}^n \mid \p_1(x,x)=0\}$ and $\mathcal{Q}_2 = \{[x] \in \R \mathrm{P}^n \mid \p_2 (x,x)=0\}$ be distinct quadric hypersurfaces in $\R \mathrm{P}^n$. Let $l$ be a line that is not an isotropic line of $\mathcal{Q}_1$ and $\mathcal{Q}_2$. Suppose that $\exists \mu \in \R$ such that $\mu \p_1(x,x) = \p_2 (x,x)$ for all $[x] \in l$. So, $\mathcal{Q}_1 \cap l = \mathcal{Q}_2\cap l$. Then, there is a unique quadric $\mathcal{Q} = \{[x] \in \R\mathrm{P}^n \mid \p(x,x) = 0\}$ that contains the line $l$ and that belongs to the pencil of quadrics spanned by $\mathcal{Q}_1$ and $\mathcal{Q}_2$.
\end{lem}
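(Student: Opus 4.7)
The plan is to parametrise the pencil as
\[
\mathcal{Q}_{\lambda,\nu} := \{[x] \in \mathbb{R}\mathrm{P}^n \mid \lambda\,\varphi_1(x,x) + \nu\,\varphi_2(x,x) = 0\}, \qquad (\lambda,\nu) \in \mathbb{R}^2 \setminus \{0\},
\]
where scaling $(\lambda,\nu)$ produces the same quadric. The containment $l \subset \mathcal{Q}_{\lambda,\nu}$ is equivalent to the quadratic form $\lambda\,\varphi_1 + \nu\,\varphi_2$ vanishing identically on any two-dimensional linear subspace whose projectivisation is $l$.

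For existence, the hypothesis provides a constant $\mu \in \mathbb{R}$ with $\mu\,\varphi_1(x,x) = \varphi_2(x,x)$ for all $[x] \in l$. Taking $(\lambda,\nu) := (-\mu,1)$, the form $-\mu\,\varphi_1 + \varphi_2$ vanishes on $l$, so $\mathcal{Q} := \mathcal{Q}_{-\mu,1}$ is a member of the pencil containing $l$. This is the required quadric.

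For uniqueness, I would argue by contradiction. Suppose two distinct members $\mathcal{Q}_{\lambda_1,\nu_1}$ and $\mathcal{Q}_{\lambda_2,\nu_2}$ of the pencil both contained $l$. Then $(\lambda_1,\nu_1)$ and $(\lambda_2,\nu_2)$ are linearly independent in $\mathbb{R}^2$, so they span all of $\mathbb{R}^2$. Every linear combination would then define a quadric in the pencil containing $l$; in particular $\mathcal{Q}_1$ and $\mathcal{Q}_2$ would contain $l$, contradicting the assumption that $l$ is not an isotropic line of $\mathcal{Q}_1$ or $\mathcal{Q}_2$.

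The argument is essentially a linear-algebraic one-line observation once the pencil is written out, so there is no real obstacle; the only point to take care of is to make the uniqueness step rely on the hypothesis that $l \not\subset \mathcal{Q}_i$ for both $i \in \{1,2\}$, which ensures that no two distinct members of the pencil can share the containment of $l$.
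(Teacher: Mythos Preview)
Your proof is correct and takes essentially the same approach as the paper: both arguments reduce to elementary linear algebra on the pencil $\lambda\varphi_1+\nu\varphi_2$. The only cosmetic difference is that the paper first picks a point $P\in l$ outside $\mathcal{Q}_1\cup\mathcal{Q}_2$, finds the unique pencil member through $P$, and then checks it contains all of $l$ (uniqueness being implicit), whereas you write down $(-\mu,1)$ directly from the hypothesis and argue uniqueness separately by linear independence; the underlying content is identical.
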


\begin{proof}
Let $P= [p] \in l$ be a point that is not contained in $\mathcal{Q}_1$ and $\mathcal{Q}_2$. There is a unique quadric $\mathcal{Q}:= \{[x] \in \R\mathrm{P}^n \mid \p(x,x) =0\}$ that contains $P$ and that belongs to the pencil spanned by $\mathcal{Q}_1$ and $\mathcal{Q}_2$. Let $\p = \l_1 \p_1 + \l_2 \p_2$, where $\l_1, \l_2 \in \R$. So, $\l_1\p_1(p,p) + \l_2 \p_2 (p,p)=0$ because $\mathcal{Q}$ contains $P= [p]$. Without loss of generality, the representative symmetric bilinear form  $\p_1$ can be chosen such that $\p_1(x,x) = \p_2 (x,x)$ for all $[x] \in l$. Then, $\l_1 \p_1(p,p) + \l_2 \p_1(p,p) = 0$. So, $\l_2 = -\l_1$. Let $[u]$ be any point in the line $l$. Then, $\p(u,u) := \l_1 \p_1(u,u) + \l_2 \p_2(u,u)$ equals $0$ because $\l_2 = -\l_1$ and $\p_1(u,u) =\p_2(u,u)$ for any $[u]\in l$. Therefore, $\mathcal{Q}$ contains the line $l$.
\end{proof}

\begin{prop}\label{prop: circularsphericalR3R4}
Let $M: \Z^2 \to \mathcal{M}^n\subset \R \mathrm{P}^{n+1}$, where $n\in \{3,4\}$, be a circular net. For all $i\in \Z$, let the points $\{M_{i,j}\}_{j\in \Z}$ be contained in a circle $\mathcal{V}_i$. Let $V_i$ be the $2$-dimensional projective subspace of $\R \mathrm{P}^{n+1}$ such that $V_i \cap \mathcal{M}^n=\mathcal{V}_i$. For all $j \in \Z$, let the points $\{M_{i,j}\}_{i\in \Z}$ be contained in a $2$-sphere $\mathcal{H}_j$. Let $H_j$ be the $3$-dimensional projective subspace of $\R \mathrm{P}^{n+1}$ such that $H_j \cap \mathcal{M}^n=\mathcal{H}_j$.
\begin{enumerate}[label=(\roman*)]
\item
Let $n=4$. The $2$-spheres $\{\mathcal{H}_j\}_{j\in \Z}$ and the circles $\{\mathcal{V}_i\}_{i \in \Z}$ are contained in a $3$-dimensional Darboux cyclide  $\mathcal{D}$. Generically, $\{H_j\}_{j\in \Z}$ are $3$-dimensional isotropic subspaces of a quadric $\mathcal{H}$ of signature $(++--00)$ and $\{V_i\}_{i\in \Z}$ are $2$-dimensional isotropic subspaces of a quadric $\mathcal{V}$ of signature $(+++---)$. The quadrics $\mathcal{H}$, $\mathcal{V}$ and $\mathcal{M}^4$ belong to a pencil of quadrics. 
\item
Let $n=3$. The $2$-spheres $\{\mathcal{H}_j\}_{j\in \Z}$ and the circles $\{\mathcal{V}_i\}_{i\in \Z}$ envelop a $2$-dimensional Darboux cyclide $\mathcal{D}$. Each sphere $\mathcal{H}_j$ touches $\mathcal{D}$ along the points of a circle and each circle $\mathcal{V}_i$ is twice tangent to $\mathcal{D}$. Generically, the planes $\{V_i\}_{i\in \Z}$ are tangent along isotropic lines of a quadric $\mathcal{V}$ of signature $(+++--)$ and the spaces $\{H_j\}_{j\in \Z}$ are tangent along isotropic planes of a quadric $\mathcal{H}$ of signature $(++-00)$. The quadrics $\mathcal{H}$, $\mathcal{V}$ and $\mathcal{M}^3$ belong to a pencil of quadrics.
\end{enumerate}
\end{prop}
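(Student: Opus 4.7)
The plan is to mirror the structure of the proof of Proposition~\ref{prop: circularcircularR2R3}: the case $n=4$ is the direct ``non-degenerate'' analogue of its $n=3$ case, while the case $n=3$ is the more subtle ``projected'' analogue of its $n=2$ case, in which $\mathcal{V}$ and $\mathcal{H}$ appear as tangent carriers rather than isotropic carriers. The central engine in both cases is Lemma~\ref{lem: mxZstripisoquadric}, applied in two directions to extract the quadrics $\mathcal{V}$ and $\mathcal{H}$, followed by a pencil argument modelled on the concluding step of the proof of Theorem~\ref{thm: pencilofquadrics}.

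For the case $n=4$, I would first apply Lemma~\ref{lem: mxZstripisoquadric} to the restriction $M|_{\Z\times[3]}$ with parameters $m=3$, $d=3$; the ambient $\R\mathrm{P}^{d+m-1}=\R\mathrm{P}^5$ matches the embedding, and the lemma yields a unique quadric $\mathcal{V}$ for which all $2$-dim planes $V_i$ are isotropic. Swapping roles and applying the lemma to $M|_{[4]\times\Z}$ with $m=4$, $d=2$ produces the quadric $\mathcal{H}$ in which the $3$-dim subspaces $H_j$ are isotropic. The generic signatures follow from dimension counting of generators as in the proof of Theorem~\ref{thm: pencilofquadrics}: a quadric in $\R\mathrm{P}^5$ with $2$-dim isotropic subspaces is generically non-degenerate $(+++---)$, while one with $3$-dim isotropic subspaces must be degenerate, with $(++--00)$ giving $\min(p,q)-1+r=2-1+2=3$. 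To establish that $\mathcal{M}^4, \mathcal{V}, \mathcal{H}$ lie in a pencil, I pick a generic point $p$ in the $1$-dim singular locus of $\mathcal{H}$ lying on neither $\mathcal{M}^4$ nor $\mathcal{V}$, form the unique quadric $\mathcal{R}$ in the pencil spanned by $\mathcal{M}^4$ and $\mathcal{V}$ that contains $p$, and verify $\mathcal{R}=\mathcal{H}$ by showing that $\mathcal{R}$ also has every $H_j$ as an isotropic $3$-subspace (the intersections $H_j\cap\mathcal{M}^4$ already lie in the common base locus of the pencil). The base locus itself is then the $3$-dim Darboux cyclide $\mathcal{D}$ containing all $\mathcal{V}_i$ and $\mathcal{H}_j$.

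For the case $n=3$, I apply Lemma~\ref{lem: mxZstripisoquadric} to $M|_{\Z\times[2]}$ with $m=2$, $d=3$ in ambient $\R\mathrm{P}^4$ to obtain a quadric $\mathcal{Q}$, generically of signature $(+++--)$, for which the chords $\ell_i:=M_{i,1}\vee M_{i,2}$ are isotropic lines. Setting $Y:= \cap_{i\in\Z}V_i$ (a single point generically), each plane $V_i=\ell_i\vee Y$ is tangent to $\mathcal{V}:=\mathcal{Q}$ along $\ell_i$ provided $Y$ lies in the polar line $\ell_i^{\perp_{\mathcal{Q}}}$; this conjugacy is extracted by applying Corollary~\ref{cor: mxmgridthm} to appropriate $3\times 2$ and $3\times 3$ sub-patches of $M$, analogously to the role of $N\perp X$ in the proof of Proposition~\ref{thm: linearcircular}. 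An analogous tangent construction from the $H_j$ family in $\R\mathrm{P}^4$ produces a degenerate quadric $\mathcal{H}$ of signature $(++-00)$ whose $2$-dim isotropic planes carry the hyperplanes $H_j$ as tangent hyperplanes. The pencil spanned by $\mathcal{V}$ and $\mathcal{M}^3$ contains $\mathcal{H}$ by the same argument as in the $n=4$ case; the tangency of each sphere $\mathcal{H}_j$ with $\mathcal{D}$ along a circle and of each circle $\mathcal{V}_i$ with $\mathcal{D}$ at the two points $M_{i,1}, M_{i,2}$ then follows by inspecting $H_j\cap\mathcal{V}\cap\mathcal{M}^3$ and $V_i\cap\mathcal{H}\cap\mathcal{M}^3$.

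The main obstacle is the case $n=3$, specifically the verification that the polar-line and polar-plane conditions ensuring the required tangencies actually hold; extracting the corresponding conjugacies will require several careful applications of Corollary~\ref{cor: mxmgridthm} together with the genericity assumptions that keep $Y$, the analogous point $Z:=\cap_{j\in\Z}H_j$, and their polars in general position. Once these ingredients are in place, the remaining claims (pencil membership, signature analysis, and the Darboux-cyclide structure) follow by the same dimension-counting and pencil-of-quadrics reasoning already deployed in the proofs of Proposition~\ref{prop: circularcircularR2R3} and Theorem~\ref{thm: pencilofquadrics}.
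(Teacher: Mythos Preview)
Your plan for $n=4$ is essentially the paper's proof: two applications of Lemma~\ref{lem: mxZstripisoquadric} (with $(m,d)=(3,3)$ and $(4,2)$) give $\mathcal{V}$ and $\mathcal{H}$, the signatures follow by generator dimension, and the pencil claim is immediate once one observes $\mathcal{M}^4\cap\mathcal{V}=\mathcal{M}^4\cap\mathcal{H}$.

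For $n=3$, however, the proposed construction of $\mathcal{V}$ does not work. Applying Lemma~\ref{lem: mxZstripisoquadric} to $M|_{\Z\times[2]}$ with $(m,d)=(2,3)$ produces a quadric $\mathcal{Q}$ lying in the pencil spanned by $\mathcal{M}^3$ and the degenerate quadric $H_1\cup H_2$; consequently $\mathcal{Q}\cap\mathcal{M}^3=\mathcal{H}_1\cup\mathcal{H}_2$, a union of two spheres, not the irreducible Darboux cyclide of the proposition. Your claimed tangency of $V_i$ to $\mathcal{Q}$ along $\ell_i=M_{i,1}\vee M_{i,2}$ would force the circle $\mathcal{V}_i$ to be tangent to the sphere $\mathcal{H}_1$ at $M_{i,1}$ and to $\mathcal{H}_2$ at $M_{i,2}$, which is false generically (a circle meets a sphere in two points, not one). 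The underlying error is that Corollary~\ref{cor: mxmgridthm} only yields conjugacies relative to $\mathcal{M}^n$, not relative to $\mathcal{Q}$, so it cannot deliver $Y\in\ell_i^{\perp_{\mathcal{Q}}}$. Relatedly, $\cap_{j}H_j$ is a \emph{line} here (see the proof of Proposition~\ref{prop: circularspherical5x4}), not a point, so your ``analogous tangent construction'' for $\mathcal{H}$ from a putative point $Z$ cannot get off the ground either.

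The paper's route is genuinely different. It first applies Lemma~\ref{lem: mxZstripisoquadric} in the \emph{other} direction ($(m,d)=(3,2)$, i.e.\ to three consecutive $V$-columns with $j$ varying) to obtain for each $i$ a cone $\mathcal{Q}_i$ of signature $(++--0)$ whose apex lies on the line $X=\cap_j H_j$. Picking $Z_1\in X$ off the apex and coning $Z_1^{\perp_{\mathcal{Q}_1}}\cap\mathcal{Q}_1$ from $Z_1$ produces the quadric $\mathcal{H}$ of signature $(++-00)$; the hyperplanes $H_j$ are then tangent to $\mathcal{H}$ along isotropic planes, and $\mathcal{D}:=\mathcal{H}\cap\mathcal{M}^3$ is the cyclide. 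The quadric $\mathcal{V}$ is found afterwards \emph{inside the pencil} $\langle\mathcal{H},\mathcal{M}^3\rangle$ via Lemma~\ref{lem: isotropiclinepencil}, using the lines $k_i=V_i\cap Z_1^{\perp_{\mathcal{Q}_1}}$ (not $\ell_i$) as the isotropic lines; the actual tangency points of $\mathcal{V}_i$ with $\mathcal{D}$ are $k_i\cap\mathcal{M}^3$, which are generically distinct from the net vertices. An iteration over the cones $\mathcal{Q}_i$ is then needed to propagate the tangency to all $i$.
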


\begin{proof}
Let $n=4$. By the case $m=4$ and $d=2$ of Lemma~\ref{lem: mxZstripisoquadric}, the spaces $\{H_j\}_{j\in \Z}$ are $3$-dimensional isotropic spaces of a $4$-dimensional quadric $\mathcal{H}$ in $\R \mathrm{P}^5$. Generically, $\mathcal{H}$ has signature $(++--00)$. The intersection $\mathcal{D} := \mathcal{H}\cap\mathcal{M}^4$ is a $3$-dimensional Darboux cyclide that contains the $2$-spheres $\{\mathcal{H}_j\}_{j\in \Z}$. For any $2$-plane in $\R\mathrm{P}^5$, its intersection with $\mathcal{M}^4$ is a circle that is either fully contained in $\mathcal{D}$ or else it is a circle that contains at most four points of $\mathcal{D}$. For each $i\in \Z$, the points $\{M_{i,j}\}_{j\in \Z}$ are contained in $\mathcal{D}$. Therefore, the circles $\{\mathcal{V}_i\}_{i\in \Z}$ are contained in $\mathcal{D}$. By the case $m=3$ and $d=3$ of Lemma~\ref{lem: mxZstripisoquadric}, the spaces $\{V_i\}_{i\in \Z}$ are $2$-dimensional isotropic spaces of a $4$-dimensional quadric $\mathcal{V}$ in $\R \mathrm{P}^5$. Generically, $\mathcal{V}$ has signature $(+++---)$. The quadrics $\mathcal{M}^4$, $\mathcal{V}$ and $\mathcal{H}$ belong to a pencil of quadrics because $\mathcal{M}^4 \cap \mathcal{V} = \mathcal{M}^4 \cap \mathcal{H}$.

Let $n=3$. As shown in the proof of Proposition~\ref{prop: circularspherical5x4}, $X:= \cap_{j\in \Z}H_j$ is generically $1$-dimensional. By the case $m=3$ and $d=2$ of Lemma~\ref{lem: mxZstripisoquadric}, for each $i\in \Z$, $\{M_{i,j}\vee M_{i+1,j}\vee M_{i+2,j}\}_{j\in \Z}$ are $2$-dimensional isotropic spaces of a $3$-dimensional quadric $\mathcal{Q}_i$ in $\R \mathrm{P}^4$. Consider $\mathcal{Q}_1$. Generically, $\mathcal{Q}_1$ has signature $(++--0)$. The apex of $\mathcal{Q}_1$ is contained in the line $X$. Let $Z_1$ be any point in the line $X$ that is distinct from the apex of $\mathcal{Q}_1$. The point $Z_1$ is not contained in $\mathcal{Q}_1$. Then, $Z_1^{\perp_{\mathcal{Q}_1}}\cap \mathcal{Q}_1$ is a quadric of signature $(++-0)$, where $Z_1^{\perp_{\mathcal{Q}_1}}$ denotes the polar hyperplane of $Z_1$ with respect to $\mathcal{Q}_1$. Let $\mathcal{H}$ be the join of the quadric $Z_1^{\perp_{\mathcal{Q}_1}}\cap \mathcal{Q}_1$ with the point $Z_1$. $\mathcal{H}$ has signature $(++-00)$. For each $j\in \Z$, the hyperplane $H_j$ can be described as the join of the point $Z_1$ with the plane $M_{1,j}\vee M_{2,j} \vee M_{3,j}$.  The $2$-plane $M_{1,j}\vee M_{2,j} \vee M_{3,j}$ intersects the $3$-plane $Z_1^{\perp_{\mathcal{Q}_1}}$ in a line, say $l_j$, which is an isotropic line of the quadric $Z_1^{\perp_{\mathcal{Q}_1}} \cap \mathcal{Q}_1$. Then, $l_j \vee Z_1$ is an isotropic plane of $\mathcal{H}$. For each $j \in \Z$, $H_j$ is tangent to $\mathcal{H}$ along the points of the isotropic plane $l_j \vee Z_1$. Let $\mathcal{D}:=\mathcal{H}\cap \mathcal{M}^3$, which is a $2$-dimensional Darboux cyclide. For each $j\in \Z$, the sphere $\mathcal{H}_j$ is tangent to $\mathcal{D}$ along the points of a (possibly imaginary) circle. Indeed, the tangency points of $\mathcal{H}_j$ are exactly the points in the intersection of $\mathcal{M}^3$ with the isotropic $2$-plane $l_j\vee Z_1$ of $\mathcal{H}$. 

For each $i\in [3]$, the $2$-plane $V_i$ intersects $Z_1^{\perp_{\mathcal{Q}_1}}$ in a $1$-dimensional line $k_i$ such that $k_i \cap \mathcal{M}^3$ is a pair of (possibly imaginary) points that are contained in $\mathcal{H}$ since $\mathcal{Q}_1$ contains the circles $\{\mathcal{V}_i\}_{i\in [3]}$. The two foregoing points are exactly the two tangency points of the circle $\mathcal{V}_i$ to the cyclide $\mathcal{D}$. The two (possibly imaginary) intersection points $k_1\cap \mathcal{M}^3$ are contained in $\mathcal{M}^3$ and also $\mathcal{H}$. By Lemma~\ref{lem: isotropiclinepencil}, there is a unique quadric, say $\mathcal{V}$, such that $k_1$ is an isotropic line of $\mathcal{V}$ and such that $\mathcal{V}$ belongs to the pencil of quadrics spanned by $\mathcal{H}$ and $\mathcal{M}^3$. Generically, $\mathcal{V}$ has signature $(+++--)$. Similarly, there is a unique quadric $\mathcal{W}$ in the pencil such that $k_2$ is an isotropic line of $\mathcal{W}$. The planes $V_1$ and $V_2$ span a $3$-dimensional space. So, $V_1 \vee V_2$ intersects the hyperplane $Z_1^{\perp_{\mathcal{Q}_1}}$ in a $2$-plane. So, the lines $k_1$ and $k_2$ are coplanar. There is a unique quadric in the pencil that contains the point $k_1 \cap k_2$. So, $\mathcal{V}= \mathcal{W}$. Similarly, the quadric $\mathcal{V}$ also contains the line $k_3$. Let $\mathcal{Q}_2$ be the quadric such that the spaces $\{M_{2,j}\vee M_{3,j}\vee M_{4,j}\}_{j\in \Z}$ are $2$-dimensional isotropic spaces of $\mathcal{Q}_2$. Generically, $\mathcal{Q}_2$ has signature $(++--0)$. Let $Z_2$ be any point in the line $X$ that is distinct from the apex of the quadric $\mathcal{Q}_2$. Repeating the same argument as above, it follows that the circles $\{\mathcal{V}_i\}_{i=2,3,4}$ are twice tangent to $\mathcal{D}$. Any circle can be twice tangent to $\mathcal{D}$ in at most two points. So, the line $Z_2^{\perp_{\mathcal{Q}_2}}\cap V_2$ coincides with $k_2$ and the line $Z_2^{\perp_{\mathcal{Q}_2}}\cap V_3$ coincides with $k_3$. Let $k_4 := Z_2^{\perp_{\mathcal{Q}_2}}\cap V_4$. The line $k_4$ is an isotropic line of $\mathcal{V}$. Iterating further, it follows that all of the circles $\{\mathcal{V}_i\}_{i\in \Z}$ are twice tangent to $\mathcal{D}$ and that the $2$-planes $\{V_i\}_{i\in \Z}$ are tangent to $\mathcal{V}$ along isotropic lines of $\mathcal{V}$. By construction, $\mathcal{V}$ is in the pencil of quadrics spanned by $\mathcal{H}$ and $\mathcal{M}^3$.
\end{proof}

In classical differential geometry, for any smooth surface in $\R^3$ with one family of circular curvature lines and one family of spherical curvature lines, the planes of the circular curvature lines are generically concurrent at a point in $\overline{\R^3}$ and the spheres containing the spherical curvature lines belong to a pencil of spheres, i.e.\ the spheres have a common (possibly imaginary) circle. For instance, see \cite{snyder1904developable, rouquet1882etude} for more information. However, in the case $n=3$ of Proposition~\ref{prop: circularsphericalR3R4}, the intersection $\cap_{j\in \Z}\mathcal{H}_j$ is a pair of (possibly imaginary) points rather than a (possibly imaginary) circle and the intersection of the planes containing the circles $\{\mathcal{V}_i\}_{i \in\Z}$ is empty rather than a point in $\overline{\R^3}$.

\subsection{One Family Linear and the Other Family Spherical}

\begin{prop}\label{thm: linearspherical5x3}
Consider a function $P: [5]\times [3]-\{(5,3)\} \to \R^n$ where $n\in \{3, 4\}$. For each $j \in [2]$, let the points $\{P_{i,j}\}_{i\in [5]}$ be contained in a $2$-dimensional sphere $\mathcal{H}_j$. Let $\mathcal{H}_3$ be the $2$-dimensional sphere containing the points $P_{1,3},P_{2,3},P_{3,3}, P_{4,3}$. For each $i \in [4]$, let the points $\{P_{i,j}\}_{j\in [3]}$ be contained in a line $\mathcal{V}_i$. Let $\mathcal{V}_5$ be the line $P_{5,1}\vee P_{5,2}$. Suppose that $\forall (i,j) \neq (4,2)$ the quad $\Box_{i,j}$ is circular. There exists a unique point $P_{5,3}$ such that the quad $\Box_{4,2}$ is circular and such that $P_{5,3}$ is contained in $\mathcal{V}_5 \cap \mathcal{H}_3$.
\end{prop}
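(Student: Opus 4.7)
The plan is to work in the Möbius lift and follow the template of the proofs of Proposition~\ref{thm: linearcircular} and Proposition~\ref{prop: circularspherical5x4}. First I would lift the given partial net to $M : [5]\times[3]-\{(5,3)\} \to \mathcal{M}^n \subset \R\mathrm{P}^{n+1}$. For each $i\in[5]$ let $V_i \subset \R\mathrm{P}^{n+1}$ be the $2$-dimensional projective subspace with $\sigma(V_i \cap \mathcal{M}^n) = \mathcal{V}_i$; since $\mathcal{V}_i$ is a line in $\R^n$, every $V_i$ contains the projection centre $N$. For each $j\in[3]$ let $H_j \subset \R\mathrm{P}^{n+1}$ be the $3$-dimensional subspace with $H_j \cap \mathcal{M}^n = \mathcal{H}_j$. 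The candidate lift of the missing vertex is
\[
M_{5,3} := V_5 \cap H_3 \cap (M_{4,2}\vee M_{5,2}\vee M_{4,3}),
\]
which the genericity assumption makes a unique point of $\R\mathrm{P}^{n+1}$. By construction $\sigma(M_{5,3})\in \mathcal{V}_5\cap\mathcal{H}_3$ and the quad $\Box_{4,2}$ is planar, so it remains only to show $M_{5,3}\in\mathcal{M}^n$.

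My main tool is Theorem~\ref{thm: mxmiteratedlaplaceconjugate} with $m=3$, applied to each of the three $[3]\times[3]$ sub-grids
\[
P_{i_0}:=\{M_{i,j} : i\in\{i_0,\, i_0+1,\, i_0+2\},\ j\in[3]\}, \qquad i_0\in\{1,2,3\}.
\]
Since each $V_i$ is a $2$-plane through $N$ and consecutive pairs satisfy $V_i\cap V_{i+1}=\{N\}$ by genericity, the Laplace points of every sub-patch collapse: $\mathcal{L}_B P_{i_0}\equiv N$ and hence $\mathcal{L}_B^2 P_{i_0}=N$. Setting $X^{(i_0)}:=\mathcal{L}_A^2 P_{i_0}=\bigcap_{j\in[3]}(M_{i_0,j}\vee M_{i_0+1,j}\vee M_{i_0+2,j})$, the theorem (applied exactly as in the proof of Proposition~\ref{thm: linearcircular}, where $N$ plays the role of the column apex) then gives $X^{(1)}\perp N$ and $X^{(2)}\perp N$ from the fully-known patches $P_1$ and $P_2$, and reduces the target $M_{5,3}\in\mathcal{M}^n$ to the single conjugacy $X^{(3)}\perp N$.

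The closing step is to verify $X^{(3)}\perp N$. All three apex points lie in $X:=H_1\cap H_2\cap H_3$, because each plane $M_{i_0,j}\vee M_{i_0+1,j}\vee M_{i_0+2,j}$ sits inside $H_j$ and because $M_{5,3}\in H_3$ by construction. Under genericity $X^{(1)}\neq X^{(2)}$, so $X$ contains at least a projective line. For $n=3$ a dimension count in $\R\mathrm{P}^4$ shows that $X$ is generically exactly one-dimensional; for $n=4$ this is non-generic in $\R\mathrm{P}^5$, and has to be forced by adapting the cone-apex argument from the proof of Proposition~\ref{prop: circularspherical5x4}. Concretely I would apply Lemma~\ref{lem: mxZstripisoquadric} to each $3$-column strip to realise the planes $\{M_{i_0,j}\vee M_{i_0+1,j}\vee M_{i_0+2,j}\}_{j\in[3]}$ as isotropic planes of a degenerate quadric whose apex cuts $X$ down to a line. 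Once $X$ is known to be a projective line, the conjugacies $X^{(1)},X^{(2)}\perp N$ force $X\subset N^\perp$, whence $X^{(3)}\in X$ satisfies $X^{(3)}\perp N$ and Theorem~\ref{thm: mxmiteratedlaplaceconjugate} applied to $P_3$ delivers $M_{5,3}\in\mathcal{M}^n$. Uniqueness of $P_{5,3}$ is immediate from the uniqueness of the triple intersection defining $M_{5,3}$.

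The step I expect to be the main obstacle is this dimension control of $X$ in the case $n=4$: in $\R\mathrm{P}^5$ three $3$-planes generically fail to intersect at all, so the one-dimensionality of $X$ rests entirely on the non-generic isotropic structure coming from the combination of $2$-spherical rows and linear columns, and its clean treatment requires adapting Lemma~\ref{lem: mxZstripisoquadric} to the setting where the natural column spaces $V_i$ are $2$-planes all passing through the common point $N$.
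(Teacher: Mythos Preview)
Your overall strategy matches the paper's proof almost exactly: lift to the M\"obius quadric, define $M_{5,3}$ as the triple intersection, apply the $m=3$ conjugacy criterion to the three overlapping $3\times 3$ sub-grids, and close up via $X\subset N^\perp$. The paper phrases this through Corollary~\ref{cor: mxmgridthm} rather than Theorem~\ref{thm: mxmiteratedlaplaceconjugate}, but that is cosmetic.

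There is, however, a genuine slip in your argument for $\mathcal{L}_B^2 P_{i_0}=N$. You claim that consecutive column planes satisfy $V_i\cap V_{i+1}=\{N\}$ ``by genericity'' and hence that $\mathcal{L}_B P_{i_0}\equiv N$. This is false: the planar quad $\Box_{i,j}$ forces the lines $M_{i,j}\vee M_{i,j+1}\subset V_i$ and $M_{i+1,j}\vee M_{i+1,j+1}\subset V_{i+1}$ to meet at the Laplace point $B_{i,j}$, so $V_i\cap V_{i+1}$ is a \emph{line} (through $N$ and the $B_{i,j}$), not a point. Were your claim true, every edge $P_{i,j}P_{i,j+1}$ would collapse under $\sigma$. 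The conclusion you actually need is still correct, but for a different reason: $\mathcal{L}_B^2 P_{i_0}$ lies in $V_{i_0}\cap V_{i_0+1}\cap V_{i_0+2}$, and the two distinct lines $V_{i_0}\cap V_{i_0+1}$ and $V_{i_0+1}\cap V_{i_0+2}$ inside $V_{i_0+1}$ generically meet only at $N$. Equivalently, invoke Corollary~\ref{cor: mxmgridthm} directly, where $Y=\cap V_i=\{N\}$ is all that is required.

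Your worry about $\dim X$ when $n=4$ is also misplaced, and the proposed fix via Lemma~\ref{lem: mxZstripisoquadric} is unnecessary. You have already shown $X^{(1)}\neq X^{(2)}$ lie in $X$, so $\dim X\ge 1$; since $X\subset H_1\cap H_2$ is at most $2$-dimensional, and $\dim X=2$ would force $H_1\cap H_2=H_2\cap H_3$ (a non-generic coincidence within the constraint class), the paper's genericity convention immediately gives $\dim X=1$. The paper simply asserts this in one line; no cone-apex argument is used here, unlike in Proposition~\ref{prop: circularspherical5x4} where four rows are involved and the $X_k$ are not automatically single Laplace points.
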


\begin{proof}

Let $M: [5]\times [3]-\{(5,3)\} \to \mathcal{M}^n \subset \R \mathrm{P}^{n+1}$ be the lift of $P$. Let $V_i$ denote the $2$-dimensional projective subspace of $\R\mathrm{P}^{n+1}$ such that $\sigma (V_i \cap \mathcal{M}^n) = \mathcal{V}_i$. Let $H_j$ denote the $3$-dimensional projective subspace of $\R\mathrm{P}^{n+1}$ such that 
$\sigma (H_j \cap \mathcal{M}^n) = \mathcal{H}_j$. Define $M_{5,3}:= V_5\cap H_3 \cap (M_{4,2} \vee M_{4,3} \vee M_{5,2})$ and the following spaces.
\begin{align*}
X &:= H_1 \cap H_2 \cap H_3
&
X_1 &:= \bigcap_{1\leq j \leq 3} \mathrm{join}\{M_{i,j}\}_{1\leq i \leq 3}\\
X_2 &:=\bigcap_{1\leq j \leq 3} \mathrm{join}\{M_{i,j}\}_{2\leq i \leq 4} &  X_3 &:= \bigcap_{1\leq j \leq 3} \mathrm{join}\{M_{i,j}\}_{3\leq i \leq 5} 
\end{align*}
$X_1, X_2$ and $X_3$ are points that are contained in $X$, which is generically $1$-dimensional. The $2$-dimensional spaces $\{V_i\}_{ i \in [5]}$ contain $N$. By applying Corollary~\ref{cor: mxmgridthm} to the points $\{M_{i,j} \mid i\in [3], j\in [3]\}$, the points $X_1$ and $N$ are conjugate. By applying Corollary~\ref{cor: mxmgridthm} to the points $\{M_{i,j} \mid i\in \{2,3,4\}, j\in [3]\}$, the points $X_2$ and $N$ are conjugate. Therefore, the line $X = X_1 \vee X_2$ is contained in the polar hyperplane $N^{\perp}$. So, $X_3$ and $N$ are conjugate. By applying Corollary~\ref{cor: mxmgridthm} to the points $\{M_{i,j} \mid i\in \{3,4,5\}, j\in [3]\}$, the point $M_{5,3}$ is contained in $\mathcal{M}^n$ because $X_3$ and $N$ are conjugate. The point $\sigma (M_{5,3})$ is the required point $P_{5,3}$.
\end{proof}

Proposition~\ref{thm: linearspherical5x3} can be used iteratively to construct circular nets $\Z^2 \to \R^n$, where $n\in \{3,4\}$, such that the parameter lines are linear in one family and spherical in the other family.

\begin{prop}\label{prop: linearsphericalquadricofrevolution}
Let $P: \Z^2 \to \R^n$, where $n\in \{3,4\}$, be a circular net. For all $i\in \Z$, let the points $\{P_{i,j}\}_{j\in \Z}$ be contained in a line $\mathcal{V}_i$. For all $j \in \Z$, let the points $\{P_{i,j}\}_{i\in \Z}$ be contained in a $2$-dimensional sphere $\mathcal{H}_j$. 

\begin{enumerate}[label=(\roman*)]
\item
Let $n=4$. The lines $\{\mathcal{V}_i\}_{i\in \Z}$ are isotropic lines of a $3$-dimensional quadric $\mathcal{R}$ such that the $2$-spheres $\{\mathcal{H}_j\}_{j\in \Z}$ are contained in $\mathcal{R}$. The $2$-spheres $\{\mathcal{H}_{2j}\}_{j\in \Z}$ are contained in parallel $3$-planes of $\R^4$. The $2$-spheres $\{\mathcal{H}_{2j+1}\}_{j\in \Z}$ are contained in parallel $3$-planes of $\R^4$.
\item
Let $n=3$. The lines $\{\mathcal{V}_i\}_{i\in \Z}$ are tangent to a $2$-dimensional quadric $\mathcal{R}$ such that, for each $j\in \Z$, the sphere $\mathcal{H}_j$ touches $\mathcal{R}$ along the points of a circle. The centres of the spheres $\{\mathcal{H}_j\}_{j\in\Z}$ are collinear.
\end{enumerate}
\end{prop}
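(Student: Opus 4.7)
The strategy parallels Proposition~\ref{prop: linearcircularquadricconcic}: lift $P$ to $M : \mathbb{Z}^2 \to \mathcal{M}^n \subset \mathbb{R}\mathrm{P}^{n+1}$, apply Lemma~\ref{lem: mxZstripisoquadric} transversely to the spheres, and push the resulting quadric back to $\overline{\mathbb{R}^n}$ via the central projection $\sigma$. Write $V_i$ for the $2$-plane with $\sigma(V_i \cap \mathcal{M}^n) = \mathcal{V}_i$, which contains $N$ because $\mathcal{V}_i$ is an affine line, and $H_j$ for the $3$-plane with $\sigma(H_j \cap \mathcal{M}^n) = \mathcal{H}_j$.

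For case (i), $n = 4$, apply Lemma~\ref{lem: mxZstripisoquadric} to $\mathbb{Z} \times [3]$ with $m = 3$ and $d = 3$: there is a quadric $\mathcal{V} \subset \mathbb{R}\mathrm{P}^5$ of which every $V_i$ is a $2$-dimensional isotropic subspace. Isotropy together with $N \in V_i$ gives $N \in \mathcal{V}$ and $V_i \subset N^{\perp_{\mathcal{V}}}$; the genericity assumption that the $V_i$ span $\mathbb{R}\mathrm{P}^5$ then forces $N$ to be a singular point of $\mathcal{V}$, so $\mathcal{V}$ is a cone with apex $N$ and $\mathcal{R} := \sigma(\mathcal{V})$ is a $3$-dimensional quadric in $\overline{\mathbb{R}^4}$ whose isotropic lines include $\mathcal{V}_i = \sigma(V_i)$. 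For each $j$, both $H_j \cap \mathcal{V}$ and $H_j \cap \mathcal{M}^4$ are $2$-dimensional quadrics in the $3$-space $H_j$ sharing the infinitely many points $\{M_{i,j}\}_i$, so they coincide and $\mathcal{H}_j \subset \mathcal{R}$. For the alternating parallelism of the ambient $3$-flats of $\mathcal{H}_{2j}$ and of $\mathcal{H}_{2j+1}$, apply Lemma~\ref{lem: mxZstripisoquadric} transversely with $m = 4$ and $d = 2$: this produces a second quadric $\mathcal{H} \subset \mathbb{R}\mathrm{P}^5$ of generic signature $(2,2,2)$ whose $3$-dimensional isotropic generators $\{H_j\}$ split, as in Proposition~\ref{prop: hyperbolicquadrictwoclasses}, into two systems indexed by $j \bmod 2$; translating this structure through $\sigma$ gives the desired equality of ideal $2$-planes within each parity class, using that the $1$-dimensional singular locus of $\mathcal{H}$ generically lies in $N^{\perp_{\mathcal{M}^4}}$.

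For case (ii), $n = 3$, one adapts the polar-cone construction from Proposition~\ref{prop: linearcircularquadricconcic}(ii): apply Lemma~\ref{lem: mxZstripisoquadric} to $\mathbb{Z} \times [2]$ with $m = 2$ and $d = 3$ to obtain a generically non-degenerate quadric $\mathcal{Q} \subset \mathbb{R}\mathrm{P}^4$ of signature $(3,2)$ in which each line $\ell_i := M_{i,1} \vee M_{i,2}$ is isotropic; let $\mathcal{V}$ be the cone with apex $N$ over the conic $N^{\perp_{\mathcal{Q}}} \cap \mathcal{Q}$. Because $\ell_i$ is isotropic and $N \in V_i$, every $V_i$ is tangent to $\mathcal{V}$, so $\mathcal{R} := \sigma(\mathcal{V})$ is a $2$-dimensional quadric in $\overline{\mathbb{R}^3}$ tangent to every $\mathcal{V}_i$. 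The circle along which $\mathcal{H}_j$ touches $\mathcal{R}$ is identified with $\sigma(H_j \cap N^{\perp_{\mathcal{Q}}} \cap \mathcal{M}^3)$, a genuine circle since $H_j \cap N^{\perp_{\mathcal{Q}}}$ is a $2$-plane. For the collinearity of the centres, Corollary~\ref{cor: mxmgridthm} applied to consecutive $4 \times 4$ sub-patches forces the line $H_j \cap H_{j+1} \cap H_{j+2}$ into $N^{\perp_{\mathcal{M}^3}}$ for every $j$; polar duality then places $N$ together with three consecutive poles $H_{j'}^{\perp_{\mathcal{M}^3}}$ in a common $2$-plane, and iterating in $j$ while using Lemma~\ref{lem: stereoproj} to identify $\sigma(H_j^{\perp_{\mathcal{M}^3}})$ with the centre of $\mathcal{H}_j$ places all these centres on a common line in $\mathbb{R}^3$.

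The main obstacle is the alternation step in case (i): same-system generators of $\mathcal{H}$ share only its $1$-dimensional singular locus, whereas the $2$-dimensional polar intersections $H_j \cap N^{\perp_{\mathcal{M}^4}}$ depend on more, so the equality of these polar intersections within each parity class must be extracted from further constraints of the joint Q-net and M\"obius structure (in particular, the classical fact that spherical sections of a non-degenerate quadric in $\mathbb{R}^4$ come in two families of parallel $3$-flats). A careful pencil-of-quadrics analysis inside each $H_j$ is likewise needed in (ii) to confirm that $H_j \cap \mathcal{M}^3 \cap \mathcal{V}$ really consists of a double circle of tangency rather than a generic quartic.
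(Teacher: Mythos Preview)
Your overall strategy is exactly what the paper intends by ``analogous to Proposition~\ref{prop: linearcircularquadricconcic}'': lift to the M\"obius quadric, apply Lemma~\ref{lem: mxZstripisoquadric} to obtain a quadric carrying the transverse spans as isotropic subspaces, and project back via $\sigma$. Two points deserve correction.

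First, in case~(ii) you invoke Corollary~\ref{cor: mxmgridthm} on $4\times 4$ sub-patches. Since the $V_i$ are $2$-dimensional (lines lifted through $N$), the correct size is $3\times 3$ (i.e.\ $m=3$), exactly as in the proof of Proposition~\ref{thm: linearspherical5x3}: two overlapping $3\times 3$ blocks give points $X_1,X_2\in H_1\cap H_2\cap H_3$ each conjugate to $N$, so the line $X_1\vee X_2=H_1\cap H_2\cap H_3$ lies in $N^\perp$; then the poles $H_j^\perp$ lie in a $2$-plane through $N$, and Lemma~\ref{lem: stereoproj} finishes. Your $4\times4$ version does not fit the hypotheses of the corollary.

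Second, and more substantively, the alternation in case~(i) need not go through a second quadric $\mathcal{H}$ of signature $(2,2,2)$ and the delicate extraction you flag as an obstacle. The paper's route (parallel to the proof of Proposition~\ref{prop: linearplanarBonnetthereom}) is elementary: for each fixed $i$, restrict to the two linear parameter lines $\mathcal{V}_i,\mathcal{V}_{i+1}$, which span a $2$-plane in $\R^4$; in that plane the restricted net is a circular net with two linear families, and Proposition~\ref{prop: linearlinearparallel} gives that the edge-lines $P_{i,j}\vee P_{i+1,j}$ are parallel for $j$ of fixed parity. Varying $i$, these edge-directions span the ideal $2$-plane of the $3$-flat containing $\mathcal{H}_j$, so all $\mathcal{H}_{2j}$ (respectively all $\mathcal{H}_{2j+1}$) share the same ideal $2$-plane, i.e.\ lie in parallel $3$-flats. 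This replaces your incomplete generator-system argument entirely.

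A minor remark: in case~(i) the most direct analogue of Proposition~\ref{prop: linearcircularquadricconcic} uses Lemma~\ref{lem: mxZstripisoquadric} with $m=2$, $d=3$ (isotropic \emph{lines} $M_{i,1}\vee M_{i,2}$ in a quadric $\mathcal{Q}\subset H_1\vee H_2$, then $\mathcal{R}=\sigma(\mathcal{Q})$ since $N\notin H_1\vee H_2$), rather than your $m=3$, $d=3$ cone construction. Both work, but the former avoids having to argue that $N$ is singular in $\mathcal{V}$.
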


The proof of Proposition~\ref{prop: linearsphericalquadricofrevolution} is analogous to the proof of Proposition~\ref{prop: linearcircularquadricconcic}.

In non-generic cases, it can happen that the quadric $\mathcal{R}$ in the case $n=3$ of Proposition~\ref{prop: linearsphericalquadricofrevolution} degenerates to a conic. For example, as shown in Figure~\ref{figure: linearandsphericaldegenerate}, this happens if one of the parameter lines is planar rather than spherical.

\begin{figure}[htbp]
\[\includegraphics[width=0.5\textwidth]{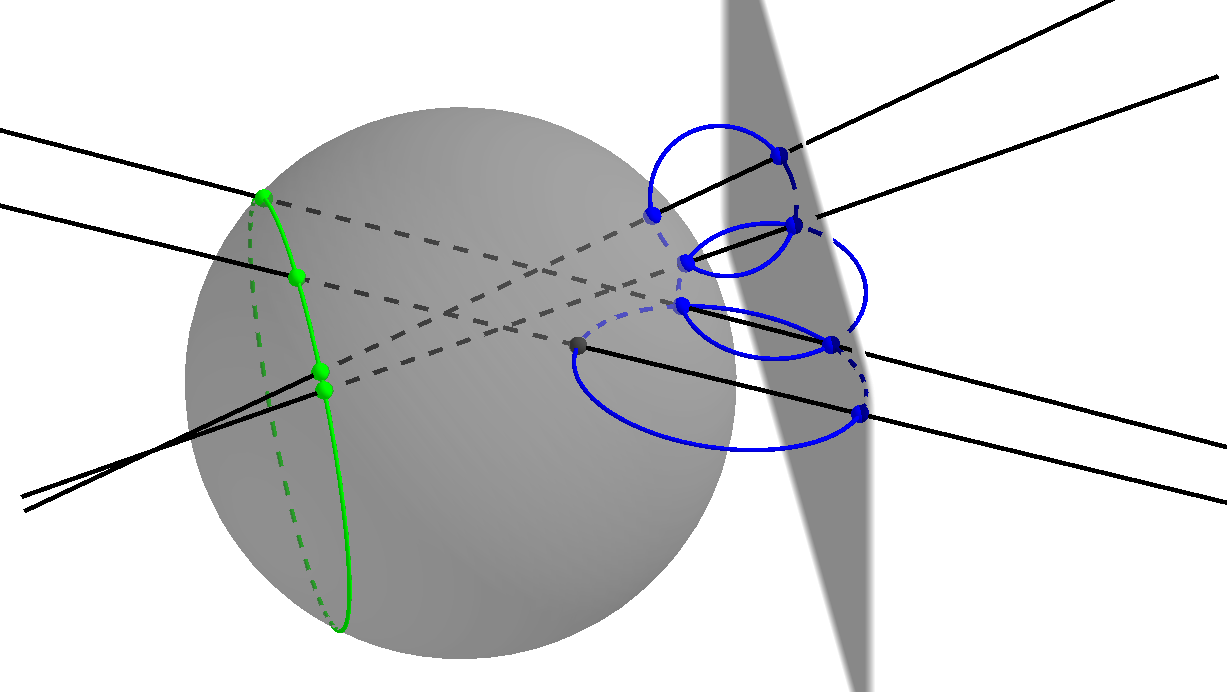}\]
\caption{A circular net $P: [4]\times [2] \to \R^3$. The points $\{P_{i,1}\}_{i \in [4]}$ are contained in a sphere. The points $\{P_{i,2}\}_{i\in [4]}$ are contained in a plane. The lines $\{P_{i,1}\vee P_{i,2}\}_{i \in [4]}$ intersect the sphere in four additional points that are coplanar.}
\label{figure: linearandsphericaldegenerate}
\end{figure}

In classical differential geometry, for any developable surface in $\R^3$ with spherical curvature lines, the spheres containing the spherical curvature lines are concentric \cite{Serret1853}. However, the spheres $\{\mathcal{H}_j\}_{j\in \Z}$ in the case $n=3$ of Proposition~\ref{prop: linearsphericalquadricofrevolution} are generically not concentric. Instead, they have collinear centres.

\subsection{One Family Circular and the Other Family Planar}

\begin{prop}\label{thm: circularplanar4x4}
Consider a function $P: [4]\times [4] - \{(4,4)\} \to \mathbb{R}^n$ where $n\in \{3,4\}$.  For each $j \in [3]$, let the points $\{P_{i,j}\}_{i\in [4]}$ be contained in a $2$-dimensional plane $\mathcal{H}_j$. Let $\mathcal{H}_4$ be the $2$-dimensional plane containing the points $P_{1,4}, P_{2,4}, P_{3,4}$. For each $i \in [3]$, let the points $\{P_{i,j}\}_{j\in [4]}$ be contained in a circle $\mathcal{V}_i$. Let $\mathcal{V}_4$ be the circle containing the points $P_{4,1}, P_{4,2}, P_{4,3}$. Suppose that $\forall (i,j) \neq (3,3)$ the quad $\Box_{i,j}$ is circular. Then, there exists a unique point $P_{4,4}\in \mathbb{R}^n$ such that the quad $\Box_{3,3}$ is circular and such that $P_{4,4} \in \mathcal{V}_4\cap \mathcal{H}_4$. 
\end{prop}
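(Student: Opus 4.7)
The approach is to lift $P$ to the M\"obius quadric and follow the template of the proof of Proposition~\ref{prop: circularspherical5x4}, exploiting the special feature that the lifts $H_j$ of the planar rows all pass through $N$. Let $M:[4]\times[4]-\{(4,4)\}\to \mathcal{M}^n\subset \R\mathrm{P}^{n+1}$ denote the lift. For each $i\in[4]$, let $V_i\subset \R\mathrm{P}^{n+1}$ be the $2$-dimensional subspace with $V_i\cap \mathcal{M}^n=\mathcal{V}_i$; for each $j\in[4]$, let $H_j\subset \R\mathrm{P}^{n+1}$ be the $3$-dimensional subspace with $\sigma(H_j\cap \mathcal{M}^n)=\mathcal{H}_j$, noting that $N\in H_j$ for all $j$. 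Define
\[
M_{4,4}:=V_4\cap H_4\cap (M_{3,3}\vee M_{4,3}\vee M_{3,4}),
\]
which by the genericity assumption is a single point. Setting $P_{4,4}:=\sigma(M_{4,4})$, the incidence $P_{4,4}\in \mathcal{V}_4\cap \mathcal{H}_4$ and the coplanarity of $\Box_{3,3}$ hold automatically; the task is to prove $M_{4,4}\in \mathcal{M}^n$, i.e., that $\Box_{3,3}$ is a circular quad.

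Applying Corollary~\ref{cor: mxmgridthm} to the $3\times 3$ subgrid $\{M_{i,j}\mid i,j\in\{2,3,4\}\}$ reduces $M_{4,4}\in \mathcal{M}^n$ to the conjugacy $X^{(\mathrm{sub})}\perp Y^{(\mathrm{sub})}$ relative to $\mathcal{M}^n$, where $X^{(\mathrm{sub})}:=\bigcap_{j\in\{2,3,4\}}H_j^{(2..4)}$, $Y^{(\mathrm{sub})}:=V_2\cap V_3\cap V_4$, and $H_j^{(2..4)}:=M_{2,j}\vee M_{3,j}\vee M_{4,j}$. The same Corollary applied to the fully inscribed subgrid $\{M_{i,j}\mid i\in\{2,3,4\},\,j\in\{1,2,3\}\}$ yields $X_{21}\perp Y^{(\mathrm{sub})}$, where $X_{21}:=\bigcap_{j\in\{1,2,3\}}H_j^{(2..4)}$, so it suffices to identify $X^{(\mathrm{sub})}$ with $X_{21}$. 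For this, invoke Lemma~\ref{lem: mxZstripisoquadric} (the case $m=3$, $d=2$) on the strip $M|_{\{2,3,4\}\times[4]}$: the four planes $\{H_j^{(2..4)}\}_{j\in[4]}$ are isotropic planes of a common quadric of signature $(++--0)$, hence generators of a cone, hence concurrent at the apex. This apex lies in both $X_{21}$ and $X^{(\mathrm{sub})}$, giving $X^{(\mathrm{sub})}=X_{21}$.

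The principal obstacle is the apparent circularity in invoking Lemma~\ref{lem: mxZstripisoquadric} on a strip containing $M_{4,4}$ itself. This is resolved as in the proof of Proposition~\ref{prop: circularspherical5x4}: the cone and its apex are already determined by the three fully inscribed planes $\{H_j^{(2..4)}\}_{j\in\{1,2,3\}}$ via the pencil-of-quadrics argument underlying Lemma~\ref{lem: mxZstripisoquadric} (cf.\ Lemma~\ref{lem: 2quadricsdefinepencil}), and the definition of $M_{4,4}$ as the triple intersection $V_4\cap H_4\cap(M_{3,3}\vee M_{4,3}\vee M_{3,4})$ is precisely what forces the fourth plane $H_4^{(2..4)}$ to pass through the apex. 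A dimension count is needed for $n=4$: the strip effectively lies in a $4$-dimensional projective subspace of $\R\mathrm{P}^5$, within which the Lemma's ambient-dimension hypothesis $\R\mathrm{P}^{d+m-1}=\R\mathrm{P}^4$ is fulfilled.
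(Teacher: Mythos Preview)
Your invocation of Lemma~\ref{lem: mxZstripisoquadric} on the strip $M|_{\{2,3,4\}\times[4]}$ is genuinely circular, and your proposed resolution does not work. The analogy with Proposition~\ref{prop: circularspherical5x4} is misleading: there the grid is $[5]\times[4]$, so the strips $\{1,2,3\}\times[4]$ and $\{2,3,4\}\times[4]$ used in that proof consist entirely of \emph{given} points already on $\mathcal{M}^n$. In your $[4]\times[4]$ situation the strip $\{2,3,4\}\times[4]$ contains the constructed point $M_{4,4}$, and nothing in the definition $M_{4,4}=V_4\cap H_4\cap F$ forces the plane $H_4^{(2..4)}=M_{2,4}\vee M_{3,4}\vee M_{4,4}$ through the apex determined by $\{H_j^{(2..4)}\}_{j\in\{1,2,3\}}$. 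Concretely, in the proof of Lemma~\ref{lem: mxZstripisoquadric} the step that extends isotropy from $j\in\{1,2,3\}$ to $j=4$ (via Lemma~\ref{lem: 2pointsandpencilofquadrics}) needs the line $M_{3,4}\vee M_{4,4}$ to have both endpoints in the base locus $\mathcal{M}^n\cap(V_3\cup V_4)$, i.e.\ it needs $M_{4,4}\in\mathcal{M}^n$.

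Your overall strategy is salvageable, but the correct tool is Proposition~\ref{prop: 4x4X1Dor2D} rather than Lemma~\ref{lem: mxZstripisoquadric}. Apply Lemma~\ref{lem: mxZstripisoquadric} to the \emph{fully inscribed} strip $\{1,2,3\}\times[4]$ to obtain the apex $X_1:=\bigcap_{j\in[4]}H_j^{(1..3)}$. Since $X_1\in H_j^{(1..3)}\subset H_j$ for all $j$, and $N\in H_j$ for all $j$ (planarity of the rows), and generically $X_1\neq N$, the intersection $\bigcap_{j\in[4]}H_j$ contains the line $N\vee X_1$ and is therefore $1$-dimensional. Now Proposition~\ref{prop: 4x4X1Dor2D}(i), which is a pure Q-net statement requiring no inscription in a quadric, gives that $\mathcal{L}_A^2 M$ is Laplace degenerate, whence $X_{21}=X^{(\mathrm{sub})}$ and your argument goes through.

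The paper takes a shorter route: it omits the proof and observes that Proposition~\ref{thm: circularplanar4x4} is a degenerate case of Proposition~\ref{prop: 4x4planarplanar}. Since each circle $\mathcal{V}_i$ lies in a $2$-plane, the data also satisfy the planar--planar hypotheses with the enlarged $3$-spaces $V_i':=V_i\vee N$; Corollary~\ref{cor: mxmgridthm} with $m=4$ then applies directly because $\bigcap_j H_j=\bigcap_i V_i'=N$ and $N\perp N$. That the resulting $M_{4,4}$ actually lies on the circle $\mathcal{V}_4$ (not just in its plane) follows from the observation that columns $3$ and $4$ together lie on a common $2$-sphere (as $V_3$ is $2$-dimensional), so the planar column~$4$ is a circular section of that sphere.
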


The proof of Proposition~\ref{thm: circularplanar4x4} is omitted. Proposition~\ref{thm: circularplanar4x4} can be seen as a degenerate case of Proposition~\ref{prop: 4x4planarplanar}, which is an incidence theorem for circular nets $P: [4]\times [4] \to \R^n$, $n\in \{3,4\}$, such that both families of parameter lines are $2$-planar. Suppose that the points $\{P_{1,1}, P_{1,2}, P_{1,3}, P_{1,4}\}$ are contained in a circle. Then, there is a unique $2$-sphere that contains the points $\{P_{1,1}, P_{1,2}$, $P_{1,3}, P_{1,4}\} \cup \{P_{2,1}, P_{2,2}, P_{2,3}, P_{2,4}\}$. The planarity of the points $\{P_{2,1}, P_{2,2}, P_{2,3}, P_{2,4}\}$ means that they are contained in a planar section of a $2$-sphere, i.e.\ a circle. Similarly, $\{P_{3,1}, P_{3,2}, P_{3,3}, P_{3,4}\}$ and $\{P_{4,1}, P_{4,2}, P_{4,3}, P_{4,4}\}$ are quadruples of points that are contained in circles because the circular net $P$ has $2$-planar parameter lines.

Proposition~\ref{thm: circularplanar4x4} can be used iteratively to construct circular nets $\Z^2 \to \R^n$, where $n\in \{3,4\}$, such that the parameter lines are circular in one family and $2$-planar in the other family.

\begin{prop}\label{prop: circularplanarR3R4}
Let $P: \Z^2 \to \mathbb{R}^n$, where $n\in \{3,4\}$, be a circular net. For all $i\in \Z$, let the points $\{P_{i,j}\}_{j\in \Z}$ be contained in a circle $\mathcal{V}_i$. For all $j \in \Z$, let the points $\{P_{i,j}\}_{i\in \Z}$ be contained in a $2$-dimensional plane $\mathcal{H}_j$. 
\begin{enumerate}[label=(\roman*)]
\item
Let $n=4$. The $2$-planes $\{\mathcal{H}_j\}_{j\in \Z}$ are isotropic planes of a $3$-dimensional quadratic cone $\mathcal{R}$. The circles $\{\mathcal{V}_i\}_{i \in \Z}$ are contained in the quadratic cone $\mathcal{R}$. 
\item
Let $n=3$. The $2$-planes $\{\mathcal{H}_j\}_{j\in \Z}$ are tangent along isotropic lines of a $2$-dimensional quadratic cone $\mathcal{R}$. The circles $\{\mathcal{V}_i\}_{i \in \Z}$ are twice tangent to the quadratic cone $\mathcal{R}$. 
\end{enumerate}
\end{prop}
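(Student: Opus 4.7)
The plan is to mimic the proof of Proposition~\ref{prop: circularsphericalR3R4}, with the additional input that the lifted subspaces corresponding to the $2$-planes $\mathcal{H}_j$ all pass through the distinguished point $N\in\R\mathrm{P}^{n+1}$ that is removed by the stereographic projection $\sigma$. Lift the circular net to $M:\Z^2\to\mathcal{M}^n\subset\R\mathrm{P}^{n+1}$ and denote by $V_i$ the $2$-dimensional projective subspace with $\sigma(V_i\cap\mathcal{M}^n)=\mathcal{V}_i$ and by $H_j$ the $3$-dimensional projective subspace with $\sigma(H_j\cap\mathcal{M}^n)=\mathcal{H}_j$. Because $\mathcal{H}_j$ is a $2$-plane rather than a generic $2$-sphere in $\R^n$, we have $N\in H_j$ for every $j\in\Z$.

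For case (i) with $n=4$, apply Lemma~\ref{lem: mxZstripisoquadric} exactly as in the proof of Proposition~\ref{prop: circularsphericalR3R4}(i) to produce a quadric $\mathcal{H}\subset\R\mathrm{P}^5$ having $\{H_j\}_{j\in\Z}$ as $3$-dimensional isotropic generators. Since the point $N$ lies in every one of these generators, it lies in the intersection of infinitely many maximal isotropic subspaces, which forces $N$ to belong to the singular locus of $\mathcal{H}$. Equivalently, $\mathcal{H}$ is a quadratic cone with apex $N$. Projecting $\mathcal{H}$ from $N$ via $\sigma$ produces a $3$-dimensional quadratic cone $\mathcal{R}\subset\overline{\R^4}$; the generators $H_j$ of $\mathcal{H}$ project to the $2$-planes $\mathcal{H}_j$, which are then isotropic planes of $\mathcal{R}$. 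Finally, the circles $\mathcal{V}_i=\sigma(V_i\cap\mathcal{M}^4)$ consist of points of $\mathcal{H}\cap\mathcal{M}^4$, so they lie on the image $\mathcal{R}$.

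For case (ii) with $n=3$, the argument runs in parallel, following the proof of Proposition~\ref{prop: circularsphericalR3R4}(ii). One constructs a quadric $\mathcal{H}\subset\R\mathrm{P}^4$ to which the $3$-dimensional subspaces $H_j$ are tangent along isotropic $2$-planes; since all $H_j$ contain $N$, the point $N$ is again forced to lie in the singular locus of $\mathcal{H}$, so $\mathcal{H}$ is a cone with apex $N$. Projection from $N$ by $\sigma$ yields a $2$-dimensional quadratic cone $\mathcal{R}\subset\overline{\R^3}$ to which the planes $\mathcal{H}_j$ are tangent along isotropic lines, and to which each circle $\mathcal{V}_i$ is twice tangent, following verbatim the tangency argument carried out for the spherical case.

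The main obstacle is verifying rigorously that $N$ is indeed a singular point of the quadric $\mathcal{H}$ produced by Lemma~\ref{lem: mxZstripisoquadric}: one has to inspect the pencil-of-quadrics construction internal to that lemma and confirm that the presence of a common point $N$ in every generator (or every tangency subspace) forces $\mathcal{H}$ to degenerate to a cone with apex at $N$. Once this degeneration is established, the remainder of the proof reduces to transparent projective considerations, because $\sigma$ behaves well on lines and planes through $N$ and preserves the isotropy and tangency properties of the relevant subspaces.
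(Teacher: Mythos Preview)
Your approach is correct, but it follows a different template than the paper's. The paper points to Proposition~\ref{prop: linearcircularquadricconcic} as the analogue, whereas you follow Proposition~\ref{prop: circularsphericalR3R4}. In the paper's approach one works in the subspace $V_1\vee V_2\vee V_3$ (which generically does \emph{not} contain $N$, since the $\mathcal{V}_i$ are circles), applies Lemma~\ref{lem: mxZstripisoquadric} there to obtain a quadric $\mathcal{Q}$ whose isotropic $2$-planes are $M_{1,j}\vee M_{2,j}\vee M_{3,j}$, and then projects by $\sigma$; for $n=4$ the restriction of $\sigma$ is a projective isomorphism and the cone structure of $\mathcal{R}$ comes for free from the signature $(++--0)$ of $\mathcal{Q}$, while for $n=3$ one takes the polar of $N$ relative to $\mathcal{Q}$ first, exactly as in the $n=2$ case of Proposition~\ref{prop: linearcircularquadricconcic}. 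Your route instead builds the full quadric $\mathcal{H}$ in $\R\mathrm{P}^{n+1}$ and identifies $N$ as singular before projecting; this is valid and conceptually appealing, but costs you the extra verification step.

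On that ``main obstacle'': it is not as delicate as you suggest, and your framing is slightly off. The quadric $\mathcal{H}$ produced in Proposition~\ref{prop: circularsphericalR3R4} is \emph{already} degenerate in the generic spherical case (signature $(++--00)$ for $n=4$, signature $(++-00)$ for $n=3$), with a one-dimensional singular line. The planar constraint does not further degenerate $\mathcal{H}$; it merely places $N$ on that existing singular line. For $n=4$ this follows because any smooth point of a $(++--00)$ quadric lies on exactly two maximal isotropic $3$-planes, while $N$ lies on infinitely many $H_j$. For $n=3$ it follows even more directly: the proof of Proposition~\ref{prop: circularsphericalR3R4} shows that the singular line of $\mathcal{H}$ is exactly $X=\cap_j H_j$, and $N\in H_j$ for all $j$ gives $N\in X$ immediately. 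Once you have $N$ singular, projection from $N$ drops the signature by one zero and your conclusions about $\mathcal{R}$ follow as you say.
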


The proof of Proposition~\ref{prop: circularplanarR3R4} is analogous to the proof of Proposition~\ref{prop: linearcircularquadricconcic}.

In classical differential geometry, if a smooth surface in $\R^3$ has one family of circular curvature lines and one family of planar curvature lines, then generically the planes of the planar curvature lines are concurrent at a line in $\R^3$ and the planes of the circular curvature lines are concurrent at a point in the plane at infinity of $\overline{\R^3}$. For instance, see \cite{rouquet1882etude} for more information. However, in the case $n=3$ of Proposition~\ref{prop: circularplanarR3R4}, the intersection $\cap_{j\in \Z}\mathcal{H}_j$ is generically a point rather than a line and the planes of the circles $\{\mathcal{V}_i\}_{i \in \Z}$ are generically not concurrent in $\overline{\R^3}$.

\subsection{One Family Linear and the Other Family Planar}

\begin{prop}\label{thm: linearplanar4x3}
Consider a function $P: [4]\times [3]-\{(4,3)\} \to \R^3$. For each $j \in [2]$, let the points $\{P_{i,j}\}_{i\in [4]}$ be contained in a plane $\mathcal{H}_j$. Let $\mathcal{H}_3$ be the plane containing the points $P_{1,3},P_{2,3},P_{3,3}$. For each $i \in [3]$, let the points $\{P_{i,j}\}_{j\in [3]}$ be contained in a line $\mathcal{V}_i$. Let $\mathcal{V}_4$ be the line $P_{4,1}\vee P_{4,2}$. Suppose that $\forall (i,j) \neq (3,2)$, the quad $\Box_{i,j}$
is circular. There exists a unique point $P_{4,3}$ such that the quad $\Box_{3,2}$ is circular and such that $P_{4,3}$ is contained in $\mathcal{V}_4 \cap \mathcal{H}_3$.
\end{prop}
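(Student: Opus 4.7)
The plan is to mimic the proofs of Propositions~\ref{thm: linearcircular} and \ref{thm: linearspherical5x3}: lift to the M\"obius quadric and chain two applications of Corollary~\ref{cor: mxmgridthm}. Let $M: [4]\times[3]-\{(4,3)\} \to \mathcal{M}^3\subset \R\mathrm{P}^4$ be the lift of $P$, let $V_i\subset\R\mathrm{P}^4$ be the $2$-dimensional projective subspace with $\sigma(V_i\cap\mathcal{M}^3)=\mathcal{V}_i$, and let $H_j\subset\R\mathrm{P}^4$ be the $3$-dimensional projective subspace with $\sigma(H_j\cap\mathcal{M}^3)=\mathcal{H}_j$. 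Because $\mathcal{V}_i$ is a line and $\mathcal{H}_j$ is a plane in $\R^3$, each $V_i$ and each $H_j$ contains the point $N$.

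I would then define
\[
M_{4,3} := V_4 \cap H_3 \cap (M_{3,2} \vee M_{4,2} \vee M_{3,3}),
\]
and first check that this is a single point. Writing $\Pi := M_{3,2} \vee M_{4,2} \vee M_{3,3}$, the coplanarity of $\Box_{3,1}$ gives $M_{3,1}\in V_4\vee M_{3,2}$, and since $M_{3,3}\in V_3 = M_{3,1}\vee M_{3,2}\vee N$ this also forces $M_{3,3}\in V_4\vee M_{3,2}$. Hence $V_4\vee \Pi = V_4 \vee M_{3,2}$ is only $3$-dimensional, so $V_4\cap \Pi$ is a line through $M_{4,2}$ whose intersection with the hyperplane $H_3$ is a single point. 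By construction $\sigma(M_{4,3})\in\mathcal{V}_4\cap\mathcal{H}_3$ and $M_{4,3}$ is coplanar with $M_{3,2},M_{4,2},M_{3,3}$, so once $M_{4,3}\in\mathcal{M}^3$ is verified the quad $\Box_{3,2}$ is automatically circular.

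Set $L := H_1\cap H_2\cap H_3$, which is generically a line containing $N$. Applying Corollary~\ref{cor: mxmgridthm} to the $3\times 3$ sub-Q-net $\{M_{i,j}\}_{i,j\in[3]}$ (all nine vertices lie on $\mathcal{M}^3$) yields $X^{(1)}:= \bigcap_{j\in[3]}\mathrm{join}\{M_{i,j}\}_{i\in[3]}\in L$ with $X^{(1)}\perp N$, since $\bigcap_{i\in[3]} V_i = \{N\}$. Because $N\in\mathcal{M}^3$ is self-conjugate, the full line $L = N\vee X^{(1)}$ lies in the polar hyperplane $N^\perp$. The coplanarity of $\Box_{3,2}$ (ensured by the definition of $M_{4,3}$) makes $\{M_{i,j}\}_{i\in\{2,3,4\},\, j\in[3]}$ a Q-net, and the inclusions $M_{4,3}\in H_3$ and $M_{4,3}\in V_4$ give $\mathrm{join}\{M_{i,j}\}_{i\in\{2,3,4\}}\subseteq H_j$ for each $j\in[3]$ as well as $\mathrm{join}\{M_{4,j}\}_{j\in[3]} = V_4$. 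Consequently $X^{(2)}:= \bigcap_{j\in[3]}\mathrm{join}\{M_{i,j}\}_{i\in\{2,3,4\}}$ lies in $L\subseteq N^\perp$ while $\bigcap_{i\in\{2,3,4\}} V_i = \{N\}$, and a second application of Corollary~\ref{cor: mxmgridthm} gives $M_{4,3}\in\mathcal{M}^3$ iff $X^{(2)}\perp N$, which holds.

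The main obstacle is the dimensional step showing that $V_4\cap\Pi$ is a line (rather than just the single point $\{M_{4,2}\}$), which is what ensures $M_{4,3}$ is well-defined. This hinges on the Q-net relation for $\Box_{3,1}$ together with the linear containment $M_{3,3}\in V_3$ to cut the join $V_4\vee\Pi$ down from its generic dimension $4$ to dimension $3$. Once that is in place, propagating the conjugacy $L\subseteq N^\perp$ from one $3\times 3$ sub-Q-net to its neighbour via the common line $L$ is straightforward.
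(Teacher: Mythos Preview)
Your proof is correct and is essentially the fleshed-out version of what the paper intends. The paper only says the proof is ``analogous to the proof of Proposition~\ref{prop: 4x4planarplanar}'', the common feature being that both the $V_i$ and the $H_j$ contain $N$, so that the self-conjugacy $N\perp N$ drives the argument. Your two chained applications of Corollary~\ref{cor: mxmgridthm}, with the intermediate step $L = N\vee X^{(1)}\subseteq N^\perp$, is exactly how this analogy has to be implemented for a non-square $[4]\times[3]$ grid; it is also the same pattern the paper uses explicitly in the proofs of Propositions~\ref{thm: linearcircular} and~\ref{thm: linearspherical5x3}, which you rightly cite as templates. Your dimension count showing $V_4\cap\Pi$ is a line (via $V_3\subseteq V_4\vee M_{3,2}$ because $N\in V_4$) is the one place where the planarity of the $\mathcal{H}_j$ is used beyond what the spherical case needs, and you handle it cleanly.
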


The proof of Proposition~\ref{thm: linearplanar4x3} is analogous to the proof of Proposition~\ref{prop: 4x4planarplanar}.

Proposition~\ref{thm: linearplanar4x3} can be used iteratively to construct circular nets $\Z^2 \to \R^3$ such that the parameter lines are linear in one family and planar in the other family.

\begin{prop}\label{prop: linearplanarBonnetthereom}
Let $P: \Z^2 \to \R^3$ be a circular net. For all $i\in \Z$, let the points $\{P_{i,j}\}_{j\in \Z}$ be contained in a line $\mathcal{V}_i$. For all $j \in \Z$, let the points $\{P_{i,j}\}_{i\in \Z}$ be contained in a plane $\mathcal{H}_j$. The lines $\{\mathcal{V}_i\}_{i\in \Z}$ are tangent to a conic that is contained in the plane at infinity of $\overline{\R^3}$. The planes $\{\mathcal{H}_{2j}\}_{j\in \Z}$ are parallel and the planes $\{\mathcal{H}_{2j+1}\}_{j\in \Z}$ are parallel.
\end{prop}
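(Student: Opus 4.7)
The plan is to split the claim into two parts and exploit a structural collapse in the net. First, observe that since the edges $P_{i,j}\vee P_{i,j+1}$ and $P_{i+1,j}\vee P_{i+1,j+1}$ lie respectively in the lines $\mathcal{V}_i$ and $\mathcal{V}_{i+1}$, the Laplace point $\mathcal{L}_B P(i,j) = (P_{i,j}\vee P_{i,j+1})\cap (P_{i+1,j}\vee P_{i+1,j+1})$ equals the $j$-independent point $B_i := \mathcal{V}_i\cap \mathcal{V}_{i+1}$. Hence the plane of the circular quad $\Box_{i,j}$ contains both $\mathcal{V}_i$ and $\mathcal{V}_{i+1}$, so it coincides with the single plane $\Pi_i := \mathcal{V}_i\vee \mathcal{V}_{i+1}$ independently of $j$. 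Restricting the net to the two-column strip $\{i,i+1\}\times \Z$ therefore lands in $\Pi_i$ and produces a circular net there, with linear parameter lines $\mathcal{V}_i, \mathcal{V}_{i+1}$ in one direction and linear parameter lines $k_{i,j} := \mathcal{H}_j\cap \Pi_i$ in the other.

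I would then apply Proposition~\ref{prop: linearlinearparallel} (after transposing rows and columns) within each $\Pi_i$ to conclude that $\{k_{i,2j}\}_{j\in\Z}$ are parallel and $\{k_{i,2j+1}\}_{j\in\Z}$ are parallel. Let $q_i^{\mathrm{e}} \in \pi_\infty$ denote the common point at infinity of $\{k_{i,2j}\}_j$, where $\pi_\infty$ is the plane at infinity of $\overline{\R^3}$. Since $k_{i,2j}\subset \mathcal{H}_{2j}$, this point lies on $\mathcal{H}_{2j}\cap \pi_\infty$ for every $j$, so the intersection $\bigcap_j (\mathcal{H}_{2j}\cap \pi_\infty)$ contains the generically distinct points $\{q_i^{\mathrm{e}}\}_{i\in \Z}$. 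Because two distinct lines in $\pi_\infty$ meet in at most one point, this forces all the lines $\mathcal{H}_{2j}\cap \pi_\infty$ to coincide, i.e., the planes $\{\mathcal{H}_{2j}\}_{j\in \Z}$ are all parallel. The same argument applied to the odd-indexed planes completes the parallelism claim.

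For the conic at infinity, I would lift $P$ to $M:\Z^2\to \mathcal{M}^3\subset \R\mathrm{P}^4$, so that the $2$-plane $V_i$ lifting $\mathcal{V}_i$ and the $3$-plane $H_j$ lifting $\mathcal{H}_j$ all pass through $N$. Applying Lemma~\ref{lem: mxZstripisoquadric} to the strip $\Z\times [2]$ with $m=2$ and $d=3$ yields a quadric $\mathcal{V}\subset \R\mathrm{P}^4$, belonging to the pencil spanned by $\mathcal{M}^3$ and $H_1\cup H_2$, that contains every chord $M_{i,1}\vee M_{i,2}$ as an isotropic line. A direct computation with representative bilinear forms shows that $N$ is a smooth point of $\mathcal{V}$ and that the tangent hyperplane there is $N^\perp$. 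In each $V_i$, the plane conic $V_i\cap \mathcal{V}$ contains the chord $M_{i,1}\vee M_{i,2}$ and the point $N$, hence decomposes as the chord together with a residual line $\tau_i$ through $N$; since $\tau_i\subset \mathcal{V}$ passes through the smooth point $N$, it lies in $T_N\mathcal{V} = N^\perp$, so $\tau_i = V_i\cap N^\perp$. Under the stereographic projection $\sigma$, the line $\tau_i$ collapses to the point at infinity of $\mathcal{V}_i$. Thus $\{\tau_i\}_{i\in\Z}$ are generators of the quadric cone $\mathcal{V}\cap N^\perp$ with vertex $N$, and their images $\sigma(\tau_i\setminus\{N\})$ trace out a conic in $\pi_\infty$, which is the sought conic tangent to every $\mathcal{V}_i$.

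The main obstacle is the conic step: verifying that the residual lines $\tau_i$ all lie on a single quadric cone. This rests on identifying the tangent hyperplane of $\mathcal{V}$ at $N$ with $N^\perp$; once this is in hand, the cone $\mathcal{V}\cap N^\perp$ is determined and the claim follows from a dimension count. The globalization from local parallelism within each $\Pi_i$ to global parallelism of the $\mathcal{H}_{2j}$ is the other delicate point, cleanly resolved by the genericity of the directions $q_i^{\mathrm{e}}$.
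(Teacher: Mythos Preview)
Your proposal is correct and follows essentially the same route as the paper: lift to $\mathcal{M}^3$, apply Lemma~\ref{lem: mxZstripisoquadric} to obtain a quadric through $N$ whose tangent hyperplane at $N$ coincides with $N^\perp$, intersect with $N^\perp$ to get a cone projecting to the desired conic at infinity, and use Proposition~\ref{prop: linearlinearparallel} on the two-column strips to obtain the alternating parallelism of the $\mathcal{H}_j$. Your presentation differs only cosmetically---you track the residual line $\tau_i$ in $V_i\cap\mathcal{V}$ where the paper tracks the single point where the chord $M_{i,1}\vee M_{i,2}$ meets $N^\perp$, and you spell out the globalization from strip-wise parallelism to parallelism of the planes $\mathcal{H}_{2j}$ more explicitly than the paper does---but the key lemmas, the identification $T_N\mathcal{V}=N^\perp$, and the overall architecture are the same.
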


\begin{proof}
Let $M: \Z^2 \to \mathcal{M}^3 \subset \R \mathrm{P}^4$ be the lift of $P$. Let $H_j$ be the $3$-dimensional projective subspace of $\R \mathrm{P}^4$ such that $\sigma(H_j \cap \mathcal{M}^3)=\mathcal{H}_j$. Let $V_i$ be the $2$-dimensional projective subspace of $\R \mathrm{P}^4$ such that $\sigma(V_i \cap \mathcal{M}^3)=\mathcal{V}_i$. By the case $m=2$ and $d=3$ of Lemma~\ref{lem: mxZstripisoquadric}, the lines $\{M_{i,1} \vee M_{i,2}\}_{i\in\Z}$ are isotropic lines of a $3$-dimensional quadric $\mathcal{Q}$. Generically, $\mathcal{Q}$ has signature $(+++--)$. The quadric $\mathcal{Q}$ contains $H_1 \cap \mathcal{M}^3$ and $H_2 \cap \mathcal{M}^3$, which both contain the point $N$ because $\mathcal{H}_1$ and $\mathcal{H}_2$ are planes in $\R^3$. So, the quadric $\mathcal{Q}$ contains the point $N$. Let $N^{\perp_\mathcal{Q}}$ be the polar hyperplane of $N$ relative to $\mathcal{Q}$. The quadric $\mathcal{R}:=N^{\perp_{\mathcal{Q}}}\cap \mathcal{Q}$ has signature $(++-0)$. It is a quadratic cone. Let $N^{\perp_{\mathcal{M}^3}}$ be the tangent hyperplane of $\mathcal{M}^3$ at $N$. The $2$-dimensional tangent space of the quadric $H_1 \cap \mathcal{M}^3$ at $N$ is contained in  $N^{\perp_{\mathcal{M}^3}}$. Similarly, the $2$-dimensional tangent space of the quadric $H_2 \cap \mathcal{M}^3$ at $N$ is contained in  $N^{\perp_{\mathcal{M}^3}}$. Generically, the two foregoing planes are distinct. Then, their span is the $3$-dimensional space $N^{\perp_{\mathcal{M}^3}}$. The $2$-spheres  $H_1 \cap \mathcal{M}^3$ and  $H_2 \cap \mathcal{M}^3$ are contained in $\mathcal{Q}$. Thus, $N^{\perp_\mathcal{Q}}$ equals $N^{\perp_{\mathcal{M}^3}}$. Under the central projection $\sigma$, $N^{\perp_{\mathcal{M}^3}}$ projects to the plane at infinity of $\overline{\R^3}$. So, the cone $\mathcal{R}$ is projected under $\sigma$ to a conic $\sigma(\mathcal{R})$ of signature $(++-)$ in the plane at infinity of $\overline{\R^3}$. Each isotropic line $M_{i,1}\vee M_{i,2}$ intersects the cone $\mathcal{R}$ in a unique point. So, each line $P_{i,1}\vee P_{i,2}$ intersects the conic $\sigma(\mathcal{R})$ in a unique point in the plane at infinity of $\overline{\R^3}$. Proposition~\ref{prop: linearlinearparallel} implies that, $\forall i \in \Z$, the lines $\{ P_{i,2j}\vee P_{i+1,2j}\}_{j \in \Z}$ are parallel and that the lines $\{ P_{i,2j+1}\vee P_{i+1,2j+1}\}_{j \in \Z}$ are parallel. Therefore, all of the planes $\{\mathcal{H}_{2j}\}_{j\in \Z}$ are parallel and all of the planes $\{\mathcal{H}_{2j+1}\}_{j\in \Z}$ are parallel.
\end{proof}

Proposition~\ref{prop: linearplanarBonnetthereom} can be seen as a degenerate case of the geometry that is shown in Figure~\ref{figure: linearandsphericaldegenerate}.

For any smooth surface in $\R^3$, Joachimsthal's theorem implies that a planar section of the surface is a curvature line if and only if the plane intersects the surface at a constant angle. For any developable surface in $\R^3$ with planar curvature lines, Joachimsthal's theorem implies that the planes of the planar curvature lines are parallel. However, in Proposition~\ref{prop: linearplanarBonnetthereom}, the planes $\{\mathcal{H}_j\}_{j\in \Z}$ are generically not parallel. Instead, the planes $\{\mathcal{H}_j\}_{j\in \Z}$ decompose into the two families $\{\mathcal{H}_{2j}\}_{j\in \Z}$ and $\{\mathcal{H}_{2j+1}\}_{j\in \Z}$ of parallel planes.

\subsection{Both Families Spherical}\label{section: sphericalspherical}

\begin{prop}\label{thm: sphericalspherical5x5}
Consider a function $M: [5]\times [5] - \{(5,5)\} \to \mathcal{M}^n \subset \R\mathrm{P}^{n+1}$ where $n\in \{3,4,5\}$.  For each $j \in [4]$, let the points $\{M_{i,j}\}_{i\in [5]}$ be contained in a $2$-sphere $\mathcal{H}_j$. Let $\mathcal{H}_5$ be the $2$-sphere containing the points $\{M_{i,5}\}_{i\in [4]}$. For each $i \in [4]$, let the points $\{M_{i,j}\}_{j\in [5]}$ be contained in a $2$-sphere $\mathcal{V}_i$. Let $\mathcal{V}_5$ be the $2$-sphere containing the points $\{M_{5,j}\}_{j\in [4]}$. Suppose that $\forall (i,j) \neq (4,4)$ the quad $\Box_{i,j}$ is circular. There exists a unique point $M_{5,5}\in \mathcal{M}^n$ such that the quad $\Box_{4,4}$ is circular and such that $M_{5,5} \in \mathcal{V}_5\cap \mathcal{H}_5$.
\end{prop}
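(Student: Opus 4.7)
The plan is to mirror the short proof of Proposition~\ref{thm: circularcircular4x4}, invoking Theorem~\ref{thm: mxmincidencethm} with $d=3$ in place of $d=2$. For $i,j\in[5]$, let $V_i, H_j\subset\R\mathrm{P}^{n+1}$ denote the unique $3$-dimensional projective subspaces with $V_i\cap\mathcal{M}^n=\mathcal{V}_i$ and $H_j\cap\mathcal{M}^n=\mathcal{H}_j$, and define
\[
M_{5,5}:=V_5\cap H_5\cap(M_{4,4}\vee M_{5,4}\vee M_{4,5}).
\]
A direct dimension count using the Q-net constraints shows that this intersection is generically a single point; by construction it lies in $V_5\cap H_5$ and in the plane of $\Box_{4,4}$, so all that remains is to show $M_{5,5}\in\mathcal{M}^n$, which simultaneously yields circularity of $\Box_{4,4}$ and $M_{5,5}\in\mathcal{V}_5\cap\mathcal{H}_5$.

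To establish $M_{5,5}\in\mathcal{M}^n$, I invoke Theorem~\ref{thm: mxmincidencethm} with $d=3$ and ambient space $\R\mathrm{P}^{n+1}$; the hypothesis $d<n$ there becomes $3<n+1$, which holds for $n\in\{3,4,5\}$. The given data $\{M_{i,j}\mid (i,j)\ne(5,5)\}$ is precisely one step of the iterative extension constructed in the proof of that theorem, starting from the inscribed $[4]\times[4]$ sub-grid $\{M_{i,j}\mid i,j\in[4]\}$ together with the two prescribed boundary points $M_{1,5}\in\mathcal{V}_1$ and $M_{5,1}\in\mathcal{H}_1$. The intermediate boundary points $M_{2,5}, M_{3,5}, M_{5,2}, M_{5,3}$ are those recursively forced by sphericity of the parameter lines combined with circularity of the neighbouring quads, while $M_{4,5}$ and $M_{5,4}$ coincide with the unique intersections $H_5\cap V_4\cap\mathrm{join}\{M_{3,4}, M_{4,4}, M_{3,5}\}$ and $V_5\cap H_4\cap\mathrm{join}\{M_{4,3}, M_{4,4}, M_{5,3}\}$, respectively. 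The final step of Theorem~\ref{thm: mxmincidencethm}'s proof then applies Theorem~\ref{thm: mxmiteratedlaplaceconjugate} to the $[4]\times[4]$ sub-grid $\{M_{i,j}\mid 2\le i,j\le 5\}$ (with $M_{5,5}$ defined as above) and delivers the desired conclusion $M_{5,5}\in\mathcal{M}^n$.

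The main obstacle is the bookkeeping required to verify that the given boundary data $\{M_{i,5}, M_{5,j}\mid i,j\in[4]\}$ really does coincide with what the recursive extension produces. For each intermediate vertex one must check, for instance, that the given $M_{i,5}$ is the unique point distinct from $M_{i,4}$ in $V_i\cap\mathcal{M}^n\cap(M_{i-1,4}\vee M_{i-1,5}\vee M_{i,4})$, and analogously for $M_{5,j}, M_{4,5}, M_{5,4}$; each of these checks is an immediate consequence of the hypothesised sphericity and circularity. Once this identification with Theorem~\ref{thm: mxmincidencethm} is made, both existence and uniqueness of $M_{5,5}\in\mathcal{M}^n$ follow uniformly for $n\in\{3,4,5\}$.
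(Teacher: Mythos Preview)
Your proof is correct and follows the same approach as the paper: define $M_{5,5}:=V_5\cap H_5\cap(M_{4,4}\vee M_{5,4}\vee M_{4,5})$ and invoke Theorem~\ref{thm: mxmincidencethm} with $d=3$ to conclude $M_{5,5}\in\mathcal{M}^n$. The paper's proof is the two-line version of yours; your additional paragraphs spelling out why the given boundary data matches the unique extension produced in the proof of Theorem~\ref{thm: mxmincidencethm} are accurate but are precisely the bookkeeping the paper leaves implicit under its standing genericity assumption.
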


\begin{proof}
Let $V_i$ and $H_j$ denote the $3$-dimensional projective subspaces of $\R\mathrm{P}^{n+1}$ such that $\mathcal{V}_i = V_i \cap \mathcal{M}^n$ and $\mathcal{H}_j = H_j \cap \mathcal{M}^n$. By Theorem~\ref{thm: mxmincidencethm}, the point $M_{5,5} := V_5\cap H_5 \cap (M_{4,4}\vee M_{4,5} \vee M_{5,4})$ is contained in $\mathcal{M}^n$.
\end{proof}

In Proposition~\ref{thm: sphericalspherical5x5}, it is superfluous to consider the case $n>5$. The vertices $\{M_{i,j} \mid (i,j) \in [5] \times [5]\}$ span at most a $6$-dimensional projective subspace of $\R \mathrm{P}^{n+1}$. So, the circular net $M: [5]\times [5] \to \mathcal{M}^n$ is contained in a $5$-sphere if $n>5$. 

Proposition~\ref{thm: sphericalspherical5x5} can be used iteratively to construct circular nets $M: \Z^2 \to \mathcal{M}^n$, where $n\in \{3,4,5\}$, such that both families of parameter lines are $2$-spherical.

\begin{prop}\label{prop: sphericalsphericalgrid}
Let $M: \Z^2 \to \mathcal{M}^n$, where $n\in \{3,4,5\}$, be a circular net. For all $i\in \Z$, let the points $\{M_{i,j}\}_{j\in \Z}$ be contained in a $2$-sphere $\mathcal{V}_i$. For all $j \in \Z$, let the points $\{M_{i,j}\}_{i\in \Z}$ be contained in a $2$-sphere $\mathcal{H}_j$.  
\begin{enumerate}[label=(\roman*)]
\item
Let $n=5$. There is a $4$-dimensional Darboux cyclide $\mathcal{D}$ that contains the spheres $\{\mathcal{V}_i\}_{i\in \Z}$ and $\{\mathcal{H}_j\}_{j\in \Z}$. 

\item
Let $n=4$. There is a $3$-dimensional Darboux cyclide $\mathcal{D}$ such that, for each $i \in\Z$, the sphere $\mathcal{V}_i$ touches $\mathcal{D}$ along the points of a circle and such that, for each $j \in\Z$, the sphere $\mathcal{H}_j$  touches $\mathcal{D}$ along the points of a circle. 
\item
Let $n=3$. There is a $2$-dimensional Darboux cyclide $\mathcal{D}$ such that, for each $i \in \Z$, the sphere $\mathcal{V}_i$ touches $\mathcal{D}$ in two points and such that, for each $j \in\Z$, the sphere $\mathcal{H}_j$ touches $\mathcal{D}$ in two points. 
\end{enumerate}
\end{prop}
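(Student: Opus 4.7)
The plan is to treat the three cases in order of decreasing ambient dimension. For $n=5$, let $V_i, H_j \subset \R\mathrm{P}^6$ be the $3$-dimensional projective subspaces with $V_i \cap \mathcal{M}^5 = \mathcal{V}_i$ and $H_j \cap \mathcal{M}^5 = \mathcal{H}_j$; these satisfy the hypotheses of Theorem~\ref{thm: pencilofquadrics} with $m=3$. The theorem produces quadrics $\mathcal{V}, \mathcal{H}$ of signature $(3,3,1)$ with $\{V_i\}$ and $\{H_j\}$ as their respective generators, lying together with $\mathcal{M}^5$ in a pencil of quadrics. The Darboux cyclide $\mathcal{D} := \mathcal{M}^5 \cap \mathcal{V} = \mathcal{M}^5 \cap \mathcal{H}$ (the base locus) contains every $\mathcal{V}_i = V_i \cap \mathcal{D}$ since $V_i \subset \mathcal{V}$, and likewise every $\mathcal{H}_j$.

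For $n=4$, the ambient $\R\mathrm{P}^5$ is one dimension short of what Theorem~\ref{thm: pencilofquadrics} requires, so I would adapt the polar-projection technique from the proof of Proposition~\ref{prop: circularsphericalR3R4}. Applying Lemma~\ref{lem: mxZstripisoquadric} with $m=3, d=3$ to the column strip $\Z \times [3] \to \mathcal{M}^4 \subset \R\mathrm{P}^5$ produces an auxiliary quadric $\mathcal{Q}$ whose isotropic $2$-planes include the partial columns $\mathrm{join}\{M_{i,1}, M_{i,2}, M_{i,3}\}$. Generically $\mathcal{Q}$ is degenerate with a $1$-dimensional singular locus contained in $\cap_{j \in [3]} H_j$, and I would pick a point $Z$ on this locus to perform a polar construction, yielding a quadric $\mathcal{V}$ in $\R\mathrm{P}^5$ tangent to each $V_i$ along a $2$-plane $P_i \subset V_i$. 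Then $\mathcal{D} := \mathcal{M}^4 \cap \mathcal{V}$ is a $3$-dimensional Darboux cyclide, and each $\mathcal{V}_i$ touches $\mathcal{D}$ along the circle $P_i \cap \mathcal{M}^4$. The symmetric construction on row strips produces a matching quadric $\mathcal{H}$ for the $H_j$; the uniqueness of the relevant pencil forces $\mathcal{M}^4 \cap \mathcal{H} = \mathcal{D}$, so a single cyclide serves both families. The $n=3$ case proceeds analogously in $\R\mathrm{P}^4$, where the hyperplanes $V_i, H_j$ force a further dimension drop: the resulting quadric $\mathcal{V}$ intersects each $V_i$ in a pair of isotropic lines, giving the two tangency points of $\mathcal{V}_i$ with the $2$-dimensional Darboux cyclide $\mathcal{D}$.

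The main obstacle is the iterative extension of the polar construction. Lemma~\ref{lem: mxZstripisoquadric} is invoked on only three columns (or rows), so one must check that the resulting tangent structure extends to every $V_i$ (respectively every $H_j$). Following the scheme in the proof of the $n=3$ case of Proposition~\ref{prop: circularsphericalR3R4}, consecutive three-column strips overlap in two columns, the auxiliary quadrics from the lemma agree on this overlap, and the pencil of quadrics containing $\mathcal{M}^n$ together with the shared isotropic data is uniquely determined by Lemma~\ref{lem: 2quadricsdefinepencil}, which forces each successive $V_i$ to be tangent to the common quadric $\mathcal{V}$. A further subtlety is verifying that the quadrics $\mathcal{V}$ and $\mathcal{H}$ constructed independently from the two directions yield the same cyclide; this should again follow from the uniqueness of the pencil determined by $\mathcal{M}^n$ together with the isotropic data supplied by either family.
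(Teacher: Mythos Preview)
Your case $n=5$ is correct and matches the paper's proof.

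For $n=4$ there is a concrete error. The quadric $\mathcal{Q}$ produced by Lemma~\ref{lem: mxZstripisoquadric} with $m=3$, $d=3$ lives in $\R\mathrm{P}^5$ and carries $2$-dimensional isotropic subspaces; generically it is \emph{non-degenerate} of signature $(+++---)$, not degenerate with a $1$-dimensional singular locus. So there is no singular locus on which to pick your point $Z$, and your polar construction has no centre. The paper's proof supplies this centre differently: by Theorem~\ref{thm: mxmincidencethm} the intersection $Y:=\cap_{i\in\Z}V_i$ is a single point (this uses that \emph{both} families are $2$-spherical, not just one). One then takes the polar hyperplane $Y^{\perp_{\mathcal{Q}}}$ and cones the quadric $Y^{\perp_{\mathcal{Q}}}\cap\mathcal{Q}$ back over $Y$ to obtain $\mathcal{V}$ of signature $(+++--0)$. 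Each $V_i$ is tangent to $\mathcal{V}$ along the $2$-plane $k_i\vee Y$, where $k_i$ is the line in which the isotropic plane $M_{i,1}\vee M_{i,2}\vee M_{i,3}$ meets $Y^{\perp_{\mathcal{Q}}}$.

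Your scheme for showing that the same cyclide works for the $\mathcal{H}_j$ (``build $\mathcal{H}$ symmetrically and appeal to uniqueness of the pencil'') is not how the paper proceeds, and it is not clear your argument closes: you would need to show that the two independently constructed cones $\mathcal{V}$ and $\mathcal{H}$ share their base locus with $\mathcal{M}^4$, which is precisely the point at issue. The paper instead argues directly: since $\mathcal{Q}$ contains each $\mathcal{H}_j$ for $j\in[3]$, the $3$-plane $H_j$ meets $Y^{\perp_{\mathcal{Q}}}$ in a $2$-plane whose intersection with $\mathcal{M}^4$ is a circle lying in $\mathcal{D}$, along which $\mathcal{H}_j$ touches $\mathcal{D}$. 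Sliding the three-row window (replacing $\mathcal{Q}$ by $\mathcal{Q}_j$ built from rows $j,j+1,j+2$) extends this to all $j$. Your $n=3$ sketch inherits the same gap: the correct input is again the point $Y=\cap_i V_i$, and the auxiliary quadric (now taken from Proposition~\ref{prop: circularsphericalR3R4}) has signature $(+++--)$; the polar construction over $Y$ then yields lines of tangency in each $V_i$, hence two tangency points of each $\mathcal{V}_i$ with $\mathcal{D}$.
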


\begin{proof}
Let $V_i$ and $H_j$ denote the $3$-dimensional projective subspaces of $\R \mathrm{P}^{n+1}$ such that $\mathcal{V}_i = V_i \cap \mathcal{M}^n$ and $\mathcal{H}_j = H_j \cap \mathcal{M}^n$.

Let $n=5$. By the case $m=3$ of Theorem~\ref{thm: pencilofquadrics}, the vertices of $M$ are generically contained in the base locus of a pencil of $5$-dimensional quadrics. The base locus is the required $4$-dimensional Darboux cyclide $\mathcal{D}$.

Let $n=4$. By the case $m=3$ and $d=3$ of Lemma~\ref{lem: mxZstripisoquadric}, the $2$-planes $\{M_{i,1}\vee M_{i,2} \vee M_{i,3} \}_{i\in \Z}$ are isotropic $2$-planes of a quadric, say $\mathcal{Q}$. By Theorem~\ref{thm: mxmincidencethm}, $Y:= \cap_{i\in \Z}V_i$ is a point. Generically, $\mathcal{Q}$ has signature $(+++---)$ and $Y$ is not contained in $\mathcal{Q}$. Let $Y^{\perp_{\mathcal{Q}}}$ be the polar hyperplane of $Y$ with respect to $\mathcal{Q}$. Then, $Y^{\perp_{\mathcal{Q}}} \cap \mathcal{Q}$ has signature $(+++--)$. The join of $Y^{\perp_{\mathcal{Q}}} \cap \mathcal{Q}$ with $Y$ is a quadric $\mathcal{V}$ of signature $(+++--0)$. For each $i \in \Z$, the $2$-plane $M_{i,1}\vee M_{i,2} \vee M_{i,3}$ intersects the hyperplane $Y^{\perp_{\mathcal{Q}}}$ in a line, say $k_i$, which is contained in $\mathcal{V}$. Then, $k_i \vee Y$ is an isotropic $2$-plane of $\mathcal{V}$. Each $3$-plane $V_i$ is tangent to $\mathcal{V}$ along the points of the $2$-plane $k_i\vee Y$. Let $\mathcal{D}:=\mathcal{V}\cap \mathcal{M}^4$. Then, $\mathcal{D}$ is a $3$-dimensional Darboux cyclide such that, for each $i \in \Z$, the sphere $\mathcal{V}_i$ touches $\mathcal{D}$ along the points of the (possibly imaginary) circle $\mathcal{M}^4 \cap (k_i \vee Y)$. For each $j\in [3]$, the $3$-plane $H_j$ intersects the hyperplane $Y^{\perp_{\mathcal{Q}}}$ in a $2$-plane that intersects $\mathcal{M}^4$ in a (possibly imaginary) circle that is contained in $\mathcal{D}$ since $\mathcal{Q}$ contains $\{\mathcal{H}_j\}_{j\in [3]}$. The $2$-sphere $\mathcal{H}_j$ is tangent to $\mathcal{D}$ along the points of the foregoing circle. Therefore, the three $2$-spheres $\{\mathcal{H}_j\}_{j\in [3]}$ are tangent to $\mathcal{D}$ along circles. By the case $m=3$ and $d=3$ of Lemma~\ref{lem: mxZstripisoquadric}, for each $j\in \Z$, the $2$-planes $\{M_{i,j}\vee M_{i,j+1} \vee M_{i,j+1} \}_{i\in \Z}$ are isotropic planes of a quadric, say $\mathcal{Q}_j$. Repeating the same argument as for $\mathcal{Q}$ (which equals $\mathcal{Q}_1$), it follows that, for each $j\in \Z$, the three $2$-spheres $\mathcal{H}_j, \mathcal{H}_{j+1}, \mathcal{H}_{j+2}$ are tangent to $\mathcal{D}$ along circles of $\mathcal{D}$. Therefore, all the $2$-spheres $\{\mathcal{H}_j\}_{j\in \Z}$ are tangent to $\mathcal{D}$ along circles. 

Let $n=3$. By the case $n=3$ of Proposition~\ref{prop: circularsphericalR3R4}, the $2$-planes $\{M_{i,1}\vee M_{i,2} \vee M_{i,3} \}_{i\in \Z}$ are tangent along isotropic lines of a quadric, say $\mathcal{Q}$. Generically, $\mathcal{Q}$ has signature $(+++--)$. By Theorem~\ref{thm: mxmincidencethm}, $Y:= \cap_{i\in \Z}V_i$ is a point. Generically, $Y$ is not contained in $\mathcal{Q}$. Then, $Y^{\perp_{\mathcal{Q}}} \cap \mathcal{Q}$ has signature $(+++-)$ or $(++--)$. The join of $Y^{\perp_{\mathcal{Q}}} \cap \mathcal{Q}$ with $Y$ is a quadric $\mathcal{V}$ of signature $(+++-0)$ or $(++--0)$. For each $i \in \Z$, let $k_i$ be the line along which the $2$-plane $M_{i,1}\vee M_{i,2} \vee M_{i,3}$ is tangent to $\mathcal{Q}$. The line $k_i$ intersects the hyperplane $Y^{\perp_{\mathcal{Q}}}$ in a point, say $T_i$, that is contained in $\mathcal{V}$. Then, $T_i \vee Y$ is an isotropic line of $\mathcal{V}$. Each $3$-plane $V_i$ is tangent to $\mathcal{V}$ along the points of $T_i \vee Y$. Let $\mathcal{D}:= \mathcal{V}\cap \mathcal{M}^3$. Then, $\mathcal{D}$ is a $2$-dimensional Darboux cyclide such that, for each $i\in \Z$, the $2$-sphere $\mathcal{V}_i$ is tangent to $\mathcal{D}$ at the (possibly imaginary) points $(T_i\vee Y)\cap \mathcal{M}^3$. By the case $n=3$ of Proposition~\ref{prop: circularsphericalR3R4}, each of the three $3$-planes $\{H_j\}_{j\in [3]}$ has a $2$-dimensional subspace, say $E_j$, such that the (possibly imaginary) circle $E_j\cap \mathcal{M}^3$ is contained in $\mathcal{Q}$. For each $j \in [3]$, the $2$-plane $E_j$ intersects the hyperplane $Y^{\perp_{\mathcal{Q}}}$ in a line, say $l_j$. The line $l_j$ intersects $\mathcal{M}^3$ in two (possibly imaginary) points that are contained in $\mathcal{D}$. For each $j \in [3]$, the sphere $\mathcal{H}_j$ is tangent to $\mathcal{D}$ at the two foregoing points. For each $j\in \Z$, by the case $n=3$ of Proposition~\ref{prop: circularsphericalR3R4}, the $2$-planes $\{M_{i,j}\vee M_{i,j+1} \vee M_{i,j+2} \}_{i\in \Z}$ are tangent along isotropic lines of a quadric, say $\mathcal{Q}_j$. By repeating the same argument as for $\mathcal{Q}$ (which equals $\mathcal{Q}_1$), it follows that for each $j\in \Z$, the three $2$-spheres $\mathcal{H}_j, \mathcal{H}_{j+1}$ and $\mathcal{H}_{j+2}$ are twice tangent to $\mathcal{D}$. Thus, all of the $2$-spheres $\{\mathcal{H}_j\}_{j\in \Z}$ are twice tangent to $\mathcal{D}$. 
\end{proof}

\begin{cor}\label{cor: sphericalsphericalR3confocaldeferents}
Let $P: \Z^2 \to \R^{3}$ be a circular net. For all $i \in \Z$, let the points $\{P_{i,j}\}_{j\in \Z}$ be contained in a $2$-sphere $\mathcal{V}_i$. For all $j \in \Z$, let the points $\{P_{i,j}\}_{i\in \Z}$ be contained in a $2$-sphere $\mathcal{H}_j$. Generically, the following properties hold.
\begin{itemize}
\item The centres of the $2$-spheres $\{\mathcal{H}_j\}_{j\in\Z}$ are contained in a $2$-dimensional quadric $\mathcal{Q}_{\mathcal{H}}$.
\item The centres of the $2$-spheres $\{\mathcal{V}_i\}_{i\in \Z}$ are contained in a $2$-dimensional quadric $\mathcal{Q}_{\mathcal{V}}$.
\item The quadrics $\mathcal{Q}_{\mathcal{H}}$ and $\mathcal{Q}_{\mathcal{V}}$ are confocal.
\item The $2$-spheres $\{\mathcal{H}_j\}_{j\in\Z}$ have a common orthogonal $2$-sphere $\mathcal{S}_{\mathcal{H}}$.
\item The $2$-spheres $\{\mathcal{V}_i\}_{i\in\Z}$ have a common orthogonal $2$-sphere $\mathcal{S}_{\mathcal{V}}$.
\item The $2$-spheres $\mathcal{S}_{\mathcal{H}}$ and $\mathcal{S}_{\mathcal{V}}$ are orthogonal.
\end{itemize}
\end{cor}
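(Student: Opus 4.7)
The plan is to mimic the proof of Corollary~\ref{cor: circularcircularR2confocaldeferents} by reducing the statement to the classical geometry of Darboux cyclides in $\R^3$ together with the case $n=3$ of Proposition~\ref{prop: sphericalsphericalgrid}. Concretely, the first step is to apply that case of Proposition~\ref{prop: sphericalsphericalgrid} to the lift $M: \Z^2 \to \mathcal{M}^3$ of $P$: this produces a $2$-dimensional Darboux cyclide $\mathcal{D}\subset \R^3$ such that, for every $i\in\Z$, the $2$-sphere $\mathcal{V}_i$ is twice tangent to $\mathcal{D}$ and, for every $j\in\Z$, the $2$-sphere $\mathcal{H}_j$ is twice tangent to $\mathcal{D}$. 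Thus the two families $\{\mathcal{V}_i\}$ and $\{\mathcal{H}_j\}$ are exactly two generations of bitangent spheres of the same Darboux cyclide, in direct analogy with the two generations of bitangent circles that appear in the case $n=2$ of Proposition~\ref{prop: circularcircularR2R3}.

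The second step is to invoke the classical facts about generations of bitangent spheres of a Darboux cyclide in $\R^3$, which are summarised in the Darboux-cyclide theorem referenced in Appendix~\ref{appendix: mobiusgeo}, the same theorem (Theorem~\ref{thm: cyclide}) that was used in the proof of Corollary~\ref{cor: circularcircularR2confocaldeferents}. That theorem asserts that, generically, the centres of any generation of bitangent spheres of a Darboux cyclide lie on a quadric, that the quadrics arising from different generations of the same cyclide are confocal, and that each generation possesses a common orthogonal sphere, with the orthogonal spheres of any two generations being mutually orthogonal. Applying this to the two generations $\{\mathcal{V}_i\}$ and $\{\mathcal{H}_j\}$ of $\mathcal{D}$ gives, respectively, the quadrics $\mathcal{Q}_{\mathcal{V}}$ and $\mathcal{Q}_{\mathcal{H}}$, their confocality, the orthogonal spheres $\mathcal{S}_{\mathcal{V}}$ and $\mathcal{S}_{\mathcal{H}}$, and the orthogonality of $\mathcal{S}_{\mathcal{V}}$ and $\mathcal{S}_{\mathcal{H}}$. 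All six bullet points then follow simultaneously.

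I do not expect a serious obstacle: the structural work is carried by Proposition~\ref{prop: sphericalsphericalgrid}, which already identifies the Darboux cyclide and the bitangency, while the remaining content is a direct quotation of the cyclide theorem from the appendix. The only point that needs a brief remark is genericity, namely that the apices and polar data used in the proof of Proposition~\ref{prop: sphericalsphericalgrid} are in sufficiently general position that $\mathcal{D}$ is non-degenerate and that $\{\mathcal{V}_i\}$, $\{\mathcal{H}_j\}$ are honest generations rather than pencils tangent along a single isotropic configuration. Under the paper's standing genericity assumption this is automatic, so the proof reduces to two short invocations and a one-line genericity comment.
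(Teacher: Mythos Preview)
Your proposal is correct and follows essentially the same route as the paper: invoke the $n=3$ case of Proposition~\ref{prop: sphericalsphericalgrid} to obtain the Darboux cyclide with $\{\mathcal{V}_i\}$ and $\{\mathcal{H}_j\}$ as two generations of bitangent spheres, then read off all six bullet points directly from Theorem~\ref{thm: cyclide}. The paper's proof is just this, stated in three sentences without the genericity remark.
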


\begin{proof}
By the case $n=3$ of Proposition~\ref{prop: sphericalsphericalgrid}, the spheres of the spherical parameter lines envelop a $2$-dimensional Darboux cyclide. The spheres $\{\mathcal{H}_j\}_{j\in \Z}$ provide one generation of the cyclide. The spheres $\{\mathcal{V}_i\}_{i\in \Z}$ provide another generation of the cyclide. So, by Theorem~\ref{thm: cyclide}, the above six properties are satisfied.
\end{proof}

In Corollary~\ref{cor: sphericalsphericalR3confocaldeferents}, it can happen that one of the spheres $\mathcal{S}_\mathcal{H}$ or $\mathcal{S}_\mathcal{V}$ is imaginary. However, they cannot both be imaginary.

For smooth surfaces in $\R^3$ such that both families of curvature lines are spherical (and not circular), the centres of the spheres of the spherical curvature lines are generically contained in a pair of orthogonal planes and each family of spheres has two common (possibly imaginary) points. For instance, see \cite{Serret1853, lemonnier1868surfaces} for more information. However, in the case $n=3$ of Proposition~\ref{prop: sphericalsphericalgrid}, the centres of the spheres are contained in a pair of confocal quadrics rather than a pair of orthogonal planes. The spheres $\{\mathcal{V}_i\}_{i\in\Z}$ (resp. $\{\mathcal{H}_j\}_{j\in\Z}$) have the common orthogonal sphere $\mathcal{S}_{\mathcal{V}}$ (resp. $\mathcal{S}_{\mathcal{H}}$) rather than two common points.

\subsection{One Family Planar and the Other Family Spherical}\label{section: planarandspherical}

\begin{prop}\label{thm: 5x4planarspherical}
Consider a function $P: [5]\times [4] - \{(5,4)\} \to \mathbb{R}^n$ where $n\in \{3,4,5\}$.  For each $j \in [3]$, let the points $\{P_{i,j}\}_{i\in [5]}$ be contained in a $2$-sphere $\mathcal{H}_j$. Let $\mathcal{H}_4$ be the $2$-sphere containing the points $P_{1,4}, P_{2,4}, P_{3,4}, P_{4,4}$. For each $i \in [4]$, let the points $\{P_{i,j}\}_{j\in [4]}$ be contained in $2$-plane $\mathcal{V}_i$. Let $\mathcal{V}_5$ be the $2$-plane containing the points $P_{5,1}, P_{5,2}, P_{5,3}$. Suppose that $\forall (i,j) \neq (4,3)$ the quad $\Box_{i,j}$
is circular. There exists a unique point $P_{5,4}\in \mathbb{R}^n$ such that the quad $\Box_{4,3}$ is circular and such that $P_{5,4} \in \mathcal{V}_5\cap \mathcal{H}_4$.  
\end{prop}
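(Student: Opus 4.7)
I would work in the M\"obius model. Lift $P$ to $M:[5]\times[4]-\{(5,4)\}\to\mathcal{M}^n\subset\mathbb{R}\mathrm{P}^{n+1}$, and for each $i\in[5]$, $j\in[4]$ let $V_i$ and $H_j$ denote the 3-dimensional projective subspaces with $\sigma(V_i\cap\mathcal{M}^n)=\mathcal{V}_i$ and $H_j\cap\mathcal{M}^n=\mathcal{H}_j$. Because each $\mathcal{V}_i$ is a 2-plane in $\mathbb{R}^n$ (not a generic 2-sphere), every $V_i$ contains $N$. Define
\[
M_{5,4}\;:=\;V_5\cap H_4\cap(M_{4,3}\vee M_{5,3}\vee M_{4,4}),
\]
which is generically a single point. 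Then $P_{5,4}:=\sigma(M_{5,4})$ automatically lies in $\mathcal{V}_5\cap\mathcal{H}_4$ and makes $\Box_{4,3}$ planar, and the three defining incidences also make $P_{5,4}$ unique. So the whole task reduces to showing $M_{5,4}\in\mathcal{M}^n$, after which $\Box_{4,3}$ is automatically circular.

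Following the template of the proofs of Propositions~\ref{prop: circularspherical5x4} and \ref{thm: sphericalspherical5x5}, I apply Corollary~\ref{cor: mxmgridthm} with $m=4$ to two different $[4]\times[4]$ sub-Q-nets of $M$. The first sub-Q-net $\{1,2,3,4\}\times[4]$ has all sixteen vertices in $\mathcal{M}^n$; its row-spans are the full 3-dimensional $V_i$ and its column-spans are the $H_j$, so Corollary~\ref{cor: mxmgridthm} yields the conjugacy $X\perp Y^{(1)}$ relative to $\mathcal{M}^n$, where $X:=H_1\cap H_2\cap H_3\cap H_4$ and $Y^{(1)}:=V_1\cap V_2\cap V_3\cap V_4$ are both points by the dimension-drop argument inside the Corollary. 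Since $N\in V_i$ for every $i$, the point $Y^{(1)}$ must equal $N$, so $X\perp N$.

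The second sub-Q-net is $\{2,3,4,5\}\times[4]$ with $M_{5,4}$ included; by construction of $M_{5,4}$ the quad $\Box_{4,3}$ is planar, so this is a genuine Q-net. Its row-spans are again the $V_i$ (now including $V_5\ni N$), and its column-spans remain the $H_j$ (since $M_{5,4}\in H_4$ by construction). In particular the relevant $X$ is unchanged, and the new $Y^{(2)}:=V_2\cap V_3\cap V_4\cap V_5$ is once more the point $N$. Corollary~\ref{cor: mxmgridthm} thus states that $M_{5,4}\in\mathcal{M}^n$ if and only if $X\perp Y^{(2)}=N$, which is precisely the conjugacy already established, completing the proof.

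The main obstacle is controlling the dimension counts uniformly for $n\in\{3,4,5\}$: a priori, four generic 3-dimensional subspaces of $\mathbb{R}\mathrm{P}^{n+1}$ meet in negative dimension once $n\ge 4$, so the very fact that $X$ and $Y^{(1)}$, $Y^{(2)}$ are points (rather than empty or higher-dimensional) is non-trivial. This is absorbed by the Q-net rigidity that appears in the proof of Corollary~\ref{cor: mxmgridthm}: the consecutive joins $V_i\vee V_{i+1}$ and $H_j\vee H_{j+1}$ are forced to be only 4-dimensional, so successive intersections drop dimension by exactly one, making $X$, $Y^{(1)}$, and $Y^{(2)}$ all 0-dimensional, exactly as in the proof of Proposition~\ref{thm: sphericalspherical5x5}. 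A parallel genericity check verifies that $V_5\cap H_4\cap(M_{4,3}\vee M_{5,3}\vee M_{4,4})$ is indeed a single point.
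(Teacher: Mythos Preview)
Your proof is correct and is essentially the same approach the paper intends: it says the argument is analogous to the proof of Proposition~\ref{thm: linearcircular}, and you have carried out exactly that analogue one dimension up --- lift to the M\"obius quadric, exploit that every $V_i$ contains $N$ so that the ``$Y$'' in Corollary~\ref{cor: mxmgridthm} is forced to be $N$, apply the Corollary with $m=4$ to the sub-net $\{1,2,3,4\}\times[4]$ to obtain $X\perp N$, and then to $\{2,3,4,5\}\times[4]$ to conclude $M_{5,4}\in\mathcal{M}^n$. Your remarks on the dimension count are also on point and match how the paper absorbs this into the Q-net rigidity in the proof of Corollary~\ref{cor: mxmgridthm}.
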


The proof of Proposition~\ref{thm: 5x4planarspherical} is analogous to the proof of Proposition~\ref{thm: linearcircular}.

\begin{prop}\label{prop: planarsphericalgrid}
Let $P: \Z^2 \to \mathbb{R}^n$, where $n\in \{3,4,5\}$, be a circular net. For all $i\in \Z$, let the points $\{P_{i,j}\}_{j\in \Z}$ be contained in a $2$-plane $\mathcal{V}_i$. For all $j \in \Z$, let the points $\{P_{i,j}\}_{i\in \Z}$ be contained in a $2$-sphere $\mathcal{H}_j$. 
\begin{enumerate}[label=(\roman*)]
\item
Let $n=5$. There is $4$-dimensional quadric $\mathcal{R}$ that contains the $2$-spheres $\{\mathcal{H}_j\}_{j\in \Z}$ and that also contains the $2$-planes $\{\mathcal{V}_i\}_{i\in \Z}$.
\item
Let $n=4$. There is a $3$-dimensional quadric $\mathcal{R}$ such that, $\forall j \in \Z$, the $2$-sphere $\mathcal{H}_j$ is tangent to $\mathcal{R}$ along a circle and such that, $\forall i \in \Z$, the plane $\mathcal{V}_i$ is tangent to $\mathcal{R}$ along an isotropic line. 
\item
Let $n=3$. There is a $2$-dimensional quadric $\mathcal{R}$ such that, $\forall j \in \Z$, the  $2$-sphere $\mathcal{H}_j$ is tangent to $\mathcal{R}$ at two points and such that, $\forall i \in \Z$, the plane $\mathcal{V}_i$ is tangent to $\mathcal{R}$ at one point. The centres of the spheres $\{\mathcal{H}_j\}_{j\in \Z}$ are coplanar.
\end{enumerate}
\end{prop}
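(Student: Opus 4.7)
The plan is to treat each $2$-plane $\mathcal{V}_i$ as a degenerate $2$-sphere passing through the distinguished point $N$ in the M\"obius model, and to reuse the machinery already developed for two spherical families in Proposition~\ref{prop: sphericalsphericalgrid}. Let $M : \Z^2 \to \mathcal{M}^n \subset \R\mathrm{P}^{n+1}$ be the M\"obius lift, and let $V_i$ and $H_j$ denote the $3$-dimensional projective subspaces of $\R\mathrm{P}^{n+1}$ whose images under $\sigma$ are $\overline{\mathcal{V}_i} \subset \overline{\R^n}$ and $\mathcal{H}_j$, respectively. The planarity of each $\mathcal{V}_i$ translates to the extra condition $N \in V_i$, whereas $N \notin H_j$.

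For $n = 5$ I would invoke Theorem~\ref{thm: pencilofquadrics} with $m = 3$ in the ambient $\R\mathrm{P}^6$ to obtain a pencil $\{\mathcal{M}^5, \mathcal{V}, \mathcal{H}\}$ in which $\mathcal{V}$ has signature $(3,3,1)$ and the $V_i$ are its isotropic $3$-subspaces. Since every $V_i$ contains $N$, the unique singular point of $\mathcal{V}$ must be $N$: if $N$ were a smooth point then the tangent hyperplane of $\mathcal{V}$ at $N$ would contain every maximal isotropic $V_i$, confining the entire net to a proper hyperplane of $\R\mathrm{P}^6$ and contradicting genericity. Hence $\mathcal{V}$ is a cone with apex $N$, and $\sigma$ sends it to a $4$-dimensional quadric $\mathcal{R} \subset \overline{\R^5}$. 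The planes $\mathcal{V}_i$ lie in $\mathcal{R}$ because $V_i \subset \mathcal{V}$, and the spheres $\mathcal{H}_j$ lie in $\mathcal{R}$ because they belong to the common base locus $\mathcal{M}^5 \cap \mathcal{V} = \mathcal{M}^5 \cap \mathcal{H}$ of the pencil.

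For $n = 4$ and $n = 3$ I would repeat the proofs of the corresponding cases of Proposition~\ref{prop: sphericalsphericalgrid} with the single specialisation $Y = N$, where $Y := \cap_{i \in \Z} V_i$. Since every $V_i$ contains $N$ and Theorem~\ref{thm: mxmincidencethm} forces $Y$ to be a single point, this specialisation is automatic. Those proofs begin by applying Lemma~\ref{lem: mxZstripisoquadric} (for $n = 4$, with $m = 3, d = 3$) or Proposition~\ref{prop: circularsphericalR3R4} (for $n = 3$) to the $2$-planes $M_{i,1} \vee M_{i,2} \vee M_{i,3}$, producing an auxiliary quadric $\mathcal{Q}$, and then form the cone $\mathcal{V} := Y \vee (Y^{\perp_{\mathcal{Q}}} \cap \mathcal{Q})$ whose apex is $Y$. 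With $Y = N$ the apex coincides with the centre of projection, so $\sigma$ sends $\mathcal{V}$ to a quadric $\mathcal{R} \subset \overline{\R^n}$ of the correct dimension. The tangency subspace $k_i \vee Y$ of $V_i$ to $\mathcal{V}$ contains $N$ and therefore projects to an isotropic line of $\mathcal{R}$ (for $n = 4$) or to a single tangency point (for $n = 3$); the tangency subspace of $H_j$ to $\mathcal{V}$ avoids $N$ and projects to a $2$-plane (for $n = 4$) or a line (for $n = 3$), intersecting $\mathcal{M}^n$ in the claimed tangency circle or point pair of $\mathcal{H}_j$ with $\mathcal{R}$.

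For the coplanarity of the sphere centres in case $n = 3$, I would apply Corollary~\ref{cor: mxmgridthm} to a $3 \times 3$ subgrid to show that $X := \cap_j H_j$ and $Y = N$ are conjugate with respect to $\mathcal{M}^3$; the poles $H_j^{\perp_{\mathcal{M}^3}}$ then all lie in the hyperplane $X^{\perp_{\mathcal{M}^3}}$, which contains $N$ by the conjugacy, and $\sigma$ sends this hyperplane to a $2$-plane of $\overline{\R^3}$ housing all the centres via Lemma~\ref{lem: stereoproj}. The main obstacle will be to confirm that the singular loci of the auxiliary cones really do collapse to $\{N\}$ and that each tangency subspace $k_i \vee Y$ genuinely passes through $N$; both rely on the identity $\cap_i V_i = \{N\}$, but one should verify that no apex-enlarging coincidence occurs and that the auxiliary $\mathcal{Q}$ does not accidentally contain $N$, so that the cone construction remains non-degenerate.
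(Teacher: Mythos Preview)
Your approach is correct and arrives at the same quadric $\mathcal{R}$ as the paper. The paper only says the proof is ``analogous to the proof of Proposition~\ref{prop: linearcircularquadricconcic}'', which amounts to: obtain an auxiliary quadric $\mathcal{Q}$ from Lemma~\ref{lem: mxZstripisoquadric}, and then either project $\mathcal{Q}$ directly when $N$ lies outside the relevant span of the $H_j$ (case $n=5$) or pass to $N^{\perp_{\mathcal{Q}}}\cap\mathcal{Q}$ and project (cases $n=4,3$). Your route via Proposition~\ref{prop: sphericalsphericalgrid} with the specialisation $Y=N$ builds the same object: your cone $\mathcal{V}=N\vee(N^{\perp_{\mathcal{Q}}}\cap\mathcal{Q})$ satisfies $\sigma(\mathcal{V})=\mathcal{V}\cap\mathcal{H}^0\cong N^{\perp_{\mathcal{Q}}}\cap\mathcal{Q}$, so the two constructions coincide. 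Routing through the spherical/spherical case is arguably cleaner because it handles all three values of $n$ uniformly.

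Two small corrections. First, the phrase ``the tangency subspace of $H_j$ to $\mathcal{V}$'' is inaccurate: the $3$-plane $H_j$ is \emph{not} tangent to the cone $\mathcal{V}$, since $N\notin H_j$ forces $H_j\cap\mathcal{V}$ to be a non-degenerate quadric in $H_j$. What Proposition~\ref{prop: sphericalsphericalgrid} actually gives is that the sphere $\mathcal{H}_j$ is tangent to the Darboux cyclide $\mathcal{D}=\mathcal{V}\cap\mathcal{M}^n$; the missing step is to observe that with apex $N$ the stereographic projection identifies $\mathcal{D}\setminus\{N\}$ with $\mathcal{R}\cap\R^n$, so tangency of $\mathcal{H}_j$ to $\mathcal{D}$ becomes tangency to $\mathcal{R}$. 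This is straightforward but should be said. Second, for the coplanarity of centres in case $n=3$ you must apply Corollary~\ref{cor: mxmgridthm} to a $4\times4$ subgrid, not $3\times3$: the spaces $V_i$ and $H_j$ are $3$-dimensional, so $m=4$ is what makes $X=\cap_{j}H_j$ and $Y=\cap_{i}V_i=N$ single points whose conjugacy you can then invoke.
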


The proof of Proposition~\ref{prop: planarsphericalgrid} is analogous to the proof of Proposition~\ref{prop: linearcircularquadricconcic}.

In classical differential geometry, for smooth surfaces in $\R^3$ such that one family of curvature lines is planar and the other family of curvature lines is spherical, the planes are generically concurrent at a point in $\overline{\R^3}$ and the centres of the spheres are collinear \cite{bonnet1853memoire, Serret1853}. However, in the case $n=3$ of Proposition~\ref{prop: planarsphericalgrid}, the planes $\{\mathcal{V}_i\}_{i\in\Z}$ are generically tangent to a non-degenerate quadric and they are not concurrent. Moreover, the centres of the spheres $\{\mathcal{H}_j\}_{j\in\Z}$ are generically coplanar rather than collinear. 

\subsection{Both Families Planar}\label{section: bothplanar}

\begin{prop}\label{prop: 4x4planarplanar}
Consider a function $P: [4]\times [4] - \{(4,4)\} \to \mathbb{R}^n$ where $n \in \{3,4\}$.  For each $j \in [3]$, let the points $\{P_{i,j}\}_{i\in [4]}$ be contained in a $2$-plane $\mathcal{H}_j$. Let $\mathcal{H}_4$ be the $2$-plane containing the points $P_{1,4}, P_{2,4}, P_{3,4}$. For each $i \in [3]$, let the points $\{P_{i,j}\}_{j\in [4]}$ be contained in a $2$-plane $\mathcal{V}_i$. Let $\mathcal{V}_4$ be the $2$-plane containing the points $P_{4,1}, P_{4,2}, P_{4,3}$. Suppose that $\forall (i,j) \neq (3,3)$ the quad $\Box_{i,j}$ is circular. There exists a unique point $P_{4,4}\in \mathbb{R}^n$ such that the quad $\Box_{3,3}$ is circular and such that $P_{4,4} \in \mathcal{V}_4\cap \mathcal{H}_4$. 
\end{prop}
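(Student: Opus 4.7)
The plan is to lift the circular net to the M\"obius quadric and then invoke the case $m=4$ of Corollary~\ref{cor: mxmgridthm}, exactly in the spirit of the proofs of the earlier incidence theorems (Propositions~\ref{thm: linearcircular}, \ref{thm: linearspherical5x3}, and \ref{thm: sphericalspherical5x5}). Denote by $M : [4]\times[4]-\{(4,4)\} \to \mathcal{M}^n \subset \R\mathrm{P}^{n+1}$ the lift of $P$, and for each $i,j \in [4]$ let $V_i$ and $H_j$ be the $3$-dimensional projective subspaces of $\R\mathrm{P}^{n+1}$ with $\sigma(V_i \cap \mathcal{M}^n) = \mathcal{V}_i$ and $\sigma(H_j \cap \mathcal{M}^n) = \mathcal{H}_j$. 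The crucial observation that makes this case different from, say, the spherical/spherical case is that each $\mathcal{V}_i$ and each $\mathcal{H}_j$ is a \emph{plane} rather than a sphere in $\R^n$, so the lifted subspaces $V_i$ and $H_j$ all contain the point $N$.

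Next, I would define the candidate lift
\[
M_{4,4} := V_4 \cap H_4 \cap (M_{3,3} \vee M_{4,3} \vee M_{3,4}),
\]
since these three linear conditions encode exactly the requirements that $\sigma(M_{4,4}) \in \mathcal{V}_4 \cap \mathcal{H}_4$ and that $\Box_{3,3}$ be a planar quad; once $M_{4,4} \in \mathcal{M}^n$ is established, circularity of $\Box_{3,3}$ follows and $P_{4,4} := \sigma(M_{4,4})$ is the required point. To verify $M_{4,4} \in \mathcal{M}^n$, set $X := \bigcap_{j \in [4]} H_j$ and $Y := \bigcap_{i \in [4]} V_i$. By the genericity clause of Corollary~\ref{cor: mxmgridthm} these are points, and since every $H_j$ and every $V_i$ contains $N$, both intersections must equal $N$. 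Because $N \in \mathcal{M}^n$, the required conjugacy $X \perp Y$ collapses to the self-conjugacy $N \perp N$, which is automatic. The case $m = 4$ of Corollary~\ref{cor: mxmgridthm} then yields $M_{4,4} \in \mathcal{M}^n$, completing the argument.

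The main obstacle I foresee lies not in the conjugacy step but in establishing well-definedness of the triple intersection defining $M_{4,4}$, particularly in the ambient $\R\mathrm{P}^5$ corresponding to $n = 4$: a naive dimension count predicts that the line $V_4 \cap H_4$ (which passes through $N$) and the $2$-plane $M_{3,3} \vee M_{4,3} \vee M_{3,4}$ in $\R\mathrm{P}^5$ do not generically meet. I expect to resolve this in the same way as the descent step in the proof of Theorem~\ref{thm: mxmincidencethm}: applying Corollary~\ref{cor: mxmgridthm} to suitable $3\times 3$ sub-grids (all of whose Laplace/conjugacy data are controlled because they are already inscribed in $\mathcal{M}^n$) forces the non-generic incidence that pins the intersection down to a single point. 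Once this incidence is verified, the remainder of the argument is a direct transcription of the template used throughout Section~\ref{section: Mobiusapproach}.
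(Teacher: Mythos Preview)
Your approach is identical to the paper's: lift to $\mathcal{M}^n$, observe that $X = Y = N$ so the conjugacy $X \perp Y$ is automatic, and invoke Corollary~\ref{cor: mxmgridthm} with $m=4$. The paper does not address the well-definedness concern you raise for $n=4$, simply defining $M_{4,4}$ as the triple intersection under its blanket genericity assumption; your caution here is warranted but goes beyond what the paper itself provides.
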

\begin{proof}
Let $M: [4] \times [4] - \{(4,4)\} \to \mathcal{M}^3 \subset \R \mathrm{P}^4$ be the lift of $P$. Let $V_i$ and $H_j$ be the $3$-dimensional projective subspaces of $\R\mathrm{P}^{n+1}$ such that $\sigma (V_i \cap \mathcal{M}^n) = \mathcal{V}_i$ and $\sigma (H_j \cap \mathcal{M}^n) = \mathcal{H}_j$. Generically, $\cap_{j\in [4]} H_{j}$ and $\cap_{i\in [4]} V_{i}$ are both $0$-dimensional. More precisely, $N= \cap_{j\in [4]} H_{j} = \cap_{i\in [4]} V_{i}$ because the spaces $V_i$ and $H_j$ contain $N$. Define $M_{4,4} := (M_{3,3}\vee M_{4,3} \vee M_{3,4}) \cap H_4 \cap V_4$. By Corollary~\ref{cor: mxmgridthm}, $M_{4,4} \in  \mathcal{M}^n$ because $N$ is conjugate to itself relative to $\mathcal{M}^n$. The point $\sigma (M_{4,4})$ is the required point $P_{4,4} \in \R^n$.
\end{proof}

In Proposition~\ref{prop: 4x4planarplanar}, it is superfluous to consider the case $n >4$. In the proof of Proposition~\ref{prop: 4x4planarplanar}, the 3-planes $V_1$ and $H_1$ both contain the distinct points $M_{1,1}$ and $N$. So, $V_1 \cap H_1$ is at least $1$-dimensional. Equivalently, $V_1 \vee H_1$ is at most $5$-dimensional. Because $M$ is a Q-net, the points $\{M_{i,j} \mid (i,j) \in [4]\times [4]\}$ are contained in $V_1 \vee H_1$. So, all the vertices of $M$ are contained in a $5$-dimensional space, i.e. they are contained in a $4$-dimensional sphere.

Proposition~\ref{prop: 4x4planarplanar} can be used iteratively to construct circular nets $\Z^2 \to \R^n$, $n \in \{3,4\}$, such that both families of parameter lines are $2$-planar.

\begin{prop}\label{prop: circularnetplanarplanar}
Let $P: \Z^2 \to \mathbb{R}^n$, where $n \in \{3,4\}$, be a circular net. For all $i\in \Z$, let the points $\{P_{i,j}\}_{j\in \Z}$ be contained in a $2$-plane $\mathcal{V}_i$. For all $j \in \Z$, let the points $\{P_{i,j}\}_{i\in \Z}$ be contained in a $2$-plane $\mathcal{H}_j$.
\begin{enumerate}[label=(\roman*)]
\item
Let $n=4$. The $2$-planes $\{\mathcal{H}_j\}_{j\in \Z}$ are tangent along isotropic lines of a $2$-dimensional quadric in the hyperplane at infinity of $\overline{\R^4}$. The $2$-planes $\{\mathcal{V}_i\}_{i\in \Z}$ are tangent along isotropic lines to a $2$-dimensional quadric in the hyperplane at infinity of $\overline{\R^4}$. 
\item
Let $n=3$. The $2$-planes $\{\mathcal{H}_j\}_{j\in \Z}$ are tangent to a conic in the plane at infinity of $\overline{\R^3}$. The $2$-planes $\{\mathcal{V}_i\}_{i\in \Z}$ are tangent to a conic in the plane at infinity of $\overline{\R^3}$. 
\end{enumerate}
\end{prop}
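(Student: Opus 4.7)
The plan is to lift $P$ to the M\"obius quadric and mirror the strategy of Propositions~\ref{prop: linearplanarBonnetthereom} and~\ref{prop: linearcircularquadricconcic}: extract an auxiliary quadric via Lemma~\ref{lem: mxZstripisoquadric}, observe that it contains the north pole $N$ (because rows and columns are planes, not generic spheres), and use polarity at $N$ to push the structure down to the hyperplane at infinity of $\overline{\R^n}$. Let $M: \Z^2 \to \mathcal{M}^n \subset \R\mathrm{P}^{n+1}$ be the lift; each plane $\mathcal{V}_i$ or $\mathcal{H}_j$ corresponds to a $3$-dimensional projective subspace $V_i$ or $H_j$ of $\R\mathrm{P}^{n+1}$ containing $N$, and each row sphere $H_j \cap \mathcal{M}^n$ and column sphere $V_i \cap \mathcal{M}^n$ passes through $N$.

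For the case $n=4$, I apply Lemma~\ref{lem: mxZstripisoquadric} with $m=3$ and $d=3$ (ambient $\R\mathrm{P}^5$) to the strip $M:\Z \times [3] \to \mathcal{M}^4$. The resulting quadric $\mathcal{Q}$, of generic signature $(+++---)$, contains the three $2$-spheres $H_j \cap \mathcal{M}^4$ for $j \in [3]$; since each passes through $N$, one has $N \in \mathcal{Q}$, and a tangent-plane count at $N$ (the three tangent planes of these spheres at $N$ generically span $N^{\perp_{\mathcal{M}^4}}$) gives $N^{\perp_\mathcal{Q}} = N^{\perp_{\mathcal{M}^4}}$. Therefore the polar cone $\mathcal{Q} \cap N^{\perp_\mathcal{Q}}$, of generic signature $(++--0)$ in the $4$-dimensional $N^{\perp_{\mathcal{M}^4}}$, projects under $\sigma$ to a $2$-dimensional quadric $\mathcal{R}_\mathcal{V}$ of signature $(++--)$ in the hyperplane at infinity of $\overline{\R^4}$. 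Each isotropic $2$-plane $M_{i,1} \vee M_{i,2} \vee M_{i,3}$ projects under $\sigma$ to the column plane $\mathcal{V}_i$, and its intersection with $N^{\perp_{\mathcal{M}^4}}$, an isotropic line of the cone, projects to the line at infinity of $\mathcal{V}_i$, which is therefore an isotropic line of $\mathcal{R}_\mathcal{V}$. The symmetric argument applied to the strip $[3] \times \Z$ gives the quadric $\mathcal{R}_\mathcal{H}$ with the corresponding property for $\{\mathcal{H}_j\}$.

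For the case $n=3$ only the instance $m=2$, $d=3$ of Lemma~\ref{lem: mxZstripisoquadric} fits the ambient $\R\mathrm{P}^4$, so one proceeds pairwise. Applied to each strip $\Z \times \{j,j+1\}$, the lemma gives a quadric $\mathcal{Q}^{(j)}$ of generic signature $(+++--)$ containing $N$, whose polar cone at $N$ projects to a conic $\mathcal{C}^{(j)}$ in the plane at infinity of $\overline{\R^3}$ passing through the direction of each line $P_{i,j} \vee P_{i,j+1}$. The main obstacle is to combine these pairwise conics into a single conic $\mathcal{C}_\mathcal{V}$ tangent to the line at infinity of every $\mathcal{V}_i$---note that this cannot simply be any $\mathcal{C}^{(j)}$, since $\ell_i^V := \overline{\mathcal{V}_i} \cap L_\infty$ contains infinitely many directions of the lines $P_{i,j}\vee P_{i,j+1}$. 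I would resolve this via a pencil-uniqueness argument: the quadrics $\mathcal{Q}^{(j)}$ all lie in pencils containing $\mathcal{M}^3$, so their restrictions to the common tangent hyperplane $N^{\perp_{\mathcal{M}^3}}$ lie in a common pencil of conics in the plane at infinity, and Lemma~\ref{lem: 2quadricsdefinepencil} can be invoked to force the whole family to envelope a single enveloping conic $\mathcal{C}_\mathcal{V}$, tangent to each $\ell_i^V$ at the unique direction distinguished by the pencil structure. An analogous construction on column strips yields $\mathcal{C}_\mathcal{H}$.
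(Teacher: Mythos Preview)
Your $n=4$ argument is correct and is essentially the paper's proof with the roles of rows and columns interchanged: you apply Lemma~\ref{lem: mxZstripisoquadric} to the strip $\Z\times[3]$ and read off the quadric for $\{\mathcal{V}_i\}$, while the paper applies it to $[3]\times\Z$ and reads off the quadric for $\{\mathcal{H}_j\}$. The tangent-hyperplane identification $N^{\perp_\mathcal{Q}}=N^{\perp_{\mathcal{M}^4}}$ is justified in the same way in both.

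For $n=3$, however, there is a real gap. Your pairwise quadrics $\mathcal{Q}^{(j)}$ lie in \emph{different} pencils with $\mathcal{M}^3$ (namely those spanned by $\mathcal{M}^3$ and $H_j\cup H_{j+1}$), so the assertion that ``their restrictions to $N^{\perp_{\mathcal{M}^3}}$ lie in a common pencil of conics'' is not justified---sharing the single member $\mathcal{M}^3|_{N^\perp}$ does not force a common pencil. Moreover, even if the $\mathcal{C}^{(j)}$ coincided, what you have shown is only that each $\mathcal{C}^{(j)}$ passes through the direction of $P_{i,j}\vee P_{i,j+1}$, i.e.\ through \emph{one point} of $\ell_i^V$; this is incidence, not tangency, and as $j$ varies these points sweep out $\ell_i^V$, which would force a degenerate conic rather than the desired tangent one. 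Lemma~\ref{lem: 2quadricsdefinepencil} does not resolve this: it glues two quadrics living in distinct hyperplanes, whereas your $\mathcal{Q}^{(j)}$ all live in the full $\R\mathrm{P}^4$.

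The paper avoids the pairwise trap by producing a \emph{single} quadric $\mathcal{Q}$ of signature $(+++--)$ to which \emph{all} the $2$-planes $M_{1,j}\vee M_{2,j}\vee M_{3,j}$ are tangent along isotropic lines; this comes from the $n=3$ case of Proposition~\ref{prop: circularsphericalR3R4}, not from Lemma~\ref{lem: mxZstripisoquadric} directly. The delicate step is then to show $N\in\mathcal{Q}$ and $N^{\perp_\mathcal{Q}}=N^{\perp_{\mathcal{M}^3}}$. For this the paper extends the data (via Proposition~\ref{prop: circularspherical5x4}) to a circular--spherical net $\tilde M$, obtains a second quadric $\mathcal{V}$ of signature $(++-00)$ whose singular line is $V_1\cap V_2\cap V_3\ni N$, and uses that $\mathcal{Q},\mathcal{V},\mathcal{M}^3$ lie in a pencil to transfer the singularity of $N$ in $\mathcal{V}$ to the equality of tangent hyperplanes. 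Once a single $\mathcal{Q}$ with $N^{\perp_\mathcal{Q}}=N^{\perp_{\mathcal{M}^3}}$ is in hand, intersecting with $N^\perp$ and projecting gives one conic to which every line at infinity of $\mathcal{H}_j$ is tangent; the $\mathcal{V}_i$ statement follows by symmetry.
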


\begin{proof}
Let $M: \Z^2 \to \mathcal{M}^n \subset \R \mathrm{P}^{n+1}$ be the lift of $P$. Let $H_j$ and $V_i$ be the $3$-dimensional projective subspaces of $\R \mathrm{P}^{n+1}$ such that $\sigma(H_j \cap \mathcal{M}^n)=\mathcal{H}_j$ and $\sigma(V_i \cap \mathcal{M}^n)=\mathcal{V}_i$.

Let $n=4$. By the case $m=3$ and $d=3$ of Lemma~\ref{lem: mxZstripisoquadric}, the spaces $\{M_{1,j}\vee M_{2,j} \vee M_{3,j}\}_{j\in \Z}$ are $2$-dimensional isotropic spaces of a $4$-dimensional quadric $\mathcal{Q}$ in the space $V_1 \vee V_2 \vee V_3$ which is generically $5$-dimensional. Generically, $\mathcal{Q}$ has signature $(+++---)$. For each $i \in \Z$, $V_i$ contains $N$ because $\mathcal{V}_i=\sigma (V_i \cap \mathcal{M}^4)$ is a $2$-plane in $\R^4$.  So, $\mathcal{Q}$ contains $N$ because $\mathcal{Q}$ contains $\{V_i \cap \mathcal{M}^4\}_{i\in [3]}$. Then, $N^{\perp_\mathcal{Q}}\cap \mathcal{Q}$ is a $3$-dimensional quadric of signature $(++--0)$. For each $i \in [3]$, the $2$-dimensional tangent plane of the $2$-sphere $V_i \cap \mathcal{M}^4$ at $N$ is a tangent $2$-plane of $\mathcal{M}^4$ and it is also a tangent $2$-plane of $\mathcal{Q}$ because $V_i \cap \mathcal{M}^4$ is contained in $\mathcal{Q}$. So, $N^{\perp_\mathcal{Q}}$ equals $N^{\perp_{\mathcal{M}^4}}$, which is the tangent hyperplane of $\mathcal{M}^4$ at $N$. Then, $\sigma(N^{\perp_\mathcal{Q}}\cap \mathcal{Q})$ is a $2$-dimensional quadric of signature $(++--)$ that is contained in the hyperplane at infinity of $\overline{\R^4}$. For each $j\in \Z$, $(M_{i,j}\vee M_{2,j} \vee M_{3,j})\cap N^{\perp_{\mathcal{Q}}}$ is an isotropic line of $N^{\perp_{\mathcal{Q}}}\cap \mathcal{Q}$. The $2$-plane $\mathcal{H}_j$ is tangent to $\sigma(N^{\perp_{\mathcal{Q}}} \cap \mathcal{Q})$ along the isotropic line $\sigma((M_{1,j}\vee M_{2,j} \vee M_{3,j})\cap N^{\perp_{\mathcal{Q}}})$. Symmetrically, the planes $\{\mathcal{V}_i\}_{i\in \Z}$ are tangent along isotropic lines of a quadric of signature $(++--)$ in the hyperplane at infinity of $\overline{\R^4}$.  

Let $n=3$. By the case $n=3$ of Proposition~\ref{prop: circularsphericalR3R4}, the $2$-planes $\{M_{1,j}\vee M_{2,j} \vee M_{3,j}\}_{j\in \Z}$ are tangent to a quadric $\mathcal{Q}$ such that each $2$-plane $M_{1,j}\vee M_{2,j} \vee M_{3,j}$ is tangent to $\mathcal{Q}$ along the points of an isotropic line. Generically, $\mathcal{Q}$ has signature $(+++--)$. By Proposition~\ref{prop: circularspherical5x4}, the vertices $\{M_{i,j} \mid i\in [3], j\in \Z\}$ can be extended to a circular net $\til{M}: \Z^2\to \mathcal{M}^3$ such that, for each $j\in \Z$, the span of the points $\{\til{M}_{i,j}\}_{i\in \Z}$ is the $2$-plane $M_{1,j}\vee M_{2,j} \vee M_{3,j}$ and such that, for each $i\in \Z$, the span of the points $\{\til{M}_{i,j}\}_{j\in \Z}$ is $3$-dimensional. Then, by Proposition~\ref{prop: circularsphericalR3R4}, the $3$-planes $\{\mathrm{join}\{\til{M}_{i,j} \}_{j\in \Z}\}_{i \in \Z}$ are tangent along isotropic $2$-planes of a quadric, say $\mathcal{V}$. Generically, $\mathcal{V}$ has signature $(++-00)$. Then, $V_1\cap V_2\cap V_3$ is an isotropic line of $\mathcal{V}$. It is the line of singular points of $\mathcal{V}$. The line $V_1 \cap V_2\cap V_3$ intersects $\mathcal{M}^3$ in two points that are contained in $\mathcal{Q}$. Indeed, $\mathcal{Q}\cap\mathcal{V}$ equals $\mathcal{M}^3\cap \mathcal{V}$ because Proposition~\ref{prop: circularsphericalR3R4} ensures that the quadrics $\mathcal{Q}$, $\mathcal{V}$ and $\mathcal{M}^3$ belong to a pencil of quadrics. The line $V_1 \cap V_2\cap V_3$ contains $N$ because the $2$-spheres $\{V_{i}\cap \mathcal{M}^3\}_{i\in [3]}$ project stereographically to the planes $\{\mathcal{V}_{i}\}_{i\in [3]}$. Therefore, $N$ is contained in $\mathcal{Q}$. Then, $N^{\perp_{\mathcal{Q}}}\cap \mathcal{Q}$ is a quadric of signature $(++-0)$. Consider the pencil of quadrics containing $\mathcal{M}^3$, $\mathcal{V}$ and $\mathcal{Q}$. Any point in $V_1\cap V_2 \cap V_3$ is a singular point of $\mathcal{V}$. In particular, $N$ is a singular point of $\mathcal{V}$. It follows that, for any quadric in the foregoing pencil, its tangent space at $N$ must contain the tangent space $N^{\perp_{\mathcal{M}^3}}$. In particular, $N^{\perp_\mathcal{Q}}$ equals $N^{\perp_{\mathcal{M}^3}}$. The hyperplane $N^{\perp_{\mathcal{M}^3}}$ intersects $\overline{\R^3}$ in the plane at infinity. So, the quadratic cone $N^{\perp_{\mathcal{Q}}}\cap \mathcal{Q}$ intersects the plane at infinity of $\overline{\R^3}$ in a conic of signature $(++-)$. The $2$-planes $\{\mathcal{H}_j\}_{j\in \Z}$ are tangent to the foregoing conic. Symmetrically, the $2$-planes $\{\mathcal{V}_i\}_{i\in \Z}$ are tangent to a conic in the plane at infinity of $\overline{\R^3}$.
\end{proof}

For smooth surfaces in $\R^3$ such that both families of curvature lines are planar, the planes of one family of planar curvature lines are tangent to a cylinder and the planes of the second family of curvature lines are also tangent to a cylinder. In other words, each family of planes is concurrent at a point in the plane at infinity of $\overline{\R^3}$. The two cylinders have orthogonal directions. See \cite{Serret1853, bonnet1853memoire} for more information. The planes $\{\mathcal{H}_j\}_{j\in\Z}$ (resp. $\{\mathcal{V}_i\}_{i\in\Z}$) in the case $n=3$ of Proposition~\ref{prop: circularnetplanarplanar} are generically not concurrent at a point in the plane at infinity of $\overline{\R^3}$. Instead, they are tangent planes of a conic that is contained in the plane at infinity of $\overline{\R^3}$.

Recently, a discretisation of surfaces with planar curvature lines was introduced in \cite{Tellier2019}. It is a discretisation that is based on the classical fact that the Gauss map is a Combescure transformation of curvature-line parametrisations. The discretisation in \cite{Tellier2019} concerns circular nets with planar parameter lines that are more restrictive than the circular nets in Proposition~\ref{prop: 4x4planarplanar} and Proposition~\ref{prop: circularnetplanarplanar}.
\section{Principal Contact Element Nets with Spherical Parameter Lines}
\label{section: Lieapproach}

\subsection{Lie Geometry and Principal Contact Element Nets}
\label{ss: Lie}

We start with some standard facts about  the Lie geometry of oriented hyperspheres in Euclidean space $\R^n$. See \cite{cecil2008lie, blaschke1929, DDG} for more information on Lie geometry.

Let $e_1,\ldots,e_{n+1},e_{n+2},e_{n+3}$ be an orthonormal basis in $\R^{n+1,2}$:
\begin{eqnarray*}
\langle e_i,e_j\rangle =0,\ i\neq j, \langle e_i,e_i\rangle =1,\ i=1,\ldots, n+1, \langle e_{n+2},e_{n+2}\rangle =\langle e_{n+3},e_{n+3}\rangle=-1.
\end{eqnarray*}
The norm of a vector $x = (x_1, \ldots, x_{n+3})=\sum_{i=1}^{n+3} x_i e_i$ is given by 
$$
\langle x,x\rangle=\sum_{i=1}^{n+1} x_i^2-x_{n+2}^2-x_{n+3}^2.
$$
It is convenient to introduce two isotropic vectors
\begin{equation}
\label{eq:e_0-infty}
e_\infty=\frac{1}{2}(e_{n+2}+e_{n+1}), \quad e_0=\frac{1}{2}(e_{n+2}-e_{n+1}).
\end{equation}

The $(n+1)$-dimensional \emph{Lie quadric} 
$$\mathcal{L}^{n+1}:=\{[x]= [x_1 \ldots, x_{n+3}]\in \R \mathrm{P}^{n+2}\mid \langle x , x\rangle =0 \}$$
parametrises the set of non-empty oriented hyperspheres, oriented hyperplanes and points in Euclidean space $\R^n$. Models of the above elements in $\mathcal{L}^{n+1}$ are as follows.

\begin{itemize}
\item {\em Oriented hypersphere with center $c\in\R^n$ and
signed radius $r\in\R$:}
\begin{equation}
\label{eq: Lie sphere}
\hat{S}=[c+e_0+(|c|_{\R^n}^2-r^2)e_\infty+r e_{n+3}].
\end{equation}
\item {\em Oriented hyperplane $\langle v,x \rangle_{\R^n}=d$ with
$|v|_{\R^n} =1$ and $d\in\R$}:
\begin{equation}\label{eq: Lie plane}
\hat{E}=[v+0\cdot e_0+2d e_\infty+e_{n+3}].
\end{equation}
\item {\em Point $P\in\R^n$:}
\begin{equation}\label{eq: Lie point}
\hat{P}=[P+e_0+|P|_{\R^n}^2 e_\infty+0\cdot e_{n+3}].
\end{equation}
\end{itemize}

The M{\"o}bius quadric $\mathcal{M}^{n} :=\{ [x]\in \R \mathrm{P}^{n+1}\mid x_1^2+\ldots + x_{n+1}^2 -x_{n+2}^2=0 \}$ is identified with $\mathcal{L}^{n+1} \cap [e_{n+3}]^\perp$ where $[e_{n+3}]^\perp$ is the polar hyperplane relative to $\mathcal{L}^{n+1}$ of the point $[e_{n+3}]$. Points of the \emph{point complex} $\mathcal{L}^{n+1} \cap [e_{n+3}]^\perp$ are identified with spheres of radius zero, which are treated as points in $\R^n$. The point $[e_\infty]$ is interpreted as the point at infinity of the one-point compactification of  $\R^n$. Points of the \emph{hyperplane complex}   $\mathcal{L}^{n+1} \cap [e_\infty]^\perp$, where $ [e_\infty]^\perp$ is the polar hyperplane relative to $\mathcal{L}^{n+1}$ of the point $[e_\infty]$, are identified with oriented hyperplanes in $\R^n$.  


The Lie quadric provides a double cover of the points in the exterior of $\mathcal{M}^{n}$. In M\"obius geometry, the exterior of $\mathcal{M}^n \subset \R \mathrm{P}^{n+1}$ parametrises the set of non-oriented hyperspheres and hyperplanes in $\R^n$. Let $S$ be a point in the exterior of the M\"obius quadric $\mathcal{M}^{n},$ which is identified with the exterior of $\mathcal{L}^{n+1} \cap [e_{n+3}]^\perp$. Then, the line $S \vee [e_{n+3}]$ intersects the Lie quadric at two real points, which are interpreted as the two orientations of the hypersphere in $\R^n$ corresponding to the point $S$.

Two oriented hyperspheres are in \emph{oriented contact} if and only if the corresponding points $\hat{S}_1$ and $\hat{S}_2$ in $\mathcal{L}^{n+1}$ are conjugate relative to $\mathcal{L}^{n+1}$. Any two hyperspheres in oriented contact determine a \emph{contact element}, i.e. their point of contact and their
common tangent hyperplane. Then, the line $\hat{S}_1 \vee \hat{S}_2$  is {\em
isotropic}, i.e. lies entirely in $\mathcal{L}^{n+1}$. An isotropic line $l$ contains exactly one point $l\cap [e_{n+3}]^\perp $ with vanishing $e_{n+3}$-component (the common point $P$ of contact of all the oriented hyperspheres in $\R^n$ corresponding to the points of $l$), and, exactly one point $l \cap [e_\infty]^\perp $ with vanishing $e_0$-component (the common tangent oriented hyperplane $E$ in $\R^n$ of all the hyperspheres). We denote the space of isotropic lines by

$$
\mathcal{L}^{n+1}_0=\{\text{space of isotropic lines} \} = \{\text{space of contact elements}\}.
$$

%


Principal contact element nets provide a discretisation of curvature-line parametrisations of surfaces in $\R^3$ in terms of points and also tangent planes. They were introduced in \cite{bobenko2007organizing} to provide a Lie-geometric discretisation of curvature-line parametrisations, see also \cite{DDG} for more information on principal contact element nets.

Let 
$$
l : \Z^2 \to  \mathcal{L}^4_0
$$ 
be a \emph{principal contact element net}. This means that $l$ is a \emph{discrete line congruence}, i.e. any two neighboring 
lines intersect. By intersecting a principal contact element net by two hyperplanes one obtains two Q-nets. The first one
$$
\hat{P}:=l\cap [e_6]^\perp:\Z^2\to \mathcal{M}^3=\mathcal{L}^4  \cap [e_6]^\perp
$$
is a Q-net in the M\"obius quadric. Via stereographic projection, it determines a circular net $P: \Z^2 \to \R^3$. The second one 
$$
\hat{E}:= l\cap [e_\infty]^\perp:\Z^2\to \mathcal{L}^4  \cap [e_\infty]^\perp
$$
is a $Q$-net in the plane complex. Let $E_{i,j}$ be the oriented plane in $\R^3$ corresponding to $\hat{E}_{i,j}$. Then, $E: \Z^2\to \{\text{oriented planes in }\R^3\}$ is a conical net. The oriented planes $E_{i,j}$, $E_{i+1,j}$, $E_{i+1,j+1}$, $E_{i,j+1}$ are tangent to a cone of revolution. See \cite{liu2006geometry, pottmann2007geometry} for more information concerning conical nets and their applications in architectural geometry. Points of the circular net lie in the planes of the conical net, i.e. $P_{i,j}\in E_{i,j}$.  

In this elementary interpretation, a principal contact element net can be identified with a pair of maps $(P, E)$ where $P: \Z^2 \to \R^3$ is a circular net and $E: \Z^2 \to \{\text{oriented planes in } \R^3\}$ is a conical net such that $P_{i,j} \in E_{i,j}$. A contact element $(P_{i,j}, E_{i,j})$ has oriented contact with an oriented sphere $\mathcal{S}$ if $P_{i,j} \in \mathcal{S}$ and $E_{i,j}$ is an oriented tangent plane of $\mathcal{S}$. The defining property of a principal contact element net $(P, E)$ is that any two neighbouring contact elements have oriented contact with an oriented sphere. 

As explained in \cite{DDG, pottmann2008focal} any circular net $P: \Z^2 \to \R^3$ can be extended to a principal contact element net $(P,E)$ in a $2$-parameter family of ways. There is a $2$-parameter family of possible choices for the oriented plane $E_{0,0}$ containing $P_{0,0}$. The conical net $E: \Z^2 \to \{\text{oriented planes in } \R^3\}$ is uniquely determined by the choice of $E_{0,0}$. For instance, the plane $E_{1,0}$ is determined as the reflection of $E_{0,0}$ about the plane that is the perpendicular bisector of the segment joining $P_{0,0}$ and $P_{1,0}$. Similarly, any conical net $E: \Z^2 \to \{\text{oriented planes in } \R^3\}$ has a $2$-parameter family of extensions to a principal contact element net $(P,E)$.

\subsection{One Family Spherical}
\label{Lie: onefamilyspherical}

Let $l : \Z^2 \to \mathcal{L}_0^4$ be a principal contact element net. Let $\hat{P}: \Z^2 \to \mathcal{L}^4 \cap [e_6]^\perp$ be the corresponding circular net in the M{\"o}bius quadric $\mathcal{M}^3$ which is identified with $\mathcal{L}^4 \cap [e_6]^\perp$. Suppose that $\{ \hat{P}_{i,j} \}_{i\in \Z}$ is a spherical parameter line, i.e.\ $\vee_i \hat{P}_{i,j}$ is $3$-dimensional. Then, $\vee_i l_{i,j}$ is $4$-dimensional because $l$ is a discrete line congruence. This motivates Definition~\ref{defn: Liessphericalpara}.

\begin{defn}
\label{defn: Liessphericalpara}
Let $l : \Z^2 \to  \mathcal{L}_0^4$ be a principal contact element net. The parameter line $\{ l_{i,j}\}_{i \in \Z}$ is \emph{spherical} if $\vee_i l_{i,j}$ is $4$-dimensional.
\end{defn}

\begin{prop}
\label{prop: sphericalaparaLie}
Let $l: \Z^2 \to  \mathcal{L}_0^4$ be a principal contact element net. Let $P: \Z^2 \to \R^3$ and $E: \Z^2 \to \{\text{oriented planes in } \R^3\}$ be the corresponding circular and conical nets. Then, the following conditions are equivalent:
\begin{itemize}
\item $\{ l_{i,j}\}_{i \in \Z}$ is spherical,
\item $\{ P_{i,j}\}_{i \in \Z}$ is spherical, i.e. the points lie on a sphere $\mathcal{S}_1$,
\item $\{ E_{i,j}\}_{i \in \Z}$ are in oriented contact with a common oriented sphere $\mathcal{S}_2$. 
\end{itemize} 
The planes $\{E_{i,j}\}_{i \in \Z}$ intersect the sphere $\mathcal{S}_1$ at a constant angle. The spheres $\mathcal{S}_1$ and $\mathcal{S}_2$ are concentric.
\end{prop}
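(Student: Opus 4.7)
The plan is to work in the Lie quadric $\mathcal{L}^4\subset\R\mathrm{P}^5$, using the identifications $\hat{P}_{i,j}=l_{i,j}\cap[e_6]^\perp$ and $\hat{E}_{i,j}=l_{i,j}\cap[e_\infty]^\perp$, so that $l_{i,j}=\hat{P}_{i,j}\vee\hat{E}_{i,j}$. Set $L:=\mathrm{join}\{l_{i,j}\}_{i\in\Z}$. Condition $(1)$ says precisely that $L$ is a hyperplane in $\R\mathrm{P}^5$, so $L=X^\perp$ for a unique pole $X$. Under the genericity assumption that $L$ is contained in neither $[e_6]^\perp$ nor $[e_\infty]^\perp$, intersecting $L$ with these two hyperplanes yields $\mathrm{join}\{\hat{P}_{i,j}\}_{i\in\Z}$ and $\mathrm{join}\{\hat{E}_{i,j}\}_{i\in\Z}$ respectively, both $3$-dimensional. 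This gives $(1)\Rightarrow(2)$ at once, and $(1)\Rightarrow(3)$ follows by taking the second intersection of the line $\mathrm{join}(X,[e_\infty])$ with $\mathcal{L}^4$, which produces a Lie sphere $\hat{S}_2$ for which $\hat{S}_2^\perp\cap[e_\infty]^\perp$ equals $L\cap[e_\infty]^\perp$; thus every $E_{i,j}$ is in oriented contact with $\mathcal{S}_2$.

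The main obstacle is the converse $(2)\Rightarrow(1)$ (the argument $(3)\Rightarrow(1)$ being parallel). Here I will crucially use that $l$ is a discrete line congruence, so neighbouring lines $l_{i-1,j}$ and $l_{i,j}$ meet at a common Ribaucour sphere $\hat{R}_{i-1,j}$. By $(2)$ the span $V_P:=\mathrm{join}\{\hat{P}_{i,j}\}_{i\in\Z}$ is $3$-dimensional in $[e_6]^\perp$. Define $L_0:=V_P\vee l_{1,j}$, which is $4$-dimensional because $\hat{E}_{1,j}\notin V_P$ generically. I prove by induction on $i$ that each $l_{i,j}\subseteq L_0$: the line $l_{i,j}$ contains the two generically distinct points $\hat{P}_{i,j}\in V_P\subseteq L_0$ and $\hat{R}_{i-1,j}\in l_{i-1,j}\subseteq L_0$, and so $l_{i,j}\subseteq L_0$. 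Iterating in both directions yields $L=L_0$, a hyperplane, which is $(1)$. This inductive step is where the principal contact element structure, not merely the circular-net condition, enters essentially: without the line-congruence property linking each $\hat{P}_{i,j}$ to its neighbours via the Ribaucour spheres, the span of the isotropic lines could well be $5$-dimensional even when the points are cospherical.

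For concentricity and the constant-angle assertion, I will write the pole as $X=w+\a e_0+\b e_\infty+\c e_6$ with $w\in\R^3$. A direct expansion of $\langle\hat{P}_{i,j},X\rangle=0$ and $\langle\hat{E}_{i,j},X\rangle=0$ using the standard representatives for points and oriented planes gives, respectively, the equation of the sphere $\mathcal{S}_1$ with centre $c:=w/\a$ and radius $r_1$ satisfying $r_1^2=|c|^2-\b/\a$, and the oriented-tangency condition of $E_{i,j}$ with the oriented sphere of centre $c$ and signed radius $r_2:=\c/\a$; the common centre is concentricity. The angle $\theta$ between $E_{i,j}$ and the tangent plane of $\mathcal{S}_1$ at any point of $E_{i,j}\cap\mathcal{S}_1$ is computed from the normals and comes out to $\cos\theta=-r_2/r_1$, independent of $i$, which is the constant-angle statement.
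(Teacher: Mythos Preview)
Your proof is correct. For the equivalence of the three conditions you and the paper use the same underlying idea---the line-congruence property forces $\dim\bigvee_i l_{i,j}=\dim\bigvee_i\hat{P}_{i,j}+1=\dim\bigvee_i\hat{E}_{i,j}+1$---but you spell out the induction via the Ribaucour spheres $\hat{R}_{i-1,j}=l_{i-1,j}\cap l_{i,j}$ explicitly, whereas the paper states the dimension count in a single sentence.

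The genuine methodological difference is in the second half. The paper argues the constant-angle claim geometrically: neighbouring planes $E_{i,j}$ and $E_{i+1,j}$ are related by reflection in the perpendicular bisector of the segment $P_{i,j}P_{i+1,j}$, and this bisector contains the centre of $\mathcal{S}_1$; reflection in any plane through the centre preserves the intersection angle with $\mathcal{S}_1$. This makes the discrete Joachimsthal interpretation transparent, but the paper does not separately derive concentricity of $\mathcal{S}_1$ and $\mathcal{S}_2$. Your approach instead expands $\langle\hat{P}_{i,j},X\rangle=0$ and $\langle\hat{E}_{i,j},X\rangle=0$ in coordinates for the single pole $X=w+\a e_0+\b e_\infty+\c e_6$ of $L$, reading off the common centre $c=w/\a$, the two radii, and the explicit angle $\cos\theta=-r_2/r_1$. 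This is more computational but has the advantage of delivering concentricity and the constant angle simultaneously from the same object~$X$, and of identifying $\hat{S}_2$ concretely as the second point of $(X\vee[e_\infty])\cap\mathcal{L}^4$.
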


\begin{proof}
Since $l_{i,j}$ is a line congruence and $\hat{P}_{i,j}\in \mathcal{L}^4  \cap [e_6]^\perp$, $\hat{E}_{i,j}\in \mathcal{L}^4  \cap [e_\infty]^\perp$, the conditions $\dim \vee_i  l_{i,j} =4$, 
$\dim \vee_i  \hat{P}_{i,j} =3$ and $\dim \vee_i  \hat{E}_{i,j} =3$ are equivalent. This proves the equivalences in Proposition~\ref{prop: sphericalaparaLie}.

Suppose that the points $\{P_{i,j}\}_{i \in \Z}$ are contained in a sphere $\mathcal{S}_1$. The planes $E_{i,j}$ and $E_{i+1,j}$ are symmetric about the plane that perpendicularly bisects the segment through $P_{i,j}$ and $P_{i+1,j}$. The foregoing plane contains the centre of $\mathcal{S}_1$. So, the planes  $E_{i,j}$ and $E_{i+1,j}$ intersect $\mathcal{S}_1$ at a constant angle. 
\end{proof}



Proposition~\ref{prop: sphericalaparaLie} is a discrete analogue of Joachimsthal's theorem. The latter states that the tangent planes of a surface in $\R^3$ along a spherical curvature line intersect the sphere containing the spherical curvature line at a constant angle.
%

\begin{prop}
\label{prop: alternatingoneparapherical}
Let $l: \Z^2 \to \mathcal{L}_0^4$ be a principal contact element net with spherical parameter lines $\{l_{i,j}\}_{i\in \Z}$. There exist quadrics $\mathcal{Q}_j$ such that the planes $\{l_{i,j} \vee l_{i,j+1}\}_{i \in \Z}$ are isotropic planes of $\mathcal{Q}_j$. Generically, there is an alternation phenomenon:  the isotropic planes $\{l_{i,j} \vee l_{i,j+1}\}_{i \in \Z}$ decompose into the two systems of generators $\{l_{2i,j} \vee l_{2i,j+1}\}_{i \in \Z}$ and $\{l_{2i+1,j} \vee l_{2i+1,j+1}\}_{i \in \Z}$ of $\mathcal{Q}_j$.
\end{prop}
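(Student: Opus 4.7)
The plan is to realise the $2$-planes $l_{i,j}\vee l_{i,j+1}$ as the spans $V_i$ of a strip $\Z\times[3]$ of an auxiliary Q-net in the Lie quadric, and then apply Lemma~\ref{lem: mxZstripisoquadric}. First, by Proposition~\ref{prop: sphericalaparaLie}, the oriented planes $\{E_{i,j}\}_{i\in\Z}$ are in oriented contact with a common oriented sphere, so $\vee_{i\in\Z}\hat E_{i,j+1}$ is $3$-dimensional in $\R\mathrm{P}^5$; together with the spherical parameter line assumption on $\hat P$, both $\vee_{i\in\Z}\hat P_{i,j}$ and $\vee_{i\in\Z}\hat P_{i,j+1}$ are $3$-dimensional.

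Define the auxiliary Q-net $P:\Z\times[3]\to\mathcal{L}^4\subset\R\mathrm{P}^5$ by $P_{i,1}:=\hat P_{i,j}$, $P_{i,2}:=\hat P_{i,j+1}$, $P_{i,3}:=\hat E_{i,j+1}$. The $k=1$ quadrilaterals are coplanar because $\hat P$ is a circular net, and the $k=2$ quadrilaterals are coplanar because the neighbouring contact element lines $l_{i,j+1}$ and $l_{i+1,j+1}$ intersect, so $\hat P_{i,j+1}, \hat P_{i+1,j+1}, \hat E_{i+1,j+1}, \hat E_{i,j+1}$ all lie in the $2$-plane $l_{i,j+1}\vee l_{i+1,j+1}$. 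One checks that $V_i:=P_{i,1}\vee P_{i,2}\vee P_{i,3}=l_{i,j}\vee l_{i,j+1}$ and that $H_k:=\vee_i P_{i,k}$ are $3$-dimensional non-isotropic subspaces. Applying Lemma~\ref{lem: mxZstripisoquadric} with $m=3$, $d=3$, ambient $\R\mathrm{P}^5$ and containing quadric $\mathcal{L}^4$ produces the quadric $\mathcal{Q}_j$ in which all $V_i$ are isotropic $2$-planes.

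The main obstacle is that the edge line $P_{i,2}\vee P_{i,3}=l_{i,j+1}$ is isotropic in $\mathcal{L}^4$, which formally violates the hypothesis of Lemma~\ref{lem: mxZstripisoquadric}. I would handle this by reproducing the proof of that lemma directly: in the base case applied to the strip $\{k=2,3\}$, the required quadric $\mathcal W$ can be taken to be $\mathcal{L}^4\cap(H_2\vee H_3)$ itself, which is the unique member of the pencil spanned by $\mathcal{L}^4\cap(H_2\vee H_3)$ and $H_2\cup H_3$ in which the lines $l_{i,j+1}$ remain isotropic; meanwhile $\mathcal U$ for the strip $\{k=1,2\}$ is produced by the standard base case. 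Lemma~\ref{lem: 2quadricsdefinepencil} then yields a pencil of quadrics in $\R\mathrm{P}^5$ containing $\mathcal U$ and $\mathcal W$, and selecting the unique quadric $\mathcal{Q}_j$ in this pencil through a generic point $O\in V_0\setminus(U_0\cup W_0)$ (for instance $O=\hat E_{0,j}$) gives, by the usual inductive argument, that every $V_i$ is isotropic in $\mathcal{Q}_j$.

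For the alternation claim, Definition~\ref{def:laplace_line congruence} gives $V_i\cap V_{i+1}=(l_{i,j}\vee l_{i,j+1})\cap(l_{i+1,j}\vee l_{i+1,j+1})=\mathcal{L}_B l(i,j)$, which is a line. Generically $\mathcal{Q}_j$ is non-degenerate of signature $(3,3)$, whose maximal isotropic subspaces have dimension $2$. By Proposition~\ref{prop: hyperbolicquadrictwoclasses} with $m=3$, two $2$-dimensional generators lie in the same system iff the dimension of their intersection has opposite parity to $m$; since $\dim(V_i\cap V_{i+1})=1$ has the same parity as $m=3$, the generators $V_i$ and $V_{i+1}$ belong to different systems, and hence $\{V_{2i}\}_{i\in\Z}$ and $\{V_{2i+1}\}_{i\in\Z}$ form the two systems of generators of $\mathcal{Q}_j$.
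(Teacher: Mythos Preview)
Your proof is correct, and the alternation argument is essentially identical to the paper's. The route to the existence of $\mathcal{Q}_j$, however, is noticeably more circuitous than the paper's.

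The paper bypasses the auxiliary $\Z\times[3]$ Q-net and Lemma~\ref{lem: mxZstripisoquadric} entirely. It works directly with the two $4$-dimensional hyperplanes $H_j:=\vee_i l_{i,j}$ and $H_{j+1}:=\vee_i l_{i,j+1}$ in $\R\mathrm{P}^5$ (these are the spaces Definition~\ref{defn: Liessphericalpara} supplies), applies Lemma~\ref{lem: 2quadricsdefinepencil} once to the pair of $3$-quadrics $H_j\cap\mathcal{L}^4$ and $H_{j+1}\cap\mathcal{L}^4$ to obtain a pencil, and then picks $\mathcal{Q}_j$ as the member through a generic point of $l_{0,j}\vee l_{0,j+1}$. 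The inductive step is the same ``three lines in a plane cannot lie on a nondegenerate conic'' argument: the plane $l_{1,j}\vee l_{1,j+1}$ already contains the isotropic lines $l_{1,j}$, $l_{1,j+1}$, and the line $(l_{0,j}\vee l_{0,j+1})\cap(l_{1,j}\vee l_{1,j+1})$, so it is forced to be isotropic in $\mathcal{Q}_j$. This avoids your isotropic-edge obstruction altogether, since no ``vertical'' edges of a Q-net are ever required to be non-isotropic.

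What your approach buys is a conceptual link to Lemma~\ref{lem: mxZstripisoquadric} and the pattern used throughout Section~\ref{section: Mobiusapproach}; what the paper's buys is brevity---one application of Lemma~\ref{lem: 2quadricsdefinepencil} instead of two (one inside the base case for $\mathcal{U}$, one to glue $\mathcal{U}$ and $\mathcal{W}$), and no need to justify the workaround for the hypothesis of Lemma~\ref{lem: mxZstripisoquadric}. Your suggested choice $O=\hat E_{0,j}$ is fine generically, but it is worth remarking that $\mathcal{L}^4$ itself is not a member of the resulting pencil (its restriction to $H_1\vee H_2$ differs from $\mathcal{U}$), so there is no risk of accidentally selecting $\mathcal{Q}_j=\mathcal{L}^4$.
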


\begin{proof}
Consider the hyperplanes $\vee_i l_{i,j}$ and $\vee l_{i,j+1}$ in $\R \mathrm{P}^5$. By Lemma~\ref{lem: 2quadricsdefinepencil}, there is a unique pencil of $4$-dimensional quadrics that contain the $3$-dimensional quadrics $\vee_i l_{i,j} \cap \mathcal{L}^4$ and $\vee_i l_{i,j+1} \cap \mathcal{L}^4$. Let $\mathcal{Q}_j$ be the quadric in the pencil such that it contains a point in the plane $l_{0,j}\vee l_{0,j+1}$ that is not contained in $l_{0,j}\cup l_{0,j+1}$. Then, $l_{0,j}\vee l_{0,j+1}$ is an isotropic plane of $\mathcal{Q}_j$. Then, the intersection $(l_{1,j}\vee l_{1,j+1}) \cap \mathcal{Q}_j$ contains the line $(l_{0,j}\vee l_{0,j+1}) \cap (l_{1,j}\vee l_{1,j+1})$.  It also contains the lines $l_{1,j}$ and $l_{1,j+1}$. Any planar section of $\mathcal{Q}_j$ is either a conic or an isotropic plane. Therefore, $l_{1,j}\vee l_{1,j+1}$ is an isotropic plane of $\mathcal{Q}_j$. Iterating, we see that the planes $l_{i,j}\vee l_{i,j+1}$ are isotropic planes of $\mathcal{Q}_j$. Generically, the planes are not concurrent. So, they are isotropic planes of a quadric of signature $(+++---)$. Since $l$ is a discrete line congruence $(l_{i,j}\vee l_{i,j+1})\cap (l_{i+1,j}\vee l_{i+1,j+1})$ is $1$-dimensional. By Proposition~\ref{prop: hyperbolicquadrictwoclasses}, $l_{i,j}\vee l_{i,j+1}$ and $l_{i+1,j}\vee l_{i+1,j+1}$ are in different systems of generators.
\end{proof}

The alternation phenomenon indicates that additional specifications are required to model surfaces with spherical curvature lines, and to obtain them in a smooth limit.  This can be achieved by requiring that for any $j$ the planes $\{ l_{i,j}\vee l_{i,j+1} \}_{i\in \Z}$ are concurrent, i.e.  $\cap_i l_{i,j} \vee l_{i,j+1}$ is a point. 
This motivates the following definition.


\begin{defn}
\label{defn: 1paraspherical}
Let $l: \Z^2 \to \mathcal{L}_0^4$ be a principal contact element net. It has \emph{one family of spherical parameter lines} $\{l_{i,j}\}_{i\in \Z}$ if,  for any $j$ the parameter line $\{l_{i,j}\}_{i\in \Z}$ is spherical and the planes $\{l_{i,j} \vee l_{i,j+1}\}_{i\in \Z}$ are concurrent.
\end{defn}

\begin{prop}
\label{prop: discreteBlaschke1parasphericalLietransforms}
Let $l: \Z^2 \to  \mathcal{L}_0^4$ be a principal contact element net with one family of spherical parameter lines $\{ l_{i,j} \}_{i\in \Z}$. Then for any $j_0 $ and $j_1$, there is a projective transformation $f_{j_0,j_1} : \vee_i l_{i,j_0}\to \vee_i l_{i,j_1}$ such that $f_{j_0,j_1}(l_{i,j_0})= l_{i,j_1}$.
\end{prop}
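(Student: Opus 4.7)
The plan is to realise $f_{j_0,j_1}$ as a composition of central projections between the $4$-dimensional hyperplanes spanned by consecutive parameter lines, using the concurrency point of the planes $\{l_{i,j}\vee l_{i,j+1}\}_{i\in\Z}$ as the centre of projection.

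For each $j\in\Z$, set $H_j := \vee_{i} l_{i,j}$, which is $4$-dimensional in $\R\mathrm{P}^5$ by the spherical assumption. By Definition~\ref{defn: 1paraspherical}, the $2$-planes $\{l_{i,j}\vee l_{i,j+1}\}_{i\in\Z}$ are concurrent at a single point $C_j$. First I would check that, generically, $C_j \notin H_j \cup H_{j+1}$. Indeed, if $C_j \in H_j$, then for each $i$ the point $C_j$ would lie in $H_j \cap (l_{i,j}\vee l_{i,j+1})$, which generically equals $l_{i,j}$ (the intersection contains $l_{i,j}$ and has expected dimension $4+2-5=1$). This would force $C_j$ to lie on every $l_{i,j}$, contradicting the genericity assumption. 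The same reasoning applies to $H_{j+1}$.

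Since $C_j \notin H_j \cup H_{j+1}$, central projection from $C_j$ restricts to a projective isomorphism $f_{j,j+1} : H_j \to H_{j+1}$: for each $P \in H_j$, the line $C_j \vee P$ meets $H_{j+1}$ transversally in exactly one point. For $P \in l_{i,j}$, the line $C_j \vee P$ lies in the $2$-plane $C_j \vee l_{i,j}$, which coincides with $l_{i,j}\vee l_{i,j+1}$ because $C_j \in l_{i,j}\vee l_{i,j+1}$ and both $2$-planes already contain the line $l_{i,j}$. The intersection $(l_{i,j}\vee l_{i,j+1}) \cap H_{j+1}$ equals $l_{i,j+1}$: it contains $l_{i,j+1}$ and has dimension $2+4-5=1$ generically. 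Therefore $f_{j,j+1}(l_{i,j}) = l_{i,j+1}$ for every $i\in\Z$.

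Finally, for $j_0 \le j_1$, I would define $f_{j_0,j_1} := f_{j_1-1,j_1} \circ \cdots \circ f_{j_0,j_0+1}$, and for $j_0 > j_1$ the inverse composition; each factor is a projective isomorphism, so is the composition, and by construction $f_{j_0,j_1}(l_{i,j_0}) = l_{i,j_1}$ for all $i$. The only non-trivial step is ruling out $C_j \in H_j \cup H_{j+1}$; this is a routine dimension count given the standing genericity assumption. The rest is the standard fact that central projection from a point not lying on two given hyperplanes is a projective isomorphism between them.
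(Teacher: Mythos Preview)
Your proof is correct and follows essentially the same approach as the paper: define $f_{j,j+1}$ as the central projection from the concurrency point $\cap_i(l_{i,j}\vee l_{i,j+1})$ and then compose. You supply more detail than the paper does (the genericity check that $C_j\notin H_j\cup H_{j+1}$ and the dimension counts), but the underlying argument is the same.
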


\begin{proof}
Let $f_{j, j+1}: \vee_i l_{i,j}\to \vee_i l_{i,j+1}$ be the central projection with centre $\cap_i (l_{i,j} \vee l_{i,j+1})$. This is a projective transformation identifying the corresponding lines: $f_{j, j+1}(l_{i,j})= l_{i,j+1}$. The composition $f_{j_1-1,j_1}\circ \ldots \circ f_{j_0+1, j_0+2} \circ f_{j_0, j_0+1}$ is the required projective transformation $f_{j_0, j_1}$.
\end{proof}
%

Proposition~\ref{prop: discreteBlaschke1parasphericalLietransforms} sheds light on the classical theory of smooth surfaces with one family of spherical curvature lines.  Let $l: \Z^2 \to \{\text{isotropic lines of } \mathcal{L}^4\}$ be a principal contact element net with one family of spherical parameter lines. Each spherical parameter line corresponds to a $4$-dimensional space that intersects $\mathcal{L}^4$ in a quadric that generically has signature $(+++--)$. Each of these quadrics can be identified with the $3$-dimensional Lie quadric $\mathcal{L}^3$ that parametrises the set of oriented circles in the $2$-dimensional M\"obius quadric $\mathcal{M}^2$, which can be identified with each of the spheres in $\R^3$ that contain the spherical parameter lines. Then, Proposition~\ref{prop: discreteBlaschke1parasphericalLietransforms} says that any two spherical parameter lines of a principal contact element net with one family of spherical parameter lines are related by a Lie transformation. This observation has a counterpart in the smooth theory. If a surface has one family of spherical curvature lines, then any two spherical curvature lines are related by a Lie transformation \cite{blaschke1929}.


Proposition~\ref{prop: 1parasphericalgoursatlaplace} concerns discrete line congruences with terminating Laplace sequences. See Definition~\ref{def:laplace_line congruence}.

\begin{prop}
\label{prop: 1parasphericalgoursatlaplace}
Let $l: \Z^2 \to \mathcal{L}_0^4$ be a principal contact element net with one family of spherical parameter lines $\{l_{i,j} \}_{i\in \Z}$. Generically, $\mathcal{L}_A^3l$ is Goursat degenerate and $\mathcal{L}_B^2l$ is Laplace degenerate.
\end{prop}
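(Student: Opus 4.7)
The plan is to handle the two claims separately. For the Goursat degeneracy of $\mathcal{L}_A^3 l$, I would argue by a dimension count that mimics the reasoning of Proposition~\ref{prop: Goursatterminatemsteps}. Since $l$ is a principal contact element net in $\mathcal{L}_0^4 \subset \R\mathrm{P}^5$ with spherical parameter lines, for each $j\in \Z$ the hyperplane $H_j := \vee_i l_{i,j}$ is $4$-dimensional by Definition~\ref{defn: Liessphericalpara}. From the definition $\mathcal{L}_A l(i,j) = (l_{i,j}\vee l_{i+1,j})\cap (l_{i,j+1}\vee l_{i+1,j+1})$ it follows that $\mathcal{L}_A l(i,j) \subset H_j \cap H_{j+1}$, and iterating this inclusion gives $\mathcal{L}_A^d l(i,j) \subset H_j \cap H_{j+1} \cap \cdots \cap H_{j+d}$. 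By the genericity assumption, each intersection of a new hyperplane with the current subspace drops the dimension by one, so $\cap_{k=0}^{3} H_{j+k}$ is a $1$-dimensional projective subspace, i.e.\ a single line. Since $\mathcal{L}_A^3 l(i,j)$ is a line contained in this $1$-dimensional subspace, it must coincide with it, and in particular it is independent of $i$.

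For the Laplace degeneracy of $\mathcal{L}_B^2 l$, I would exploit the concurrency condition from Definition~\ref{defn: 1paraspherical}: for each $j$, the point $X_j := \cap_{i\in\Z}(l_{i,j} \vee l_{i,j+1})$ is well-defined. The key observation is that $\mathcal{L}_B l(i,j) = (l_{i,j}\vee l_{i,j+1})\cap (l_{i+1,j}\vee l_{i+1,j+1})$ is the intersection of two planes both of which contain $X_j$, so the line $\mathcal{L}_B l(i,j)$ passes through $X_j$ for every $i$. Consequently the $2$-plane $\mathcal{L}_B l(i,j) \vee \mathcal{L}_B l(i,j+1)$ contains both $X_j$ and $X_{j+1}$, hence contains the line $X_j \vee X_{j+1}$; the same holds for $\mathcal{L}_B l(i+1,j) \vee \mathcal{L}_B l(i+1,j+1)$. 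Therefore the intersection
\[
\mathcal{L}_B^2 l(i,j) = (\mathcal{L}_B l(i,j) \vee \mathcal{L}_B l(i,j+1)) \cap (\mathcal{L}_B l(i+1,j) \vee \mathcal{L}_B l(i+1,j+1))
\]
is a line that contains $X_j \vee X_{j+1}$. Since it is $1$-dimensional (by the line congruence property and genericity), it equals $X_j \vee X_{j+1}$, which is manifestly independent of $i$.

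The main technical step is verifying that the generic dimension counts really do yield a $1$-dimensional intersection in both parts, but this is guaranteed by the genericity assumption stated in the introduction. Beyond that, the argument is essentially a direct unpacking of Definitions~\ref{defn: Liessphericalpara} and \ref{defn: 1paraspherical} combined with the definition of Laplace transforms of line congruences (Definition~\ref{def:laplace_line congruence}), so I do not anticipate a substantial obstacle.
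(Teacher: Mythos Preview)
Your proposal is correct and follows essentially the same approach as the paper's proof: both argue Goursat degeneracy of $\mathcal{L}_A^3 l$ via the dimension count $\dim\bigl(\cap_{k=0}^{3} H_{j+k}\bigr)=1$, and both obtain Laplace degeneracy of $\mathcal{L}_B^2 l$ by showing that $\mathcal{L}_B^2 l(i,j)$ coincides with the line $X_j \vee X_{j+1}$ joining the two concurrency points. Your presentation is in fact slightly more explicit than the paper's in tracing why $\mathcal{L}_B l(i,j)$ passes through $X_j$, but the underlying argument is the same.
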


\begin{proof}
Let $V_{i,j}$ be the $3$-dimensional space $\vee_i l_{i,j}\vee l_{i,j+1}\vee l_{i,j+2}$. The intersection $\cap_i V_{i,j}$ is generically $1$-dimensional. It is the line joining the points $\cap_i l_{i,j} \vee l_{i,j+1}$ and $\cap_i l_{i,j+1} \vee l_{i,j+2}$. The foregoing line equals $\mathcal{L}_B^2l(i,j)$ for all $i$. So, $\mathcal{L}_B^2l$ is Laplace degenerate. Let $H_j$ be the $4$-dimensional space $\vee_i l_{i,j}$. The intersection $H_j \cap H_{j+1} \cap H_{j+2} \cap H_{j+3}$ is generically $1$-dimensional. The foregoing line equals $\mathcal{L}_A^3l(i,j)$ for all $i$. So, $\mathcal{L}_A^3 l$ is Goursat degenerate.
\end{proof}

Let $\hat{P}: \Z^2 \to \mathcal{M}^3$ be the circular net corresponding to the principal contact element net $l$ with one family of spherical parameter lines. Intersecting $\mathcal{L}^4$ by $[e_6]^\perp$ we obtain from Proposition~\ref{prop: 1parasphericalgoursatlaplace} the corresponding statements for $\hat{P}$. Thus, for the corresponding circular net $P:\Z^2\to \R^3$ we obtain that $\mathcal{L}_A^3 P$ is Goursat degenerate and $\mathcal{L}_B^2 P$ is Laplace degenerate.  As already mentioned in Section~\ref{section: Mobiusapproach}, these properties are in complete agreement with the smooth theory. We use them to define circular nets with one family of spherical parameter lines.

Denote by $\Pi_{i,j}$ the plane containing the points $P_{i,j}, P_{i,j+1}, P_{i,j+2}$.

\begin{defn}
\label{def:1parSphericalmoebius}
A circular net $P:\Z^2\to \R^3$ has \emph{one family of spherical parameter lines} $\{ P_{i,j}\}_{i\in\Z}$ if
all parameter lines $\{ P_{i,j}\}_{i\in\Z}$ are spherical, and  the planes $\{ \Pi_{i,j} \}_{i\in\Z}$ are concurrent.
\end{defn}

Denote by $\mathcal{C}_{i,j}\subset \Pi_{i,j}$ the circle containing the points $P_{i,j}, P_{i,j+1}, P_{i,j+2}$, and by $\mathcal{S}_{i,j}$ the sphere containing the points $P_{i,j}, P_{i,j+1}, P_{i,j+2}, P_{i+1,j}, P_{i+1,j+1}, P_{i+1,j+2}$.

\begin{prop}
\label{prop:circular_one_family_spherical}
A circular net $P:\Z^2\to \R^3$ has one family of spherical parameter lines $\{P_{i,j}\}_{i\in \Z}$ if and only if all its parameter lines $\{P_{i,j}\}_{i\in \Z}$ are spherical and one of the following equivalent conditions is satisfied:
\begin{itemize}
\item the spheres $\{ \mathcal{S}_{i,j} \}_{i\in\Z}$ have a (possibly imaginary) common orthogonal sphere,
\item the circles $\{ \mathcal{C}_{i,j} \}_{i\in\Z}$ have a (possibly imaginary) common orthogonal sphere,
\item the Laplace transform $\mathcal{L}_B^2 \hat{P}$ is Laplace degenerate.
\end{itemize}
\end{prop}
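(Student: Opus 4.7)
The plan is to work in the M\"obius lift $\hat{P}:\Z^2\to\mathcal{M}^3\subset\R\mathrm{P}^4$ with $N=[e_\infty]$, and to introduce, for each $(i,j)$, the 2-plane $C_{i,j}:=\hat{P}_{i,j}\vee\hat{P}_{i,j+1}\vee\hat{P}_{i,j+2}$ of the circle $\mathcal{C}_{i,j}$, the 3-hyperplane $S_{i,j}:=C_{i,j}\vee C_{i+1,j}$ of the sphere $\mathcal{S}_{i,j}$, and the 3-hyperplane $T_{i,j}:=C_{i,j}\vee N$ corresponding to the plane $\Pi_{i,j}$ viewed as a sphere through $\infty$. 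The strategy is to reduce every bulleted condition to the single lift-statement that $\cap_i C_{i,j}$ is a point for each $j$, and then to match this with the concurrency of $\Pi_{i,j}$ from Definition~\ref{def:1parSphericalmoebius}.

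For the equivalence of the three bullets, the third is exactly the statement that $\cap_i C_{i,j}$ is a point, by the $B$-symmetric version of Proposition~\ref{prop: Laplaceterminatemsteps} with $m=2$. For the second I would invoke the standard M\"obius fact that a sphere with polar $t$ is orthogonal to $\mathcal{C}_{i,j}$ iff $t\in C_{i,j}$, since orthogonality to the entire pencil $C_{i,j}^\perp$ of spheres through $\mathcal{C}_{i,j}$ forces $t\in(C_{i,j}^\perp)^\perp=C_{i,j}$. For the first, common orthogonality to $\{\mathcal{S}_{i,j}\}_i$ amounts analogously to $\cap_i S_{i,j}\neq\emptyset$; since $S_{i-1,j}$ and $S_{i,j}$ are distinct 3-hyperplanes of $\R\mathrm{P}^4$ each containing the 2-plane $C_{i,j}$, dimension count forces $S_{i-1,j}\cap S_{i,j}=C_{i,j}$ generically, so $\cap_i S_{i,j}=\cap_i C_{i,j}$.

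For the equivalence with concurrency of $\Pi_{i,j}$, the easy direction is that $\cap_i C_{i,j}=\{t_j\}$ forces the line $N\vee t_j$ into every $T_{i,j}$; the second intersection of this line with $\mathcal{M}^3$ is a lifted point $\hat{p}_j$ whose image $p_j\in\R^3$ lies on every $\Pi_{i,j}$. The converse is the main obstacle, and I resolve it via the classical power-of-a-point lemma: if $p$ lies in the plane $\Pi$ of a circle $\mathcal{C}\subset\mathcal{S}$, then $p_{\mathcal{C}}(p)=p_{\mathcal{S}}(p)$, because $\mathcal{C}=\mathcal{S}\cap\Pi$ and both powers are computed by secant lines in $\Pi$. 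The concurrency assumption puts $p_j$ simultaneously in $\Pi_{i,j}$ and $\Pi_{i+1,j}$, while both $\mathcal{C}_{i,j}$ and $\mathcal{C}_{i+1,j}$ lie on the common sphere $\mathcal{S}_{i,j}$, so the lemma yields $p_{\mathcal{C}_{i,j}}(p_j)=p_{\mathcal{S}_{i,j}}(p_j)=p_{\mathcal{C}_{i+1,j}}(p_j)$, and the value $\rho_j:=p_{\mathcal{C}_{i,j}}(p_j)$ is independent of $i$. The (possibly imaginary) sphere centred at $p_j$ with $r^2=\rho_j$ is then orthogonal to every $\mathcal{C}_{i,j}$ by the classical criterion $r^2=p_{\mathcal{C}_{i,j}}(p_j)$, giving the second bullet. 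The only extra care concerns the degenerate case of parallel $\Pi_{i,j}$ (concurrency at infinity), which is handled by reading the power-of-a-point relation on $\mathcal{M}^3$ via the pairing $\langle\hat{p}_j,s_{i,j}\rangle$ rather than in affine $\R^3$.
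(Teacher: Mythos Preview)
Your proposal is correct and close in spirit to the paper's proof, but organized differently. The paper works mostly in $\R^3$: from the concurrency point $P_j=\cap_i\Pi_{i,j}$ it builds the sphere $\mathcal{S}_j$ centred at $P_j$ orthogonal to $\mathcal{S}_{1,j}$, then propagates orthogonality to all $\mathcal{S}_{i,j}$ via the observation that $\mathcal{S}_{i,j},\mathcal{S}_{i+1,j},\Pi_{i+1,j}$ lie in a pencil (they all contain $\mathcal{C}_{i+1,j}$), and hence to all $\mathcal{C}_{i,j}=\mathcal{S}_{i,j}\cap\Pi_{i,j}$. Your power-of-a-point chain $p_{\mathcal{C}_{i,j}}(p_j)=p_{\mathcal{S}_{i,j}}(p_j)=p_{\mathcal{C}_{i+1,j}}(p_j)$ is the same propagation in different language. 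For the equivalence with the third bullet, both you and the paper invoke Proposition~\ref{prop: Laplaceterminatemsteps}.

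Where your write-up adds value: you reduce everything uniformly to the single lift statement ``$\cap_i C_{i,j}$ is a point'', and your argument for the first bullet via the dimension count $S_{i-1,j}\cap S_{i,j}=C_{i,j}$, hence $\cap_i S_{i,j}=\cap_i C_{i,j}$, is cleaner than the paper's pencil step. One simplification you can make in the easy direction: once $t_j\in\cap_i C_{i,j}$, you already have $\sigma(t_j)\in\cap_i\sigma(C_{i,j})=\cap_i\Pi_{i,j}$ directly, so there is no need to pass through the second intersection of $N\vee t_j$ with $\mathcal{M}^3$; by Lemma~\ref{lem: stereoproj}, $\sigma(t_j)$ is simply the centre of the common orthogonal sphere.
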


\begin{proof}
Let $P_j=\cap_i \Pi_{i,j}$ be the intersection point of the planes $\{\Pi_{i,j}\}_{i\in\Z}$. Consider the (possibly imaginary) sphere $\mathcal{S}_j$ with the center $P_j$ and orthogonal to the sphere $\mathcal{S}_{1,j}$. Since $\mathcal{S}_j$ is orthogonal to all planes $\{ \Pi_{i,j}\}_{i\in\Z}$ and $\mathcal{S}_{i,j}, \mathcal{S}_{i+1,j}$ and $\Pi_{i+1,j}$ belong to a pencil, $\mathcal{S}_j$ is orthogonal to all the spheres $\{\mathcal{S}_{i,j}\}_{i\in\Z}$. Equivalently, $\mathcal{S}_j$ is orthogonal to all the circles $\{\mathcal{C}_{i,j}=\mathcal{S}_{i,j}\cap \Pi_{i,j}\}_{i \in \Z}$.

Lift $P: \Z^2 \to \R^3$ to the M\"obius quadric. Then, $\hat{P}: \Z^2 \to \mathcal{M}^3$ is a Q-net such that each $P_{i,j}$ is the stereographic projection of $\hat{P}_{i,j}$. By Proposition~\ref{prop: Laplaceterminatemsteps}, $\mathcal{L}_A^2 \hat{P}$ is Laplace degenerate if and only if the planes $\{\hat{P}_{i,j} \vee \hat{P}_{i,j+1} \vee \hat{P}_{i,j+2}\}_{i\in \Z}$ are concurrent. Equivalently, the circles $\{\mathcal{C}_{i,j}\}_{i \in \Z}$ have a common orthogonal sphere which corresponds to the intersection of $\mathcal{M}^3$ with the polar hyperplane of the foregoing concurrency point. 

\end{proof}

The next proposition reveals some of the geometric properties of principal contact element nets with one family of spherical parameter lines.

\begin{prop}
\label{cor: conicalandcircularLaplace2steps}
Let $l: \Z^2 \to \mathcal{L}_0^4$ be a principal contact element net with one family of spherical parameter lines $\{l_{i,j}\}_{i\in \Z}$. Let $P: \Z^2 \to \R^3$ and $E: \Z^2 \to \{\text{oriented planes in } \R^3\}$ be the corresponding circular and conical nets, and $n_{i,j}$ be the normal line of $E_{i,j}$ through $P_{i,j}$. Then:
\begin{itemize}
\item the circular  net $P$  has one family of spherical parameter lines in the sense of Definition~\ref{def:1parSphericalmoebius}.
\item for each $j$ the planes $\{n_{i,j} \vee n_{i,j+1}\}_{i\in \Z}$ are concurrent and the concurrency point is contained in the line joining the centres of the spheres through the points $\{P_{i,j}\}_{i \in \Z}$ and $\{P_{i,j+1}\}_{i \in \Z}$,
\item for each $j$ the intersection points $\{K_{i,j}:=E_{i,j} \cap E_{i,j+1} \cap E_{i,j+2}\}_{i \in \Z}$ are coplanar.
\end{itemize}
\end{prop}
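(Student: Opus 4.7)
The plan is to work in the Lie-geometric setup: $\hat{P} = l \cap [e_6]^\perp$ is the Q-net in the M\"obius quadric (projecting stereographically to the circular net $P$), and $\hat{E} = l \cap [e_\infty]^\perp$ is the Q-net in the plane complex (projecting to the conical net $E$). For each bullet, I intersect the Laplace information about $l$ supplied by Proposition~\ref{prop: 1parasphericalgoursatlaplace} with the appropriate hyperplane.

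\emph{First bullet.} The sphericity of each parameter line $\{P_{i,j}\}_{i\in \Z}$ follows directly from Proposition~\ref{prop: sphericalaparaLie}. To verify the concurrency of $\{\Pi_{i,j}\}_{i\in\Z}$ required by Definition~\ref{def:1parSphericalmoebius}, I will use that generically $\mathcal{L}_B^k \hat P(i,j) = \mathcal{L}_B^k l(i,j) \cap [e_6]^\perp$ (Laplace transforms commute with intersection by the hyperplane), so the Laplace degeneracy of $\mathcal{L}_B^2 l$ descends to $\mathcal{L}_B^2 \hat P$ Laplace degenerate. By Proposition~\ref{prop:circular_one_family_spherical}, this is exactly the concurrency condition, yielding Definition~\ref{def:1parSphericalmoebius} for $P$.

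\emph{Third bullet.} The same intersection argument with $[e_\infty]^\perp$ in place of $[e_6]^\perp$ gives $\mathcal{L}_B^2 \hat E$ Laplace degenerate. By the $\mathcal{L}_B$-analogue of Proposition~\ref{prop: Laplaceterminatemsteps}, for each $j$ the 2-planes $\hat E_{i,j}\vee \hat E_{i,j+1}\vee \hat E_{i,j+2}$ share a common point $Q_j$ lying in $[e_\infty]^\perp$. The incidences $K_{i,j}\in E_{i,j+r}$ ($r=0,1,2$) translate in Lie coordinates to $\hat K_{i,j}\perp \hat E_{i,j+r}$, where $\hat K_{i,j}\in\mathcal{M}^3$ is the M\"obius lift of $K_{i,j}$; hence $\hat K_{i,j}\perp Q_j$, i.e.\ $\hat K_{i,j}\in Q_j^\perp$. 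From $Q_j\in[e_\infty]^\perp$ one gets $[e_\infty]\in Q_j^\perp$, so the $3$-dimensional subspace $Q_j^\perp\cap[e_6]^\perp\subset\R\mathrm{P}^4$ contains $[e_\infty]$ together with every $\hat K_{i,j}$. This is precisely the M\"obius lift of a plane in $\overline{\R^3}$, so the points $K_{i,j}$ are coplanar.

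\emph{Second bullet.} Definition~\ref{defn: 1paraspherical} gives a common point $Q_j^l\in\bigcap_i(l_{i,j}\vee l_{i,j+1})\subset\R\mathrm{P}^5$. Let $\pi_0\colon\R\mathrm{P}^5\to\overline{\R^3}$ be the central projection from the line $[e_\infty]\vee[e_6]$; since each isotropic line $l_{i,j}$ joins $\hat P_{i,j}$ (projecting to $P_{i,j}$) and $\hat E_{i,j}$ (projecting to the direction $v_{i,j}$ at infinity), $\pi_0$ maps $l_{i,j}$ to $n_{i,j}$ and hence $l_{i,j}\vee l_{i,j+1}$ to $n_{i,j}\vee n_{i,j+1}$. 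The point $\pi_0(Q_j^l)$ is therefore common to every plane in $\{n_{i,j}\vee n_{i,j+1}\}_{i\in\Z}$, establishing the concurrency. To place $\pi_0(Q_j^l)$ on the line $C_jC_{j+1}$ joining the concentric centres furnished by Proposition~\ref{prop: sphericalaparaLie}, observe that $\pi_0^{-1}(C_jC_{j+1})$ is the $3$-dimensional subspace $\mathrm{span}\{\hat C_j,\hat C_{j+1},e_\infty,e_6\}$; the Lie lifts of the concentric spheres lie in this span and are conjugate to $\vee_i l_{i,j}$ and $\vee_i l_{i,j+1}$, and a polarity computation confines $Q_j^l$ to this subspace. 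The hard part will be this final polarity identification of $Q_j^l$ with the concentric-sphere data of rows $j$ and $j+1$.
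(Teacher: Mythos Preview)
Your arguments for the first and third bullets are correct and essentially match the paper's: both use that $\mathcal{L}_B^2 l$ is Laplace degenerate (Proposition~\ref{prop: 1parasphericalgoursatlaplace}) and slice by $[e_6]^\perp$ respectively $[e_\infty]^\perp$; your polar formulation $Q_j^\perp\cap[e_6]^\perp$ for the third bullet is equivalent to the paper's choice of a second intersection point of $\mathcal{L}_B^2\hat E(j)\vee[e_\infty]$ with $\mathcal{L}^4$. For the concurrency clause of the second bullet your projection $\pi_0$ from the line $[e_\infty]\vee[e_6]$ is exactly the paper's composite $\sigma\circ\pi$ (project from $[e_6]$, then stereographically), and your identification $\pi_0(l_{i,j})=n_{i,j}$ agrees with the paper's observation that $\sigma\circ\pi$ sends a sphere to its centre.

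The genuine gap is the last clause of the second bullet. Your sentence ``the Lie lifts of the concentric spheres lie in this span and are conjugate to $\vee_i l_{i,j}$ and $\vee_i l_{i,j+1}$, and a polarity computation confines $Q_j^l$ to this subspace'' does not actually pin $Q_j^l$ down: knowing that certain sphere-lifts lie in $\pi_0^{-1}(C_jC_{j+1})$ and are conjugate to $H_j$, $H_{j+1}$ gives you no constraint on $Q_j^l$ itself. What is missing is a conjugacy relation \emph{for} $Q_j^l$. The paper supplies exactly this: for each $i$ the intersection point $l_{i,j}\cap l_{i,j+1}$ is the singular point of the degenerate conic $l_{i,j}\cup l_{i,j+1}$ in the plane $l_{i,j}\vee l_{i,j+1}$, hence is conjugate (relative to $\mathcal{L}^4$) to every point of that plane, in particular to $Q_j^l$. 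Since these intersection points span $H_j\cap H_{j+1}$, one gets $Q_j^l\in(H_j\cap H_{j+1})^\perp=H_j^\perp\vee H_{j+1}^\perp$. Then $\sigma\circ\pi(H_j^\perp)$ is, via Lemma~\ref{lem: stereoproj}, the centre of the sphere $\mathcal{H}_j$ through $\{P_{i,j}\}_{i\in\Z}$, so $\pi_0(Q_j^l)$ lands on the line joining the two centres. Once you insert this step your proof is complete and coincides with the paper's.
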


\begin{proof}


By Proposition~\ref{prop: 1parasphericalgoursatlaplace}, $\mathcal{L}_A^2 l$ is Laplace degenerate. So, $\mathcal{L}_A^2\hat{P}$ is Laplace degenerate. Then, the first claim follows from Proposition~\ref{prop:circular_one_family_spherical}.

Let $\pi$ be the central projection with centre $[e_6]$ onto the hyperplane $[e_6]^\perp$. Recall, each point $\hat{S} \in \mathcal{L}^4$ corresponds to an oriented sphere in $\R^3$.  The projection $\pi(\hat{S})$ is a point outside $\mathcal{M}^3 = \mathcal{L}^4 \cap [e_6]^{\perp}$ that corresponds in M\"obius geometry to the same sphere in $\R^3$, without orientation. By Lemma~\ref{lem: stereoproj}, $\sigma \circ \pi(\hat{S})$ is the centre of the oriented sphere in $\R^3$ that corresponds to the point $\hat{S}$. Then, $n_{i,j} = \sigma \circ \pi(l_{i,j})$ because $n_{i,j}$ is the locus of the centres of oriented spheres of the contact element corresponding to $l_{i,j}$. Let $O_j$ be the concurrency point $\cap_{i \in \Z} l_{i,j}\vee l_{i,j+j}$. Then, $\sigma \circ \pi (O_j)$ is the concurrency point $\cap_{i \in \Z} n_{i,j} \vee n_{i,j+1}$.

Each space $H_j := \mathrm{join}\{l_{i,j}\}_{i \in \Z}$ is $4$-dimensional. The point $\pi(H_j^\perp)$ has a polar hyperplane relative to $\mathcal{L}^4$ that intersects $[e_6]^{\perp}$ in a $3$-dimensional space that equals $H_j \cap [e_6]^\perp$. So, $\pi(H_j^{\perp})$ is a point outside $\mathcal{M}^3 = \mathcal{L}^4 \cap [e_6]^{\perp}$ that corresponds in M\"obius geometry to the sphere $\mathcal{H}_j$ in $\R^3$ through the points $\{P_{i,j}\}_{i \in \Z}$. By Lemma~\ref{lem: stereoproj}, $\sigma \circ \pi (H_j^{\perp})$ is the centre of the sphere $\mathcal{H}_j$. The point $O_j$ is conjugate to the points $\{l_{i,j}\cap l_{i,j+1}\}_{i \in \Z}$ which span the $3$-dimensional space $H_j \cap H_{j+1}$. So, the point $O_j$ is contained in the line $H_j^{\perp} \vee H_{j+1}^{\perp}$. The concurrency point $\sigma \circ \pi (O_j)$ is contained in the line $\sigma \circ\pi (H_j^\perp \vee  H_{j+1}^\perp)$ which is the line joining the centres of the spheres  $\mathcal{H}_j$  and $\mathcal{H}_{j+1}$. 

Let $\hat{E}_{i,j}\in \mathcal{L}^4\cap [e_\infty]^\perp$ and $\hat{K}_{i,j}\in \mathcal{L}^4\cap [e_6]^\perp$ be the lifts of the planes $E_{i,j}$ and of their 
intersection points $K_{i,j}$ to the Lie quadric. Since $\hat{E}_{i,j}, \hat{E}_{i,j+1}, \hat{E}_{i,j+2} \in \hat{K}_{i,j}^\perp$ and the Laplace transform
 $\mathcal{L}_B^2 \hat{E}$ is independent of $i$, we obtain the conjugacies
$\hat{K}_{i,j} \perp \mathcal{L}_B^2 \hat{E}(j)$ and $\mathcal{L}_B^2 \hat{E}(j) \perp [e_\infty]$ relative to $\mathcal{L}^4$. Thus, the line $\mathcal{L}_B^2\hat{E}(j) \vee [e_\infty]$ is contained in the plane complex $[e_\infty]^\perp$ and intersects $\mathcal{L}^4$ in the point $[e_\infty]$ and a second real point which is conjugate to the points $\{\hat{K}_{i,j}\}_{i \in \Z}$. The second intersection point corresponds to a plane in $\R^3$ that contains the points $\{ K_{i,j} \}_{i\in \Z}$.

\end{proof}

\subsection{Both Families Spherical}\label{Lie: twofamilyspherical}

\begin{defn}\label{defn: Lie2familiesspherical}
Let $l: \Z^2 \to \mathcal{L}_0^4$ be a principal contact element net. It has \emph{two families of spherical parameter lines} if the following conditions are satisfied:
\begin{itemize}
\item for any $j$ the parameter line $\{l_{i,j}\}_{i\in \Z}$ is spherical and $\cap_i l_{i,j} \vee l_{i,j+1}$ is a point.
\item for any $i$ the parameter line $\{l_{i,j}\}_{j\in \Z}$ is spherical and $\cap_j l_{i,j} \vee l_{i+1,j}$ is a point.
\end{itemize}
\end{defn}

We start with Proposition~\ref{prop: isotropiclineslaplacetransform}, which is an analogue of Theorem~\ref{thm: mxmiteratedlaplaceconjugate} for line congruences. It is used later in the proof of Theorem~\ref{thm: twofamiliessphericalconjugateplanesLie}.

\begin{prop}
\label{prop: isotropiclineslaplacetransform}
Let $l: [m]\times [m] \to \{ \text{lines in }\mathbb{R}\mathrm{P}^n\}$ be a discrete line congruence, where $m \geq 2$ and $n\geq 3$. Suppose that $\mathcal{L}_A^d l : [m-d]\times [m-d] \to \mathbb{R}\mathrm{P}^n$ and $\mathcal{L}_B^d l: [m-d]\times [m-d] \to \mathbb{R}\mathrm{P}^n$ are well defined discrete line congruences for all $d \le m-1$. Then, $\mathcal{L}_A^{m-1} l$ and $\mathcal{L}_B^{m-1} l$ are two lines. Suppose that the lines $\{l_{i,j} \mid (i,j) \in [m]\times [m] \text{ s.t.\ } (i,j)\neq(m,m)\}$ are contained in a quadric  $\mathcal{Q} \subset \mathbb{R}\mathrm{P}^n$. Then, the line $l_{m,m}$ is contained in $\mathcal{Q}$ if and only if $\mathcal{L}_A^{m-1} l$ and $\mathcal{L}_B^{m-1} l$ are conjugate with respect to $\mathcal{Q}$.
\end{prop}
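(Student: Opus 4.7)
The plan is to reduce Proposition~\ref{prop: isotropiclineslaplacetransform} to Theorem~\ref{thm: mxmiteratedlaplaceconjugate} by introducing Q-net sections of the line congruence. A \emph{Q-net section} of $l$ is a Q-net $P:[m]\times [m]\to \R\mathrm{P}^n$ with $P_{i,j}\in l_{i,j}$ for every $(i,j)$. First I would show that such sections exist and form a $(2m-1)$-parameter family: freely choose $P_{i,1}\in l_{i,1}$ and $P_{1,j}\in l_{1,j}$ for $i,j\in [m]$, and then inductively set $P_{i,j}:=l_{i,j}\cap (P_{i-1,j-1}\vee P_{i-1,j}\vee P_{i,j-1})$ for $i,j\ge 2$, which under the genericity assumption is a single point. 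By induction on $d$, using $P_{i,j}\vee P_{i+1,j}\subset l_{i,j}\vee l_{i+1,j}$ and analogous containments, $\mathcal{L}_A^d P(i,j)\in \mathcal{L}_A^d l(i,j)$ and $\mathcal{L}_B^d P(i,j)\in \mathcal{L}_B^d l(i,j)$ hold for all $d\le m-1$.

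Next I would apply Theorem~\ref{thm: mxmiteratedlaplaceconjugate} to such a Q-net section $P$. Under the hypothesis $l_{i,j}\subset \mathcal{Q}$ for $(i,j)\ne (m,m)$, automatically $P_{i,j}\in \mathcal{Q}$ for $(i,j)\ne (m,m)$; the theorem then gives $P_{m,m}\in \mathcal{Q}$ if and only if $\mathcal{L}_A^{m-1}P$ and $\mathcal{L}_B^{m-1}P$ are conjugate with respect to $\mathcal{Q}$. These two points lie on the lines $\mathcal{L}_A^{m-1}l$ and $\mathcal{L}_B^{m-1}l$ respectively, by the inductive containment above.

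Assume first $\mathcal{L}_A^{m-1}l\perp \mathcal{L}_B^{m-1}l$. For every Q-net section $P$, the points $\mathcal{L}_A^{m-1}P\in \mathcal{L}_A^{m-1}l$ and $\mathcal{L}_B^{m-1}P\in \mathcal{L}_B^{m-1}l$ are automatically conjugate, forcing $P_{m,m}\in \mathcal{Q}$. As the $2m-1$ free parameters of the section vary, $P_{m,m}$ attains infinitely many values on $l_{m,m}$; since a line that meets a quadric in three or more points must lie in the quadric, $l_{m,m}\subset \mathcal{Q}$. Conversely, assume $l_{m,m}\subset \mathcal{Q}$. Then $P_{m,m}\in \mathcal{Q}$ for every section, so $\mathcal{L}_A^{m-1}P\perp \mathcal{L}_B^{m-1}P$ holds for every section. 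The map from the $(2m-1)$-dimensional family of Q-net sections to the two-dimensional variety $\mathcal{L}_A^{m-1}l\times \mathcal{L}_B^{m-1}l$ defined by $P\mapsto (\mathcal{L}_A^{m-1}P,\mathcal{L}_B^{m-1}P)$ is generically dominant; since conjugacy is a closed algebraic condition that holds on a dense subset, it holds on the entire product, giving $\mathcal{L}_A^{m-1}l\perp \mathcal{L}_B^{m-1}l$.

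The main technical obstacle will be justifying the two dominance claims: that $P_{m,m}$ sweeps out a dense subset of $l_{m,m}$, and that $(\mathcal{L}_A^{m-1}P,\mathcal{L}_B^{m-1}P)$ sweeps out a dense subset of $\mathcal{L}_A^{m-1}l\times \mathcal{L}_B^{m-1}l$ as the section varies. Both should follow from parameter counting together with the blanket genericity assumption of the paper: varying, for example, the single parameter $P_{2,1}$ along $l_{2,1}$ already produces a generically non-constant motion of $P_{m,m}$ along $l_{m,m}$, and varying a second parameter independently disentangles the motions of $\mathcal{L}_A^{m-1}P$ and $\mathcal{L}_B^{m-1}P$ on their respective lines.
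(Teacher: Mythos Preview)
Your approach is correct and reduces the proposition to Theorem~\ref{thm: mxmiteratedlaplaceconjugate} in essentially the same spirit as the paper, but the paper's execution is considerably simpler. Rather than building general Q-net sections by propagation from boundary data, the paper takes an arbitrary hyperplane $\Pi\subset\R\mathrm{P}^n$ not containing any line $l_{i,j}$ and sets $P_{i,j}:=l_{i,j}\cap\Pi$. Because the four lines of an elementary cell of a line congruence span a $3$-space, the resulting $P$ is automatically a Q-net, and moreover one has the exact identity $\mathcal{L}_A^d P(i,j)=\mathcal{L}_A^d l(i,j)\cap\Pi$ (and similarly for $\mathcal{L}_B$) at every step, with no induction needed beyond this observation. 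Theorem~\ref{thm: mxmiteratedlaplaceconjugate} then gives the equivalence for each fixed $\Pi$, and varying $\Pi$ over all hyperplanes through a prescribed pair of points on $\mathcal{L}_A^{m-1}l$ and $\mathcal{L}_B^{m-1}l$ (which is always possible since $n\ge 3$) immediately yields both directions.

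The advantage of the paper's hyperplane sections over your general Q-net sections is that they eliminate the two dominance arguments entirely: the map $\Pi\mapsto(\mathcal{L}_A^{m-1}l\cap\Pi,\,\mathcal{L}_B^{m-1}l\cap\Pi,\,l_{m,m}\cap\Pi)$ is manifestly surjective onto the relevant targets. Your argument works too, and in fact your family of sections strictly contains the hyperplane sections (a hyperplane section corresponds to choosing all the boundary data on a common $\Pi$), so the dominance you need could be deduced from this inclusion rather than from the parameter-counting heuristic you invoke. If you want to keep your version, replacing the heuristic by this observation would make the argument fully rigorous with no extra work.
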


\begin{proof}
Let $\Pi$ be a hyperplane of $\R \mathrm{P}^n$ that does not contain any of the lines of $l$. The points $P_{i,j} := l_{i,j} \cap \Pi$ build a Q-net $P$ in the quadric $\mathcal{Q} \cap \Pi$. By Theorem~\ref{thm: mxmiteratedlaplaceconjugate}, the point $P_{m,m}$ is contained in $\mathcal{Q} \cap \Pi$ if and only if the points $\mathcal{L}^{m-1}_A P$ and $\mathcal{L}_B^{m-1} P$ are conjugate relative to $\mathcal{Q}$. As $\Pi$ is arbitrary, the line $l_{m,m}$ is contained in $\mathcal{Q}$ if and only if $\mathcal{L}^{m-1}_A l$ and $\mathcal{L}_B^{m-1} l$ are conjugate relative to $\mathcal{Q}$.
\end{proof}

\begin{thm}
\label{thm: twofamiliessphericalconjugateplanesLie}
Let $l: \Z^2 \to \mathcal{L}_0^4$ be a principal contact element net with two families of spherical parameter lines.  Then, $\mathcal{L}^2_Al$ and $\mathcal{L}^2_Bl$ are Laplace degenerate. Each set of points $(\vee_i l_{i,j})^\perp$ and $(\vee_j l_{i,j})^\perp$ is contained in a $2$-plane. The two $2$-planes are conjugate relative to $\mathcal{L}^4$.
Let $P: \Z^2 \to \R^3$ be the corresponding circular net. Let $\mathcal{H}_j$ and $\mathcal{V}_i$ be the spheres through the points $\{P_{i,j} \mid i \in \Z\}$ and $\{P_{i,j} \mid j \in \Z\}$, respectively. Then, 
\begin{itemize}
\item Each family of spheres has two common (possibly imaginary) points.
\item The centres of the spheres $\mathcal{H}_j$ are contained in a plane that is orthogonal to the plane containing the centres of the spheres $\mathcal{V}_i$.
\end{itemize}
\end{thm}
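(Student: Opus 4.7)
The Laplace degeneracy of $\mathcal{L}_A^2 l$ and $\mathcal{L}_B^2 l$ is immediate from Proposition~\ref{prop: 1parasphericalgoursatlaplace} applied in each of the two spherical directions in turn. For the remaining claims, set $H_j := \vee_i l_{i,j}$ and $V_i := \vee_j l_{i,j}$ (both $4$-dimensional subspaces of $\R\mathrm{P}^5$), and let $O_j := \cap_i (l_{i,j}\vee l_{i,j+1})$ and $O'_i := \cap_j (l_{i,j}\vee l_{i+1,j})$ be the concurrency points supplied by Definition~\ref{defn: Lie2familiesspherical}. Since $O'_i \in l_{i,j}\vee l_{i+1,j}\subset H_j$ for every $j$, one has $O'_i \in \cap_j H_j$, and symmetrically $O_j \in \cap_i V_i$. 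The proof of Proposition~\ref{prop: 1parasphericalgoursatlaplace}, read in both directions, identifies $\mathcal{L}_B^2 l(i,j) = O_j\vee O_{j+1}$ and $\mathcal{L}_A^2 l(i,j) = O'_i\vee O'_{i+1}$; Proposition~\ref{prop: isotropiclineslaplacetransform} applied to any $3\times 3$ sub-net contained in $\mathcal{L}^4$ then shows these two lines are conjugate relative to $\mathcal{L}^4$, so letting $(i,j)$ vary, the spans $\vee_i O'_i$ and $\vee_j O_j$ are mutually conjugate.

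Combining the above inclusions with the mutual conjugacy and a genericity-based dimension count, I would deduce the equalities $\cap_j H_j = \vee_i O'_i$ and $\cap_i V_i = \vee_j O_j$, both $2$-dimensional. Taking polars relative to $\mathcal{L}^4$ then realises $\Pi_H := \vee_j H_j^\perp = (\cap_j H_j)^\perp$ and $\Pi_V := \vee_i V_i^\perp = (\cap_i V_i)^\perp$ as $2$-planes; the mutual conjugacy translates through these equalities to $\Pi_H = \vee_j O_j$ and $\Pi_V = \vee_i O'_i$. Since $\vee_j O_j \subset \cap_i V_i = \Pi_V^\perp$, this gives $\Pi_H \subset \Pi_V^\perp$, proving the conjugacy of the two $2$-planes.

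For the $\R^3$ statements: because $\cap_j H_j$ is $2$-dimensional in $\R\mathrm{P}^5$, its intersection with the point complex $[e_6]^\perp$ is a projective line meeting $\mathcal{M}^3$ in two (possibly imaginary) points $A, B$, which are the common points of all spheres $\{\mathcal{H}_j\}_j$; symmetrically $\{\mathcal{V}_i\}_i$ share two common points $C, D$. The centres of the $\mathcal{H}_j$ then lie in the perpendicular bisector plane of $AB$ (and symmetrically for $\mathcal{V}_i$), so the claimed orthogonality of the two planes of centres reduces to $\overrightarrow{AB}\perp\overrightarrow{CD}$ in $\R^3$, which I would derive from $\Pi_H\perp\Pi_V$ via Lemma~\ref{lem: stereoproj} and a stereographic-projection computation. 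The main obstacle is the dimension count in the second paragraph --- proving rigorously that $\vee_i O'_i$ is exactly $2$-dimensional and coincides with $\cap_j H_j$ --- together with matching Lie-quadric polarity to Euclidean orthogonality of the planes of centres when translating the final statement back to $\R^3$.
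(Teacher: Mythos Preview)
Your approach is essentially the same as the paper's. Both invoke Proposition~\ref{prop: 1parasphericalgoursatlaplace} for the Laplace degeneracy, identify the $2$-planes via the concurrency points $O'_i\in\cap_j H_j$ and $O_j\in\cap_i V_i$, and obtain their conjugacy from Proposition~\ref{prop: isotropiclineslaplacetransform}; the two common points of each family of spheres likewise arise from intersecting $\cap_j H_j$ with $[e_6]^\perp$. The dimension count you flag as the ``main obstacle'' is handled in the paper no more rigorously than you handle it: the paper simply asserts that $X:=\cap_j H_j$ is $2$-dimensional because it is the join of three consecutive concurrency points, relying on the blanket genericity assumption. The only substantive difference is the final orthogonality claim: rather than computing $\overrightarrow{AB}\perp\overrightarrow{CD}$ directly (which, incidentally, needs care when $A,B$ or $C,D$ are a complex-conjugate pair), the paper disposes of this by noting it is a limiting case of the confocal-quadric statement in Corollary~\ref{cor: sphericalsphericalR3confocaldeferents}.
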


\begin{proof}
Proposition~\ref{prop: 1parasphericalgoursatlaplace} ensures that $\mathcal{L}^2_Al$ and $\mathcal{L}^2_Bl$ are Laplace degenerate. The lines $\{\mathcal{L}_A^2 l(i)\}_{i \in \Z}$ span the space $X:=\cap_{j \in \Z}H_j$, where $H_j := \mathrm{join}\{l_{i,j} \mid i \in \Z\}$. The space $X$ is $2$-dimensional. Indeed, it is the join of the concurrency points  $\cap_j l_{i,j} \vee l_{i+1,j}$, $\cap_j l_{i+1,j} \vee l_{i+2,j}$ and $\cap_j l_{i+2,j} \vee l_{i+3,j}$. The lines $\{\mathcal{L}_B^2 l(j)\}_{j \in \Z}$ span the $2$-dimensional space $Y:=\cap_{i \in \Z}V_i$ where $V_i := \mathrm{join}\{l_{i,j} \mid j \in \Z\}$.  By Proposition~\ref{prop: isotropiclineslaplacetransform}, $\mathcal{L}_A^2 l(i) \perp \mathcal{L}_B^2l(j)$ for any $i, j$. So, $X$ and $Y$ are conjugate planes. The points $\{H_j^\perp\}_{j \in \Z}$ are contained in $Y$. The points $\{V_i^\perp\}_{i \in \Z}$ are contained in $X$.

For each $j$, $H_j \cap \mathcal{L}^4 \cap [e_6]^\perp$ is the sphere in $\mathcal{M}^3= \mathcal{L}^4 \cap [e_6]^\perp$ that contains the points $\{\hat{P}_{i,j} = l_{i,j} \cap [e_6]^\perp \}_{i \in \Z}$. The $2$-dimensional space $X$ intersects $[e_6]^\perp$ in a line that intersects $\mathcal{M}^3$ in two (possibly imaginary) points that are common to all the spheres of one family. Similarly, the second family of spheres also has two (possibly imaginary) common points. Thus, the centres of  the spheres $\{\mathcal{H}_j\}_{j \in\Z}$ are contained in a plane and the centres of  the spheres $\{\mathcal{V}_i\}_{i \in\Z}$ are contained in a plane. The two planes are orthogonal. Indeed, this is a limiting case of the two confocal quadrics in 
Corollary~\ref{cor: sphericalsphericalR3confocaldeferents}.
\end{proof}

Theorem~\ref{thm: twofamiliessphericalconjugateplanesLie} has a direct analogue in the classical theory of smooth surfaces with two families of spherical curvature liness. If a smooth surface in $\R^3$ has two families of spherical curvature lines, then the spheres correspond to points in the exterior of $\mathcal{L}^4$ that are contained in two planes that are conjugate relative to $\mathcal{L}^4$ \cite{blaschke1929, cho2021constrained}.

\section*{Acknowledgments}
This research was supported by the DFG Collaborative Research Center TRR 109 ``Discretization in Geometry and Dynamics" and the Berlin Mathematical School. Thanks to the Leverhulme Trust for the grant SAS-2018-040 that enabled A.~Y.~F. to initiate the work in this article. We thank Niklas Affolter and Yuri Suris for stimulating discussions on iterated Laplace transformations. Thanks to Th\'eo Tyburn and Oliver Gross for making Figure~\ref{figure: 4x4circulargridincidence}.

\appendix

\section{M\"obius Geometry}\label{appendix: mobiusgeo}


Here we give a brief overview of M\"obius geometry and Darboux cyclides. For instance, see \cite{blaschke1929, hertrich2003introduction, DDG} for more information on M\"obius geometry and see \cite{coolidge1916} for more information on Darboux cyclides. 

M\"obius geometry is a subgeometry of Lie geometry, with points distinguishable among all hyperspheres as those of radius zero. As it was explained in Section~\ref{ss: Lie}, it is a restriction of Lie geometry to the polar hyperplane $[e_{n+3}]^\perp$ of a time-like vector $e_{n+3}$. Thus one can use the Lie-geometric description and just omit the $e_{n+3}$-component, passing to the Lorentz space $\R^{n+1,1}\subset \R^{n+1,2} $. The resulting objects are points of the $(n+1)$-dimensional projective space $\R \mathrm{P}^{n+1}:= \mathrm{P}(\R^{n+1,1})$. We continue to use notations of Section~\ref{ss: Lie} in the context of M\"obius geometry. 

The $n$-dimensional \emph{M\"obius quadric} is given by
$$\mathcal{M}^{n}:=\{[x]= [x_1 \ldots, x_{n+2}]\in \R \mathrm{P}^{n+1}\mid \langle x , x\rangle =0 \},$$
where 
$$
\langle x,y\rangle=\sum_{i=1}^{n+1} x_i y_i-x_{n+2} y_{n+2}.
$$
is the Lorentz product of two vectors $x,y\in\R^{n+1,1}$.
We will use the basis $e_1,\ldots, e_n, e_o, e_\infty$ (see (\ref{eq:e_0-infty})), and denote the corresponding coordinates of the vectors $x\in\R^{n+1,1}$ as follows:
$$
x=\sum_{i=1}^n x_i e_i + x_0 e_0 + x_\infty e_\infty, \quad x_0=x_{n+2}-x_{n+1}, \ x_\infty=x_{n+2}+x_{n+1}.
$$

Without loss of generality two arbitrary points on the M\"obius quadric can be normalized as $[e_\infty]$ and $[e_0]$.  Let us consider the tangent hyperplanes to $\mathcal{M}^{n}$ at these points as well as their intersection plane:
\begin{eqnarray*}
\mathcal{H}^\infty & := &\{ [x]\in \R \mathrm{P}^{n+1}\mid x_0=0 \}, \\
\mathcal{H}^0 & := &\{ [x] \in \R \mathrm{P}^{n+1}\mid x_\infty=0 \}, \\
\mathcal{H} & := &\mathcal{H}^\infty\cap \mathcal{H}^0=\{ [x] \in \R \mathrm{P}^{n+1}\mid x_0=x_\infty=0 \}.
\end{eqnarray*}

Elements of M\"obius geometry of $\R^n\cup \infty$ are modelled in the space $\R^{n+1,1}$ of homogeneous coordinates as follows:
\begin{itemize}
\item {\em Point $P\in\R^n$:}
\begin{equation}\label{eq: Moeb point}
P_{\rm Euc}=P+e_0+|P|^2 e_\infty.
\end{equation}
\item {\em Infinity $\infty$:}
\begin{equation}\label{eq: Moeb infty}
e_\infty.
\end{equation}
\item {\em Hypersphere $\mathcal{S}$ with center $c\in\R^n$ and radius $r>0$:}
\begin{equation}\label{eq: Moeb sphere}
S_{\rm Euc}=c+e_0+(|c|^2-r^2) e_\infty.
\end{equation}
\item {\em Hyperplane $\langle v,P \rangle_{\R^n}=d$ with
$|v|_{\R^n} =1$ and $d\in\R$:}
\begin{equation}\label{eq: Moeb plane}
E_{\rm Euc}=v+0\cdot e_0+2d e_\infty.
\end{equation}
\end{itemize}

In the projective space $ \R \mathrm{P}^{n+1}$ points of $\R^n \cup\infty$ are represented by points of the M\"obius quadric. Similarly to Section~\ref{ss: Lie} we denote the elements of the projective space corresponding to the representatives (\ref{eq: Moeb point}, \ref{eq: Moeb sphere}, \ref{eq: Moeb plane}) by:
$$
\hat{P}=[P_{\rm Euc}], \hat{S}=[S_{\rm Euc}], \hat{E}=[E_{\rm Euc}]\in  \R \mathrm{P}^{n+1}. 
$$

Hyperspheres and hyperplanes are intersections of $\mathcal{M}^n$ with hyperplanes $H$. These are represented by their space-like polar points $\hat{S}, \hat{E} \in \R \mathrm{P}^{n+1}$. Note, $\langle S_{\rm Euc} , S_{\rm Euc} \rangle=r^2 $. 
If the hyperplane $H$ does not intersect $\mathcal{M}^{n}$ then the corresponding hypersphere is called \emph{imaginary}, and is represented by its polar time-like vector, i.e. purely imaginary $r$. The incidence condition $P\in S$ reads as the polarity condition $ \langle S_{\rm Euc} , P_{\rm Euc}\rangle=0$. The hyperplanes are treated as spheres passing through $\infty$, i.e. satisfying the relation $ \langle E_{\rm Euc} , e_\infty \rangle=0$.

To sum up:  Points in the exterior of $\mathcal{M}^n$ (where $\langle x , x\rangle>0$) correspond either to Euclidean hyperspheres with real centres and real radii or else to hyperplanes in $\R^n$. Hyperplanes are interpreted as Euclidean hyperspheres passing through the infinity point. Points in $\mathcal{M}^n$ (where $\langle x , x\rangle=0$) correspond to Euclidean hyperspheres with real centres and null radii, i.e. to points of $\R\cup\infty$. Points in the interior of $\mathcal{M}^n$ (where $\langle x , x\rangle<0$) correspond to imaginary hyperspheres with real centres and purely imaginary radii. 

Two hyperspheres $\mathcal{S}'$ and $\mathcal{S}''$ are orthogonal in $\R^n$ if and only if their M\"obius representatives are orthogonal $\langle S'_{\rm Euc} , S''_{\rm Euc}\rangle=0$. Two (possibly imaginary) hyperspheres $\mathcal{M}^n \cap G_1$ and $\mathcal{M}^n \cap G_2$ are orthogonal if their poles $G_1^\perp$ and $G_2^\perp$ are conjugate relative to $\mathcal{M}^n$.
Similarly, the intersection of $\mathcal{M}^n$ with an $m$-dimensional projective subspace $H\subset\R \mathrm{P}^{n+1}$ is an $(m-1)$-dimensional sphere, which is imaginary if $H \cap \mathcal{M}^n = \emptyset$. 

Let $\sigma: \R \mathrm{P}^{n+1}\setminus N \to  \mathcal{H}^0$ be the central projection 
$$
[x=\sum_i^n x_ie_i +x_0 e_0+x_\infty e_\infty] \xrightarrow{\sigma}
 [\sum_i^n x_ie_i +x_0 e_0]
$$
with center $N:=[e_\infty]$. 
The hyperplane $\mathcal{H}^0$ is the projective closure of $\mathcal{H}^0\setminus \mathcal{H}$. The later can be identified with $\R^n := \{[x] \in \R \mathrm{P}^{n+1} \mid x_0=1, x_\infty =0 \}$, where each point $[P+e_0]$ is identified with the point $P=(x_1,\ldots, x_n) \in \R^n$.

So we also will use short notations
$
\overline{\R^{n}}:= \mathcal{H}^0, \ \R^{n}:= \mathcal{H}^0\setminus \mathcal{H},
$
and interpret $\mathcal{H}^0\setminus \mathcal{H}$ as the set of points in $\R^n$, and $\mathcal{H}^0$ as its projective closure. Note that we have already identified $\mathcal{H}^\infty$ with the space of hyperplanes in $\R^n$.
 
By comparing with (\ref{eq: Moeb point}), we see that the restriction to the M{\"o}bius quadric $\sigma:\mathcal{M}^n\setminus N \to \mathcal{H}^0\setminus \mathcal{H}\cong\R^n$ is given by
$$
\sigma(\hat{P})= [P+e_0] \cong P \in \R^n.
$$
 


The quadric $\mathcal{M}_\C^n :=\{[x] \in \C \mathrm{P}^{n+1}\mid \langle x,x\rangle =0\}$ is the \emph{complexification} of $\mathcal{M}^n$. The intersection of $\mathcal{M}_\C^n$ with an $m$-dimensional projective subspace in $\C \mathrm{P}^{n+1}$ is an $(m-1)$-dimensional \emph{complex sphere} in $\mathcal{M}_\C^n$. Similarly to the real case we denote by $\sigma_\C$  the central projection with center $N_\C := [e_\infty] \in \C \mathrm{P}^{n+1}$:
$$
\sigma_\C: \C\mathrm{P}^{n+1}\setminus N_\C \to \mathcal{H}^0_\C,
$$
and identify the points $[P+e_0]\cong P\in \C^n$ and the spaces  $\mathcal{H}^0_\C\cong \overline{\C^{n}},  \mathcal{H}^0_{\C}\setminus\mathcal{H}_{\C} \cong \C^{n}$.

From representations (\ref{eq: Moeb point}, \ref{eq: Moeb sphere}, \ref{eq: Moeb plane}) we easily obtain the following result.

\begin{lem}
\label{lem: stereoproj}
Let $\hat{S} \in \C \mathrm{P}^{n+1}$ be a point not contained in $\mathcal{H}^\infty_{\C}$, i.e. with non-vanishing $e_0$-component, and $\hat{S}^\perp$ its polar hyperplane. Then the image  $\sigma_\C(\hat{S}^\perp \cap \mathcal{M}^n_\C)$ is the round sphere $\sum_i^n (P_i -c_i)^2=r^2 $, with center $c$ and radius $r$ given by (\ref{eq: Moeb sphere}). In particular $\sigma(\hat{S})$ is the center of the sphere. Let $\hat{E} \in \C \mathrm{P}^{n+1}$ be a point in $\mathcal{H}^\infty_{\C}$, i.e. with vanishing $e_0$-component, and $\hat{E}^\perp$ its polar hyperplane. Then the image  $\sigma_\C(\hat{E}^\perp \cap \mathcal{M}^n_\C)$ is the hyperplane $\langle v,P \rangle=d$, with parameters given by (\ref{eq: Moeb plane}). In particular $\sigma(\hat{E})$ is the point at infinity of $\overline{\C^n}$ in the direction of the normals of the hyperplane.
\end{lem}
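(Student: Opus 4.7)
The lemma is a direct calculation with the Lorentz inner product on $\R^{n+1,1}$, relying only on the normalisations $\langle e_i, e_j\rangle = \delta_{ij}$ for $1\leq i,j\leq n$, $\langle e_0, e_0\rangle = \langle e_\infty, e_\infty\rangle = 0$, and $\langle e_0, e_\infty\rangle = -\tfrac{1}{2}$ (the last obtained directly from (\ref{eq:e_0-infty}) and the signature conventions). The plan is to parametrise a generic point of $\mathcal{M}^n_\C \setminus \{N_\C\}$ by the representative $P_{\rm Euc} = P + e_0 + |P|^2 e_\infty$ from (\ref{eq: Moeb point}), and then to read off the polarity condition $\hat{P} \in \hat{S}^\perp$ (respectively $\hat{P} \in \hat{E}^\perp$) as an explicit equation in the coordinates of $P$. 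Identifying the resulting equation with that of a Euclidean hypersphere (respectively hyperplane) is immediate.

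For the first half, I would expand
\[
\langle S_{\rm Euc}, P_{\rm Euc}\rangle = \langle c, P\rangle + |P|^2 \langle e_0, e_\infty\rangle + (|c|^2-r^2)\langle e_\infty, e_0\rangle = -\tfrac{1}{2}\bigl(|P-c|^2 - r^2\bigr),
\]
so $\hat{P}\in\hat{S}^\perp$ is exactly the equation of the hypersphere with centre $c$ and radius $r$. The analogous expansion for the hyperplane gives $\langle E_{\rm Euc}, P_{\rm Euc}\rangle = \langle v, P\rangle - d$, yielding the equation $\langle v, P\rangle = d$. The projection statements then follow by inspection: discarding the $e_\infty$-component in (\ref{eq: Moeb sphere}) leaves $[c + e_0]$, which corresponds under the identification $\mathcal{H}^0 \setminus \mathcal{H} \cong \C^n$ to the point $c$; and discarding the $e_\infty$-component in (\ref{eq: Moeb plane}) leaves $[v]$, a point in $\mathcal{H}$, which is the point at infinity of $\overline{\C^n}$ in the direction of the unit normal $v$.

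There is almost nothing to obstruct; the only point needing a brief remark is that $\sigma_\C$ is undefined at the centre of projection $N_\C = [e_\infty]$, and one must check whether this point lies in the intersections being projected. In the hypersphere case, the hypothesis that the $e_0$-component of $\hat{S}$ is non-zero forces $\langle S_{\rm Euc}, e_\infty\rangle \neq 0$, so $[e_\infty] \notin \hat{S}^\perp$ and $\sigma_\C$ is defined on the entire intersection. In the hyperplane case, $[e_\infty] \in \hat{E}^\perp \cap \mathcal{M}^n_\C$, and its image under $\sigma_\C$ is exactly the point at infinity of the projective closure $\overline{\C^n}$ corresponding to the direction $v$, which is consistent with the classical picture of a hyperplane as a sphere through infinity.
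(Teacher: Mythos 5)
Your proof is correct and is exactly the calculation the paper has in mind: the lemma is stated there as following "easily" from the representations (\ref{eq: Moeb point}), (\ref{eq: Moeb sphere}), (\ref{eq: Moeb plane}), and your expansion of $\langle S_{\rm Euc}, P_{\rm Euc}\rangle = -\tfrac{1}{2}\bigl(|P-c|^2 - r^2\bigr)$ and $\langle E_{\rm Euc}, P_{\rm Euc}\rangle = \langle v,P\rangle - d$ is precisely that omitted computation, with the correct normalisation $\langle e_0, e_\infty\rangle = -\tfrac{1}{2}$. Your closing remark about where $\sigma_\C$ is undefined is a welcome extra care that the paper glosses over.
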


\begin{defn}\label{defn: cyclide}
The intersection of $\mathcal{M}^n \subset \mathbb{R}\mathrm{P}^{n+1}$, with a real quadric hypersurface $\mathcal{Q} \neq \mathcal{M}^n$ is an $(n-1)$-dimensional \emph{Darboux cyclide} in $\mathcal{M}^n$. The central projection $\sigma(\mathcal{Q}\cap \mathcal{M}^n)$ is a Darboux cyclide in $\R^n=\mathcal{H}^0\setminus \mathcal{H}$.
\end{defn}

Similarly, the intersection of $\mathcal{M}_\C^n$ with a complex quadric hypersurface $\mathcal{Q} \neq \mathcal{M}_\C^n$ is a \emph{complex Darboux cyclide}. The central projection $\sigma_\C(\mathcal{Q} \cap \mathcal{M}^{n}_\C)$ is a complex Darboux cyclide in $\overline{\C^n}$.

Definition~\ref{defn: cyclide} is a direct generalisation of the definition of Darboux cyclides in 
\cite{coolidge1916}. $1$-dimensional and $2$-dimensional Darboux cyclides are classical curves and surfaces that were notably studied by John Casey \cite{casey1871cyclics, casey1871cyclides}, Gaston Darboux \cite{darboux1873} and Th{\'e}odore Moutard \cite{Moutard1864a, Moutard1864b}. $1$-dimensional Darboux cyclides are also known as \emph{cyclics}. Recently, $2$-dimensional Darboux cyclides have garnered renewed interest \cite{pottmann2012, lubbes2018kinematic, skopenkov2019}.

Because $1$-dimensional Darboux cyclides in $\overline{\R^2}$ are generically quartic curves with two imaginary singular points that are the circular points at infinity, they are also known as \emph{bicircular quartics} \cite{casey1871cyclics}. A direct generalisation is that Darboux cyclides in $\overline{\R^n}$ are generically \emph{bihyperspherical quartics}. By definition, an algebraic hypersurface $\mathcal{S}$ in $\overline{\R^n}$ is \emph{bihyperspherical} if its complexification $\mathcal{S}_\C$ has a singular locus that contains the $(n-2)$-dimensional quadric 
$$
\mathscr{C}^{n-2}:=\mathcal{M}_{\C}\cap \mathcal{H}_{\C}= \{[x]\in \overline{\C^n} \mid x_1^2+x_2^2+\ldots + x_n^2 =0, x_{0}= x_\infty =0\}
$$ 
that is contained in the hyperplane at infinity of $\overline{\C^n}$. For example, any pair of hyperspheres is a bihyperspherical quartic. Indeed, hyperspheres in $\overline{\R^n}$ can be characterised as quadrics such that their complexifications are quadrics in $\overline{\C^n}$ that contain $\mathscr{C}^{n-2}$.

Complex $2$-dimensional Darboux cyclides in $\overline{\C^3}$ are generically quartic surfaces that contain $\mathscr{C}^1$ as a double conic. More generally, quartic surfaces with a double conic were notably studied by Ernst Kummer \cite{kummer1863}. Quartic surfaces with a double conic are surfaces that can be studied as the projections of the intersection surfaces of pairs of quadric hypersurfaces in $\C \mathrm{P}^4$ \cite{segre1884}.

\begin{defn}
\label{defn: confocalquadrics}
A $1$-parameter family of quadric hypersurfaces in $\overline{\C^n}$ is a family of \emph{confocal quadrics} if they are the projective duals of a pencil of quadrics such that $\mathscr{C}^{n-2}$ is the dual of a degenerate quadric that belongs to the pencil.
\end{defn}

Confocal quadrics in $\R^n$ are quadrics such that their complexifications are confocal quadrics in $\C^n$.

Definition~\ref{defn: confocalquadrics} is due to Darboux, who showed its equivalence to standard definitions of confocal quadrics \cite{darboux1873}.


Let $\mathcal{Q} \cap \mathcal{M}_\C^n$ be a complex Darboux cyclide. The two distinct quadrics $\mathcal{Q}$ and $\mathcal{M}_\C^n$ span a pencil of quadrics in $\mathbb{C}\mathrm{P}^{n+1}$. By definition, the complex Darboux cyclide is \emph{general} if the pencil of quadrics contains $n+2$ degenerate quadrics. This is the maximal number of degenerate quadrics that the pencil can contain.

Theorem~\ref{thm: cyclide} is a direct generalisation of properties of $1$-dimensional and $2$-dimensional Darboux cyclides that can be found in \cite{darboux1873, coolidge1916}.

\begin{thm}
\label{thm: cyclide}
Let $\mathcal{D}:= \mathcal{Q}\cap \mathcal{M}_\C^{n}$ be a general complex Darboux cyclide. There are $n+2$ different ways to generate the Darboux cyclide $\mathcal{D}$ as the envelope of a $(n-1)$-dimensional family of hyperspheres in $\mathcal{M}_\C^n$ that are twice tangent to $\mathcal{D}$. For each generation $i$, the hyperspheres have a common orthogonal hypersphere $\mathcal{S}_i$. The $n+2$ common orthogonal hyperspheres $\mathcal{S}_i$ are pairwise orthogonal. Consider the Darboux cyclide $\sigma_\C(\mathcal{D})$ in $\overline{\C^n}$. For each generation of $\sigma_\C(\mathcal{D})$, the centres of the generating spheres are contained in a quadric hypersurface of $\overline{\C^n}$. These $n+2$ quadric hypersurfaces are confocal.
\end{thm}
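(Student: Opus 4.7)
The plan is to exploit the pencil of quadrics $\mathscr{P} := \{\lambda \mathcal{Q} + \mu \mathcal{M}_\C^n\}$ in $\C\mathrm{P}^{n+1}$. The hypothesis that $\mathcal{D}$ is a general cyclide is exactly that $\mathscr{P}$ contains $n+2$ degenerate members $\mathcal{V}_1, \ldots, \mathcal{V}_{n+2}$, each a cone of rank $n+1$ with a single apex $A_i$.

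First I would construct the $n+2$ generations. Since $\mathcal{V}_i$ is a cone with apex $A_i$, its tangent hyperplanes along the smooth locus all pass through $A_i$ and are parametrised by the $(n-1)$-dimensional family of generators of $\mathcal{V}_i$. For such a tangent hyperplane $H$, the restriction $H \cap \mathcal{V}_i$ is a quadric of rank $n-1$ in $H \cong \C\mathrm{P}^n$; in the pencil of quadrics in $H$ spanned by $H \cap \mathcal{V}_i$ and $H \cap \mathcal{M}_\C^n$, this degenerate member forces $H \cap \mathcal{D}$ to be an $(n-2)$-dimensional complex cyclide counted with multiplicity two, so the hypersphere $H \cap \mathcal{M}_\C^n$ is twice tangent to $\mathcal{D}$. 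Letting $H$ range over the tangent hyperplanes of $\mathcal{V}_i$ yields the $i$-th generation; distinct apices give genuinely distinct generations by the pairwise orthogonality in the next step.

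Next I would identify the common orthogonal hypersphere. Let $H_i := A_i^\perp$ be the polar hyperplane of $A_i$ with respect to $\mathcal{M}_\C^n$ and set $\mathcal{S}_i := H_i \cap \mathcal{M}_\C^n$. Since every tangent hyperplane to $\mathcal{V}_i$ contains $A_i$, its $\mathcal{M}_\C^n$-pole lies in $H_i$, which in M\"obius geometry says exactly that the corresponding hypersphere is orthogonal to $\mathcal{S}_i$. For the pairwise orthogonality $\mathcal{S}_i \perp \mathcal{S}_j$ I would use that the apices of the degenerate members of the pencil form a self-polar simplex: writing $\mathcal{V}_k = \lambda_k \mathcal{Q} + \mu_k \mathcal{M}_\C^n$, the apex relations $\mathcal{V}_i(A_i, A_j) = \mathcal{V}_j(A_i, A_j) = 0$ form a non-singular $2 \times 2$ linear system in $\mathcal{Q}(A_i, A_j)$ and $\mathcal{M}_\C^n(A_i, A_j)$ with determinant $\lambda_i \mu_j - \lambda_j \mu_i \neq 0$, so both pairings vanish and $\mathcal{S}_i \perp \mathcal{S}_j$.

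Finally I would address confocality. By Lemma~\ref{lem: stereoproj}, the centre in $\overline{\C^n}$ of the hypersphere with $\mathcal{M}_\C^n$-pole $[a]$ is $\sigma_\C([a])$, so the locus of centres of the $i$-th generation is the quadric $\mathcal{R}_i := \sigma_\C(\widetilde{\mathcal{V}}_i) \subset \overline{\C^n}$, where $\widetilde{\mathcal{V}}_i \subset H_i$ is the image of the dual variety $\mathcal{V}_i^*$ under the $\mathcal{M}_\C^n$-polarity. To verify confocality I would simultaneously diagonalise $\mathcal{Q}$ and $\mathcal{M}_\C^n$ in an eigenbasis of $M^{-1} Q$, writing $M = \mathrm{diag}(m_k)$ and $Q = \mathrm{diag}(q_k)$; then $A_i$ is the $i$-th basis vector and a direct computation gives
\[
\widetilde{\mathcal{V}}_i = \Bigl\{ x_i = 0,\ \sum_{k \neq i} \tfrac{m_k^2}{m_i q_k - q_i m_k}\, x_k^2 = 0 \Bigr\}.
\]
Passing to projective duals of the images $\mathcal{R}_i$ in $(\overline{\C^n})^*$ yields $n+2$ quadrics that, after this explicit substitution, all lie on a common pencil whose degenerate member corresponding to the limit $\lambda/\mu \to 0$ is precisely the dual of $\mathscr{C}^{n-2}$, so by Definition~\ref{defn: confocalquadrics} the $\mathcal{R}_i$ are confocal. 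The main obstacle is the bookkeeping caused by the fact that $N = [e_\infty]$ is a generic linear combination of the diagonal basis vectors; this can be handled by choosing the projection-direction coordinate to be $\langle \cdot, N\rangle_{\mathcal{M}_\C^n}$ and tracking how $\sigma_\C$ intertwines this coordinate with the $\mathcal{M}_\C^n$-polarity, which reduces the claim to a direct verification in the normal form above.
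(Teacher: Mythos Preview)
Your overall architecture matches the paper's: use the $n+2$ singular cones of the pencil, take their tangent hyperplanes to produce the generating families, set $\mathcal{S}_i = A_i^\perp \cap \mathcal{M}_\C^n$, and pass to polar/dual quadrics for confocality. The execution diverges in three places, and one of them is an actual gap.

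\textbf{Twice-tangency.} Your claim that ``$H \cap \mathcal{D}$ is an $(n-2)$-dimensional complex cyclide counted with multiplicity two'' is only correct for $n=2$, where $H\cap\mathcal{V}_i$ has rank $1$ (a double line) and the circle $H\cap\mathcal{M}_\C^2$ meets it in two doubled points. For $n\ge 3$ the quadric $H\cap\mathcal{V}_i$ has corank $2$ but is \emph{not} a doubled hypersurface (e.g.\ for $n=3$ it is a pair of distinct planes), and the base locus $H\cap\mathcal{D}$ is a genuine, typically reducible $(n-2)$-fold rather than a multiplicity-two object. ``Twice tangent'' in the theorem means tangent at two \emph{points}. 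The paper obtains this directly: $H$ is tangent to the cone $\mathcal{V}_i$ along a single ruling $\ell$ through $A_i$; at each of the two points of $\ell\cap\mathcal{M}_\C^n$ one has $T_x\mathcal{D}=T_x\mathcal{V}_i\cap T_x\mathcal{M}_\C^n\subset H$, so the hypersphere $H\cap\mathcal{M}_\C^n$ is tangent to $\mathcal{D}$ there. Your step should be replaced by this observation.

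\textbf{Pairwise orthogonality.} Your self-polar-simplex argument (the $2\times 2$ linear system in $\mathcal{Q}(A_i,A_j)$ and $\mathcal{M}_\C^n(A_i,A_j)$) is correct and standard. The paper instead invokes its own Lemma~\ref{lem: planarquadpolarlaplace}: a generic plane through $A_i,A_j$ meets $\mathcal{D}$ in four points forming a planar quad whose Laplace points are exactly $A_i,A_j$, hence conjugate. Your route is shorter; the paper's route ties the result back into the article's running theme.

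\textbf{Confocality.} Here the approaches genuinely differ. You simultaneously diagonalise, write down $\widetilde{\mathcal{V}}_i$ explicitly, and then face the bookkeeping of projecting from a generic $N=[e_\infty]$; this is feasible but you have not actually carried it out, and the last paragraph is a plan rather than a proof. The paper avoids coordinates: for any $P\in N_\C^\perp\cap\mathcal{D}$, the point $P$ lies on every $\mathcal{K}_i$, so by polarity $P^\perp$ is tangent to every $\mathcal{K}_i^\perp$; moreover $P^\perp$ meets $\overline{\C^n}$ in a hyperplane tangent to $\mathscr{C}^{n-2}$ at $\sigma_\C(P)$. Thus all $\sigma_\C(\mathcal{K}_i^\perp)$ share every tangent hyperplane of $\mathscr{C}^{n-2}$, which is exactly Definition~\ref{defn: confocalquadrics}. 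This synthetic argument is shorter and insensitive to the position of $N$.
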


\begin{proof}
There are $n+2$ degenerate quadric hypersurfaces in the pencil spanned by $\mathcal{Q}$ and $\mathcal{M}^n_\C$. Let $\mathcal{K}_1,\ldots,\mathcal{K}_{n+2}$ be the degenerate quadrics. Let $A_i$ be the point that is the apex of $\mathcal{K}_i$. Any tangent hyperplane of $\mathcal{K}_i$ intersects $\mathcal{M}^n_\C$ in a hypersphere of $\mathcal{M}_{\C}^n$. The tangent hyperplane is tangent to $\mathcal{K}_i$ along the points of an isotropic line that passes through the apex $A_i$. The isotropic line intersects $\mathcal{M}_\C^n$ in two points. The hypersphere is tangent to $\mathcal{D}$ at the two foregoing points. The cone $\mathcal{K}_i$ has a $(n-1)$-dimensional family of tangent hyperplanes. They intersect $\mathcal{M}^n_\C$ in a $(n-1)$-dimensional family of hyperspheres that are orthogonal to the hypersphere $A_i^{\perp} \cap \mathcal{M}^n_\C$, where $A_i^{\perp}$ is the polar hyperplane of $A_i$ relative to $\mathcal{M}^n_\C$. 

Consider an arbitrary plane that contains two of the points $A_i, \ldots, A_{n+2}$. Generically, the plane intersects $\mathcal{D}:= \mathcal{Q}\cap \mathcal{M}_\C^{n}$ in four points. The four points determine a planar quad such that its two Laplace points are two of the points $A_i$. By Lemma~\ref{lem: planarquadpolarlaplace}, the two Laplace points are conjugate relative to $\mathcal{M}^n_\C$. Therefore, any two of the points in $A_i$ must be conjugate relative to $\mathcal{M}^n_\C$. Equivalently, the spheres $\mathcal{S}_i=A_i^{\perp}\cap \mathcal{M}^n_\C$ are pairwise orthogonal.

Let $\mathcal{K}^{\perp}_i$ be the polar quadric of $\mathcal{K}_i$ relative to $\mathcal{M}^n_\C$. Each $\mathcal{K}^{\perp}_i$ is a $(n-1)$-dimensional non-degenerate quadric that is contained in the hyperplane $A_i^{\perp}$. By Lemma~\ref{lem: stereoproj}, the centres of the hyperspheres in $\overline{\C^n}$ of the $n+2$ different generations of $\sigma_\C(\mathcal{D})$ are exactly the points of the quadrics $\sigma_\C(\mathcal{K}^{\perp}_i)$.

Let $N_\C^\perp$ be the tangent hyperplane of $\mathcal{M}^n_\C$ at the point $N_\C$. The intersection $N_\C^\perp \cap \mathcal{D}$ projects under $\sigma_\C$ to the quadric $\mathscr{C}^{n-2}$. The quadrics $\sigma_\C (\mathcal{K}^{\perp}_i)$ are confocal quadrics because they belong to the dual of a pencil of quadrics such that $\mathscr{C}^{n-2}$ also belongs to the dual pencil. To see this, consider an arbitrary point in $\mathscr{C}^{n-2}$. It can be described as the projection $\sigma_\C(P)$ of some point  $P \in  N_\C^\perp \cap \mathcal{D}$. Let $P^{\perp}$ be the polar hyperplane relative to $\mathcal{M}_\C^n$. The point $P$ is contained in all of the quadrics $\mathcal{K}_i$. Dually, $P^{\perp}$ is tangent to all of the quadrics $\mathcal{K}_i^{\perp}$. The hyperplane $P^{\perp}$ intersects $\overline{\C^n}$ in a $(n-1)$-dimensional projective subspace that is tangent to $\mathscr{C}^{n-2}$ at the point where the line $P \vee N_\C$ intersects $\mathscr{C}^{n-2}$. Each quadric $\mathcal{K}^{\perp}_i$ is tangent to the hyperplane  $P^{\perp}$. So, each projected quadric $\sigma_\C(\mathcal{K}^{\perp}_i)$ is tangent to the foregoing $(n-1)$-dimensional projective subspace that is tangent to $\mathscr{C}^{n-2}$. By Definition~\ref{defn: confocalquadrics}, this means that the $n+2$ quadrics are confocal.
\end{proof}

Figure~\ref{figure: twocyclics} illustrates how cyclics can be generated according to Theorem~\ref{thm: cyclide}.

\begin{figure}[htbp]
\[\includegraphics[width=0.5\textwidth]{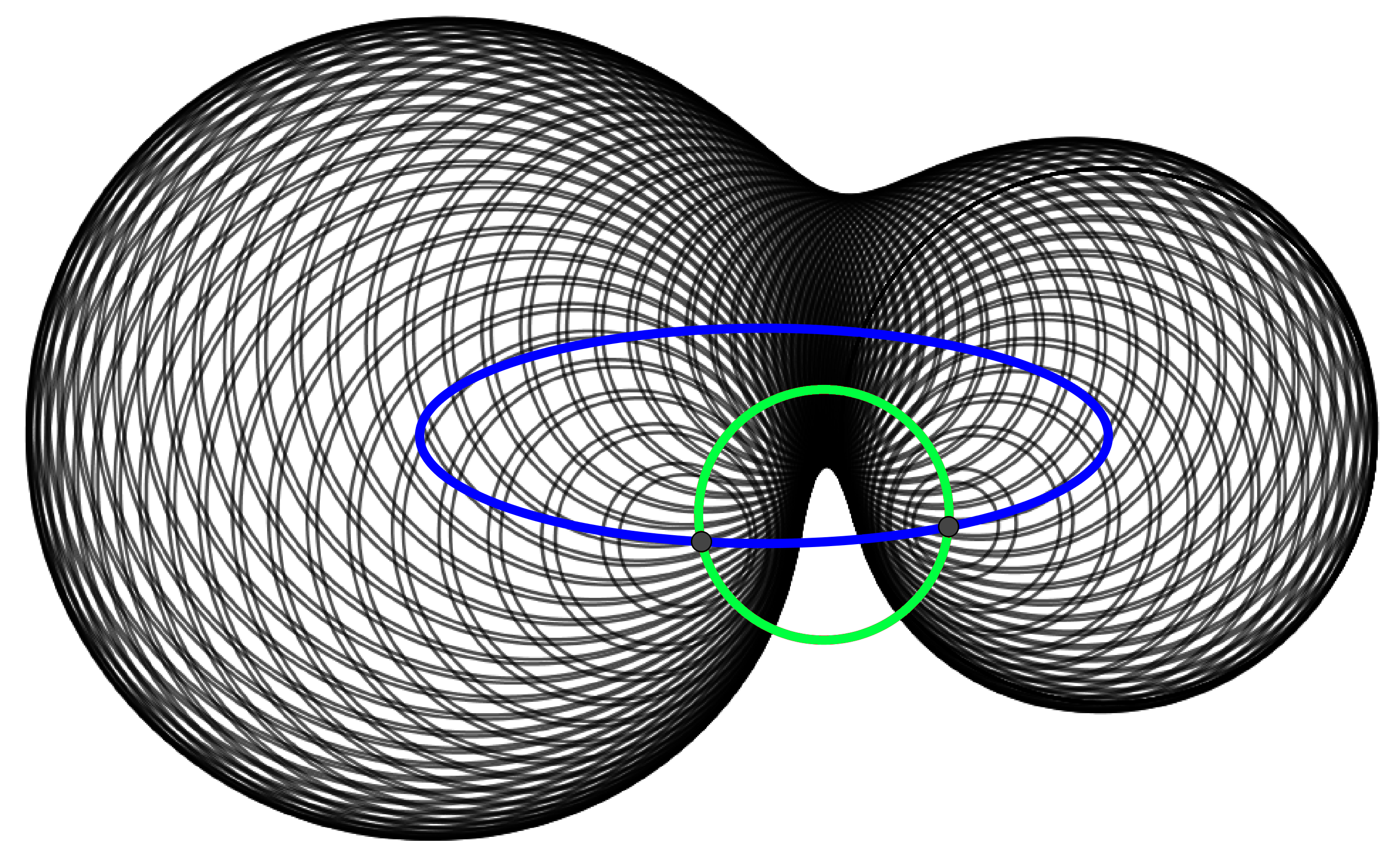}\]
\caption{The envelope of the black circles is a cyclic. The generating circles have a common orthogonal circle and their centres are contained in a conic. Each generating circle is twice tangent to the cyclic. Some of the pairs of tangency points are imaginary. Some of the generating circles are imaginary. Any generating circle with null radius is a focal point of the cyclic.}
  \label{figure: twocyclics}
\end{figure}

\bibliographystyle{acm}
\bibliography{bibfile}

\begin{thebibliography}{10}

\bibitem{Abresch1987}
{\sc Abresch, U.}
\newblock Constant mean curvature tori in terms of elliptic functions.
\newblock {\em J. Reine Angew. Math. 374\/} (1987), 169--192.

\bibitem{Bernstein2001}
{\sc Bernstein, H.}
\newblock Non-special, non-canal isothermic tori with spherical lines of
  curvature.
\newblock {\em Trans. Amer. Math. Soc. 353}, 6 (2001), 2245--2274.

\bibitem{blaschke1929}
{\sc Blaschke, W.}
\newblock {\em Vorlesungen {\"u}ber Differentialgeometrie und geometrische
  Grundlagen von Einsteins Relativit{\"a}tstheorie III: Differentialgeometrie
  der Kreise und Kugeln}, vol.~29.
\newblock Springer-Verlag, Berlin, 1929.

\bibitem{bobenko2021compact}
{\sc Bobenko, A.~I., Hoffmann, T., and Sageman-Furnas, A.~O.}
\newblock Compact {B}onnet pairs: isometric tori with the same curvatures.
\newblock {\em arXiv 2110.06335\/} (2021).

\bibitem{bobenko2003discrete}
{\sc Bobenko, A.~I., Matthes, D., and Suris, Y.~B.}
\newblock Discrete and smooth orthogonal systems: {$C^\infty$}-approximation.
\newblock {\em Int. Math. Res. Not. 2003}, 45 (2003), 2415--2459.

\bibitem{bobenko2019multi}
{\sc Bobenko, A.~I., Pottmann, H., and R{\"o}rig, T.}
\newblock Multi-nets. {C}lassification of discrete and smooth surfaces with
  characteristic properties on arbitrary parameter rectangles.
\newblock {\em Discrete \& Computational Geometry 63\/} (2019), 624--655.

\bibitem{bobenko2020checkerboard}
{\sc Bobenko, A.~I., Schief, W.~K., and Techter, J.}
\newblock Checkerboard incircular nets: {L}aguerre geometry and
  parametrisation.
\newblock {\em Geom. Dedicata 204\/} (2020), 97--129.

\bibitem{bobenko2007organizing}
{\sc Bobenko, A.~I., and Suris, Y.~B.}
\newblock On organizing principles of discrete differential geometry.
  {G}eometry of spheres.
\newblock {\em Russian Mathematical Surveys 62}, 1 (2007).

\bibitem{DDG}
{\sc Bobenko, A.~I., and Suris, Y.~B.}
\newblock {\em Discrete Differential Geometry}, vol.~98 of {\em Graduate
  Studies in Mathematics}.
\newblock American Mathematical Society, Providence, RI, 2008.

\bibitem{bobenko2018curvature}
{\sc Bobenko, A.~I., and Tsarev, S.}
\newblock The curvature line parametrization from circular nets on a surface.
\newblock {\em J. Math. Phys. 59}, 9 (2018), 091410, 18.

\bibitem{bol1950projektive}
{\sc Bol, G.}
\newblock {\em Projektive {D}ifferentialgeometrie. {T}eil {I}}.
\newblock Vandenhoeck \& Ruprecht, G\"{o}ttingen, 1950.

\bibitem{bonnet1853memoire}
{\sc Bonnet, O.}
\newblock {\em M{\'e}moire sur les surfaces dont les lignes de courbure sont
  planes ou sph{\'e}riques}.
\newblock Mallet-Bachelier, 1853.

\bibitem{casey1871cyclics}
{\sc Casey, J.}
\newblock On bicircular quartics.
\newblock {\em The Transactions of the Royal Irish Academy 24\/} (1871),
  457--569.

\bibitem{casey1871cyclides}
{\sc Casey, J.}
\newblock On cyclides and sphero-quartics.
\newblock {\em Philosophical Transactions of the Royal Society of London 161\/}
  (1871), 585--721.

\bibitem{cecil2008lie}
{\sc Cecil, T.~E.}
\newblock {\em Lie sphere geometry}, second~ed.
\newblock Universitext. Springer, New York, 2008.

\bibitem{cho2021constrained}
{\sc Cho, J., Pember, M., and Szewieczek, G.}
\newblock Constrained elastic curves and surfaces with spherical curvature
  lines.
\newblock {\em arXiv 2104.11058\/} (2021).

\bibitem{cieslinski1997}
{\sc Cieśliński, J., Doliwa, A., and Santini, P.~M.}
\newblock The integrable discrete analogues of orthogonal coordinate systems
  are multi-dimensional circular lattices.
\newblock {\em Physics Letters A 235}, 5 (1997), 480--488.

\bibitem{coolidge1916}
{\sc Coolidge, J.~L.}
\newblock {\em A Treatise on the Circle and the Sphere}.
\newblock Oxford University Press, Oxford, 1916.

\bibitem{darboux1873}
{\sc Darboux, G.}
\newblock {\em Sur une Classe Remarquable de Courbes et de Surfaces
  Alg{\'e}briques et sur la Th{\'e}orie des Imaginaires}.
\newblock Gauthier-Villars, Paris, 1873.

\bibitem{darboux1889lecons}
{\sc Darboux, G.}
\newblock {\em Lecons sur la th{\'e}orie g{\'e}n{\'e}rale des surfaces et les
  applications g{\'e}om{\'e}triques du calcul infinit{\'e}simal: 2{\`e}me
  Partie}.
\newblock Gauthier-Villars, Paris, 1889.

\bibitem{darboux1896lecons}
{\sc Darboux, G.}
\newblock {\em Lecons sur la th{\'e}orie g{\'e}n{\'e}rale des surfaces et les
  applications g{\'e}om{\'e}triques du calcul infinit{\'e}simal: 4{\`e}me
  Partie}.
\newblock Gauthier-Villars, Paris, 1896.

\bibitem{doliwa1997geometricToda}
{\sc Doliwa, A.}
\newblock Geometric discretisation of the {T}oda system.
\newblock {\em Phys. Lett. A 234}, 3 (1997), 187--192.

\bibitem{doliwa1999}
{\sc Doliwa, A.}
\newblock Quadratic reductions of quadrilateral lattices.
\newblock {\em J. Geom. Phys. 30}, 2 (1999), 169--186.

\bibitem{doliwa2000lattice}
{\sc Doliwa, A.}
\newblock Lattice geometry of the {H}irota equation.
\newblock In {\em S{IDE} {III}---symmetries and integrability of difference
  equations ({S}abaudia, 1998)}, vol.~25 of {\em CRM Proc. Lecture Notes}.
  Amer. Math. Soc., Providence, RI, 2000, pp.~93--100.

\bibitem{eisenhart1909treatise}
{\sc Eisenhart, L.~P.}
\newblock {\em A treatise on the differential geometry of curves and surfaces}.
\newblock Ginn and Co., Boston, 1909.

\bibitem{goursat1896equations}
{\sc Goursat, E.}
\newblock Sur les {E}quations {L}ineaires et la {M}ethode de {L}aplace.
\newblock {\em Amer. J. Math. 18}, 4 (1896), 347--385.

\bibitem{hertrich2003introduction}
{\sc Hertrich-Jeromin, U.}
\newblock {\em Introduction to {M}\"{o}bius differential geometry}, vol.~300 of
  {\em London Mathematical Society Lecture Note Series}.
\newblock Cambridge University Press, Cambridge, 2003.

\bibitem{hertrich2020}
{\sc Hertrich-Jeromin, U., Rossman, W., and Szewieczek, G.}
\newblock Discrete channel surfaces.
\newblock {\em Math. Z. 294}, 1-2 (2020), 747--767.

\bibitem{Pottmannplanarplanar2022}
{\sc Jiang, C., Wang, C., Tellier, X., Wallner, J., and Pottmann, H.}
\newblock Planar panels and planar supporting beams in architectural
  structures.
\newblock {\em ACM Trans. Graphics 41\/} (2022).

\bibitem{Kilian2023}
{\sc Kilian, M., M\"{u}ller, C., and Tervooren, J.}
\newblock Smooth and {D}iscrete {C}one-{N}ets.
\newblock {\em Results Math. 78}, 3 (2023), Paper No. 110.

\bibitem{kummer1863}
{\sc Kummer, E.~E.}
\newblock \"{U}ber die {F}l\"{a}chen vierten {G}rades, auf welchen {S}chaaren
  von {K}egelschnitten liegen.
\newblock {\em J. Reine Angew. Math. 64\/} (1865), 66--76.

\bibitem{lane1932projective}
{\sc Lane, E.~P.}
\newblock {\em Projective differential geometry of curves and surfaces}.
\newblock University of Chicago Press, Chicago, Ill., 1932.

\bibitem{lane1942treatise}
{\sc Lane, E.~P.}
\newblock {\em A {T}reatise on {P}rojective {D}ifferential {G}eometry}.
\newblock University of Chicago Press, Chicago, Ill., 1942.

\bibitem{lemonnier1868surfaces}
{\sc Lemonnier, H.~G.}
\newblock {\em Des surfaces dont les lignes de courbure sont planes ou
  sph{\'e}riques}.
\newblock E. Thunot et cie., Paris, 1868.

\bibitem{liu2006geometry}
{\sc Liu, Y., Pottmann, H., Wallner, J., Yang, Y.-L., and Wang, W.}
\newblock Geometric modeling with conical meshes and developable surfaces.
\newblock {\em ACM Trans. Graphics 25}, 3 (2006), 681--689.
\newblock Proc. SIGGRAPH.

\bibitem{lubbes2018kinematic}
{\sc Lubbes, N., and Schicho, J.}
\newblock Kinematic generation of {D}arboux cyclides.
\newblock {\em Comput. Aided Geom. Design 64\/} (2018), 11--14.

\bibitem{Moutard1864a}
{\sc Moutard, T.~F.}
\newblock Note sur la transformation par rayons vecteurs r\'eciproques.
\newblock {\em Nouvelles annales de math\'ematiques: journal des candidats aux
  \'ecoles polytechnique et normale 2e s{\'e}rie, 3\/} (1864), 306--309.

\bibitem{Moutard1864b}
{\sc Moutard, T.~F.}
\newblock Sur les surfaces anallagmatiques du quatri\`eme ordre.
\newblock {\em Nouvelles annales de math\'ematiques : journal des candidats aux
  \'ecoles polytechnique et normale 2e s{\'e}rie, 3\/} (1864), 536--539.

\bibitem{musso1999laguerre}
{\sc Musso, E., and Nicolodi, L.}
\newblock Laguerre geometry of surfaces with plane lines of curvature.
\newblock {\em Abh. Math. Sem. Univ. Hamburg 69\/} (1999), 123--138.

\bibitem{Nutbourne1988}
{\sc Nutbourne, A.~W., and Martin, R.~R.}
\newblock {\em Differential geometry applied to curve and surface design.
  {V}ol. 1}.
\newblock Ellis Horwood Ltd., Chichester; Halsted Press [John Wiley \& Sons,
  Inc.], New York, 1988.

\bibitem{odehnal2020universe}
{\sc Odehnal, B., Stachel, H., and Glaeser, G.}
\newblock {\em The universe of quadrics}.
\newblock Springer, Berlin, 2020.

\bibitem{picart1863}
{\sc Picart, A.}
\newblock {\em Essai d'une th{\'e}orie g{\'e}om{\'e}trique des surfaces}.
\newblock PhD thesis, Facult{\'e} des sciences de Paris, Paris, France, 1863.

\bibitem{pottmann2007geometry}
{\sc Pottmann, H., Liu, Y., Wallner, J., Bobenko, A., and Wang, W.}
\newblock Geometry of multi-layer freeform structures for architecture.
\newblock {\em ACM Trans. Graph. 26}, 3 (2007), 65–es.

\bibitem{pottmann2012}
{\sc Pottmann, H., Shi, L., and Skopenkov, M.}
\newblock Darboux cyclides and webs from circles.
\newblock {\em Comput. Aided Geom. Design 29}, 1 (2012), 77--97.

\bibitem{pottmann2008focal}
{\sc Pottmann, H., and Wallner, J.}
\newblock The focal geometry of circular and conical meshes.
\newblock {\em Adv. Comput. Math. 29}, 3 (2008), 249--268.

\bibitem{rorig2021ribaucour}
{\sc R\"{o}rig, T., and Szewieczek, G.}
\newblock The {R}ibaucour families of discrete {R}-congruences.
\newblock {\em Geom. Dedicata 214\/} (2021), 251--275.

\bibitem{rouquet1882etude}
{\sc Rouquet, P.~V.}
\newblock {\em {\'E}tude g{\'e}om{\'e}trique des surfaces dont les lignes de
  courbure d'un syst{\`e}me sont planes}.
\newblock Imp. de Douladoure-Privat, 1882.

\bibitem{sauer1970differenzen}
{\sc Sauer, R.}
\newblock {\em Differenzengeometrie}.
\newblock Springer, Berlin, 1970.

\bibitem{segre1884}
{\sc Segre, C.}
\newblock Etude des diff\'{e}rentes surfaces du 4\textsuperscript{e} ordre \`a
  conique double ou cuspidale (g\'{e}n\'{e}rale ou d\'{e}compos\'{e}e)
  consid\'{e}r\'{e}es comme des projections de l'intersection de deux
  vari\'{e}t\'{e}s quadratiques de l'espace \`a quatre dimensions.
\newblock {\em Math. Ann. 24}, 3 (1884), 313--444.

\bibitem{Serret1853}
{\sc Serret, J.-A.}
\newblock M{\'e}moire sur les surfaces dont toutes les lignes de courbure sont
  planes ou sph{\'e}riques.
\newblock {\em Journal de Mathématiques Pures et Appliquées 18\/} (1853),
  113--162.

\bibitem{skopenkov2019}
{\sc Skopenkov, M., and Krasauskas, R.}
\newblock Surfaces containing two circles through each point.
\newblock {\em Math. Ann. 373}, 3-4 (2019), 1299--1327.

\bibitem{snyder1904developable}
{\sc Snyder, V.}
\newblock On developable and tubular surfaces having spherical lines of
  curvature.
\newblock {\em Bull. Amer. Math. Soc. 11}, 1 (1904), 1--6.

\bibitem{Tellier2019}
{\sc Tellier, X., Douthe, C., Hauswirth, L., and Baverel, O.}
\newblock Surfaces with planar curvature lines: Discretization, generation and
  application to the rationalization of curved architectural envelopes.
\newblock {\em Automation in Construction 106\/} (2019), 102880.

\bibitem{Titeica1924geometrie}
{\sc Tzitz{\'e}ica, G.}
\newblock {\em G{\'e}om{\'e}trie diff{\'e}rentielle projective des
  r{\'e}seaux}.
\newblock Acad{\'e}mie roumaine. {\'E}tudes et recherches. Cultura Nationala,
  Bucharest, 1924.

\bibitem{Walter1987}
{\sc Walter, R.}
\newblock Explicit examples to the {$H$}-problem of {H}einz {H}opf.
\newblock {\em Geom. Dedicata 23}, 2 (1987), 187--213.

\bibitem{Wente1992}
{\sc Wente, H.~C.}
\newblock Constant mean curvature immersions of {E}nneper type.
\newblock {\em Mem. Amer. Math. Soc. 100}, 478 (1992), vi+77.

\end{thebibliography}

\end{document}